\definecolor{string}{rgb}{0.7,0.0,0.0}
\definecolor{comment}{rgb}{0.13,0.54,0.13}
\definecolor{keyword}{rgb}{0.0,0.0,1.0}
\tikzstyle{vtx}=[circle, inner sep= 0pt, minimum size= 1.2mm, fill]
\newtheorem{te}{Theorem}[section]
\newtheorem{pro}[te]{Proposition}
\newtheorem{co}[te]{Corollary}
\newtheorem{lemma}[te]{Lemma}
\newtheorem{conjecture}{Conjecture}[section]
\newtheorem{re}{Remark}[section]
\newtheorem{observation}{Observation}[section]
\newcommand{\beq}{\begin{eqnarray}}
\newcommand{\eeq}{\end{eqnarray}}
\newcommand{\beqs}{\begin{eqnarray*}}
\newcommand{\eeqs}{\end{eqnarray*}}
\newcommand{\ABC}{{\rm ABC}}
\begin{document}
\title{ Complete characterization of the minimal-ABC trees}
\maketitle
{
\begin{center}
{ \bf Darko Dimitrov$^{a,b}$, Zhibin Du$^{c}$}
\end{center}

\baselineskip=0.20in
\begin{center}
{\it $^a$Hochschule f\"ur Technik und Wirtschaft Berlin, Germany \\
$^b$Faculty of Information Studies,  Novo mesto, Slovenia \\
E-mail: {\tt darko.dimitrov11@gmail.com}}
\end{center}
\baselineskip=0.20in
\begin{center}
{\it $^c$School of Software, South China Normal University, Guangdong 528225, China}
\\E-mail: {\tt zhibindu@126.com}
\end{center}
}


%
%
%
%

%
%

%
%

%
%
\vspace{6mm}
\begin{abstract}
The problem of characterizing trees with minimal atom-bond-connectivity index (minimal-ABC trees)
has a reputation as one of the most demanding recent open optimization problems in mathematical chemistry.
Here firstly, we give an affirmative answer to the conjecture, which states  that enough large minimal-ABC trees are comprised solely of
a root vertex and so-called $D_z$- and $D_{z+1}$-branches.
Based on the presented theoretical results here and some already known results,
we obtain enough constraints to reduce the search space and solve the optimization problem, and thus,
determine exactly the minimal-ABC trees of a given arbitrary order.

\bigskip

\noindent
Keywords: Molecular descriptors, atom-bond connectivity index, extremal graphs

\smallskip

\noindent
AMS subject classification: 05C35,
 05C90, 92E10

\noindent

\end{abstract}

\section[Introduction]{Introduction}
The {\em atom-bond connectivity} ({\em ABC})  index is one of the most investigated molecular structure descriptors,
which was proved to be applicable for practical purposes.
It was introduced by Estrada et al.  \cite{etrg-abc-98}  in $1998$, who showed  that there is an
excellent (linear) correlation between the ABC index  and the experimental heats of formation of alkanes.
Ten years later, an inventive explanation of this topological index  based on quantum theory was provided in \cite{e-abceba-08}.
The applicative potentiality of the ABC index was further supported in \cite{gtrm-abcica-12}  by Gutman et al.
As a result of those seminal works, the ABC index attracted the attention of mathematicians, chemists, and computer scientists,
which resulted in many  theoretical and  computational  results
\cite{adgh-dctmabci-14, cg-eabcig-11, cg-abccbg-12, clg-subabcig-12,  cll-abcbsp-13, dd-mabctb1b2-2018, dgf-abci-11, dmga-cbabcig-16, d-sptmabci-2-2015, dt-cbfgaiabci-10, d-etrabci-2017, ddf-sptmabci-3-2015, ddf-mabctb1b-2018, dis-rmabciggp-17, d-abcircg-2015,df-ftmabc-2015,  e-abcm-17, e-abceba-08, etrg-abc-98, ftvzag-siabcigo-2011, fgv-abcit-09, fgiv-cstmabci-12, ghl-srabcig-11, gs-sabcitnpv-16, gf-tsabci-12, gfahsz-abcic-2013, gg-nwabci-10, gtrm-abcica-12, hdw-abcig-19, k-abcibsfc-12, ks-ca-98, lcmzczj-twmabciatgnl-16, rb-egevvdbti-19, cr-psevvdbtiat-19, xzd-abcicg-2011, xzd-frabcit-2010, yxc-abcbsp-11, zlcl-bgabci-20, zc-rbabcfgai-2015, hma-esabcmt-2021}.

Among remaining open problems, the problem of full characterization of trees,
whose  ABC index is minimal (referred further  as {\em minimal-ABC trees}),
has a reputation as one of the most demanding recent open problems in mathematical chemistry.

The ABC index of a simple undirected graph $G=(V, E)$ with vertex set $V = V(G)$ and edge set $E= E(G)$ is defined as
\beq \label{eqn:001}
\ABC(G)=\sum_{uv\in E}\sqrt{\frac{d(u) +d(v)-2}{d(u)d(v)}}, \nonumber
\eeq
where $d(v)$ is the degree of a vertex $v \in V$.

A {\it pendant vertex} is a vertex of degree one.
If the degree of a vertex is larger than two and no vertex of degree two is adjacent to it, then the vertex is called a {\it big} vertex.
A path, whose one end-vertex has the degree of at least three, the other end-vertex is a pendant vertex,
and the rest of the vertices have degree two, is called a {\it pendant path}.

A path of length two adjacent to a vertex that has at least one child of degree at least three is called a $B_1$-branch.
A vertex $v$ with degree $k+1$, $k \geq 2$, together with $k$ pendant paths of length $2$ attached to it, comprised a so-called
$B_k$-branch.
The vertex $v$ is referred to as the center of the $B_k$-branch.
By attaching a vertex to a pendant vertex of $B_k$-branch, one obtains a so-called  $B_k^*$-branch.
Illustrations of $B_k$-, $B_k^*$-, $k\ge 1$, and $B_3^{**}$-branches
are given in Figure~\ref{B_k-branches}.
We will refer to them in general as $B$-branches.

\begin{figure}[h]
\begin{center}
\includegraphics[scale=0.75]{./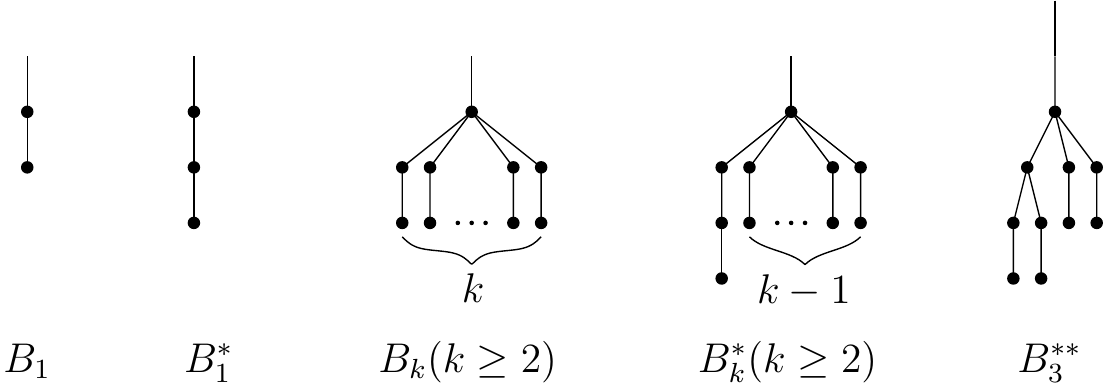}
\caption{$B_k$, $B_k^*$, $k \geq 1$ and $B_3^{**}$ branches.}
\label{B_k-branches}
\end{center}
\end{figure}

For the further analysis, besides
the  $D_z$-branches, which were introduced  in \cite{adgh-dctmabci-14},
we consider here also the $D_z^{**}$, $D_{z,x}^{2}$ $(x=1,2)$ and $D_{z,x}^{4}$ $(x=1, 2, 3, 4)$ branches depicted in Figure~\ref{fig-30},
which may occur in the minimal-ABC trees.
We will refer to them in general as $D$-branches.
The vertex of a $D$-branch to whom the $B$-branches are attached is referred to as the center of the $D$-branch.
Later, in Lemma~\ref{lemma-B4-root} it will be shown that $D_{z,x}^{4}$- $(x=1, 2, 3, 4)$ branches does not exist in minimal-ABC trees.

\begin{figure}[!ht]
\begin{center}
\includegraphics[scale=0.75]{./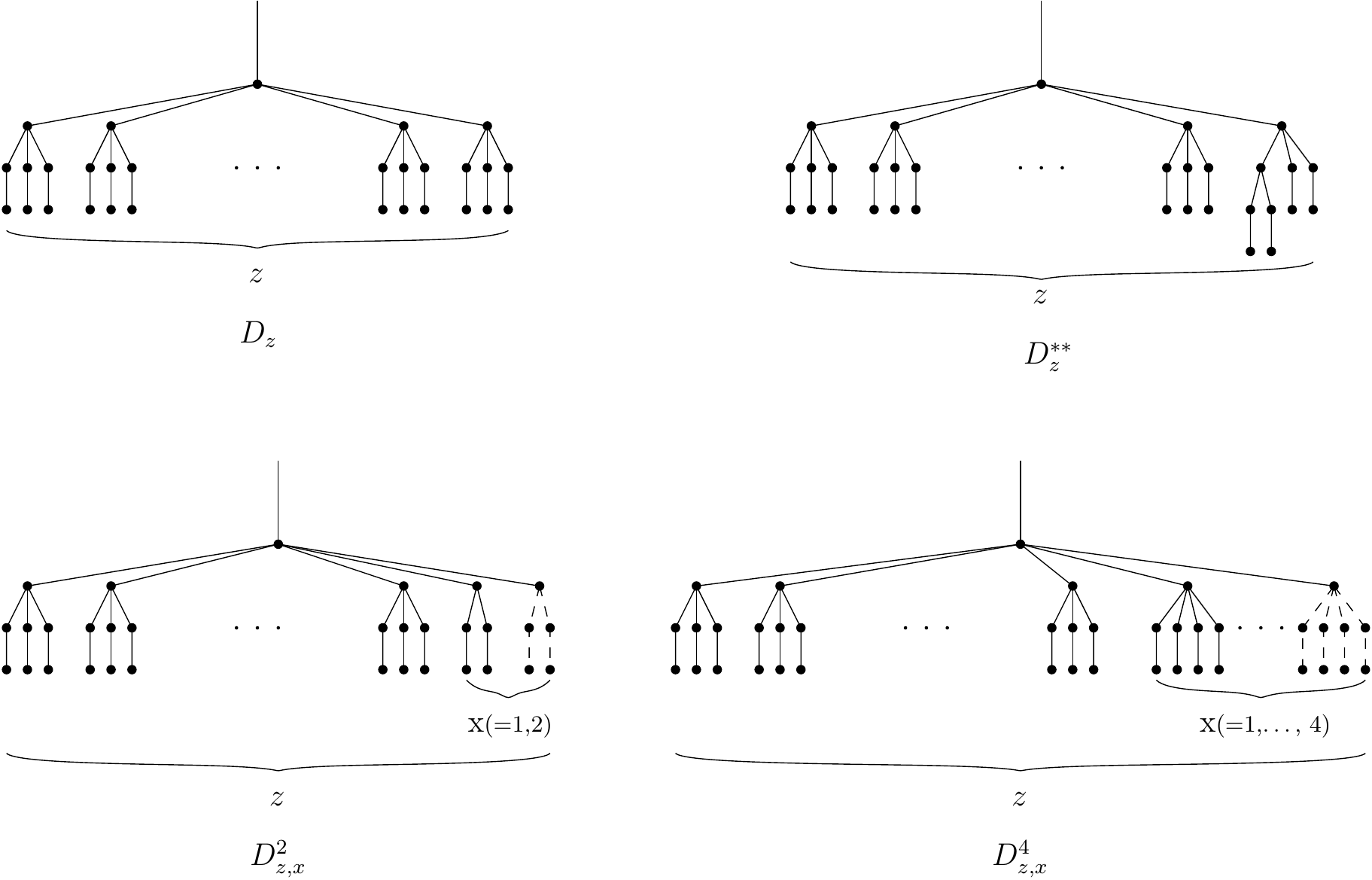}
\caption{$D_z$-, $D_z^{**}$-,  $D_{z,x}^{2}$- $(x=1,2)$ and $D_{z,x}^{4}$- $(x=1, 2, 3, 4)$ branches. The dashed line segments are optional.}
\label{fig-30}
\end{center}
\end{figure}
%


Here, we show that when the minimal-ABC trees are enough large then they do not contain $B_4$, $B_2$, $B_3^*$, $B_3^{**}$ -branches,
and consequently there are no other kinds of $D$-branches except $D_k$ and $D_{k+1}$-branches
(see Corollary~\ref{co-conj2_and_more} and Theorem~\ref{te-combinations-D-branches}).
As a corollary, we have that the radius of large minimal-ABC trees is at most $4$.
The presented theoretical results here reduce further the search space considerably such
 that the minimal-ABC trees of a given arbitrary order can be exactly determined
(see Section~\ref{section-compuation} and Appendix~\ref{appendix-figures}).

\subsection[Some prerequisite theoretical results]{Some prerequisite theoretical results}\label{subsec-2.1}

The following results are needed in some of the proofs later.
More known theoretical and computational results about minimal-ABC trees can be found in \cite{dd-mabctb2-2020, dm-ectmabcir-2018, lcwdh-csltmabci-18}.

\begin{pro}[\cite{d-sptmabci-2014}] \label{pro-10}
Let $h(x,y)=\sqrt{(x +y-2)/ (x y)}$.
The expression $-h(x,y)+h(x+\Delta x,y-\Delta y)$  increases in $x$ and decreases in $y$,  where $x, y, \Delta x, \Delta y  \in \mathbb{R}$
 and $x, y \geq 2$,  $\Delta x \geq 0$,  $0 \leq \Delta y < y$.

\end{pro}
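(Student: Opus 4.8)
The plan is to treat $\Delta x$ and $\Delta y$ as fixed parameters and set $F(x,y)=-h(x,y)+h(x+\Delta x,y-\Delta y)$, so that the two assertions become $\partial F/\partial x\geq 0$ and $\partial F/\partial y\leq 0$. Writing $h_x$ for the partial derivative of $h$ in its first slot and $h_y$ for the second, and noting $\partial(y-\Delta y)/\partial y=1$, these reduce to the two comparisons $h_x(x+\Delta x,y-\Delta y)\geq h_x(x,y)$ and $h_y(x+\Delta x,y-\Delta y)\leq h_y(x,y)$; in words, moving from $(x,y)$ to $(x+\Delta x,y-\Delta y)$ raises $\partial_x h$ and lowers $\partial_y h$. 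First I would record that $h$ is smooth on all the segments used below: on each of them one has $x>0$, $y>0$ and $x+y-2>0$ (e.g.\ at the shifted point, $(x+\Delta x)+(y-\Delta y)-2=(x-2)+\Delta x+(y-\Delta y)>0$), so the radicand $u:=(x+y-2)/(xy)$ stays strictly positive and the fundamental theorem of calculus applies.

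The computational input is the sign of the second partials of $h$. Writing $h=\sqrt u$, the sign of $h_{xx}$ equals that of $2u\,u_{xx}-u_x^2$, which is, up to a positive factor, $(y-2)(4x+3y-6)$; hence $h_{xx}\geq 0$ exactly when $y\geq 2$ (since $4x+3y-6>0$ for $x\geq 2$), and symmetrically $h_{yy}\geq 0$ exactly when $x\geq 2$. Likewise the sign of the mixed partial $h_{xy}$ equals that of $2u\,u_{xy}-u_xu_y$, which up to a positive factor is $-(xy+2x+2y-4)$; and $xy+2x+2y-4>0$ whenever one coordinate is $\geq 2$ and the other is positive. Thus $h_{xy}<0$ throughout the region that concerns us, whereas each pure second derivative keeps its good sign only on a half-plane. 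These are elementary calculations that I expect to be routine.

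The genuine obstacle is that at the shifted point the second coordinate $y-\Delta y$ may drop below $2$, where $h_{xx}$ reverses sign, so one cannot run a straight-line path and claim both pure second derivatives stay nonnegative. I would resolve this by choosing the two-step path in the right order, so that each pure second derivative is invoked only while its companion variable still sits at its original value $\geq 2$, while the robust inequality $h_{xy}\leq 0$ carries the remaining leg down to a merely positive coordinate. For the $x$-claim, route through $(x+\Delta x,y)$:
\[
h_x(x+\Delta x,y-\Delta y)-h_x(x,y)=\underbrace{\int_x^{x+\Delta x}h_{xx}(t,y)\,dt}_{\geq 0}-\underbrace{\int_{y-\Delta y}^{y}h_{xy}(x+\Delta x,s)\,ds}_{\leq 0},
\]
where the first integral is nonnegative because its fixed second argument is $y\geq 2$, and the second is nonpositive because $h_{xy}\leq 0$ at the fixed first argument $x+\Delta x\geq 2$ for every $s>0$. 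For the $y$-claim, route through $(x,y-\Delta y)$ instead:
\[
h_y(x+\Delta x,y-\Delta y)-h_y(x,y)=\underbrace{\int_x^{x+\Delta x}h_{xy}(t,y-\Delta y)\,dt}_{\leq 0}-\underbrace{\int_{y-\Delta y}^{y}h_{yy}(x,s)\,ds}_{\geq 0},
\]
where now the pure-second-derivative leg is taken at the fixed first argument $x\geq 2$, so $h_{yy}\geq 0$ applies for all $s>0$, while the mixed leg again uses $h_{xy}\leq 0$ at second argument $y-\Delta y>0$. This gives $\partial F/\partial x\geq 0$ and $\partial F/\partial y\leq 0$, hence the stated monotonicity; the inequalities become strict as soon as $\Delta y>0$ (respectively $\Delta x>0$), since then a nondegenerate $h_{xy}$-integral contributes strictly, while the degenerate cases (e.g.\ $y=2$, $\Delta y=0$) correspond to $F$ being constant.
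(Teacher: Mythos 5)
You cannot really be compared against the paper here, because the paper does not prove Proposition~\ref{pro-10} at all: it is imported as a prerequisite from \cite{d-sptmabci-2014}, with no argument reproduced in this document. Judged on its own, your proof is correct. The three sign computations check out: writing $u=(x+y-2)/(xy)$, the quantities $2uu_{xx}-u_x^2$, $2uu_{yy}-u_y^2$ and $2uu_{xy}-u_xu_y$ equal, up to positive factors, $(y-2)(4x+3y-6)$, $(x-2)(3x+4y-6)$ and $-(xy+2x+2y-4)$ respectively, so $h_{xx}\ge 0$ wherever the second coordinate is $\ge 2$, $h_{yy}\ge 0$ wherever the first coordinate is $\ge 2$, and $h_{xy}<0$ whenever one coordinate is $\ge 2$ and the other is positive. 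Both of your path decompositions are exact identities, and the ordering of the legs is precisely the point that needs care: since $y-\Delta y$ may drop below $2$, the leg invoking $h_{xx}$ must be taken at height $y\ge 2$ (and the leg invoking $h_{yy}$ at abscissa $x\ge 2$), while the everywhere-valid $h_{xy}\le 0$ absorbs the legs that descend into the strip $0<s<2$; you also verify that the radicand stays positive on every segment, so the integral representations are legitimate. This gives $\partial F/\partial x\ge 0$ and $\partial F/\partial y\le 0$ (strict when $\Delta y>0$, respectively $\Delta x>0$), which is exactly the assertion of the proposition; your argument is a clean, self-contained substitute for the external citation.
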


Proposition~\ref{pro-10} can be re-stated in the following manner.

\begin{pro}  \label{pro-20}
The expression $-h(x,y)+h(x-\Delta x,y+\Delta y)$  decreases in $x$ and increases  in $y$,  where $x, y, \Delta x, \Delta y  \in \mathbb{R}$
 and $x, y \geq 2$,  $\Delta y \geq 0$,  $0 \leq \Delta x < x$.
\end{pro}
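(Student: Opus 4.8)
The plan is to deduce Proposition~\ref{pro-20} directly from Proposition~\ref{pro-10} by exploiting the symmetry of $h$, rather than re-running any derivative computation. First I would record the elementary identity $h(x,y)=h(y,x)$, which is immediate from $h(x,y)=\sqrt{(x+y-2)/(xy)}$, since both the numerator $x+y-2$ and the denominator $xy$ are symmetric in $x$ and $y$.

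Using this symmetry, I would rewrite the target expression as
\[
-h(x,y)+h(x-\Delta x,\,y+\Delta y)=-h(y,x)+h(y+\Delta y,\,x-\Delta x).
\]
Next I would introduce the relabelling $X=y$, $Y=x$, $\Delta X=\Delta y$, $\Delta Y=\Delta x$, under which the right-hand side becomes exactly $-h(X,Y)+h(X+\Delta X,\,Y-\Delta Y)$, that is, the expression appearing in Proposition~\ref{pro-10}. The following step is to verify that the hypotheses of Proposition~\ref{pro-20} translate into the hypotheses required by Proposition~\ref{pro-10} for the relabelled variables: $X=y\ge 2$ and $Y=x\ge 2$ hold, $\Delta X=\Delta y\ge 0$ holds, and $0\le \Delta Y=\Delta x< x=Y$ holds, which is precisely the condition $0\le \Delta Y<Y$. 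Proposition~\ref{pro-10} then asserts that the expression increases in $X$ and decreases in $Y$; unwinding $X=y$ and $Y=x$ yields that it increases in $y$ and decreases in $x$, which is exactly the claim.

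The final relabelling step is purely formal, so I expect no computational difficulty. The one point that deserves care — and the reason I would route the argument through the symmetry of $h$ rather than through the naive substitution $(x,y)\mapsto(x-\Delta x,y+\Delta y)$ — is the domain bookkeeping. A direct substitution would evaluate $h$ at a first argument $x-\Delta x$ that the hypothesis $0\le\Delta x<x$ forces only to be positive, possibly below $2$, hence outside the base domain $x\ge 2$ of Proposition~\ref{pro-10}. The symmetry route instead places this possibly-small quantity $x-\Delta x$ in the \emph{second} shifted slot, where Proposition~\ref{pro-10} already permits values in $(0,y]$ through the condition $0\le\Delta y<y$; thus every evaluation remains inside the admissible region and the monotonicity transfers verbatim.
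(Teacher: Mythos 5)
Your proposal is correct and matches the paper's approach: the paper offers no separate proof, simply declaring Proposition~\ref{pro-20} to be Proposition~\ref{pro-10} ``re-stated,'' which is precisely the symmetry-and-relabelling argument you carry out. Your explicit check of the domain bookkeeping (placing the possibly-small quantity $x-\Delta x$ in the shifted slot governed by $0\le\Delta y<y$) makes rigorous what the paper leaves implicit.
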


Lin, Gao, Chen, and Lin
\cite{lgcl-mabcicggds-13}, and Gan, Liu, and You in
\cite{gly-abctgds-12} contributed the so-called {\em switching transformation}.

\begin{lemma}[Switching transformation] \cite{lgcl-mabcicggds-13,gly-abctgds-12} \label{lemma-switching}
Let $pq$ and $rs$ be two edges of a connected graph $G=(V,E)$ such that $ps, qr \notin E$.
Let $G'= G - pq - rs + ps + qr$. If $d(p) \geq d(r)$ and $d(q) \leq d(s)$, then
$\ABC(G') \leq \ABC(G)$. The equality holds if and only if $d(p) = d(r)$ or $d(q) =d(s)$.
\end{lemma}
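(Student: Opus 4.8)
The plan is to exploit that the switch $G\mapsto G'$ is \emph{degree-preserving}: each of $p,q,r,s$ loses exactly one incident edge and gains exactly one, so $d(v)$ is the same in $G$ and in $G'$ for every vertex $v$. First I would record that $p,q,r,s$ are four distinct vertices and that $pq,rs,ps,qr$ are four pairwise distinct edges; this follows from the hypotheses $pq,rs\in E$ and $ps,qr\notin E$ (for instance $p=r$ would force $rs=ps\in E$, contradicting $ps\notin E$). Because all degrees are unchanged, every edge other than these four contributes the same summand to $\ABC(G)$ and to $\ABC(G')$, so the whole difference collapses to the four affected edges:
\begin{equation*}
\ABC(G') - \ABC(G) = h(d(p), d(s)) + h(d(q), d(r)) - h(d(p), d(q)) - h(d(r), d(s)),
\end{equation*}
where $h$ is the function of Proposition~\ref{pro-10}.

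Next I would turn this into a single monotone increment. Replacing $h(d(q),d(r))$ by $h(d(r),d(q))$ via the symmetry $h(x,y)=h(y,x)$ and grouping by the first argument gives
\begin{equation*}
\ABC(G') - \ABC(G) = F(d(s)) - F(d(q)), \qquad F(y) := h(d(p), y) - h(d(r), y).
\end{equation*}
Taking $x=d(r)$, $\Delta x = d(p)-d(r)\ge 0$ and $\Delta y = 0$ in Proposition~\ref{pro-10}, the expression $F(y)=-h(d(r),y)+h(d(r)+\Delta x,\,y)$ is decreasing in $y$. Since $d(q)\le d(s)$, this yields $F(d(s))\le F(d(q))$, and hence $\ABC(G')\le\ABC(G)$.

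For the equality statement I would track strictness in the same monotonicity: when $d(p)>d(r)$ the decrease of $F$ in $y$ is strict, so the inequality is strict as soon as $d(q)<d(s)$ as well; conversely $F\equiv 0$ if $d(p)=d(r)$, and $F(d(s))=F(d(q))$ if $d(q)=d(s)$, giving equality exactly when $d(p)=d(r)$ or $d(q)=d(s)$. The step I expect to be delicate is the domain hypothesis $x,y\ge 2$ of Proposition~\ref{pro-10}: in a tree the switched endpoints may be pendant vertices of degree $1$, where the proposition does not literally apply. I would therefore verify separately that $F(y)=h(d(p),y)-h(d(r),y)$ stays (strictly) decreasing down to $y=1$ and for $d(r)=1$, either by a direct sign analysis of this difference or by checking the finitely many small-degree configurations; everything else is routine bookkeeping of which degrees and edges change.
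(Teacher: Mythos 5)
The paper never proves this lemma: it is imported verbatim from \cite{lgcl-mabcicggds-13,gly-abctgds-12}, so there is no in-paper argument to compare yours against. Judged on its own merits, your proposal is correct and is essentially the standard proof from the literature: the switch preserves every vertex degree, so $\ABC(G')-\ABC(G)$ collapses to the four affected edge terms, and writing this as $F(d(s))-F(d(q))$ with $F(y)=h(d(p),y)-h(d(r),y)$, which is nonincreasing in $y$ by Proposition~\ref{pro-10} (taking $\Delta y=0$, $\Delta x=d(p)-d(r)\geq 0$), gives the inequality; equality tracking via strictness of the monotonicity gives the characterization.

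Two caveats. First, the distinctness of the four vertices does not entirely follow from the hypotheses as you claim: $p\neq r$ and $q\neq s$ do follow by your argument, but $p\neq s$ and $q\neq r$ cannot be deduced (if $p=s$, then ``$ps\notin E$'' is vacuously true in a simple graph); they are implicit in the requirement that $G'=G-pq-rs+ps+qr$ be a simple graph, i.e.\ an assumption, not a consequence. Second, the boundary issue you flag is genuine and is the one real gap left open: Proposition~\ref{pro-10} assumes $x,y\geq 2$, while $q$, $r$, or $s$ may be pendant vertices in the graphs this lemma is applied to. Your primary fix, a direct sign analysis of $F(y)=h(d(p),y)-h(d(r),y)$ down to $y=1$ and $d(r)=1$, does work (one can check that $\partial_y h(x,y)$ has the sign of $2-x$ and that the relevant difference of derivatives keeps a fixed sign for integer degrees $\geq 1$), but your fallback of ``checking the finitely many small-degree configurations'' is not literally available, since the other degrees remain unbounded. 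The sign analysis must also deliver \emph{strict} decrease when $d(p)>d(r)$, since the ``only if'' half of the equality statement needs it, and it must accommodate the degenerate value $h(1,1)=0$ (which arises when $d(q)=d(r)=1$, in which case $G'$ is disconnected --- harmless, since the lemma does not claim $G'$ is connected, but worth noting).
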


The following result gives an upper bound on the possible number of $B_4$-branches contained
in a minimal-ABC tree.

\begin{te}[\cite{d-sptmabci-2014}]  \label{thm-20}
There are no more than 4 $B_4$-branches in a minimal-ABC tree.
\end{te}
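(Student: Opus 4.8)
The plan is to argue by contradiction. Suppose a minimal-ABC tree $T$ contains at least five $B_4$-branches, with centers $c_1,\dots,c_5$, each of degree $5$ and carrying four pendant paths of length two, and I would exhibit a tree $T'$ on the same number of vertices with $\ABC(T')<\ABC(T)$, contradicting minimality. The key preliminary observation is that the eight internal edges of any $B_4$-branch contribute a \emph{fixed} amount to the index: an edge joining the center (of any degree $d$) to a middle vertex contributes $h(d,2)=\sqrt{1/2}$, and an edge joining a middle vertex to its pendant contributes $h(2,1)=\sqrt{1/2}$, both independent of the center's degree. Consequently, any transformation that only shuffles pendant paths among centers leaves every internal edge untouched, and the \emph{only} terms of $\ABC$ that change are the edges joining the centers to their parents.

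First I would normalize the local structure using the switching transformation (Lemma~\ref{lemma-switching}): by reattaching branches I would arrange that all five centers hang from a common big vertex $v$ (or at least that their parents share a common degree), so that the five parent-edges $h(5,\cdot)$ sit on equal footing. At each reattachment I must verify the degree hypotheses $d(p)\ge d(r)$ and $d(q)\le d(s)$ of Lemma~\ref{lemma-switching} so that the index does not increase. A naive attempt — redistributing the twenty pendant paths among the five fixed centers — will \emph{not} work, since $h(d,\cdot)$ is convex in $d$ and the balanced configuration (all centers of degree $5$) already minimizes $\sum_i h(d_i,\cdot)$ under $\sum_i d_i$ fixed. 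Hence the profitable move must change the \emph{number} or the \emph{type} of centers, not merely their degree distribution.

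The core step is therefore a reconfiguration of the five branches into the genuinely more efficient form that dominates minimal-ABC trees, namely branches with smaller centers (the $B_2$-type), with the vertex count matched by the center vertex freed when the count of centers drops. Each resulting change of a parent-edge has a definite sign via Propositions~\ref{pro-10} and~\ref{pro-20}: lowering a center degree, or moving degree from one endpoint to another, moves $h$ monotonically in the controlled direction, and these propositions let me reduce the whole comparison to its value at the extremal admissible parent degrees.

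The hard part will be exactly this sign analysis, and it is where the threshold ``four'' is forced. Consolidating or down-grading centers tends to create a near-pendant vertex whose parent-edge $h(1,\cdot)$ is expensive, and the degree of the shared parent $v$ itself shifts under the reconfiguration, altering \emph{all} of its incident edges at once; the gain from the several lowered center-degrees must strictly outweigh both of these costs. The delicate point is that this net balance depends on the admissible range of the parent degree, and it first becomes strictly negative precisely when a fifth $B_4$-branch is present — so the crux of the argument is a careful, case-split estimate, anchored at the extremal parent degrees via Propositions~\ref{pro-10}–\ref{pro-20}, establishing $\Delta\ABC<0$ and thereby the bound of Theorem~\ref{thm-20}.
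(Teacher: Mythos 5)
The paper itself contains no proof of Theorem~\ref{thm-20}: it is imported verbatim from \cite{d-sptmabci-2014}. So your proposal has to stand on its own, and it does not.

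The central problem is that your text is a plan, not a proof. The preliminary observations are fine (every internal edge of a $B_k$-branch costs $h(d,2)=h(2,1)=\sqrt{1/2}$ independently of the center degree; redistributing pendant paths among a fixed set of centers cannot help, by convexity), but they only clear the stage. The entire content of the theorem --- a concrete, vertex-count-preserving transformation together with a verified inequality $\ABC(T')<\ABC(T)$, and an argument for why this becomes available exactly at five $B_4$-branches --- is deferred to ``a careful, case-split estimate'' that is never carried out. In particular, nothing in the proposal explains where the constant $4$ comes from; it is simply asserted that the sign ``first becomes strictly negative precisely when a fifth $B_4$-branch is present.'' That assertion \emph{is} the theorem.

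Worse, the one structural move you do commit to --- trading the five $B_4$-branches for ``branches with smaller centers (the $B_2$-type)'', on the premise that these are ``the genuinely more efficient form that dominates minimal-ABC trees'' --- goes in the wrong direction, and the premise is false. The branches that dominate large minimal-ABC trees are $B_3$-branches (that is exactly Conjecture~\ref{conjecture2}, confirmed by Theorem~\ref{co-withB3_to_root} of this paper); moreover, by Theorem~\ref{noB2B4}, $B_2$- and $B_4$-branches cannot even coexist in a minimal tree. Quantitatively, a $B_k$-branch below a vertex of large degree $D$ costs $h(D,k+1)+k\sqrt{2}$ over its $2k+1$ vertices, so its asymptotic per-vertex cost is
\[
\frac{1}{2k+1}\left(\frac{1}{\sqrt{k+1}}+k\sqrt{2}\right)\approx
\begin{cases}
0.6812, & k=2,\\
0.6775, & k=3,\\
0.6782, & k=4,
\end{cases}
\]
i.e.\ $B_2$ is the \emph{most} expensive of the three. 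Concretely, replacing five $B_4$-branches ($45$ vertices) by nine $B_2$-branches ($45$ vertices) on the same parent changes the index by
\[
9\,h(D+4,3)-5\,h(D,5)-2\sqrt{2}+O(1/D)\;\longrightarrow\;3\sqrt{3}-\sqrt{5}-2\sqrt{2}\approx +0.132>0 ,
\]
so in the large-degree regime your core move strictly \emph{increases} the ABC index; no sign analysis at extremal parent degrees can rescue it. A workable argument must trade $B_4$-branches for $B_3$-branches plus a cheap remainder (note $5\cdot 9=45=6\cdot 7+3$, which is where ``five'' enters), and there the margin is only a few hundredths, so the root-degree bookkeeping you postpone is precisely the whole battle.
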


Some  forbidden combinations of branches are stated in the next theorems.

\begin{te}[\cite{DiDuFo2018}] \label{noB1B4}
A $B_1$-branch and a  $B_4$-branch cannot exist simultaneously in a minimal-ABC tree.
\end{te}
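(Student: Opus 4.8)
The plan is to argue by contradiction: assume a minimal-ABC tree $T$ contains both a $B_1$-branch and a $B_4$-branch, and exhibit a local transformation producing a tree $T'$ with $\ABC(T')<\ABC(T)$, contradicting minimality. First I would fix notation. Let $v$ be the center of the $B_4$-branch, so $d(v)=5$, with four attached pendant paths $v-a_i-b_i$ (with $d(a_i)=2$, $d(b_i)=1$) and a parent $p$ of degree $\alpha:=d(p)$. For the $B_1$-branch, let $u$ be the vertex carrying the pendant path $u-c-e$ of length two together with a child $w$ of degree $\beta:=d(w)\ge 3$, and write $t:=d(u)$. The single observation that drives everything is that $h(x,2)=\sqrt{(x+2-2)/(2x)}=1/\sqrt2$ is independent of $x$; hence every edge incident with a degree-two vertex contributes the constant $1/\sqrt2$, and a whole pendant path of length two contributes $\sqrt2$ \emph{regardless of where it is attached}. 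Consequently, relocating such a path changes the ABC index only through the edges joining vertices of degree $\neq 2$ at the two endpoints of the move.

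The transformation I would use is to detach one pendant path from the $B_4$-center and reattach it inside the $B_1$-branch (or, in the complementary degree regime, to detach the pendant path of the $B_1$-branch and reattach it at $v$). Consider the first move, turning the $B_4$-branch into a $B_3$-branch while raising $d(w)$ from $\beta$ to $\beta+1$. By the observation above, all edges to the $a_i$, to $c$, and to the remaining degree-two neighbours cancel, and the new and removed path-edges (both equal to $1/\sqrt2$) cancel as well; what survives is $\Delta:=\ABC(T')-\ABC(T)=\big(h(4,\alpha)-h(5,\alpha)\big)+\sum\big(h(\beta+1,\,\cdot\,)-h(\beta,\,\cdot\,)\big)$, the last sum running over the edges from $w$ to its neighbours of degree at least three (at least the edge $uw$). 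The first bracket is positive while the surviving sum is negative, so the whole question reduces to whether the decrease accrued at $w$ dominates the increase accrued at the parent edge of $v$.

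To settle the sign I would invoke Propositions \ref{pro-10} and \ref{pro-20}, which express exactly that for fixed second argument $y>2$ the consecutive differences $h(x,y)-h(x+1,y)$ decrease in $x$ (convexity) and increase in $y$. When $\beta$ is small relative to $5$ this immediately yields $h(\beta,\,\cdot\,)-h(\beta+1,\,\cdot\,)>h(4,\alpha)-h(5,\alpha)$ and hence $\Delta<0$, the monotonicity in the second argument letting me carry the comparison across the generally different second arguments $\alpha$ and $t$. The main obstacle I anticipate is the regime where $w$ already has large degree, for there the convexity makes the gain at $w$ too small; in that case the first transformation need not help and I would instead perform the complementary move — pushing the $B_1$ pendant path onto $v$ (a $B_4\to B_5$ step), whose surviving change $\big(h(6,\alpha)-h(5,\alpha)\big)+\sum\big(h(t-1,\,\cdot\,)-h(t,\,\cdot\,)\big)$ is negative precisely in the opposite degree range. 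Verifying that the two moves jointly cover every admissible combination of the degrees $\alpha$, $\beta$, $t$ and the degrees of $w$'s other big neighbours, together with the few degenerate low-degree configurations (for instance $t=2$, where $u$ becomes pendant and an extra edge $uw$ must be accounted for), is the delicate bookkeeping; Theorem \ref{thm-20} and the structural restrictions on large minimal-ABC trees bound the relevant degrees and keep this case analysis finite.
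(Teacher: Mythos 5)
You should first note that the paper you are comparing against does not prove Theorem~\ref{noB1B4} at all: it is imported from \cite{DiDuFo2018}, so the only proof to compare with is the external one. Judged on its own terms, your proposal has a genuine gap, and it is precisely at the step you flag as ``delicate bookkeeping.'' Your structural observation (every edge meeting a degree-two vertex contributes $1/\sqrt2$, so a length-two pendant path contributes $\sqrt2$ wherever it sits) and your two formulas for the surviving change are correct, but the central claim that the two moves ``jointly cover every admissible combination'' of degrees is false. Consider a tree in which the $B_4$-center $v$ and the vertex $u$ are both children of a vertex $p$ of large degree $\alpha=\gamma=100$, where $u$ has degree $t=3$ (parent $p$, the $B_1$-path, and one big child $w$), and $w$ has degree $\beta=20$ with nineteen children of degree $4$ (nineteen $B_3$-branches). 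For your first move,
\begin{equation*}
\Delta_1=\left(h(4,\alpha)-h(5,\alpha)\right)+\left(h(\beta+1,3)-h(\beta,3)\right)+19\left(h(\beta+1,4)-h(\beta,4)\right)\approx 0.0511-0.0007-0.0216>0,
\end{equation*}
since the positive bracket tends to $1/2-1/\sqrt5\approx0.053$ while the loss at $w$ is only $O(1/\beta)$. For your second move,
\begin{equation*}
\Delta_2=\left(h(6,\alpha)-h(5,\alpha)\right)+\left(\tfrac{1}{\sqrt2}-h(3,\gamma)\right)+\left(\tfrac{1}{\sqrt2}-h(3,\beta)\right),
\end{equation*}
where by Proposition~\ref{pro-20} the first bracket is never below $-(1/\sqrt5-1/\sqrt6)\approx-0.039$, while for $t=3$ the other two brackets are nonnegative for $\gamma,\beta\ge2$ and the last one alone is at least $1/\sqrt2-2/3\approx0.040$ already for $\beta=3$; numerically $\Delta_2\approx-0.038+0.127+0.116>0$. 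So on this configuration both of your moves strictly increase the ABC index, and you cannot appeal to minimality to exclude the configuration, because its exclusion is exactly what the theorem is supposed to deliver.

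The failure also shows what is missing: in this very configuration the move you did not consider, reattaching the path at $u$ itself (so $d(u)\colon 3\to4$), gives $\left(h(4,\alpha)-h(5,\alpha)\right)+\left(h(4,\gamma)-h(3,\gamma)\right)+\left(h(4,\beta)-h(3,\beta)\right)\approx 0.051-0.075-0.067<0$. A correct argument therefore needs at least a third transformation and a case split governed jointly by $t$, $\beta$, $\alpha$ and the degrees of $w$'s neighbors, which is essentially why the proof in \cite{DiDuFo2018} is a lengthy case analysis rather than a two-move dichotomy. Two further soft spots: even in your ``favorable'' regime $\beta\le4$, the comparison $h(\beta,t)-h(\beta+1,t)>h(4,\alpha)-h(5,\alpha)$ requires relating the different second arguments, which by the monotonicity of Propositions~\ref{pro-10} and \ref{pro-20} goes the right way only when $t\ge\alpha$, a condition you never arrange; and the closing appeal to ``structural restrictions on large minimal-ABC trees'' is circular, since Theorem~\ref{noB1B4} carries no size hypothesis and is itself an ingredient in deriving those large-tree restrictions later in the paper.
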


\begin{te}[\cite{DiDuFo2018}] \label{noB2B4}
A $B_2$-branch and a  $B_4$-branch cannot exist simultaneously in a minimal-ABC tree.
\end{te}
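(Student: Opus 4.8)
The plan is to argue by contradiction: suppose a minimal-ABC tree $T$ contains both a $B_2$-branch, with centre $u$ of degree $3$ and parent $p$, and a $B_4$-branch, with centre $w$ of degree $5$ and parent $q$. The decisive starting point is a simple observation about the internal edges of any $B_k$-branch. The edge joining the centre (of degree $k+1$) to an adjacent degree-$2$ vertex contributes $h(k+1,2)=\sqrt{\big((k+1)+2-2\big)/\big(2(k+1)\big)}=\sqrt{1/2}$, and each edge joining a degree-$2$ vertex to its pendant neighbour contributes $h(2,1)=\sqrt{1/2}$. Hence \emph{every} internal edge of \emph{every} $B$-branch carries the same weight $\sqrt{1/2}$, independent of $k$. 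Consequently, relocating one pendant path of length $2$ from one branch centre to another leaves the total internal contribution untouched, so only the two edges joining the affected centres to their parents can change the ABC index.

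First I would apply the balancing transformation $T\to T'$ that detaches one pendant path of length $2$ from $w$ and re-attaches it to $u$. This keeps $T'$ a tree on the same number of vertices, turns the $B_4$-branch into a $B_3$-branch (centre degree $5\to4$) and the $B_2$-branch into a $B_3$-branch (centre degree $3\to4$), while $d(p)$ and $d(q)$ are left unchanged. By the observation above, $\ABC(T')-\ABC(T)$ collapses onto the two parent edges:
\[
\Delta \;=\; \bigl[h(4,d(p))-h(3,d(p))\bigr]+\bigl[h(4,d(q))-h(5,d(q))\bigr].
\]
Writing $\phi(x,y)=h(x+1,y)-h(x,y)$, this is exactly $\Delta=\phi(3,d(p))-\phi(4,d(q))$, and Proposition~\ref{pro-10} (with $\Delta x=1,\ \Delta y=0$) tells us that $\phi$ increases in $x$ and decreases in $y$; since the parents $p,q$ are branch-bearing big vertices we also have $d(p),d(q)\ge3$, so these monotonicities are strict. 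As $T$ is minimal we must have $\Delta\ge0$, so the goal is to prove $\Delta<0$, i.e. $\phi(3,d(p))<\phi(4,d(q))$, and thereby reach a contradiction.

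When $d(p)\ge d(q)$ this is immediate: monotonicity in $y$ gives $\phi(3,d(p))\le\phi(3,d(q))$, and monotonicity in $x$ gives $\phi(3,d(q))<\phi(4,d(q))$, whence $\Delta<0$ and the configuration is impossible. In particular the case $p=q$, where the two branches hang from a common vertex, is settled at once, since then $d(p)=d(q)$.

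The hard part is the remaining case $d(p)<d(q)$, where the gain from $3<4$ in the first argument of $\phi$ must overcome the loss from $d(p)<d(q)$ in the second, and bare monotonicity no longer closes the argument (the two terms sit at different values of $y$ and cannot be merged into a single application of Proposition~\ref{pro-10} or Proposition~\ref{pro-20}). Here I would bring in structural information specific to minimal-ABC trees: the parents of $B$-branches are big vertices (centres of $D$-branches, or the root), whose admissible degrees are tightly constrained, so that only finitely many pairs $(d(p),d(q))$ with $d(p)<d(q)$ can arise. For those finitely many pairs I expect either to verify $\phi(3,d(p))<\phi(4,d(q))$ by a direct estimate, or to exclude the configuration by a complementary move—relocating the whole $B_2$-branch so that it shares the parent $q$ and then applying the already-settled equal-parent case, possibly after a preliminary switching step (Lemma~\ref{lemma-switching})—while invoking Theorem~\ref{noB1B4} and the count of Theorem~\ref{thm-20} to rule out the degenerate $B_1$/$B_5$ outcomes that the balancing move would otherwise produce. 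Pinning down the quantitative degree bounds that make this case analysis finite, and checking the inequality on the surviving pairs, is the step I expect to require the most care.
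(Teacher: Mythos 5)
This theorem is not actually proved in the present paper: it is imported from \cite{DiDuFo2018}, so there is no in-paper argument to compare yours against, and your attempt has to stand on its own. Its first half does: the observation that every internal edge of every $B$-branch contributes $\sqrt{1/2}$ is correct, so your balancing move (shifting one pendant path of length two from the $B_4$-centre $w$ to the $B_2$-centre $u$) changes the index by exactly $\Delta=\phi(3,d(p))-\phi(4,d(q))$, where $\phi(x,y)=h(x+1,y)-h(x,y)$, and Proposition~\ref{pro-10} settles the case $d(p)\ge d(q)$, including $p=q$.

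The genuine gap is the case $d(p)<d(q)$, and the repair you sketch cannot work, because the inequality you hope to verify there, $\phi(3,d(p))<\phi(4,d(q))$, is \emph{false} for small $d(p)$. Indeed $\phi$ decreases in its second argument with $\lim_{y\to\infty}\phi(4,y)=\sqrt{1/5}-\sqrt{1/4}\approx-0.0528$, while $\phi(3,3)=\sqrt{5/12}-\sqrt{4/9}\approx-0.0212$ and $\phi(3,7)\approx-0.0503$; consequently, for every $d(p)\in\{3,\dots,7\}$ there are values of $d(q)$ (e.g.\ $d(p)=3$ with any $d(q)\ge 5$) for which $\Delta>0$, i.e.\ your move strictly \emph{increases} the ABC index and yields no contradiction. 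Only for $d(p)\ge 8$ does $\phi(3,d(p))\le-0.0533<\phi(4,d(q))$ hold for all $d(q)$. Your two fallbacks do not close this case either: the set of pairs $(d(p),d(q))$ is not finite, since $q$ may be the root, whose degree is unbounded; and the switching transformation (Lemma~\ref{lemma-switching}) that would bring both branches to a common parent requires $d(p)\ge d(q)$ once $d(u)=3\le 5=d(w)$, which is precisely what fails here --- in the problematic configuration the $B_4$-branch already hangs from the larger-degree parent, so no orientation of the switching hypotheses is satisfied, and relocating the whole $B_2$-branch to $q$ without that lemma changes all edges at $p$ and $q$ with no sign guarantee. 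What is missing --- and what is the real content of the theorem --- is an argument excluding a $B_2$-branch attached to a vertex of small degree ($3\le d(p)\le 7$) far from the $B_4$-branch; this cannot be extracted from Propositions~\ref{pro-10} and~\ref{pro-20} plus your balancing move, and it would be circular to invoke the structural facts quoted in this paper (e.g.\ that parents of $B$-branches are $D$-branch centres of degree at least $16$), since those come from work that already takes Theorems~\ref{noB1B4} and~\ref{noB2B4} as inputs.
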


The following two conjectures have been arisen in  \cite{adgh-dctmabci-14}.

\begin{conjecture} [\cite{adgh-dctmabci-14}] \label{conjecture1}
The subgraph of a minimal-ABC tree induced by its big vertices is a star.
\end{conjecture}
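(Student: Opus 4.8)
The plan is to argue by contradiction from the minimality of $T$, using a degree-preserving edge swap (a switching transformation, Lemma~\ref{lemma-switching}) whose effect on the index is controlled by the monotonicity in Propositions~\ref{pro-10} and~\ref{pro-20}. Let $T$ be a minimal-ABC tree and let $B$ be its set of big vertices. Since $T[B]$ is a subforest of a tree, it is a star exactly when it is connected and contains no path on four vertices; so I would assume that $T[B]$ is \emph{not} a star and separate two situations: (i) $T[B]$ is disconnected, and (ii) $T[B]$ is connected but carries a path $w-u-v-z$ on four big vertices. The bulk of the work is situation (ii).

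For (ii) I would relabel so that $d(u)\ge d(v)$ and then relocate the big vertex $z$ (with the entire subtree below it) from $v$ to $u$, compensating by moving one neighbor $p$ of $u$ (with its subtree) over to $v$; formally $T'=T-pu-zv+pv+uz$. This preserves all degrees and keeps $T'$ a tree of the same order, and a direct computation gives
\begin{equation*}
\ABC(T')-\ABC(T)=\bigl[h(d_p,d_v)-h(d_p,d_u)\bigr]-\bigl[h(d_z,d_v)-h(d_z,d_u)\bigr]=\phi(d_p)-\phi(d_z),
\end{equation*}
where $\phi(c):=-h(c,d_u)+h(c,d_v)$. Because $d_u\ge d_v$, Proposition~\ref{pro-10} (taking $\Delta x=0$ and $\Delta y=d_u-d_v$) shows that $\phi$ is nondecreasing in $c$, and strictly increasing when $d_u>d_v$. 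Hence, as soon as $u$ has a neighbor $p$ with $d_p<d_z$, we get $\ABC(T')<\ABC(T)$, contradicting minimality; no ``progress toward a star'' is needed, since the strict decrease alone suffices.

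The main obstacle is guaranteeing a suitable $p$, i.e. a neighbor of $u$ of degree strictly less than $d_z\ge 3$. A pendant neighbor of $u$ gives $d_p=1<d_z$ for free, and a big neighbor $w$ of $u$ with $d_w<d_z$ works as well; the genuinely awkward configurations are those in which every neighbor of $u$ has degree at least $d_z$ (so $u$ is surrounded only by large vertices). There I would reverse the direction of the transfer along the path (working from the $z$-side toward $w$, again by Proposition~\ref{pro-10}), and dispose of the remaining extremal degree patterns using the known bounds on $B_4$-branches in Theorem~\ref{thm-20} together with the forbidden combinations in Theorems~\ref{noB1B4} and~\ref{noB2B4}, which pin down the possible degrees of the branches attached to $u$ and $v$. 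The degenerate case $d_u=d_v$ makes $\phi\equiv 0$, so the swap only yields an equal-ABC tree; here I would either pass to a secondary extremal quantity (for instance minimizing $\sum_v d(v)^2$ among all minimal-ABC trees and showing the equal-ABC swap strictly decreases it) or replace the swap by a genuine degree-transferring relocation, whose sign is again read off from Propositions~\ref{pro-10}--\ref{pro-20}.

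Finally, for situation (i) the $T$-path joining two big vertices in distinct components of $T[B]$ must pass through a non-big vertex of degree at least three, since a big vertex has no degree-two neighbor; I would strip the degree-two pendant paths hanging from such separating vertices and pull the two big components together, each elementary step being a transfer covered by Propositions~\ref{pro-10}--\ref{pro-20}, to again produce a tree of strictly smaller ABC index. Combining the two situations yields the contradiction and hence that $T[B]$ is a star.
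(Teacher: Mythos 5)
First, a point of comparison: this paper contains no proof of the statement at all. It is Conjecture~\ref{conjecture1}, quoted from \cite{adgh-dctmabci-14}, and the text merely records that it was recently settled in \cite{dd-scbvmabct-2020}; the contribution of the present paper is the resolution of Conjecture~\ref{conjecture2} and the ensuing characterization. So your sketch cannot be measured against an in-paper argument -- it has to stand on its own, and it does not.

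The swap in your situation (ii) is sound, but it is exactly the switching transformation of Lemma~\ref{lemma-switching} applied to the edges $pu$ and $zv$, and everything difficult is deferred to the cases you wave at. (a) Switching preserves the degree sequence, so it yields only $\ABC(T')\le\ABC(T)$, with equality precisely when $d(p)=d(z)$ or $d(u)=d(v)$; in a ``balanced'' non-star configuration, where the adjacent big vertices and their big neighbors have equal (or interlocking) degrees, every admissible swap -- in either direction along the path $w$-$u$-$v$-$z$ -- returns equality, and no contradiction with minimality is obtained. Equal degrees among nearby big vertices are not an exotic situation in this problem (cf.\ Proposition~\ref{pro-diff-Dz}). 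Worse, your proposed tie-breaker $\sum_v d(v)^2$ is itself a function of the degree sequence, hence \emph{invariant} under the swap, so it cannot break ties; and even a working secondary functional would only show that \emph{some} minimal-ABC tree has star-shaped big-vertex structure, whereas the conjecture asserts this for \emph{every} minimal-ABC tree. (b) In the case where every neighbor of $u$ has degree at least $d(z)$, the results you invoke -- Theorem~\ref{thm-20} and Theorems~\ref{noB1B4}, \ref{noB2B4} -- only bound the number of $B_4$-branches and forbid their coexistence with $B_1$- or $B_2$-branches; they say nothing about the degrees of the big vertices adjacent to $u$ and $v$, so this case is simply not handled (the ``reversed'' swap needs a neighbor of $v$ of degree exceeding $d(w)$, which can equally fail). (c) Situation (i), the disconnected case, is dismissed in one sentence, yet reattaching subtrees and ``stripping'' length-2 paths changes degrees and many edge terms at once; determining the sign of such transformations is exactly the hard content of this line of work (compare the transformations $\mathcal{T}_4$--$\mathcal{T}_{15}$ here, each needing an appendix of monotonicity estimates and direct computation), and minimal-ABC trees consist largely of length-2 pendant paths, so such moves are certainly not generically ABC-decreasing. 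In short, the proposal reproduces the easy first step (switching, via Propositions~\ref{pro-10}--\ref{pro-20}) and leaves open precisely the equality and restructuring cases that forced the published proof in \cite{dd-scbvmabct-2020} to be a substantial, largely computational argument.
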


\begin{conjecture} [\cite{adgh-dctmabci-14}] \label{conjecture2}
After some enough large $n$, besides the big vertices, minimal-ABC trees have only $B_3$-branches.
\end{conjecture}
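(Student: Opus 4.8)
The plan is to prove Conjecture~\ref{conjecture2} by combining the structural reduction of Conjecture~\ref{conjecture1} with a succession of local exchange arguments that eliminate every $B$-branch other than a $B_3$-branch. Granting (or establishing in tandem) that the big vertices form a star, a minimal-ABC tree $T$ of large order $n$ consists of a star of big vertices to each of which some $B$-branches are attached. I would first record the elementary fact that, since $h(k+1,2)=h(2,1)=1/\sqrt2$, every pendant path of length $2$ contributes exactly $\sqrt2$ to $\ABC(T)$ irrespective of the degree $k+1$ of the center carrying it; hence the only $k$-dependent part of a $B_k$-branch is the single edge joining its center to its big vertex, of weight $h(k+1,R)$ with $R$ the degree of that big vertex. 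The whole problem thereby reduces to controlling $\sum h(k_i+1,R)$ over the centers.

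The heart of the argument is a balancing estimate. For a big vertex of degree $R$ carrying centers of sizes $k_1,\dots,k_m$, Proposition~\ref{pro-10}/Proposition~\ref{pro-20} (equivalently the convexity of $k\mapsto h(k+1,R)$) show that detaching a pendant path from a larger center and reattaching it to a smaller one never increases $\ABC$; thus the centers under a common big vertex should be as balanced as possible. A direct evaluation of the resulting per-vertex cost of an all-$B_k$ configuration, $f(k)=\bigl(k\sqrt2+(k+1)^{-1/2}\bigr)/(2k+1)$ in the regime of large root degree, yields $f(3)<f(k)$ for every integer $k\neq3$. Since this term scales linearly in $n$, any fixed positive fraction of non-$B_3$ branches costs an extra $\Theta(n)$ and is excluded once $n$ is large; the remaining task is to sharpen this asymptotic statement into an exact, remainder-free one.

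I would then carry out the exact elimination type by type. Branches $B_k$ with $k\ge5$ and $B_1$-branches are the easiest, being so far from per-vertex optimality that a single regrouping into $B_3$-branches wins outright. For $B_4$ I would invoke Theorem~\ref{thm-20} together with Theorems~\ref{noB1B4} and~\ref{noB2B4}: a surviving $B_4$-branch forces every other branch to be a $B_3$- or $B_4$-branch and caps the number of $B_4$-branches at four, after which a global rebalancing converts each $B_4$-branch to a $B_3$-branch and collects the liberated pendant paths into fresh $B_3$-branches, every step being justified edge by edge by the switching Lemma~\ref{lemma-switching} and the monotonicity of $h$. The $B_2$-branches are removed by the dual merge (three $B_2$-branches into two $B_3$-branches, which strictly lowers the center-edge cost), and the starred branches $B_3^{*}$, $B_3^{**}$---which encode pendant paths of length $3$---by shortening the long path and reinvesting the freed vertex into the $B_3$-pattern, the net $\ABC$ change being negative by Proposition~\ref{pro-10}. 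In each conversion the small discrepancy in vertex count and root degree is reabsorbed by adjusting how many $B_3$-branches the big vertices carry, which is precisely where the $D_z$/$D_{z+1}$ flexibility enters.

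I expect the main obstacle to be the exactness of the inequalities rather than their asymptotic shape. The margins are minuscule---$f(3)\approx0.6775$ against $f(4)\approx0.6782$---so the comparisons cannot be read off from the leading term and require controlling the center-edge weights $h(k+1,R)$ to second order as $R$ (hence $n$) grows, while simultaneously tracking the unavoidable change in the big-vertex degrees each time a transformation adds or removes a branch. Making the threshold ``enough large $n$'' explicit, and certifying that every vertex-count remainder can be absorbed into the star of big vertices without recreating a forbidden branch, is the delicate part that the exchange lemmas must be engineered to guarantee.
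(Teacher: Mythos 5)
Your proposal correctly identifies the toolkit that the paper itself uses --- local transformations judged by Propositions~\ref{pro-10} and \ref{pro-20}, the switching Lemma~\ref{lemma-switching}, and the prior results on $B_1$/$B_2$/$B_4$ coexistence --- but the two mechanisms you actually supply cannot close the argument, and the part you defer as ``the delicate part'' is not a technical remainder: it is the whole proof. Your per-vertex cost comparison (your $f(3)<f(k)$) penalizes each deviant branch by only a bounded amount, so it can exclude configurations in which non-$B_3$ branches occupy a positive fraction of the tree, whereas Conjecture~\ref{conjecture2} requires killing the last one. A lone $B_2$-, $B_4$- or $B_3^{**}$-branch cannot be removed by any fixed-order swap: a $B_2$-branch has $5$ vertices, a $B_3$-branch $7$, a $B_4$-branch $9$, so a tree containing one deviant branch and a tree containing none are not comparable branch-for-branch at the same order $n$; they differ globally (root degree, number and sizes of the $D$-branches), the vertex count is reconciled only modulo $7$ through the $D_z$/$D_{z+1}$ structure, and the competing $\ABC$ values then differ by nearly cancelling sums of terms $-h(x,y)+h(x',y')$ whose margins are of order $10^{-3}$ and smaller. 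Deciding the sign of such sums is exactly what the paper's transformations $\mathcal{T}_{4}$--$\mathcal{T}_{15}$ and their appendices do, and nothing in your outline replaces that.

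Concretely, your claim that each elimination can be ``justified edge by edge by the switching lemma and the monotonicity of $h$'' fails in every case that matters. The paper needs interlocking quantitative lemmas with explicit thresholds: at most $919$ $B$-branches at the root (Lemma~\ref{thm-bound-B-branches-to-root}); no $B_4$-branches once the root degree reaches $1228$ (Lemma~\ref{lemma-no-B4-branches}); elimination of $D_{k,2}^2$-, $D_{k,1}^2$- and $D_k^{**}$-branches only in the presence of at least $65$, $261$, $56$ $D_z$-branches respectively (Lemmas~\ref{lemma-NO-D_k,2_2}, \ref{lemma-D_k,1_2} and \ref{lemma-no-D_B3**-branches-v2}); all feeding into Theorem~\ref{co-withB3_to_root} and Corollary~\ref{co-conj2_and_more-add}, which are what actually answer the conjecture. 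Each of these is proved by a globally defined transformation --- moving branches and pendant paths among many $D$-branches and the root simultaneously, precisely because changing the root degree perturbs every edge incident to it --- whose $\ABC$-change is first bounded by expressions monotone in $d(v)$ and then verified negative by explicit computation over the finite ranges $15\le z\le 131$, $x\le 919$, etc. Your proposed merges (three $B_2$'s into two $B_3$'s, a $B_4$ into a $B_3$ plus a freed path) are not even vertex-count preserving, so ``reabsorbing the discrepancy'' already presupposes this global machinery; the switching lemma and monotonicity alone have no way to certify the resulting inequalities. In short, the plan sketches the right strategy but proves none of the steps that are hard, and its one concrete quantitative device is structurally incapable of being ``sharpened into an exact, remainder-free'' statement by local means.
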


Moreover, the structure of the minimal-ABC tree depicted in Figure~\ref{fig-dd-4},
for enough large trees, was conjectured in \cite{adgh-dctmabci-14}.

\begin{figure}[!ht]
\begin{center}
\includegraphics[scale=0.75]{./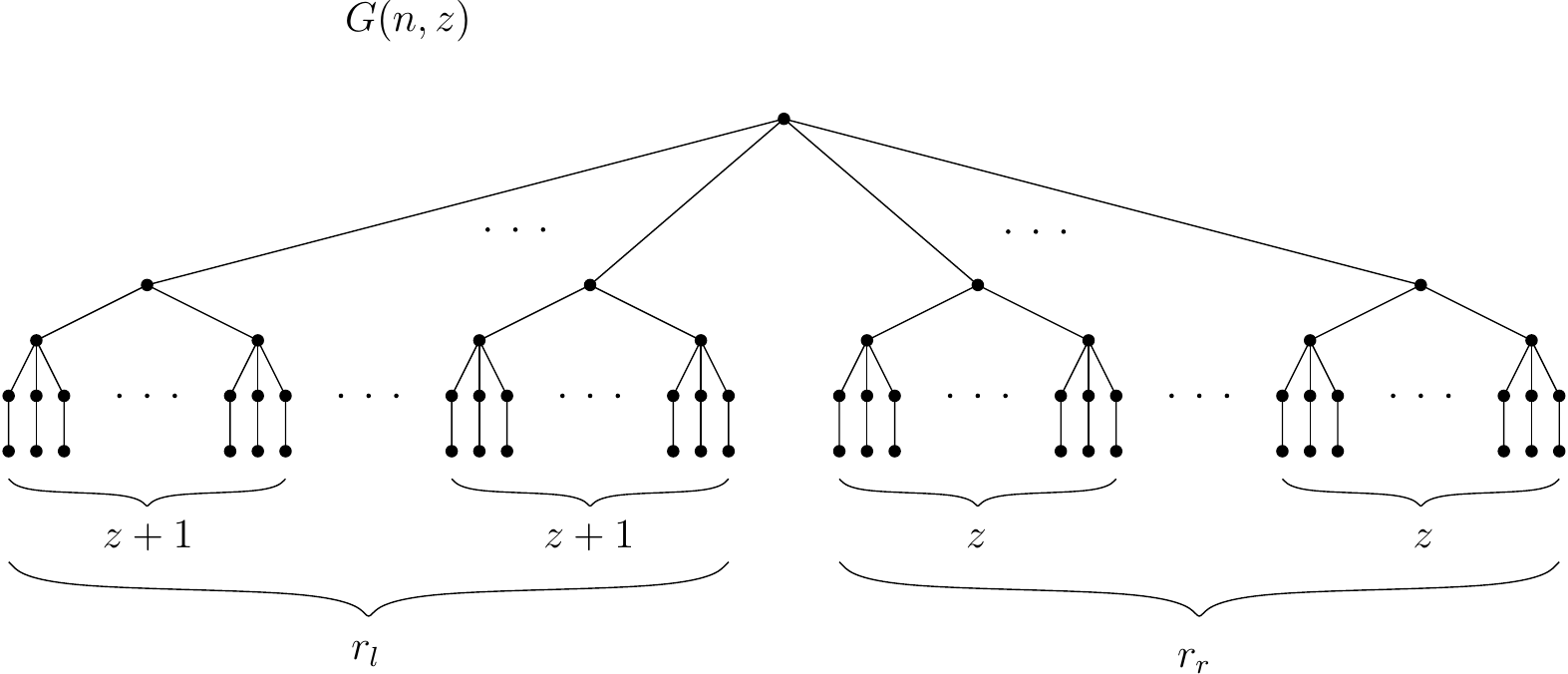}
\caption{The figure from \cite{adgh-dctmabci-14} with the conjectured structure of the minimal-ABC tree.}
\label{fig-dd-4}
\end{center}
\end{figure}

Recently, in \cite{dd-scbvmabct-2020} it was shown that Conjecture~\ref{conjecture1} is true.
Here we give also an affirmative answer to Conjecture~\ref{conjecture2} and show that
the conjectured structure from Figure~\ref{fig-dd-4} is indeed the structure of the minimal-ABC trees,
when the degree of its root is at least $2838$.

The following results give  bounds on the size $z$ of a $D_z$-branch and
the possible combinations of $D$-branches.

\begin{lemma}[\cite{dd-scbvmabct-2020}]   \label{le-Dz-lowerBound}
For $z \le 14$, there is no minimal-ABC tree, which has  a $D_z$-branch.
\end{lemma}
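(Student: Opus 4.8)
The plan is to argue by contradiction. Suppose $T$ is a minimal-ABC tree containing a $D_z$-branch with $z\le 14$, and let $c$ be its center, with parent (root) $r$ of degree $d_r=d(r)$. Since each attached $B_3$-branch is a fixed block of seven vertices whose center has degree $4$, and since $c$ is adjacent only to $r$ and to the $z$ centers of the attached $B_3$-branches, we have $d(c)=z+1$, and the edges meeting $c$ contribute exactly $h(d_r,z+1)+z\,h(z+1,4)$, while the internal edges of the $B_3$-branches contribute a constant that is unaffected by how the $B_3$-branches are distributed among the $D$-centers. The goal is to exhibit an order-preserving local transformation $T\mapsto T'$ that strictly lowers this quantity whenever $z\le 14$, contradicting minimality; the two engines for controlling the sign of every edge-change will be Proposition~\ref{pro-10}/\ref{pro-20} (monotonicity of $-h(x,y)+h(x\pm\Delta x,y\mp\Delta y)$) and, where a clean two-edge swap is available, the switching transformation of Lemma~\ref{lemma-switching}.

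Concretely, I would first use the already-established star structure of the big vertices (Conjecture~\ref{conjecture1}, now a theorem) to fix the global picture: a single central big vertex $r$ with the $D$-centers attached to it, so that under a redistribution of $B_3$-branches the only varying quantities are the degrees $z_i+1$ of the $D$-centers and the degree of $r$. I would then set $g(z)=h(d_r,z+1)+z\,h(z+1,4)$ and study two moves. The first keeps the number of $D$-centers fixed and transfers a single $B_3$-branch between two centers, which by Proposition~\ref{pro-10} pushes all sizes to within one of each other (this feeds into Theorem~\ref{te-combinations-D-branches}). The second, which is the one relevant here, removes the small $D_z$-center by reattaching its $z$ $B_3$-branches to a larger $D$-center and relocating the freed vertices so that the total order is preserved; this decrements $d_r$ and replaces $g(z)+g(z')$ by a single enlarged term. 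In either case only finitely many edge-contributions change, and each term's sign is pinned down by the favourable direction in Proposition~\ref{pro-10} or \ref{pro-20}.

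The decisive step is the uniform sign analysis of $\ABC(T')-\ABC(T)$. After substituting the relevant degrees, this difference becomes a concrete expression in $z$ together with terms depending on $d_r$; the $d_r$-dependence enters only through monotone pieces that can be bounded in the favourable direction (since $d_r$ is large), leaving an essentially one-variable inequality in $z$ that is strictly negative precisely for $z\le 14$ and first fails at $z=15$, which is the origin of the stated threshold. I expect the main obstacle to lie exactly here: making the transformation genuinely order-preserving while keeping it local, and then verifying the resulting multi-term inequality \emph{uniformly} over all admissible $d_r$ rather than for a single configuration. A secondary difficulty is disposing of the degenerate cases—too few $D$-branches to perform the merge, or orders so small that the redistribution is simply unavailable—where the argument must instead compare $T$ directly against the known minimal-ABC trees of small order to rule out a $D_z$-branch with $z\le 14$.
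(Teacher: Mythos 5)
There is an important mismatch to flag first: the paper does \emph{not} prove this statement. Lemma~\ref{le-Dz-lowerBound} is imported verbatim from \cite{dd-scbvmabct-2020}, and Remark~\ref{remark} makes clear that the cited paper actually proves sharper, branch-specific lower bounds, of which $15$ is merely a weakened unified threshold convenient for the present paper. So there is no internal proof to compare yours against; your attempt has to stand on its own, and in its present form it does not.

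What you have written is a strategy, not a proof, and every load-bearing step is deferred. The transformation that is supposed to produce the contradiction (``removes the small $D_z$-center by reattaching its $z$ $B_3$-branches to a larger $D$-center and relocating the freed vertices so that the total order is preserved'') is never pinned down: in this subject the exact bookkeeping of where the freed vertices go (converting a $B_3$- into a $B_3^*$-branch, adding a pendant path, attaching to the root, etc.) determines every term of $\ABC(T')-\ABC(T)$, and with it the numerical threshold; different choices give different bounds. The ``decisive step'' --- uniform negativity of the resulting expression over all admissible $d_r$ and the claim that it ``is strictly negative precisely for $z\le 14$ and first fails at $z=15$'' --- is exactly the content of the lemma, and you explicitly leave it unverified; asserting the location of the threshold without evaluating any expression is assuming the conclusion. (Note that $15$ does not drop out of monotonicity alone; in this literature such thresholds come from case-by-case numerical verification of a concrete multi-term inequality, which is why the present paper relegates all analogous verifications to its appendices.) Finally, your fallback for the degenerate cases does not work as stated: a minimal-ABC tree containing a single $D_z$-branch with $z\le 14$ --- hence with no ``larger $D$-center'' to merge into --- need not be of small order, since the root can carry many $B$-branches; such configurations are therefore not covered by a comparison with the known small-order minimal-ABC trees, and your argument has no move available for them.
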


\begin{re} \label{remark}
This result is also valid for other types of $D$-branches, which has been proved in \cite[Section 2]{dd-scbvmabct-2020}. In fact, somewhat better lower bounds are obtained there, but the (weaker) unified lower bound $15$ is good enough for our need throughout this paper.
\end{re}

\begin{lemma} [\cite{dd-scbvmabct-2020}]  \label{le-Dz-UpperBound}
A minimal-ABC tree does not contain a $D_z$-branch, $z \geq 132$.
\end{lemma}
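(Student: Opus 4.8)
The plan is to argue by contradiction. Suppose a minimal-ABC tree $T$ contains a $D_z$-branch with $z \geq 132$, with center $c$ and parent $p$; since the $B$-branches attached to the center of a $D_z$-branch are $B_3$-branches (centers of degree $4$), the center $c$ has degree $z+1$, namely one edge to $p$ and $z$ edges to the attached $B_3$-branches. I would produce a tree $T'$ on the same number of vertices with $\ABC(T') < \ABC(T)$, contradicting minimality. The guiding intuition is that, as $z$ grows, $c$ becomes a very high degree vertex and each edge incident to $c$ contributes only marginally to the index, because $h(x,y)=\sqrt{(x+y-2)/(xy)} \to \sqrt{1/y}$ as $x \to \infty$; hence it should pay to redistribute $B_3$-branches away from the overloaded center.

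I would carry this out in two steps. First, a rebalancing step: if $T$ contains another $D$-branch whose size $k'$ is at most $z-2$, move one $B_3$-branch from $c$ to that smaller center, keeping the parent fixed (legitimate because, by the star structure of the big vertices, the two centers share the same parent $p$). Writing the contribution of a size-$k$ branch as $g(k) = k\,h(k+1,4) + h(k+1,d(p))$, the resulting change of the index is $\bigl(g(z-1)-g(z)\bigr) + \bigl(g(k'+1)-g(k')\bigr)$, and the (strict) convexity of $g$ --- i.e.\ that the forward differences $g(k+1)-g(k)$ are monotone, which follows from the monotonicity statements in Proposition~\ref{pro-10} and Proposition~\ref{pro-20} --- makes this negative. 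This reduces matters to the balanced case, in which all $D$-branches have essentially the same size. Second, a splitting step: I would compare $T$ with the tree obtained by replacing the size-$z$ branch by two branches of sizes $\lfloor z/2\rfloor$ and $\lceil z/2\rceil$ attached to $p$, which raises $d(p)$ by one. The index difference then splits into the cost of reattaching the $B_3$-branches to two centers of smaller degree, the change of the two center-to-parent edges, and the change of every remaining parent-incident edge; the last of these decreases because $h$ is decreasing in its first argument (Proposition~\ref{pro-20}), and throughout one may invoke the switching transformation (Lemma~\ref{lemma-switching}) to normalize how subtrees are attached.

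The main obstacle is that these contributions pull in opposite directions: spreading the $B_3$-branches over two lower-degree centers by itself \emph{increases} the index, so the entire gain must be extracted from the parent-incident edges, and the net sign flips only at one precise threshold. Pinning that threshold down to exactly $z=132$ requires sharp --- in practice numerical --- estimates comparing $h(z+1,4)$ with $h(\cdot,4)$, together with genuine control of the parent degree $d(p)$, since the sign of the splitting difference depends on it. This is where the earlier structural input is essential: Conjecture~\ref{conjecture1} (now a theorem) forces the big vertices to form a star, so that $p$ is the unique big vertex whose degree is large in the regime of interest, while the lower bound $z \geq 15$ (Lemma~\ref{le-Dz-lowerBound} and Remark~\ref{remark}) bounds the remaining quantities from below. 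With $d(p)$ thus controlled, the whole comparison collapses to a single-variable inequality in $z$, which can be verified to hold precisely when $z \geq 132$, giving the desired contradiction.
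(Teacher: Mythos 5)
First, a point of reference: the paper never proves this lemma itself --- it is imported from \cite{dd-scbvmabct-2020}, and the only trace of its proof here is in the proof of Lemma~\ref{le-Dz-B2-UpperBound}, which recalls that the argument applies a vertex-preserving transformation (Figure~\ref{fig-Dz_size10}) replacing the $D_z$-branch by \emph{three} branches of size roughly $z/3$, the negativity of whose change $A_1$ for $z\ge 132$ is quoted from that reference. Judged as an independent argument, your proposal has a bookkeeping gap and a substantive one. The bookkeeping gap: your splitting step does not preserve the order of the tree. A $D_z$-branch occupies $7z+1$ vertices, whereas two branches of sizes $\lfloor z/2\rfloor$ and $\lceil z/2\rceil$ occupy $7z+2$, so the tree $T'$ you build is not a competitor of $T$ and no contradiction with minimality follows. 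This is not a technicality to be patched silently: in every transformation of this family (compare $\mathcal{T}_4$ and \eqref{eq-Lemma-Dz-30-00}), the vertex count is balanced by destroying a $B_3$-branch or pendant paths, and since each destroyed pendant-path edge is worth $f(4,2)=f(2,1)=1/\sqrt{2}$, the resulting terms (about $-3f(4,2)-3f(2,1)\approx -4.24$ per cannibalized $B_3$-branch) are the \emph{dominant negative contribution} to the change of the index. Your proposal contains no such mechanism, and that is exactly what sinks it quantitatively.

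The substantive gap: the bare two-way split \emph{increases} the ABC index precisely in the regime the lemma is needed for, namely when $d(p)$ is large, and no threshold in $z$ can repair this. Concretely, after the split the $z$ edges joining branch centers to $B_3$-centers cost $z\,f(\lceil z/2\rceil+1,4)$ instead of $z\,f(z+1,4)$; since $f(w,4)=\tfrac12\sqrt{1+2/w}$, this loss equals $z\bigl(f(\lceil z/2\rceil+1,4)-f(z+1,4)\bigr)\approx\tfrac12$ for large $z$, and is already $\approx 0.48$ at $z=132$. The two new parent edges cost $2f(d(p)+1,\lceil z/2\rceil+1)-f(d(p),z+1)$, which is positive for every admissible $d(p)$ and tends to $2/\sqrt{z/2+1}-1/\sqrt{z+1}\approx 0.16$ at $z=132$. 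Against this, the only gain --- the decrease of the remaining parent-incident edges when $d(p)$ grows by one --- is at most about $\sqrt{z+2}/(2\,d(p))<6/d(p)$ in total, which vanishes as $d(p)$ grows. Hence for $d(p)\gtrsim 10$ your transformation strictly increases the index and yields no contradiction, exactly for trees with large root degree, which are the ones this lemma is applied to throughout the paper (e.g.\ in Lemma~\ref{le-UpperBoundOnNumberOf_Dz-10}). This is why the cited proof both splits into three parts \emph{and} relocates the vertices of destroyed $B_3$-branches: the $-6/\sqrt{2}$ term is what pays for the split, and the threshold $131/132$ is calibrated to that specific transformation. Your assertion that the comparison ``can be verified to hold precisely when $z \geq 132$'' has no computation behind it and is false for the transformation you define. (A lesser issue: the convexity of $g(k)=k\,h(k+1,4)+h(k+1,d(p))$ does not follow from Propositions~\ref{pro-10} and~\ref{pro-20} as claimed --- the summand $k\,h(k+1,4)$ is in fact concave in $k$ --- though the rebalancing step could instead simply invoke Proposition~\ref{pro-diff-Dz}.)
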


Actually, the proof of Lemma \ref{le-Dz-UpperBound} is still valid for $D_z^{**}$-branches.

\begin{lemma}  \label{le-Dz-UpperBound-star}
A minimal-ABC tree does not contain a $D_z^{**}$-branch, $z \geq 132$.
\end{lemma}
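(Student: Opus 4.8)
The plan is to reuse, essentially verbatim, the argument establishing Lemma~\ref{le-Dz-UpperBound}, as the remark preceding the statement already suggests. Recall that in that proof one assumes for contradiction that a minimal-ABC tree $T$ contains a $D_z$-branch with $z \geq 132$, and then performs a local transformation producing a tree $T'$ with $\ABC(T') < \ABC(T)$. The sign of the change $\ABC(T)-\ABC(T')$ is governed by a function of the degree of the center $c$ of the $D$-branch together with the degrees of the $B_3$-centers attached to it; once $z$ is large enough this function has the wrong sign, so $T$ cannot be minimal. I would first isolate from that proof exactly which edges are destroyed and created by the transformation, observing that they all lie in the immediate neighborhood of $c$ and of its attached $B$-branches.

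Then I would make the single structural observation that does all the work: a $D_z^{**}$-branch coincides with a $D_z$-branch on precisely this neighborhood. The two differ only by the extra pendant vertex that turns one attached $B_3$-branch into a $B_3^{**}$-branch, and that vertex sits at the far end of a pendant path, well away from $c$. In particular the center of a $D_z^{**}$-branch still has the same degree, as a function of $z$, as the center of a $D_z$-branch, and every edge incident to $c$ has identical endpoint degrees in both cases.

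Consequently the transformation of Lemma~\ref{le-Dz-UpperBound} can be applied to $T$ unchanged, and when one computes $\ABC(T)-\ABC(T')$ every term contributed by the ${}^{**}$ decoration appears identically on both sides, since the transformation never touches that vertex or its incident edge; these terms therefore cancel. What remains is exactly the expression analyzed in Lemma~\ref{le-Dz-UpperBound}, which already forces $z < 132$. The monotonicity input needed is available in Proposition~\ref{pro-20}, and the relocation step, if phrased as a switching, is licensed by Lemma~\ref{lemma-switching}.

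The main obstacle I anticipate is bookkeeping rather than conceptual. I must verify that the ${}^{**}$ decoration is genuinely attached at a pendant of an attached $B$-branch and not at $c$ itself, since otherwise the center degree, and hence the critical function of $z$, would shift and the inherited inequality would need to be re-derived. I would also check that the non-adjacency hypotheses of the switching transformation, namely $ps, qr \notin E$, still hold in the presence of the extra vertex so that the transformation remains admissible. Granting these routine checks, the proof transfers with no further computation.
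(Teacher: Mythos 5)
Your proposal is correct and takes essentially the same route as the paper, which disposes of this lemma with the single remark that the proof of Lemma~\ref{le-Dz-UpperBound} remains valid for $D_z^{**}$-branches: the transformation never touches the decorated part, the $D$-center still has degree $z+1$, and every edge from it still joins a degree-$4$ $B$-branch center, so the ABC-difference is literally the same expression. One small factual correction that does not affect the argument: the $^{**}$ decoration is not a single extra vertex at the far end of a pendant path (that describes $B_3^*$); in a $B_3^{**}$-branch a neighbor of the $B$-center has degree $3$ and carries two pendant paths of length $2$ --- but since the $B_3^{**}$-center retains degree $4$, the cancellation you describe goes through unchanged.
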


\begin{lemma} \label{le-Dz-B2-UpperBound}
A minimal-ABC tree does not contain a $D_{z,x}^{2}$-branch, $z \geq 132$, for each $x=1, 2$.
\end{lemma}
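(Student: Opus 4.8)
The plan is to reuse, essentially verbatim, the argument that proves Lemma~\ref{le-Dz-UpperBound}, following the same pattern by which that proof was already seen to cover $D_z^{**}$-branches in Lemma~\ref{le-Dz-UpperBound-star}. A $D_{z,x}^{2}$-branch differs from a plain $D_z$-branch only by a bounded, $z$-independent modification: the extra $B_2$-structure indicated by the superscript, in one of the two placements indexed by $x\in\{1,2\}$. The quantity that drives the entire estimate, namely the degree of the center $c$ of the branch, still grows linearly with $z$. I would therefore argue by contradiction: assume a minimal-ABC tree $T$ contains a $D_{z,x}^{2}$-branch with $z\ge 132$, let $c$ be its center and let $r$ be the vertex to which $c$ is attached, so that $d(c)$ is of order $z$ and, in particular, very large.

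First I would carry over the same local transformation $T\mapsto T'$ used for Lemma~\ref{le-Dz-UpperBound}, which acts on $c$ together with the $B$-branches it carries---relocating or redistributing one of them so as to reduce the maximal center degree---while leaving the remainder of $T$ untouched. The difference $\ABC(T')-\ABC(T)$ then localizes to finitely many edges: the edge $rc$, whose contribution moves between two values of $h$; the edges joining $c$ to the centers of its retained $B$-branches, whose number is of order $z$; and the $O(1)$ edges affected by the relocated material. Propositions~\ref{pro-10} and~\ref{pro-20} are exactly the tools that sign each of these changes, converting the transformation into a comparison of $h$-values; the contribution of the order-$z$ bundle of edges at $c$ is the term that dominates for large $z$, yielding a strict decrease $\ABC(T')<\ABC(T)$ that contradicts the minimality of $T$.

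The main obstacle is purely the bookkeeping of the correction introduced by the distinguished $B_2$-structure, and confirming that the numerical threshold $132$ genuinely survives this correction in both configurations $x=1$ and $x=2$. Since the two cases differ only in where the $B_2$-branch sits relative to $c$, they touch only a bounded number of edges and thus contribute only an $O(1)$ term to $\ABC(T')-\ABC(T)$, independent of $z$; what I would need to verify is that this bounded term can never cancel the linear-in-$z$ gain already present in the $D_z$ analysis, and that the edge-by-edge sign computations of Propositions~\ref{pro-10} and~\ref{pro-20} remain valid in each of the two placements. Because the excerpt already records that the identical transformation and the identical bound $z\ge 132$ transfer from $D_z$ to $D_z^{**}$, it is very plausible that the same transformation and the same threshold carry over to $D_{z,x}^{2}$ as well, with only these routine, case-by-case adjustments.
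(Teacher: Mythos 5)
Your overall plan --- reuse the transformation behind Lemma~\ref{le-Dz-UpperBound} and treat the distinguished $B_2$-structure as a bounded perturbation --- matches the paper's starting point, but the step you defer as ``bookkeeping'' is in fact the entire content of the proof, and the heuristic you offer for why it is routine is wrong. The change of the ABC index produced by that transformation is \emph{not} a ``linear-in-$z$ gain'': the order-$z$ bundles of edges enter through sums such as $-z\,f(z+1,4)$ against roughly $3\cdot\frac{z-x}{3}\,f\bigl(\frac{z-x}{3}+1,4\bigr)$, whose individual terms tend to $\tfrac12$, so the total change $A_1$ of the original transformation is $O(1)$; its negativity is a delicate numerical fact that only sets in at the threshold $z\ge 132$. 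Consequently an uncontrolled $O(1)$ correction coming from one or two $B_2$-branches could perfectly well destroy the negativity; boundedness alone proves nothing, and ``the same threshold survives'' is exactly what must be established, not assumed by analogy with the $D_z^{**}$ case.

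The paper closes precisely this gap, and does so without redoing any numerics, by a sign comparison that your proposal lacks. Writing $A_1$ for the change of the ABC index under the original transformation (all relevant children being $B_3$-branches) and $A_{2,x}$ for the change under the modified transformation in which $x$ of them are $B_2$-branches ($x=1,2$), the paper computes
\begin{eqnarray*}
A_{2,x} - A_1 & = & x \left( \left( - f(z+1,3) + f \left(\tfrac{z-x}{3} + 1, 3 \right) \right) -  \left( - f(z+1,4) + f \left(\tfrac{z-x}{3} + 1, 4 \right) \right) \right),
\end{eqnarray*}
which is negative because, by Proposition~\ref{pro-20}, the expression $-f(z+1,y)+f\left(\tfrac{z-x}{3}+1,y\right)$ increases in $y$. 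Hence $A_{2,x} < A_1 < 0$ for $z \ge 132$ (the negativity of $A_1$ being the already-established Lemma~\ref{le-Dz-UpperBound}), and the threshold transfers automatically to $D_{z,x}^{2}$-branches. To repair your argument you would need either this monotonicity observation or an explicit re-verification of the numerics for the modified transformation in both placements $x=1,2$; neither appears in your proposal.
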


\begin{proof}
Here, in order to simplify our proof, we recall the transformation (shown in Figure~\ref{fig-Dz_size10}) used to establish Lemma \ref{le-Dz-UpperBound} in \cite{dd-scbvmabct-2020}, and show that such upper bound $131$ is also valid for
$D_{z,x}^{2}$-branches, $x=1, 2$. We assume that  $z \equiv x\, (\bmod\,3)$ in Figure~\ref{fig-Dz_size10}, other two cases are analogous.

\begin{figure}[!ht]
\begin{center}
\includegraphics[scale=0.75]{./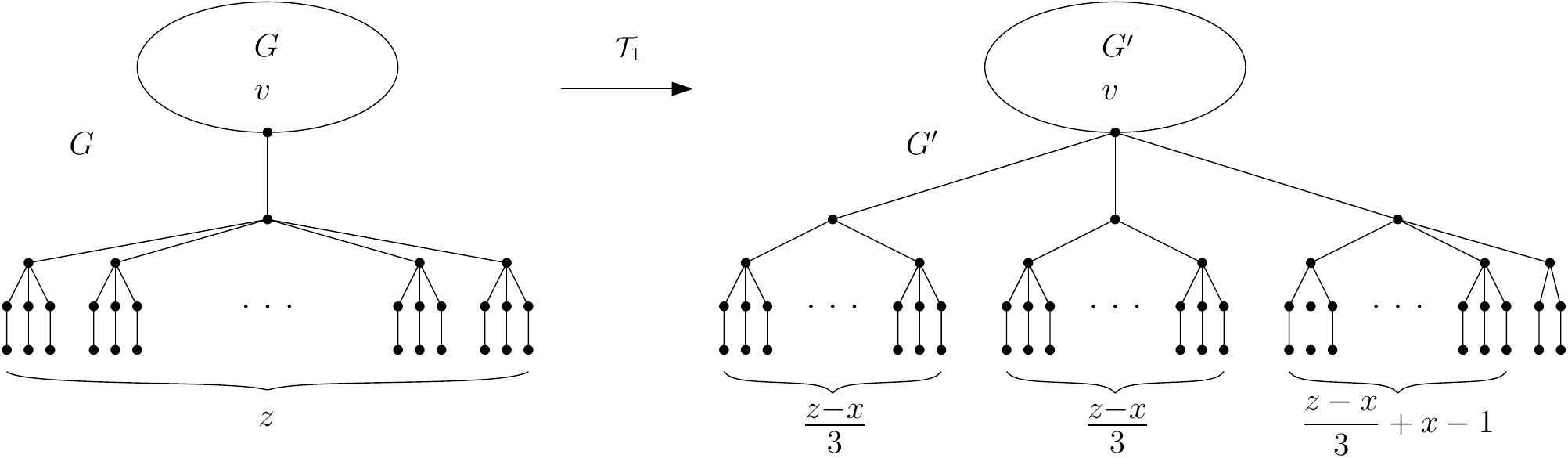}
\caption{The transformation referred in Lemma~\ref{le-Dz-B2-UpperBound}, where $z \equiv x\, (\bmod\,3)$.}
\label{fig-Dz_size10}
\end{center}
\end{figure}

Let $A_1$ denote the change of ABC index after applying the transformation depicted in Figure~\ref{fig-Dz_size10}, in which all of $B$-branches occurring in $G$ outside $\bar{G}$ are $B_3$-branches. If we replace $x$ $B_3$-branch(es) by $B_2$-branch(es), $x=1,2$, then we get a transformation aiming to $D_{z,x}^{2}$-branches, and denote by $A_{2,x}$ the change of ABC index, for $x=1,2$.

In \cite[Lemma 2.5]{dd-scbvmabct-2020}, it is shown that $A_1 < 0$ when $z \geq 132$, so we can conclude the desired result by noting that $A_{2,x} < A_1$ for $x=1,2$.
Indeed, it is easy to obtain that
\beq
A_{2,x} - A_1 & = & x \left( \left( - f(z+1,3) + f \left(\frac{z-x}{3} + 1, 3 \right) \right) -  \left( - f(z+1,4) + f \left(\frac{z-x}{3} + 1, 4 \right) \right) \right) < 0. \nonumber
\eeq
The negativity of the above difference follows by Proposition~\ref{pro-20}.
\end{proof}

\begin{re}
The upper bound $131$ is not optimal. One can improve it by considering the particular structures of $D_{z,x}^{2}$-branches.
Since we do not think that it would affect significantly the subsequent proofs, we omit such a feasible improvement.
\end{re}

\begin{lemma} \label{le-Dz-B4-UpperBound}
A minimal-ABC tree does not contain a $D_{z,x}^{4}$-branch, $z \geq 216$, for each $x=1, 2, 3, 4$.
\end{lemma}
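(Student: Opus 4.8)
The plan is to reuse, verbatim, the transformation of Figure~\ref{fig-Dz_size10} that underlies Lemma~\ref{le-Dz-UpperBound} and that was already recycled in the proof of Lemma~\ref{le-Dz-B2-UpperBound}, but now replacing $x$ of the $B_3$-branches at the center of the $D_z$-branch by $B_4$-branches, whose centers have degree $5$. Writing $A_{4,x}$ for the resulting change of the ABC index and $A_1$ for the change in the all-$B_3$ case (again assuming $z \equiv x \pmod 3$, the remaining residues being analogous), the only edges that differ between the two transformations are the $x$ edges joining the converted branch-centers to the center of the $D$-branch, both before the transformation (where that center has degree $z+1$) and after it (where it has degree $\tfrac{z-x}{3}+1$). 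Hence exactly as in the $B_2$ computation one obtains
\begin{equation*}
A_{4,x} - A_1 = x\left(\left(-f(z+1,5)+f\!\left(\tfrac{z-x}{3}+1,5\right)\right) - \left(-f(z+1,4)+f\!\left(\tfrac{z-x}{3}+1,4\right)\right)\right).
\end{equation*}

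Here is where the argument must depart from the $B_2$ case. By Proposition~\ref{pro-20} the expression $-f(z+1,y)+f(\tfrac{z-x}{3}+1,y)$ is increasing in $y$, so the bracket above is now \emph{positive} and $A_{4,x} > A_1$; moving from a lighter branch ($B_2$) to a heavier one ($B_4$) flips the sign of the correction. Consequently the known bound $A_1 < 0$ for $z \geq 132$ no longer transfers to $A_{4,x}$, which is exactly why a larger threshold is expected. To close the argument I would keep the explicit formula for $A_1$ furnished by \cite[Lemma 2.5]{dd-scbvmabct-2020}, add the explicit positive correction $x\,\delta(z,x)$ displayed above, and study the resulting single function of $z$ directly, using $x \leq 4$ (guaranteed by Theorem~\ref{thm-20}) to bound the correction uniformly over the admissible range of $x$. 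Showing that this combined function is negative for every $z \geq 216$ and each $x \in \{1,2,3,4\}$ then yields the claim, since a minimal-ABC tree cannot admit an ABC-strictly-decreasing transformation.

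The main obstacle I anticipate is quantitative rather than conceptual. Unlike the $B_2$ situation, where monotonicity instantly transfers the established sign of $A_1$, here the positive correction $x\,\delta(z,x)$ competes against the (negative, slowly varying) main term $A_1$, and one must verify that the margin by which $A_1$ is already negative dominates the correction once $z \geq 216$. This is plausible because $\delta(z,x)\to 0$ as $z\to\infty$ (both $f(z+1,y)$ and $f(\tfrac{z-x}{3}+1,y)$ tend to $\sqrt{1/y}$), while $A_1$ remains bounded away from $0$ on the negative side for all $z\ge 132$; so a finite threshold certainly exists. The delicate part is pinning down the exact value $216$ rather than some larger constant: one would show monotonicity of the combined function in $z$ via Proposition~\ref{pro-10}/Proposition~\ref{pro-20}, reduce to checking the endpoint $z=216$, and dispose of the three residue classes of $z$ modulo $3$ separately (the worst case presumably being $x=4$, where the correction is largest).
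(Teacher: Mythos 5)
Your setup is correct as far as it goes: the difference formula
\begin{equation*}
A_{4,x} - A_1 \;=\; x\left(\left(-f(z+1,5)+f\!\left(\tfrac{z-x}{3}+1,5\right)\right)-\left(-f(z+1,4)+f\!\left(\tfrac{z-x}{3}+1,4\right)\right)\right)
\end{equation*}
is the right analogue of the one in Lemma~\ref{le-Dz-B2-UpperBound}, and your observation that Proposition~\ref{pro-20} now forces this correction to be \emph{positive} (so that the sign of $A_1$ no longer transfers) is exactly the reason a threshold larger than $132$ is unavoidable. Note, however, that this is a different route from the paper's: the paper does not reuse the splitting transformation of Figure~\ref{fig-Dz_size10} at all for this lemma. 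Instead it introduces a new transformation $\mathcal{T}_2$ (Figure~\ref{fig-B4_to_root-1}) that moves a single $B_3$-branch out of the oversized $D_{z,x}^4$-branch onto the center $w$ of another $D$-branch, writes out every term of $ABC(G')-ABC(G)$ explicitly, bounds the $d(v)$-dependent terms by Propositions~\ref{pro-10} and~\ref{pro-20}, and reduces the claim to a finite check over $16\le d(w)\le 132$, $1\le x\le 4$ at $z=216$, using monotonicity in $z$. Every quantity in that check is explicit within the paper.

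The genuine gap in your proposal is that its entire quantitative content is deferred and cannot be discharged with the facts you have in hand. You need not merely the sign of $A_1$ but a lower bound on its margin of negativity: specifically $|A_1(z)| > x\,\delta(z,x)$ for all $z\ge 216$, $x\le 4$. The only imported fact is ``$A_1<0$ for $z\ge 132$'' from \cite[Lemma 2.5]{dd-scbvmabct-2020}; the explicit formula for $A_1$ is not reproduced in this paper, and your assertion that ``$A_1$ remains bounded away from $0$ on the negative side for all $z\ge 132$'' is unsubstantiated --- indeed it is exactly the wrong thing to lean on, since $132$ is the threshold of that cited lemma, so $A_1$ is presumably nearly zero there, and nothing you cite gives its monotonicity in $z$ or its value at $z=216$. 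Your limiting argument ($\delta(z,x)\to 0$ while $A_1$ stays negative) establishes only that \emph{some} finite threshold works, which is strictly weaker than the stated lemma; the specific bound $z\ge 216$ is what the lemma asserts, and your sketch acknowledges rather than closes this step. As written, the proposal reformulates the lemma as an unverified numerical claim about a function whose formula lies outside the paper, whereas the paper's proof produces a self-contained expression whose negativity is then checked directly.
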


\begin{proof}
We apply the transformation $\mathcal{T}_{2}$, depicted in Figure~\ref{fig-B4_to_root-1}, to prove the above result.
The change of ABC index after applying $\mathcal{T}_{2}$ is
\begin{eqnarray}\label{eq:B4-1-ori}
&&ABC (G') - ABC (G) \nonumber\\
&=& - f (d(v),z+1) + f(d(v),z) + (-f (d(v),d(w)) + f (d(v),d(w) + 1))  + x (-f(z+1,5) + f(z,5))    \nonumber\\
&&  - (z-x) f(z+1,4) + (z-x-1)  f(z,4) + f(d(w) + 1,4) + (d(w) - 1)  (-f(d(w),4) + f(d(w) + 1,4)).  \nonumber
\end{eqnarray}

\begin{figure}[!ht]
\begin{center}
\includegraphics[scale=0.739]{./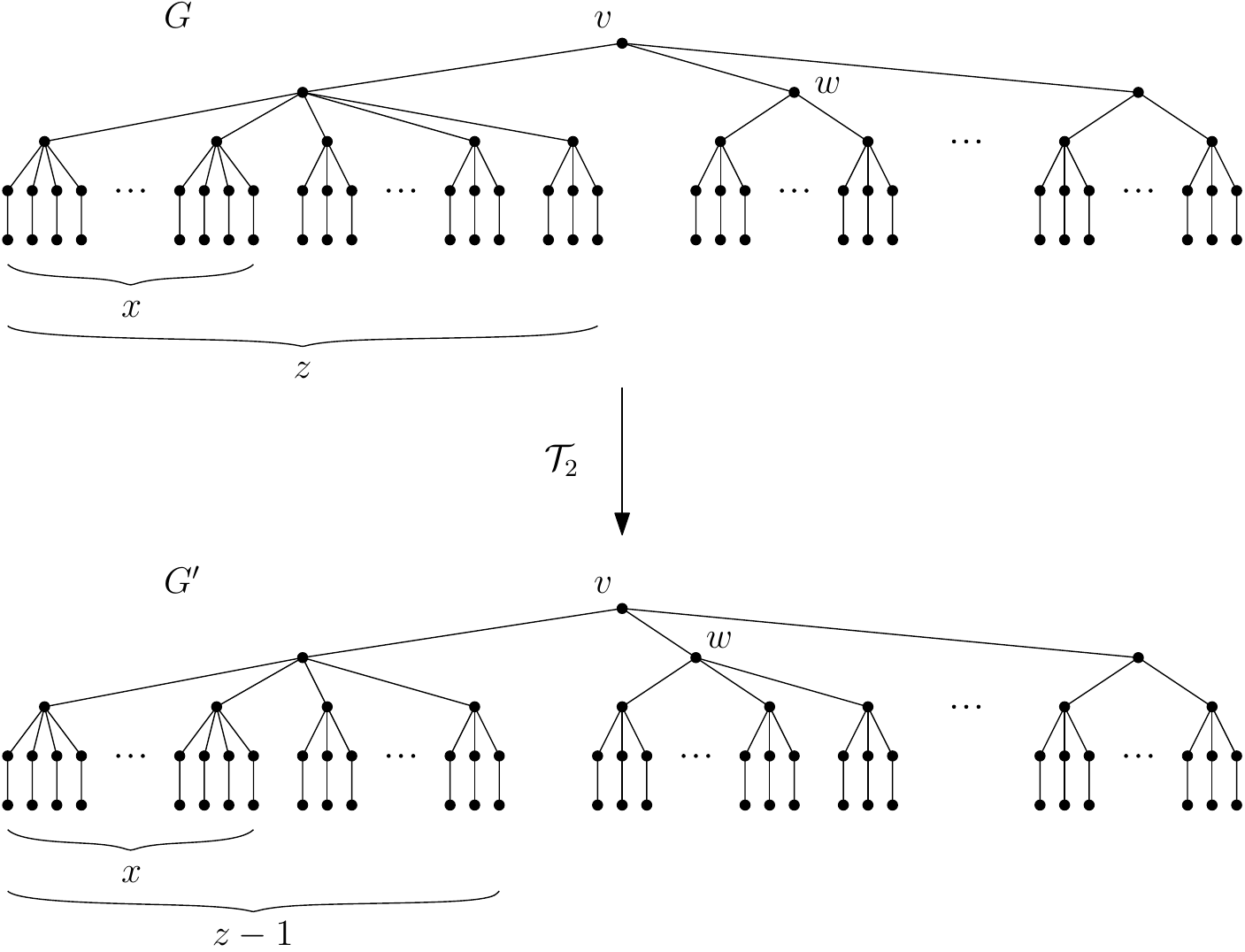}
\caption{Transformation used to derive an upper bound on the size of $D_{z,x}^4$-branches, $x=1,2,3,4$.}
\label{fig-B4_to_root-1}
\end{center}
\end{figure}

From Propositions~\ref{pro-10} and \ref{pro-20}, $- f (d(v),z+1) + f(d(v),z)$ increases in $d(v)$, thus
\begin{eqnarray*}
- f (d(v),z+1) + f(d(v),z) &\le& \lim_{d(z) \rightarrow + \infty}(- f (d(v),z+1) + f(d(v),z) ) \quad = \quad -\sqrt{\frac{1}{z+1}} + \sqrt{\frac{1}{z}},
\end{eqnarray*}
and $-f (d(v),d(w)) + f (d(v),d(w) + 1)$ decreases in $d(v) \ge z+1$, i.e.,
$$
-f (d(v),d(w)) + f (d(v),d(w) + 1) \le -f (z+1,d(w)) + f (z+1,d(w) + 1).
$$
Moreover, it is not hard to verify that $- (z-x) f(z+1,4) + (z-x-1)  f(z,4)$ increases in $z$ for each of fixed $1 \le x \le 4$, and its limit is $- \frac{1}{2}$ when $z$ goes to infinity, i.e.,
$$
- (z-x) f(z+1,4) + (z-x-1)  f(z,4) < - \frac{1}{2}.
$$
Combining all above comments, it leads to an upper bound of $ABC (G') - ABC (G)$:
\begin{eqnarray}\label{eq:B4-1}
ABC (G') - ABC (G) &\le& - \sqrt{\frac{1}{z + 1}} + \sqrt{\frac{1}{z}} + ( -f (z+1,d(w)) + f (z+1,d(w) + 1) ) + x (-f(z+1,5)  \nonumber\\
&& + f(z,5)) - \frac{1}{2} + f(d(w) + 1, 4) + (d(w) - 1) (-f(d(w),4) + f(d(w) + 1, 4)).  \nonumber
\end{eqnarray}
The right-hand side of the above inequality decreases in $z$ and
is negative for $16 \le d (w) \le 132$, $1 \le x \le 4$ and $z = 216$ (and so is for $z \ge 216$).
\end{proof}

\begin{re}
The above unified upper bound of $215$ can be improved by involving particular structural properties
of $D_{z,x}^4$-branches, but the current upper bound suffices for our needs.
\end{re}

\begin{pro}  [\cite{d-sptmabci-4-2017}] \label{pro-diff-Dz}
Let $x$ and $y$ be vertices of a minimal-ABC tree $G$ that have a common parent vertex $z$,  such that $d(x) \geq d(y) \geq 5$.
If $x$ and $y$ have only $B_3$-branches as children, and at most one of them is $B_3^{**}$, then either $d(y) = d(x)$ or  $d(y) = d(x)-1$.
\end{pro}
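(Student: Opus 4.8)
The plan is to argue by contradiction using a branch-relocation transformation. Assume $d(x) \ge d(y) + 2$ and write $p = d(x)$, $q = d(y)$, $w = d(z)$, so that $p \ge q + 2 \ge 7$. Since $x$ has $p - 1 \ge 6$ children, each the center of a $B_3$-type branch, and at most one of the branches involved is a $B_3^{**}$, at least five children of $x$ are centers of ordinary $B_3$-branches. I would detach one such branch from $x$ and reattach it to $y$, producing a tree $G'$, and then show $\ABC(G') < \ABC(G)$, contradicting the minimality of $G$.

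The first real step is to isolate the edges whose weights change. The center of any $B_3$-, $B_3^*$- or $B_3^{**}$-branch has degree $4$, so the relocated center keeps degree $4$, every edge internal to a branch is untouched, and all edges outside $\{x,y,z\}$ are untouched; this is exactly why the ``at most one $B_3^{**}$'' hypothesis is harmless, since we may always move a plain $B_3$-branch and a center edge contributes $h(\cdot,4)$ regardless of the branch type. Only the two parent edges and the center edges at $x$ and $y$ move, giving
\begin{eqnarray*}
\ABC(G') - \ABC(G) &=& \underbrace{\big(h(w,p-1) - h(w,p)\big) + \big(h(w,q+1) - h(w,q)\big)}_{=:\,P} \\
&& {}+ \underbrace{\big((p-2)\,h(p-1,4) - (p-1)\,h(p,4)\big) + \big(q\,h(q+1,4) - (q-1)\,h(q,4)\big)}_{=:\,Q},
\end{eqnarray*}
where $P$ collects the parent edges $zx, zy$ and $Q$ collects the center edges.

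The parent term $P$ is what drives the balancing: rewriting $P = \big(h(w,p-1)-h(w,p)\big) - \big(h(w,q)-h(w,q+1)\big)$ and applying Proposition~\ref{pro-10} with $\Delta x = 0$, the quantity $s \mapsto h(w,s-1)-h(w,s)$ is non-increasing in $s$ (strictly once $w>2$), so from $p \ge q+2$ we get $P \le 0$, strictly for $w>2$. The main obstacle is that the center term $Q$ works against us: a short expansion writes $Q = \big(g(q)-g(q-1)\big) - \big(g(p-1)-g(p-2)\big)$ as a difference of first differences of the concave function $g(m)=m\,h(m+1,4)$ (taken around $q$ versus around $p$), so $Q$ is in fact non-negative and small, tending to $0^+$ as the degrees grow (reflecting that the center edges alone prefer concentrating branches on one vertex). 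Hence $\ABC(G')-\ABC(G)$ cannot be bounded term by term, and the crux is to show that $|P|$ strictly dominates $Q$ on the admissible range. I would do this by treating the right-hand side as a function of $w,p,q$, using Propositions~\ref{pro-10} and~\ref{pro-20} to establish monotonicity in each argument, and then checking negativity at the boundary of that range; here one uses that $x$ and $y$ are centers of $D$-branches, so $p,q$ are bounded below through Lemma~\ref{le-Dz-lowerBound} and Remark~\ref{remark}, and, crucially, that their common parent $z$ is the high-degree center of the star of big vertices furnished by Conjecture~\ref{conjecture1}, so that $w$ is large (of the order of the root degree in the trees under consideration). Once $P + Q < 0$ is secured throughout this range, $\ABC(G') < \ABC(G)$ contradicts the minimality of $G$ and forces $d(x) - d(y) \le 1$.
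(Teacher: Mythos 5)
Note first that the paper itself does not prove this proposition: it is imported verbatim from \cite{d-sptmabci-4-2017}, so there is no internal proof to compare against, and your attempt has to stand on its own. Your setup is the natural one and is carried out correctly: the edge accounting behind $\ABC(G')-\ABC(G)=P+Q$ is exact (the centers of $B_3$-, $B_3^*$- and $B_3^{**}$-branches all have degree $4$, so only the two parent edges and the center edges at $x$ and $y$ change), $P\le 0$ does follow from Proposition~\ref{pro-10} with $\Delta x=0$, $\Delta y=1$, and $Q\ge 0$ is indeed the concavity of $g(m)=m\,h(m+1,4)$. You also correctly identify that the proof must show $|P|>Q$ globally rather than argue term by term.

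The genuine gap is in that last step: the inequality $|P|>Q$ is false on part of the range your hypotheses allow, and the extra assumption you invoke to exclude the bad range is not available. $Q$ is independent of $w=d(z)$, while $P\to 0$ as $w\to 2^{+}$ (note $h(2,s)\equiv 1/\sqrt{2}$), so for small $w$ the relocation strictly \emph{increases} the index. Concretely, for $w=5$, $q=5$, $p=7$ one computes $P\approx-0.00476$ and $Q\approx+0.00539$, hence $\ABC(G')-\ABC(G)\approx+0.0006>0$; and even with the bound $p,q\ge 16$ you import from Lemma~\ref{le-Dz-lowerBound}, taking $w=3$, $q=16$, $p=18$ gives $P\approx-0.000113$, $Q\approx+0.000273$, again positive. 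Comparing $|P|\approx(\sqrt{w}-2/\sqrt{w})\,q^{-3}$ with $Q\approx\tfrac{3}{2}\,q^{-3}$ (for fixed $w$ and large $q$) shows the move becomes profitable only once $d(z)\ge 6$ or so. Your sole control on $w$ is the appeal to Conjecture~\ref{conjecture1}: that result does identify $z$ as the center of the star of big vertices, but it gives no lower bound whatsoever on $d(z)$, because the proposition carries no assumption that the tree is large --- minimal-ABC trees of small order have roots of small degree, and the paper applies this proposition in settings (e.g.\ Appendix~\ref{appendix-upper_bound-2}) where the root degree may be modest. To complete the argument you would need either a proved quantitative lower bound on $d(z)$ (some parent-versus-child degree monotonicity, which you neither state nor prove), or a separate transformation or case analysis covering $d(z)\le 6$; that missing piece is precisely the nontrivial content hidden behind your phrase ``the trees under consideration.''
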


Next, we show that if a minimal-ABC tree has $B_4$-branches, then they must be adjacent to the root.

\begin{lemma} \label{lemma-B4-root}
Let $G$ be a minimal-ABC tree.
Then all  $B_4$-branches (maximum $4$) are adjacent to the root vertex of $G$.
\end{lemma}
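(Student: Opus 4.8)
The plan is to argue by contradiction: I would assume that a minimal-ABC tree $G$ contains a $B_4$-branch whose center $u$ is \emph{not} adjacent to the root $v$, and then produce a tree $G'$ with $\ABC(G')<\ABC(G)$. The first step is to pin down where such a branch can sit. Since $u$ has degree $5$ yet is adjacent to the degree-two vertices of its own pendant paths, $u$ is not a big vertex, so its parent $w$ must be. By Conjecture~\ref{conjecture1} (now a theorem, \cite{dd-scbvmabct-2020}) the big vertices of $G$ induce a star centered at $v$, so $w\neq v$ forces $w$ to be a non-root big vertex adjacent to $v$, i.e.\ the center of a $D$-branch. As this $D$-branch carries at least one $B_4$-branch, it is a $D_{z,x}^{4}$-branch for some size $z$ and some $1\le x\le 4$, the count of $B_4$-branches at $w$ being at most $4$ by Theorem~\ref{thm-20}.

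Next I would bound the free parameters so that only finitely many configurations survive. Lemma~\ref{le-Dz-lowerBound} together with Remark~\ref{remark} gives $z\ge 15$, while Lemma~\ref{le-Dz-B4-UpperBound} rules out $z\ge 216$; hence $15\le z\le 215$. Applying the same size bounds to the $D$-branch centered at $w$ forces $16\le d(w)\le 132$. The only unbounded quantity left is the root degree $d(v)$, which by Conjecture~\ref{conjecture1} is the largest degree in $G$; I would neutralize its influence exactly as in the proof of Lemma~\ref{le-Dz-B4-UpperBound}, by monotone estimation and passage to the limit $d(v)\to\infty$.

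The core step is a relocation transformation that detaches one $B_4$-branch from $w$ and re-attaches its center directly to the root $v$, with $\ABC$-change of the same shape as \eqref{eq:B4-1-ori}. The guiding principle is the monotonicity from Proposition~\ref{pro-20}: the difference $f(x,5)-f(x,4)$ is decreasing in $x$, so a degree-five center is cheaper to host at the high-degree root than at the moderate-degree vertex $w$, while a degree-four ($B_3$) center prefers $w$; this is the switching intuition of Lemma~\ref{lemma-switching} with the degree inequalities $d(v)>d(w)$ and $4<5$. Raising $d(v)$ by one only lowers every root-edge weight, so the large root degree works in our favor, and after bounding it out the sign of the $\ABC$-change reduces to a function of $z$, $d(w)$ and $x$ alone.

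The hard part will be closing the window $15\le z\le 215$. In Lemma~\ref{le-Dz-B4-UpperBound} the single boundary value $z=216$ sufficed because the relevant expression is monotone in $z$ on $[216,\infty)$, but across the smaller window that monotonicity is unavailable. After reducing to the extreme value in $x$ (the per-branch term $-f(z+1,5)+f(z,5)$ has constant sign, so one endpoint dominates) and to the boundary values of $d(w)$ via Propositions~\ref{pro-10} and~\ref{pro-20}, one is left with a two-parameter lattice on which the strict negativity of the relocation must be confirmed by direct evaluation, uniformly in the large root degree. Verifying this at every lattice point — thereby ruling out $D_{z,x}^{4}$-branches entirely and forcing all $B_4$-branches to be adjacent to the root — is the step I expect to demand the most care.
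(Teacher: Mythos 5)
Your overall strategy is the same as the paper's: argue by contradiction, use the star structure of big vertices to recognize the offending configuration as a $D_{z,x}^{4}$-branch with $1\le x\le 4$ and $15\le z\le 215$ (Remark~\ref{remark} and Lemma~\ref{le-Dz-B4-UpperBound}), then relocate the $B_4$-branch(es) to the root and show the ABC index strictly decreases (the paper's transformation $\mathcal{T}_3$ moves all $x$ of them at once; moving them one at a time is an immaterial difference). However, your execution plan conflates the paper's two distinct transformations. A relocation to the root produces a change of the shape of \eqref{eq:B4-2-ori}, not \eqref{eq:B4-1-ori}: equation \eqref{eq:B4-1-ori} belongs to $\mathcal{T}_2$ of Lemma~\ref{le-Dz-B4-UpperBound}, which moves a $B_3$-branch to a \emph{different} $D$-branch $w$ and leaves the root degree unchanged; that is where the free parameter $d(w)\in[16,132]$ and the term $-f(z+1,5)+f(z,5)$ come from. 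In your configuration $w$ is the center of the $D_{z,x}^{4}$-branch itself, so $d(w)=z+1\le 216$ is determined by $z$, and your simultaneous constraints ``$15\le z\le 215$'' and ``$16\le d(w)\le 132$'' are inconsistent.

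This conflation hides a genuine gap: your method for eliminating $d(v)$ (``monotone estimation and passage to the limit, exactly as in Lemma~\ref{le-Dz-B4-UpperBound}'') works for $\mathcal{T}_2$ precisely because $\mathcal{T}_2$ does not change the root degree, so only two terms involve $d(v)$. Relocation to the root raises $d(v)$ to $d(v)+x$, creating both the sum $\sum_i\bigl(-f(d(v),d(u_i))+f(d(v)+x,d(u_i))\bigr)$ over all other root edges and the term $x\bigl(-f(z+1,5)+f(d(v)+x,5)\bigr)$, and these behave oppositely in $d(v)$: the sum is negative but vanishes as $d(v)\to\infty$, while $f(d(v)+x,5)$ decreases in $d(v)$, so any upper bound on it must use the \emph{smallest} admissible root degree. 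If you discard the sum and bound the relocated-edge term at $d(v)=z+1$, the resulting estimate is positive; for instance, at $z=15$, $x=1$ it equals
\begin{equation*}
\left(-\tfrac{1}{4}+\tfrac{1}{\sqrt{15}}\right)+\bigl(-f(16,5)+f(17,5)\bigr)+14\bigl(-f(16,4)+f(15,4)\bigr)\approx +0.033,
\end{equation*}
so the argument collapses (the margin in the true limit is only about $-0.004$, so nothing cruder can be substituted). The paper's proof must therefore keep the root-edge sum, which in turn forces a lower bound on the degrees $d(u_i)$ of the root's other neighbors; this is exactly why it first argues, via the switching transformation (Lemma~\ref{lemma-switching}), that no $B_3$-branch can be attached to the root when a $D_{z,x}^{4}$-branch exists, so that $d(u_i)\ge 16$ except for at most $4-x$ values equal to $5$. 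It then shows a suitably grouped bound \eqref{eq:B4-2-ori-2} decreases in $d(v)$, checks it at $d(v)=300$, and verifies $z+1\le d(v)<300$ by direct computation. Both the no-$B_3$-at-root reduction and this two-regime treatment of $d(v)$ are missing from your plan, and without them the lattice verification you propose cannot be made ``uniform in the large root degree.''
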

\begin{proof}
Assume the opposite, that  there is at least one $B_4$-branch attached to a child of the root
(i.e., some $D_{z,x}^4$-branch exists). Recall that $15 \le z \le 215$, from Remark \ref{remark} and Lemma \ref{le-Dz-B4-UpperBound}. It is also worth mentioning that the existence of $D_{z,x}^4$-branch allows us to set an assumption that no $B_3$-branch is attached to the root vertex, due to the switching transformation (Lemma~\ref{lemma-switching}).
Then we move all such  $B_4$-branches to the root vertex
(illustrated  in Figure \ref{fig-B4_to_root-2} and denoted as transformation $\mathcal{T}_{3}$).
\begin{figure}[!ht]
\begin{center}
\includegraphics[scale=0.739]{./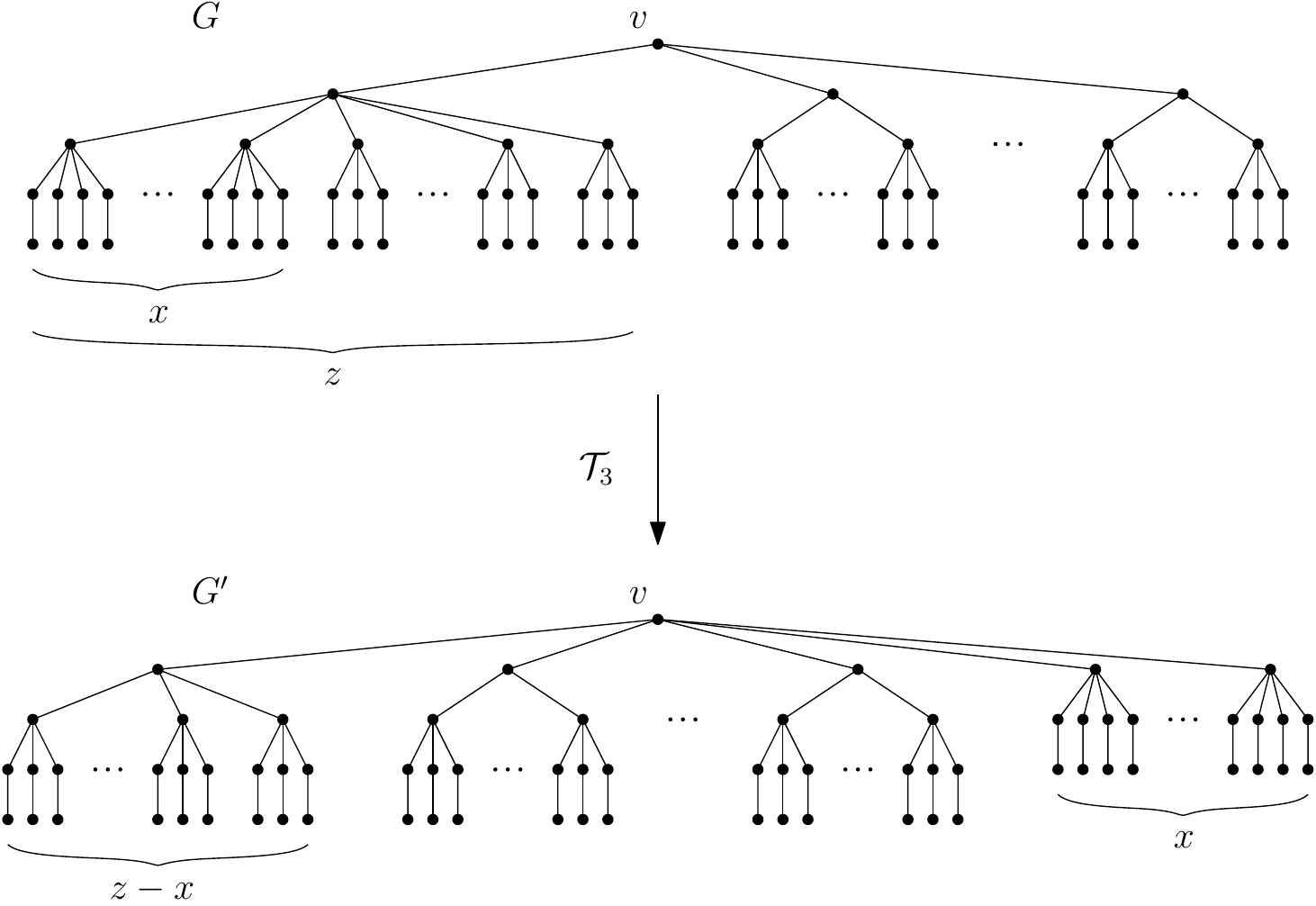}
\caption{Transformation which helps us to show that all  $B_4$-branches are attached to the root vertex of a minimal-ABC tree.}
\label{fig-B4_to_root-2}
\end{center}
\end{figure}
After applying this transformation, the change of ABC index is
\begin{eqnarray}\label{eq:B4-2-ori}
 ABC (G') - ABC (G) &=& -  f(d(v), z+1) + f (d(v) + x, z-x+1) - x \, f(z + 1, 5) - (z-x) f(z+1,4) \nonumber\\
&& + (z-x) f (z-x+1,4) + x \, f(d(v)+x,5) + \sum_{i= 1}^{d(v)-1} (-f(d(v), d(u_i))  \nonumber\\
&& + f(d(v)+x, d(u_i))),
\end{eqnarray}
where $d(u_i)$, $i = 1,2, \dots, d(v) - 1$, are the neighbors of $v$, except the center of $D_{z,x}^4$-branch.
We show that the right-hand side of (\ref{eq:B4-2-ori}) is negative in Appendix~\ref{appendix-B4-to-root}
\footnote{Due to interdependencies of the results in the appendices, at the end of the paper
they do not occur in the same order as they appear in the main text.},
for $1 \le x \le 4$ and $15 \le z \le 215$, which concludes
the proof.
\end{proof}

\begin{co}
A minimal-ABC tree does not contain $D_{z,x}^4$-branches, $x=1,2,3,4$.
\end{co}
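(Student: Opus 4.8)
The plan is to read off the corollary directly from Lemma~\ref{lemma-B4-root}, since the substantive work has already been done there. The crucial point is purely definitional: by the definition of a $D$-branch given earlier, the center of a $D_{z,x}^4$-branch is a child of the root vertex, and a $D_{z,x}^4$-branch carries $x \ge 1$ $B_4$-branches attached to that center. Hence every $B_4$-branch sitting inside such a branch is adjacent to the center, and therefore lies at distance two from the root; in particular it is \emph{not} adjacent to the root.

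First I would argue by contradiction. Suppose some minimal-ABC tree $G$ contains a $D_{z,x}^4$-branch for some $x \in \{1,2,3,4\}$. By the observation above, $G$ then possesses at least one $B_4$-branch that is not adjacent to the root. On the other hand, Lemma~\ref{lemma-B4-root} guarantees that in a minimal-ABC tree \emph{all} $B_4$-branches (of which there are at most four) are adjacent to the root vertex. These two facts are incompatible, so no $D_{z,x}^4$-branch can occur, for any $x$, which is exactly the claim.

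I do not expect a genuine obstacle here: unlike the preceding lemmas, this statement requires no new ABC-index estimate. The only detail to verify is that Lemma~\ref{lemma-B4-root} is genuinely applicable to the situation at hand. The existence of a $D_{z,x}^4$-branch constrains its size to $15 \le z \le 215$ (combining Remark~\ref{remark} with Lemma~\ref{le-Dz-B4-UpperBound}), and this is precisely the range over which the proof of Lemma~\ref{lemma-B4-root} establishes that the transformation $\mathcal{T}_{3}$ strictly decreases the ABC index. Thus the hypotheses line up and the contradiction is complete.
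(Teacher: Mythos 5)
Your proposal is correct and is essentially the paper's own reading: the paper offers no separate argument for this corollary because it is an immediate consequence of Lemma~\ref{lemma-B4-root}, whose proof itself opens by identifying the negation of its conclusion with the existence of a $D_{z,x}^4$-branch (with $15 \le z \le 215$ via Remark~\ref{remark} and Lemma~\ref{le-Dz-B4-UpperBound}), exactly as you observe. Your definitional contradiction argument, including the check that the size range matches the range covered by the transformation $\mathcal{T}_{3}$, is precisely the intended derivation.
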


\section[The size of $D_z$-branches]{The size of $D_z$-branches}

Here we give an upper bound on the size $z$ of $D_z$-branches depending
on their occurrences in a minimal-ABC tree.

\begin{lemma} \label{le-UpperBoundOnNumberOf_Dz-10}
If a minimal-ABC tree contains at least $261$ $D_z$-branches, then $z \leq 52$.
\end{lemma}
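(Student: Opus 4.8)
The plan is to argue by contradiction. Suppose $G$ is a minimal-ABC tree containing at least $261$ $D_z$-branches with $z \ge 53$. By Lemma~\ref{le-Dz-UpperBound} we also have $z \le 131$, so $z$ lives in the finite window $53 \le z \le 131$, and by Proposition~\ref{pro-diff-Dz} the centers of these branches all share essentially the same degree $z+1$, each carrying $z$ $B_3$-branches. In particular the root $r$ has degree $R \ge 261$. The strategy is to exhibit an order-preserving rearrangement $\mathcal{T}$ of the $B_3$-branches sitting on these many centers, in the same spirit as the splitting transformation used to establish Lemma~\ref{le-Dz-UpperBound}, that strictly lowers the ABC index when both the common size $z$ and the number of centers are large, thereby contradicting the minimality of $G$.

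Concretely, I would regroup the $B_3$-branches so as to change the number of $D$-centers (equivalently, the common size $z$), taking care that $\mathcal{T}$ keeps the order $n$ fixed. This last point is essential: since the minimal-ABC property is an optimization over trees of a fixed number of vertices, $\mathcal{T}$ must relocate whole $B_3$-branches and reuse vertices rather than delete any, and it is exactly the availability of at least $261$ centers that makes such a regrouping possible without wasting vertices. Writing the resulting difference $\ABC(G')-\ABC(G)$ as a sum of elementary terms $-h(x,y)+h(x',y')$, the contributions split into three groups: edges joining centers to their $B_3$-branches (second argument $4$); edges joining $r$ to the centers (arguments involving $R$ and the center degrees near $z+1$); and the internal edges of any relocated $B_3$-branch, whose contribution is invariant. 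The net effect trades a controlled increase on some edges against a saving elsewhere, and the role of the large center-count is precisely to make the saving dominate.

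The next step is to eliminate the dependence on the large, unknown root degree $R$. Each $R$-dependent term has the monotone shape handled by Propositions~\ref{pro-10} and \ref{pro-20}, so it can be replaced by its limiting value as $R \to \infty$; since $R \ge 261$ this bound is already extremely tight and contributes only negligible slack. After this reduction the estimate for $\ABC(G')-\ABC(G)$ becomes a function of the single integer variable $z$, with the count $261$ entering only through the multiplicities of the regrouped edges. It then remains to show this one-variable expression is negative on $53 \le z \le 131$; because it is monotone in $z$ on this range, it suffices to check its sign at the endpoint $z=53$, a finite numerical verification. The specific constants $261$ and $52$ are exactly the threshold at which this reduced quantity changes sign: with fewer centers, or with $z \le 52$, the rearrangement no longer pays off.

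I expect the principal difficulty to be twofold. First, designing $\mathcal{T}$ so that it genuinely preserves $n$ while exploiting the abundance of centers: the honest bookkeeping of which vertices become the new centers, and correspondingly which $B_3$-branches are dismantled or relocated, is delicate, and the whole argument hinges on doing this without creating or destroying vertices. Second, the simultaneous control of the $\Theta(R)$ perturbed root edges: although each is only slightly changed, their cumulative effect is of the same order as the localized change on the center-to-$B_3$-branch edges, so the two must be weighed against each other carefully. The monotonicity furnished by Propositions~\ref{pro-10} and \ref{pro-20} is what ultimately collapses this two-parameter competition into a single boundary check in $z$.
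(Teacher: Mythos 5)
Your overall strategy is the right one, and it is in fact the paper's: the transformation $\mathcal{T}_4$ of Figure~\ref{fig-Dz_size30} regroups the $B_3$-branches of $x$ $D_z$-branches into $n_k$ $D_k$- and $n_{k-1}$ $D_{k-1}$-branches while conserving the vertex count, and the verification is then reduced by monotonicity plus finite checking. But two of the reductions you rely on fail as stated. First, you cannot replace \emph{every} root-degree-dependent term by its limit as $R\to\infty$. In the paper's bound \eqref{eq-Lemma-Dz-30-00-1}, the $d(v)$-dependent part contains $(d(v)-x+7)f(d(v)+7,4)-(d(v)-x)f(d(v),4)$, which \emph{decreases} in $d(v)$ toward its limit $7/2$; substituting the limit for a decreasing term produces a lower bound, not an upper bound. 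Only the increasing terms (such as $-f(d(v),z+1)+f(d(v)+7,4)$) may be replaced by their limits. Hence the $d(v)$-dependence cannot be eliminated, and the slack is anything but negligible: for the representative case $z=53$, $x=261$, the resulting $d(v)$-decreasing bound \eqref{eq-Lemma-Dz-30-00-2} becomes negative only from $d(v)=259226$ onward, and the entire range $261\le d(v)<259226$ has to be settled by direct calculation with the sharper bound \eqref{eq-Lemma-Dz-30-00-1}. Near $d(v)=261$ the relaxed bound is positive, so your plan, taken literally, proves nothing there.

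Second, the claim that after removing the root degree the estimate becomes a single function of $z$, monotone on $[53,131]$, so that checking $z=53$ suffices, is unjustified. The change of the ABC index also involves the target size $k$ and the multiplicities $n_k$, $n_{k-1}$, which are forced by the conservation constraints $n_k+n_{k-1}=x+7$ and $kn_k+(k-1)n_{k-1}=xz-1$ of \eqref{eq-Lemma-Dz-10-1}; these integer parameters jump with $z$, and no monotonicity in $z$ is available (or claimed in the paper). The paper instead verifies each $z\in\{53,\dots,131\}$ separately with tailored parameters, listed in Table~\ref{table-large-z}. Finally, the vertex bookkeeping you flag as the "principal difficulty" but leave open is precisely where the transformation gets its shape: one $B_3$-branch is dismantled (the $-3f(4,2)-3f(2,1)$ terms in \eqref{eq-Lemma-Dz-30-00}) and its seven vertices become the centers of the seven additional $D$-branches; in particular your assertion that the internal edges of every relocated $B_3$-branch contribute invariantly is not quite correct for this one branch.
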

\begin{proof}
Assume that the minimal-ABC tree $G$ has (at least) $x$ $D_z$-branches.
Recall that from Lemma \ref{le-Dz-UpperBound}, we may assume that $z \le 131$.
%
%
We apply the transformation $\mathcal{T}_{4}$ depicted in Figure~{\ref{fig-Dz_size30}}.
\begin{figure}[!ht]
\begin{center}
\includegraphics[scale=0.75]{./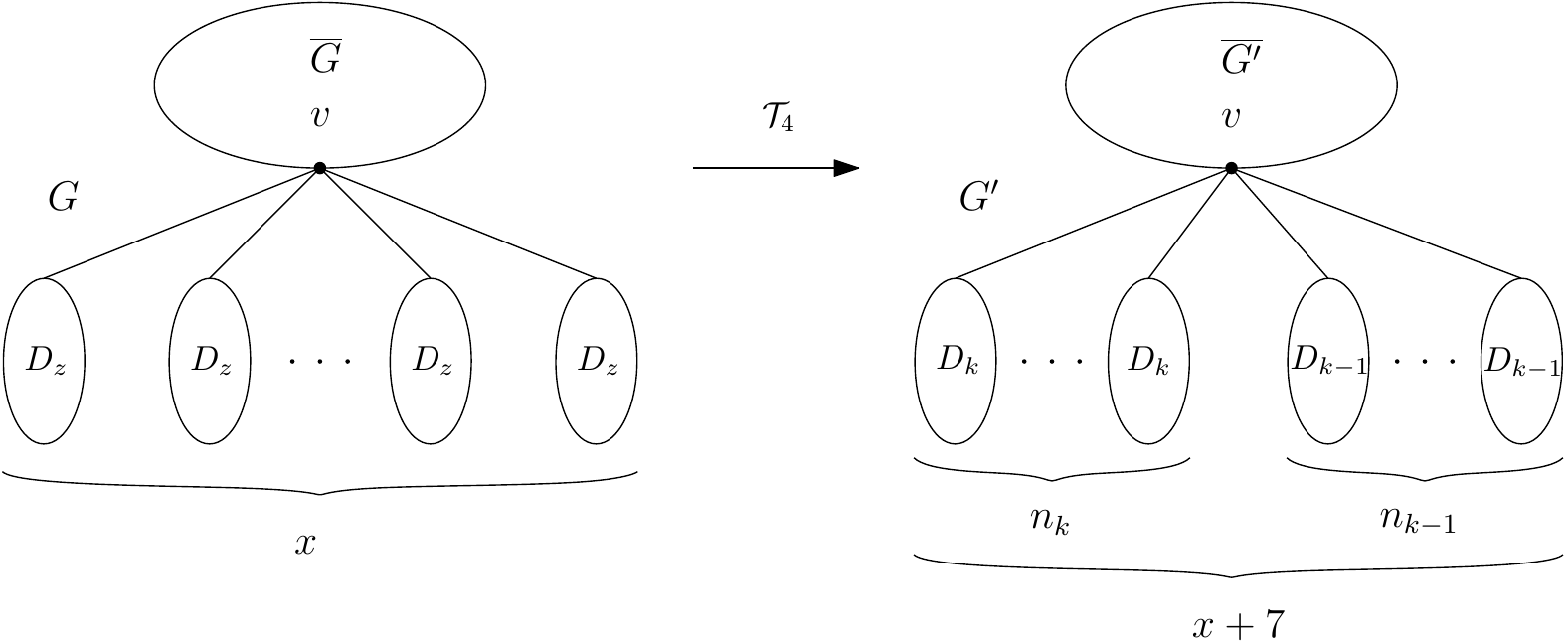}
\caption{The transformation which leads to upper bounds on the sizes of $D_z$-branches depending on their occurrences in minimal-ABC trees.}
\label{fig-Dz_size30}
\end{center}
\end{figure}
The change of the ABC index after applying $\mathcal{T}_{4}$ is
\begin{eqnarray}  \label{eq-Lemma-Dz-30-00}
ABC (G') - ABC (G) &=&-x f(d(v),z+1) -x \, z \, f(z+1,4) - 3 f(4,2) - 3 f(2,1)  \nonumber \\
&&  + n_{k}  f(d(v)+7,k+1)  + n_{k-1}  f(d(v)+7,k) + k n_{k}  f(k+1,4)  +  (k-1) n_{k-1}  f(k,4) \nonumber \\
&&  + \sum_{i=1}^{d(v)-x} ( - f(d(v),d(v_i)) + f(d(v)+7,d(v_i))),
\end{eqnarray}
where $d(v_i)$, $i=1, \dots, d(v)-x$, are the children of $v$, which are in $\overline{G}$ ({$\overline{G'}$}).
For fixed $z$, the parameters $x$, $k$, $n_k$, $n_{k-1}$ must satisfy
\begin{eqnarray}  \label{eq-Lemma-Dz-10-1}
n_{k} + n_{k-1}=x+7  \quad \text{and} \quad k n_{k}  + (k-1) n_{k-1} = x \, z - 1.
\end{eqnarray}
It can be shown that the right-hand side of \eqref{eq-Lemma-Dz-30-00} is always negative, for $53 \le z \le 131$ and $x \geq 261$.
Detailed proofs and results (for every particular $z$) are presented in Appendix~\ref{appendix-upper_bound}.
\end{proof}

\begin{lemma} \label{le-LowerBoundOnNumberOf_Dz}
A minimal-ABC tree with a root of degree at least $3249$ cannot contain more than $364$  $D_{z}$-branches, for $15 \le z \le 51$.
\end{lemma}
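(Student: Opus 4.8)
The plan is to argue by contradiction, following the same transformation-based scheme used for Lemma~\ref{le-UpperBoundOnNumberOf_Dz-10}. Suppose $G$ is a minimal-ABC tree whose root $v$ has degree $d(v)\ge 3249$ and which contains $x\ge 365$ $D_z$-branches for some fixed $z$ with $15\le z\le 51$. By Lemma~\ref{le-Dz-UpperBound} and Remark~\ref{remark} every admissible $D$-branch has size between $15$ and $131$, and since Conjecture~\ref{conjecture1} has been established the big vertices of $G$ induce a star centered at $v$; hence each of the $x$ branch centers is a genuine neighbor of $v$, of degree $z+1$ and carrying $z$ $B$-branches. The goal is to exhibit a transformation $\mathcal{T}$ producing a tree $G'$ on the same number of vertices with $\ABC(G')<\ABC(G)$, contradicting minimality.

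Because $z$ is small, the decisive observation is that \emph{consolidating} the $B$-branches of many small $D_z$-branches onto fewer, larger centers lowers the ABC index: both the per-edge cost $f(z+1,4)$ from a center to its $B$-branches and the root-edge cost $f(d(v),z+1)$ strictly decrease when $z$ is enlarged, as is immediate from the form of $f$ (cf.\ Propositions~\ref{pro-10} and \ref{pro-20}). Accordingly, $\mathcal{T}$ would redistribute the $xz$ $B$-branches carried by the $x$ centers onto a strictly smaller set of centers whose degrees are raised into an admissible $D$-branch size range, delete the emptied centers together with their root edges, and rebalance the freed vertices into $B$-branch material, exactly as the bookkeeping terms $-3f(4,2)-3f(2,1)$ do in the proof of Lemma~\ref{le-UpperBoundOnNumberOf_Dz-10}. (Beforehand I would invoke the switching transformation of Lemma~\ref{lemma-switching} to normalize which branches sit at $v$.) Writing $\ABC(G')-\ABC(G)$ as a signed sum of $f$-terms, it becomes, to leading order, an affine function of $x$ whose coefficient is negative—coming from the eliminated root edges and center-to-$B$-branch edges—offset by a positive boundary contribution from the surviving larger centers and the vertex rebalancing.

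The hypothesis $d(v)\ge 3249$ serves to tame the root-edge terms: each expression $-f(d(v),z+1)$, together with its partners $f(d(v),\cdot)$, is monotone in $d(v)$ by Propositions~\ref{pro-10} and \ref{pro-20}, so on the range $d(v)\ge 3249$ it may be bounded by its limit as $d(v)\to\infty$, where $f(d(v),y)\to 1/\sqrt{y}$. This removes the $d(v)$-dependence and reduces $\ABC(G')-\ABC(G)$ to an expression in $z$ and $x$ alone; the threshold $3249$ is chosen precisely so that the finite-$d(v)$ value is close enough to this limit for the estimate to hold. The threshold $364$ then emerges as $\max_{15\le z\le 51}g(z)$, where $x>g(z)$ is the point at which the negative per-branch gain overcomes the positive boundary cost for that particular $z$. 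It remains to check that the resulting bound on $\ABC(G')-\ABC(G)$ is strictly negative for every integer $z$ with $15\le z\le 51$ once $x\ge 365$; this is a finite monotonicity-plus-numerical verification of the kind carried out in the manner of Appendix~\ref{appendix-upper_bound}, which I would relegate to an appendix.

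The main obstacle is not the estimation but the design of a single transformation $\mathcal{T}$ that is simultaneously (i) valid, i.e.\ yields a tree of the same order with all consolidated centers of admissible size at most $131$, (ii) compatible with the already-established structural constraints, and (iii) delivers an ABC difference whose leading coefficient in $x$ is negative \emph{uniformly} over the whole range $15\le z\le 51$. Pinning the exact constants $364$ and $3249$—rather than some cruder admissible pair—forces the careful limiting and monotonicity analysis described above, and arranging the rebalancing so that the vertex count is preserved exactly while the overhead stays controlled is the delicate point.
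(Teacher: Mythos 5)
Your overall strategy coincides with the paper's: Lemma~\ref{le-LowerBoundOnNumberOf_Dz} is proved there with the transformation $\mathcal{T}_{5}$ (Figure~\ref{fig-Dz_size-LowerBound}), which is exactly the consolidation you describe --- $x$ small $D_z$-branches are merged into $x-7$ larger branches of sizes $k$ and $k-1$ subject to $n_{k}+n_{k-1}=x-7$ and $k n_{k}+(k-1)n_{k-1}=xz+1$, the root degree drops by $7$, and the seven freed center vertices are recycled as $B_3$-branch material --- followed by monotonicity estimates in $d(v)$ and a finite numerical verification over $15\le z\le 51$ (Appendix~\ref{appendix-upper_bound-2}, Table~\ref{table-small-z}), with the pair $(364,3249)$ forced by the worst case $z=51$, $x=365$. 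One cosmetic slip: in this direction the bookkeeping terms enter with \emph{positive} sign, $+3f(4,2)+3f(2,1)$ (a new $B_3$-branch is assembled from the freed centers), not $-3f(4,2)-3f(2,1)$ as in $\mathcal{T}_{4}$ of Lemma~\ref{le-UpperBoundOnNumberOf_Dz-10}; your verbal description of the operation is nevertheless the right one.

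There is, however, one step that would fail as literally written. You claim that every $f(d(v),\cdot)$-term, being monotone in $d(v)$, "may be bounded by its limit as $d(v)\to\infty$," so that $\ABC(G')-\ABC(G)$ "reduces to an expression in $z$ and $x$ alone." That is only legitimate for the terms that \emph{increase} in $d(v)$; the positive terms $n_{k}f(d(v)-7,k+1)$ and $n_{k-1}f(d(v)-7,k)$ \emph{decrease} in $d(v)$, so their limits $n_k/\sqrt{k+1}$, $n_{k-1}/\sqrt{k}$ are lower bounds, and replacing them by these limits does not give a valid upper bound on the ABC change. The paper's actual argument groups the terms so that each group is monotone of a known direction, replaces only the increasing groups by their limits (e.g.\ $(d(v)-x-7)f(d(v)-7,z+2)-(d(v)-x)f(d(v),z+2)\le -7/\sqrt{z+2}$), and obtains a weaker bound, (\ref{eq-Lemma-Dz-30-00-third-1}), that still depends on $d(v)$ and decreases in it; for $z=51$, $x=365$ this bound is verified negative at $d(v)=3863$, hence for all $d(v)\ge 3863$, and the remaining window $3249\le d(v)<3863$ is closed by direct calculation on the un-relaxed expression (\ref{eq-Lemma-Dz-30-00-second}). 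In other words, no pure limiting argument pins the constant $3249$; a two-regime verification (limit-based bound for large $d(v)$, exact evaluation on a finite window) is indispensable, and your plan goes through only once the "remove the $d(v)$-dependence" step is replaced by it.
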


\begin{proof}
Consider the transformation $\mathcal{T}_{5}$ depicted in Figure~\ref{fig-Dz_size-LowerBound}.
\begin{figure}[!ht]
\begin{center}
\includegraphics[scale=0.75]{./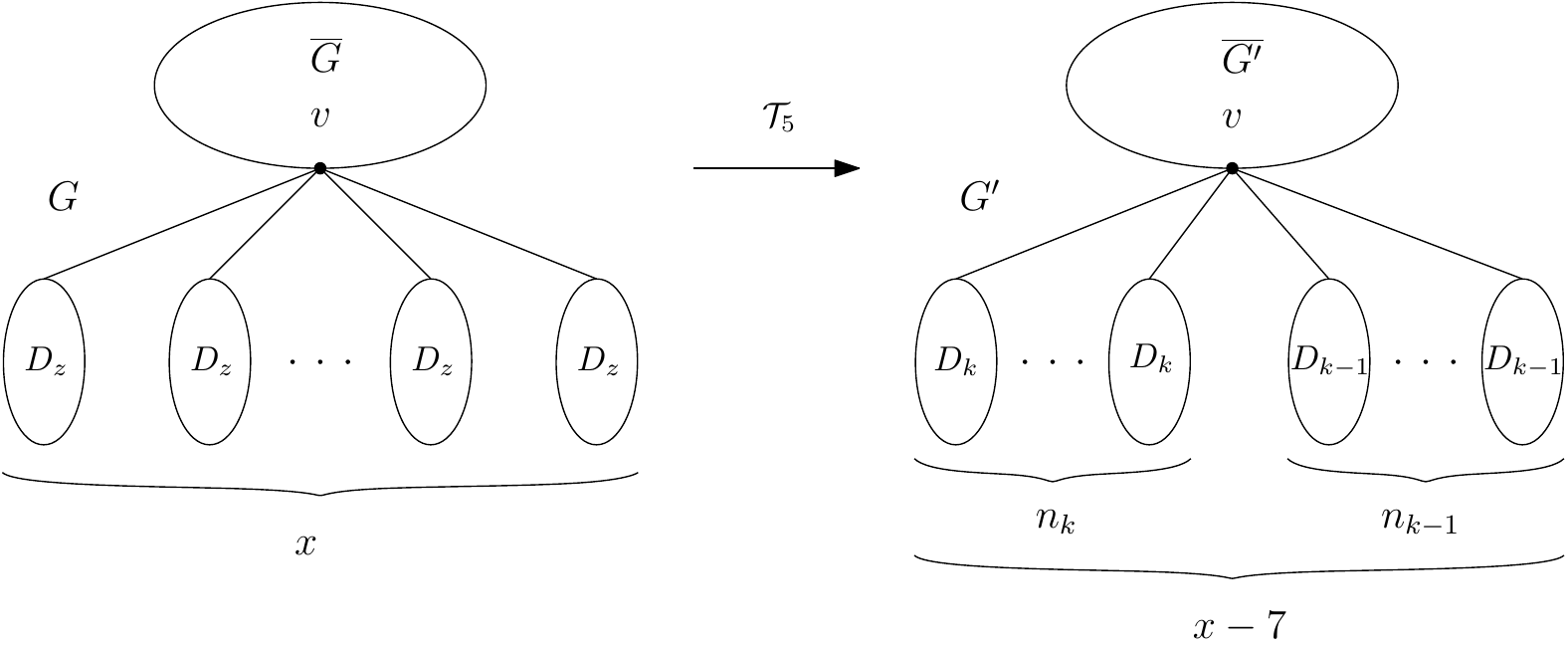}
\caption{The transformation which leads to upper bounds on the sizes of $D_z$-branches depending on their occurrences in minimal-ABC trees
and the degrees of their root vertices.}
\label{fig-Dz_size-LowerBound}
\end{center}
\end{figure}
The change of the ABC index after applying $\mathcal{T}_{5}$ is
\begin{eqnarray}  \label{eq-Lemma-Dz-30-00-second-pre}
%
ABC (G') - ABC (G) &=&-x \, f(d(v),z+1) -x \, z \, f(z+1,4) + 3 f(4,2) + 3 f(2,1) \nonumber \\
&&  + n_{k}  f(d(v)-7,k+1)  + n_{k-1}  f(d(v)-7,k) + k n_{k}  f(k+1,4)   \nonumber \\
&&  +  (k-1) n_{k-1}  f(k,4) + \sum_{i=1}^{d(v)-x} ( - f(d(v),d(v_i)) + f(d(v)-7,d(v_i)))
\end{eqnarray}
with constraints
\begin{eqnarray}  \label{eq-Lemma-Dz-10-12}
n_{k} + n_{k-1}=x-7  \quad \text{and} \quad k n_{k}  + (k-1) n_{k-1} = x \, z + 1,
\end{eqnarray}
where $d(v_i)$, $i=1, \dots, d(v)-x$, are the children of $v$, which are in $\overline{G}$ ({$\overline{G'}$}).
It can be shown that the right-hand side of \eqref{eq-Lemma-Dz-30-00-second-pre} is always negative, for $15 \le z \le 51$, $d(v) \geq  x \geq 365$.
We present detailed proofs and particular results, for every $z$, in Appendix~\ref{appendix-upper_bound-2}.
\end{proof}

\section[$B$-branches adjacent to the root]{$B$-branches adjacent to the root }

By Lemma~\ref{lemma-B4-root}, if there are $B_4$-branches in a minimal-ABC tree, then they must be attached to the root.
Also, the $B_2$-branches cannot be attached to the root, otherwise, we can apply a switching transformation (Lemma \ref{lemma-switching})
between one $B_2$-branch attached to the root and one $B_3$-branch from a $D$-branch.
Using the same argumentation, we can also assume that there is no $B_3^{**}$-branch attached to the root.

\begin{lemma} \label{thm-bound-B-branches-to-root}
The number of $B$-branches  adjacent to the root vertex of a  minimal-ABC tree is at most $919$.
\end{lemma}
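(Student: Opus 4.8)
\emph{Setup and reduction.} I argue by contradiction: assume a minimal-ABC tree $G$ whose root $r$ carries more than $919$ $B$-branches, and exhibit a transformation that strictly lowers the ABC index. By Theorem~\ref{thm-20} and Lemma~\ref{lemma-B4-root} at most four of the branches at $r$ are $B_4$-branches, while the switching argument recorded just before the statement rules out $B_2$- and $B_3^{**}$-branches at $r$; a $B_1$-branch at $r$ forbids all $B_4$-branches by Theorem~\ref{noB1B4} and can itself be switched against a $B_3$-branch sitting in some $D$-branch. Hence, up to a bounded additive constant, every branch at $r$ is a $B_3$-branch, and it suffices to bound the number $s$ of $B_3$-centres that are children of $r$.

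\emph{Draining into a $D$-branch.} The principal tool is to detach one such $B_3$-centre from $r$ and re-attach it to the centre of a neighbouring $D_z$-branch ($15\le z\le 130$), turning it into a $D_{z+1}$-branch; no vertex is created or destroyed. With $f=h$ the change is
\begin{eqnarray*}
ABC(G') - ABC(G) &=& -f(d(r),4) + f(z+2,4) + z\,\bigl(f(z+2,4)-f(z+1,4)\bigr) \\
&& {}+ \bigl(f(d(r)-1,z+2) - f(d(r),z+1)\bigr) + \sum_i \bigl(f(d(r)-1,d(v_i)) - f(d(r),d(v_i))\bigr),
\end{eqnarray*}
where the $v_i$ run over the remaining neighbours of $r$. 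By Propositions~\ref{pro-10} and~\ref{pro-20} the two $d(r)$-dependent brackets converge monotonically to their limits as $d(r)\to\infty$, and passing to the limit leaves $-\tfrac12+f(z+2,4)+z(f(z+2,4)-f(z+1,4))+\bigl(\sqrt{1/(z+2)}-\sqrt{1/(z+1)}\bigr)$, which a finite sweep over $15\le z\le 130$ shows to be negative. Thus minimality forces that either no $B_3$-branch is attached to $r$, or every $D$-branch has already reached the maximal size $z=131$ of Lemma~\ref{le-Dz-UpperBound}.

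\emph{The saturated case and the threshold.} When all $D$-branches have size $131$, Lemma~\ref{le-UpperBoundOnNumberOf_Dz-10} caps their number at $260$, and no further $B_3$-centre can be absorbed. I would then regroup a block of the $B_3$-branches at $r$ into fresh $D_z$-branches of near-optimal size, a transformation in the spirit of $\mathcal{T}_4$ (Figure~\ref{fig-Dz_size30}), each new centre being harvested by trimming a single pendant path so that $n$ is preserved. The number $919$ is the least block size for which this regrouping is ABC-decreasing simultaneously for every admissible $z$ and every occurring root degree, established by the same style of finite computation used in the appendices.

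\emph{Main obstacle.} The delicate point is exactly this last estimate. A single freshly created $D$-branch \emph{raises} the index (its limiting contribution $\sqrt{1/(z+1)}+z(f(z+1,4)-\tfrac12)$ is positive while $d(r)$ is still large), and only a whole block---where the accumulated reduction of the root degree finally outweighs the per-branch overhead---turns the sign negative; this non-convexity is precisely why a positive threshold, rather than $0$, is forced at this stage. Because the competing contributions cancel to order $10^{-3}$ per branch, the sign must be certified by the lower-order terms together with the bounded vertex-bookkeeping correction from the trimmed pendant path, which is what makes the finite verification unavoidable.
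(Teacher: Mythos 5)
Your proof breaks down at the ``draining'' step, and the error is one of monotonicity direction. In the change formula for moving one $B_3$-branch from the root into a $D_z$-branch, the bracket $-f(d(r),z+1)+f(d(r)-1,z+2)$ is of the form covered by Proposition~\ref{pro-20}: it \emph{decreases} in $d(r)$, so it approaches its limit $\sqrt{1/(z+2)}-\sqrt{1/(z+1)}$ \emph{from above}; likewise each term of $\sum_i\bigl(f(d(r)-1,d(v_i))-f(d(r),d(v_i))\bigr)$ is strictly positive and tends to $0$ from above. Hence the limiting expression you sweep over $15\le z\le 130$ is a \emph{lower} bound on the actual change, not an upper bound, and its negativity proves nothing. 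In fact your dichotomy (``no $B_3$-branch at the root unless every $D$-branch is saturated at $z=131$'') is false: the paper's computed minimal trees violate it, e.g.\ for $n=5047$ the minimal tree has $n_3=164$ $B_3$-branches at the root together with $D_{50}$- and $D_{51}$-branches and root degree $176$; a direct evaluation of your formula there gives a (small but) \emph{positive} change, exactly because the finite-$d(r)$ correction terms, of order $1/d(r)$, dominate the limit value, which is only of order $-10^{-3}$. This near-cancellation is not a technical nuisance you can defer; it is why single-branch draining cannot work at all.

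Consequently your third stage, which is where the constant $919$ would have to be produced, is both unproven and aimed at the wrong configuration: it only addresses the ``saturated'' case ($D_{131}$-branches everywhere), which by the above never needs to occur, and it defers the entire quantitative content to an unspecified computation. The paper's proof takes a different and essentially unavoidable route: whenever $x\ge 920$ $B$-branches sit at the root, it applies one bulk transformation ($\mathcal{T}_6$, Figure~\ref{fig-t3}) that converts $365$ of the root's $B_3$-branches into $7$ new $D_{52}$-branches attached to the root (one $B_3$-branch is dismantled to supply the seven new centres), thereby dropping the root degree by $358$ in one stroke; the sign of the resulting change \eqref{eq-thm-bound-B3-branches-to-root} is then certified in Appendix~\ref{appendix-bound-B-branches-to-root} by monotonicity reductions and finite checks over the maximal $D$-branch size $k$ and the possible branch combinations. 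The key idea you are missing is that the gain must come from a large simultaneous reduction of the root degree (which improves every remaining edge at the root at once), not from enlarging existing $D$-branches one $B_3$-branch at a time.
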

\begin{proof}
Assume that there are at least $x$ $B$-branches ($B_3$- and $B_4$-branches) adjacent to the root vertex $v$ (possibly there are other branches adjacent to $v$, besides the $x$ $B$-branches). Recall that there are at most $4$ $B_4$-branches, and assume that all $B_4$-branches are counted in the $x$ $B$-branches.
We apply the transformation $\mathcal{T}_{6}$ depicted in Figure~\ref{fig-t3}.
\begin{figure}[!ht]
\begin{center}
\includegraphics[scale=0.735]{./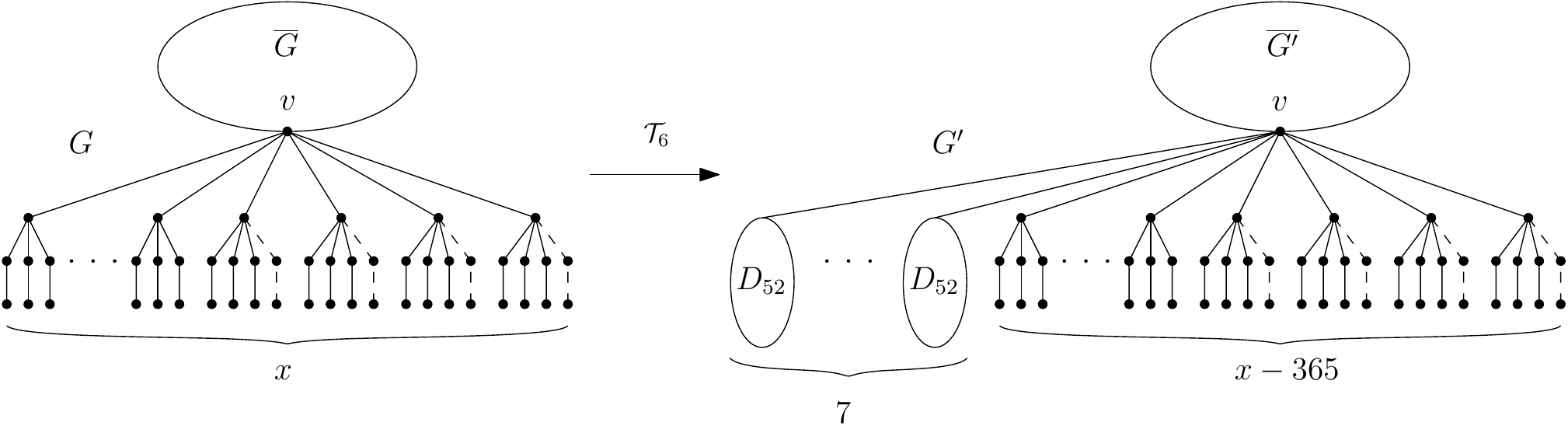}
\caption{The transformation which helps us to obtain an upper bound on the number of $B$-branches attached to the root (the dashed line segments are optional).}
\label{fig-t3}
\end{center}
\end{figure}
After applying $\mathcal{T}_{6}$, the change of the ABC index is
\begin{eqnarray} \label{eq-thm-bound-B3-branches-to-root}
%
ABC (G') - ABC (G) &=& \sum_{i=1}^{d(v)-x}( -f(d(v),d(v_i))+f(d(v'),d(v_i)))
-f(d(v),4)+f(d(v'),53)  \nonumber \\
&&+6(-f(2,1)+f(d(v'),53))  +364(-f(d(v),4)+f(53,4))  \nonumber \\
&& +(x-365-n_4)(-f(d(v),4)+f(d(v'),4)) +n_4(-f(d(v),5)+f(d(v'),5)), \qquad
\end{eqnarray}
where $d(v')=d(v)-358$, $n_4$ is the number of $B_4$-branches attached to $v$, and $v_i$, $i=1, \dots, d(v)-x$, are the  vertices adjacent to $v$ in $\overline{G}$ ({$\overline{G'}$}). It is worth mentioning that $v_i$ is the center of either a $D$-branch or a $B_3$-branch.
When $x \ge 920$, it can be verified that the right-hand side of \eqref{eq-thm-bound-B3-branches-to-root} is always negative. We present the detailed proofs and particular results, for every possible combination of $D$-branches, in Appendix~\ref{appendix-bound-B-branches-to-root}.
\end{proof}


\subsection[Existence of $B_4$-branches]{Existence of $B_4$-branches}

Here, we show that if a minimal-ABC tree is large enough, then it does not contain $B_4$-branches.

\begin{lemma} \label{lemma-no-B4-branches}
A minimal-ABC tree, whose root has a degree of at least $1228$, does not contain $B_4$-branches.
\end{lemma}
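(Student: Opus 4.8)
The plan is to argue by contradiction. Suppose $G$ is a minimal-ABC tree whose root $v$ has degree $d(v) \ge 1228$ and which nonetheless contains at least one $B_4$-branch. By Lemma~\ref{lemma-B4-root} every $B_4$-branch is adjacent to $v$, and by Theorem~\ref{thm-20} their number $n_4$ satisfies $1 \le n_4 \le 4$. First I would pin down the local structure around $v$: by the discussion preceding Lemma~\ref{thm-bound-B-branches-to-root} no $B_2$- and no $B_3^{**}$-branch is attached to $v$ (otherwise a switching transformation, Lemma~\ref{lemma-switching}, applies), and by Theorems~\ref{noB1B4} and~\ref{noB2B4} the presence of a $B_4$-branch forbids any $B_1$- or $B_2$-branch in $G$ altogether. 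Hence every neighbour of $v$ is the center either of a $B_3$-branch, of one of the $n_4$ $B_4$-branches, or of a $D$-branch whose size $z$ obeys $15 \le z \le 131$ by Remark~\ref{remark} and Lemma~\ref{le-Dz-UpperBound}.

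The next step is a counting observation guaranteeing a rich supply of $D$-branches to absorb material. Lemma~\ref{thm-bound-B-branches-to-root} bounds the number of $B$-branches adjacent to $v$ by $919$, so at least $d(v) - 919 \ge 1228 - 919 = 309$ neighbours of $v$ are centers of $D$-branches. This is precisely what makes the threshold $1228$ useful: it leaves enough room to reorganize without running out of $D$-branches.

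I would then introduce a transformation $\mathcal{T}_{7}$ that removes one $B_4$-branch while preserving the order of the tree: detach one pendant path of length two from the degree-$5$ center $w$ of a $B_4$-branch, thereby turning that branch into a $B_3$-branch, and reattach the freed vertices within the $D$-branch structure (enlarging a $D$-branch, or completing a $B_3$-branch at one of the $\ge 309$ available $D$-centers). Writing the resulting change $ABC(G') - ABC(G)$ as a signed sum of terms $f(x,y) = h(x,y)$ — one group recording the change of $d(v)$ and its incident edges, one recording the $B_4 \to B_3$ conversion at $w$, and one recording the growth at the receiving $D$-center — I would bound each group by its monotone limit via the expressions $-h(x,y) + h(x',y')$ of Propositions~\ref{pro-10} and~\ref{pro-20}, exactly as in the proofs of Lemmas~\ref{le-Dz-B4-UpperBound} and~\ref{thm-bound-B-branches-to-root}. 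The aim is to show this quantity is strictly negative for all admissible parameters $d(v) \ge 1228$, $15 \le z \le 131$ and $1 \le n_4 \le 4$, contradicting the minimality of $G$.

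The hard part will be twofold: designing $\mathcal{T}_{7}$ so that it keeps the vertex count fixed while holding every affected degree in the range where the monotonicity of Propositions~\ref{pro-10} and~\ref{pro-20} can be exploited, and then verifying strict negativity uniformly over the finite-but-large parameter box $\{d(v) \ge 1228\} \times \{15 \le z \le 131\} \times \{1 \le n_4 \le 4\}$ together with the degrees of the finitely many admissible $D$-branch types. As in the preceding lemmas I expect to reduce the estimate to a quantity that is monotone in $d(v)$, evaluate it at the boundary value $d(v) = 1228$, and carry out the remaining per-$z$, per-$n_4$ numerical checks in a dedicated appendix.
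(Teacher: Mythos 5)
Your overall framing (argument by contradiction, reduction to the root via Lemma~\ref{lemma-B4-root}, exclusion of $B_1$-/$B_2$-branches via Theorems~\ref{noB1B4} and~\ref{noB2B4}, and the count of at least $1228-919=309$ $D$-branches) is consistent with the paper, but the core of your argument --- the local transformation $\mathcal{T}_7$ --- fails. Detaching one pendant path of length two from the $B_4$-center $w$ turns the root--$w$ edge contribution from $f(d(v),5)$ into $f(d(v),4)$, which \emph{increases} the ABC index by $f(d(v),4)-f(d(v),5)\approx 0.0526$ at $d(v)=1228$, while every edge incident to a degree-two vertex contributes the constant $\sqrt{1/2}$ and therefore cancels exactly between removal and reattachment. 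The possible reattachments recover an order of magnitude less than this loss: attaching the freed path to a $D_z$-center (creating a $B_1$-branch) gains only $\bigl(f(d(v),z+2)-f(d(v),z+1)\bigr)+z\bigl(f(z+2,4)-f(z+1,4)\bigr)\approx -0.0013-0.0092$ for $z=50$, $d(v)=1228$, so the net change is about $+0.042>0$; attaching it to a $B_3$-center inside a $D$-branch (creating a $D_{z,1}^4$-branch) changes the index by $\bigl(f(d(v),4)-f(d(v),5)\bigr)+\bigl(f(z+1,5)-f(z+1,4)\bigr)\approx +0.003>0$. Thus every version of the move you describe makes the tree \emph{worse}; this is precisely why $B_4$-branches at the root are stubborn (it is Lemma~\ref{lemma-B4-root} read in reverse). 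Your alternative phrasing of ``completing a $B_3$-branch'' is moreover not well defined: the freed path has only two vertices, while a $B_3$-branch needs seven, so the order of the tree cannot be preserved that way.

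The paper's proof has a structure your proposal is missing, and any correct proof needs something like it. First, it performs a case analysis over which pair of $D$-branch sizes occurs: for combinations $D_{z-1}$/$D_z$ with $15\le z\le 48$ or $55\le z\le 131$, the per-size upper bounds on the number of $D_z$-branches (Lemmas~\ref{le-UpperBoundOnNumberOf_Dz-10} and~\ref{le-LowerBoundOnNumberOf_Dz}, Tables~\ref{table-large-z} and~\ref{table-small-z}) together with the bounds on $B$-branches (Table~\ref{table-combin-small-z}) already force $d(v)<1228$, a contradiction with no transformation at all. Only in the remaining window ($D_{49}$/$D_{50}$ through $D_{53}$/$D_{54}$) is a transformation applied, and it is global rather than local: it dissolves $2z-3$ entire $D_z$-branches \emph{together with} the $B_4$-branch and reassembles all of their vertices as $2z-1$ $D_{z-1}$-branches --- this is an exact vertex-count identity, $(2z-3)(7z+1)+9=(2z-1)\bigl(7(z-1)+1\bigr)$ --- raising the root degree by one; feasibility (having $2z-3$ $D_z$-branches of a common size) is itself secured by a further counting step, since otherwise $d(v)<(2z-3)+(2z-1)+919<1228$. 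Because the $B_4$-center disappears entirely in this rearrangement, the positive term $f(d(v),4)-f(d(v),5)$ that defeats your transformation never arises, and the negativity of the resulting change \eqref{eq-lemma-no-B4-10} can then be verified as in Appendix~\ref{appendix-exist-B4}. A single-path move cannot substitute for this global rearrangement.
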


\begin{proof}
Assume that there exists at least one $B_4$-branch in a minimal-ABC tree $G$. The existence of $B_4$-branch means that $B_1$- or $B_2$-branches would not occur, from Theorems \ref{noB1B4} and \ref{noB2B4}, so each $D$-branch occurring in $G$ should be a $D_z$-branch for some $z$.

Let us consider first the case when $G$ contains a combination of $D_{49}$- and $D_{48}$-branches.
Since  $d(v) \geq 1228$, by Lemma~\ref{le-LowerBoundOnNumberOf_Dz} (Table \ref{table-small-z}), there can be at most $121$ $D_{49}$-branches
and $91$ $D_{48}$-branches. By Lemma~\ref{thm-bound-B-branches-to-root},
there cannot be more than $919$ $B$-branches attached to the root vertex-
actually in the case with a combination of $D_{49}$- and $D_{48}$-branches there cannot be more than $824$ $B$-branches attached to the root vertex (see Table \ref{table-combin-small-z}).
Thus, in this case it holds that $d(v) \leq 121 + 91 + 824 = 1036 < 1228$, which is a contradiction to the initial assumption that $d(v) \ge 1228$.
Similarly, we can show that the cases when $G$ contains a combination $D_{z}$- and $D_{z-1}$-branches, for $15 \leq z \leq 48$ and $55 \leq z \leq 131$,
are not possible.

It remains to prove the claim of the lemma when $G$ contains $D_{49}$- and $D_{50}$-branches; $D_{50}$- and $D_{51}$-branches; $D_{51}$- and $D_{52}$-branches;
$D_{52}$- and $D_{53}$-branches; and $D_{53}$- and $D_{54}$-branches.
Consider the transformation from Figure~\ref{fig-no-B4}.
Recall that there can be at most four $B_4$-branches and they are attached to the root of the minimal-ABC tree, from Theorem~\ref{thm-20} and Lemma~\ref{lemma-B4-root}, so in $\overline{G}$ there can be at most three $B_4$-branches, $B_3$-branches, and $D$-branches.

Since a tree may contain a combination of $D_{z}$- and $D_{z+1}$-branches, with $49 \le z \le 53$,
the transformation $\mathcal{T}_{7}$ to be feasible, there must be at least
$2z-3$ $D_{z}$-branches or $2(z+1)-3$ $D_{z+1}$-branches.
Observe that when this is not the case, then $d(v) < 2z-3 + 2(z+1)-3 + 919 < 1228$,
and thus the transformation $\mathcal{T}_{7}$ can be applied.
Therefore,  to be consistent with the notation in Figure~\ref{fig-no-B4},
we further assume that $G$ contains (at least) $2z-3$ $D_{z}$-branches, $49 \le z \le 54$,
and in addition to possibly either $D_{z-1}$-branches or $D_{z+1}$-branches.

\begin{figure}[!ht]
\begin{center}
\includegraphics[scale=0.75]{./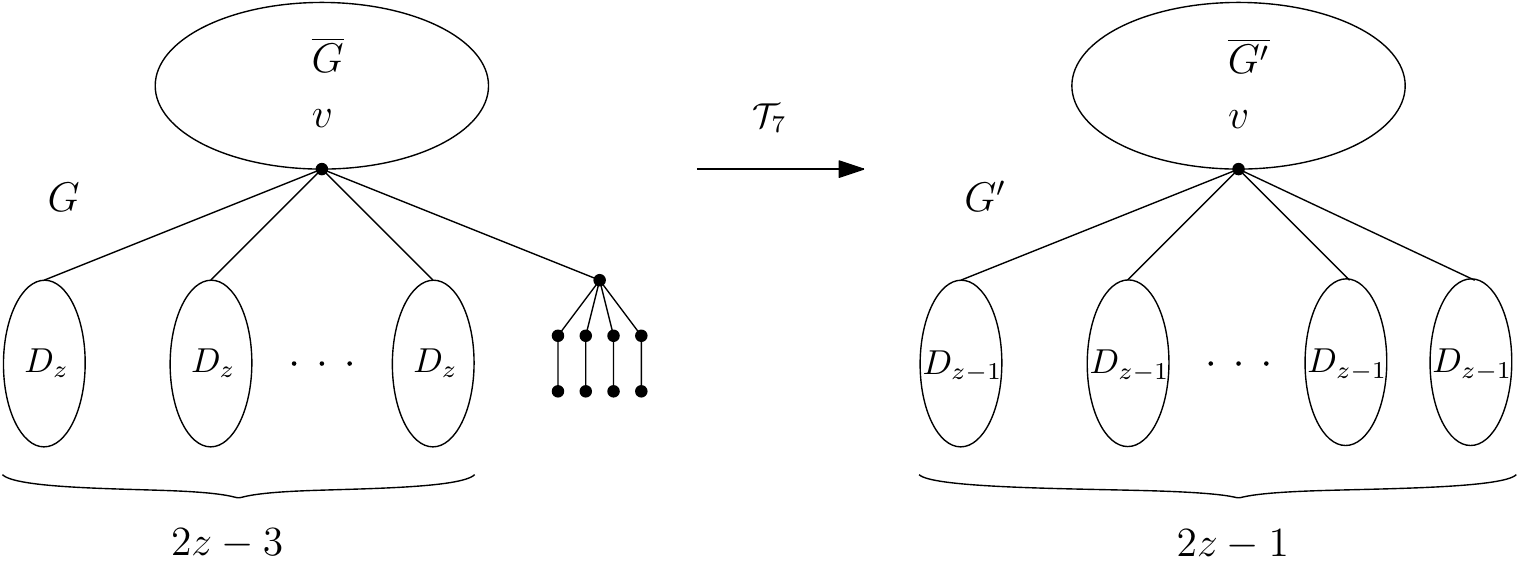}
\caption{Suggested transformation, which shows that there are no $B_4$-branches, when a minimal-ABC tree is large enough. }
\label{fig-no-B4}
\end{center}
\end{figure}
After applying $\mathcal{T}_{7}$, the change of the ABC index is
\begin{eqnarray}  \label{eq-lemma-no-B4-10}
ABC (G') - ABC (G) &=& -(2z-3) f(d(v),z+1)  -(2z-3) z \, f(z+1,4) -  f(d(v),5) - f(4,2) - f(2,1)  \nonumber \\
&&  + (2z-1) f(d(v)+1,z)  + (2z-1) (z-1) f(z,4)  \nonumber \\
&& + n_3 (-f(d(v),4)+f(d(v)+1,4)) + (n_4 - 1) (-f(d(v),5)+f(d(v)+1,5))  \nonumber \\
&& + \sum_{i=1}^{d(v)-2z+3 -n_3 -n_4} (- f(d(v),d(v_i)) + f(d(v) + 1, d(v_i))),
\end{eqnarray}
where $n_3$ is the number
of $B_3$-branches attached to the root, $n_4$ is the number of $B_4$-branches (attached to the root),
and $d(v_i)$, $i=1, \dots, d(v)-2z+3-n_3-n_4$, are the children of $v$, which are centers of $D$-branches in $\overline{G}$ ({$\overline{G'}$}). By Theorem~\ref{thm-20}, $n_4$ can be at most $4$, and $n_3 + n_4 \le 919$ by Lemma~\ref{thm-bound-B-branches-to-root}.

Verification shows that the right-hand side of (\ref{eq-lemma-no-B4-10}) is negative.
The detailed deductions are presented  in Appendix \ref{appendix-exist-B4}.
\end{proof}
%

\subsection[$B$-branches attached to the root]
{$B$-branches attached to the root}

\begin{lemma} \label{lemma-no-B-branches-10}
If the root vertex of a minimal-ABC tree has a degree of at least $2956$,
then there are no $B$-branches attached to the root.
\end{lemma}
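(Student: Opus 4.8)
```latex
\noindent\textbf{Proof proposal.}
The plan is to show that once the root degree is large enough, any $B$-branch
attached to the root can be profitably relocated or traded against a
$B_3$-branch living inside some $D$-branch, so that a minimal-ABC tree simply
cannot have such a branch at the root.  The strategy mirrors the one used for
Lemma~\ref{thm-bound-B-branches-to-root} and Lemma~\ref{lemma-no-B4-branches}:
we combine the already established structural restrictions with an explicit
local transformation and a sign analysis of the resulting change in the ABC
index.  First I would invoke the earlier lemmas to pin down the admissible
configuration near the root.  By Lemma~\ref{lemma-no-B4-branches}, since the
root degree is at least $2956 > 1228$, there are no $B_4$-branches at all; by
Theorems~\ref{noB1B4} and~\ref{noB2B4} together with the switching transformation
(Lemma~\ref{lemma-switching}), no $B_1$-, $B_2$-, or $B_3^{**}$-branches can be
attached to the root either.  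Hence the only $B$-branches that could be attached
to the root are ordinary $B_3$-branches, and it suffices to forbid these.

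Next I would set up the transformation.  Assume for contradiction that at least
one $B_3$-branch is attached to the root $v$.  I would move this $B_3$-branch off
the root and onto the center of some $D_z$-branch, thereby raising that center's
degree from $z+1$ to $z+2$ (or, symmetrically, relocate it to increase the root's
effective structure), and compute $\mathrm{ABC}(G')-\mathrm{ABC}(G)$ exactly as
in \eqref{eq-thm-bound-B3-branches-to-root} and \eqref{eq-lemma-no-B4-10}.  The
change will again split into a head term coming from the edge at the root, a
contribution from the re-attached $B_3$-branch (two new internal edges of the form
$f(z+2,4)$ replacing $f(4,2)$-type edges), and the telescoping sum
$\sum_i\bigl(-f(d(v),d(v_i))+f(d(v'),d(v_i))\bigr)$ over the remaining neighbors of
$v$, where $d(v')$ is the reduced root degree.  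Here I would use the bounds already
secured: by Lemmas~\ref{le-Dz-lowerBound}, \ref{le-Dz-UpperBound} and
Remark~\ref{remark} every surviving $D_z$-branch has $15\le z\le 131$, and by
Lemma~\ref{le-UpperBoundOnNumberOf_Dz-10} and
Lemma~\ref{le-LowerBoundOnNumberOf_Dz} the number of $D_z$-branches of each size is
controlled, so only finitely many neighbor-degree profiles $d(v_i)$ can occur.

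The sign analysis is where the monotonicity tools do the work.  Using
Propositions~\ref{pro-10} and~\ref{pro-20}, the head term and the re-attachment
term tend to fixed negative limits as $d(v)\to\infty$, while each summand
$-f(d(v),d(v_i))+f(d(v'),d(v_i))$ is small in magnitude and of controlled sign
because $d(v')=d(v)-c$ for a bounded constant $c$ determined by the transformation.
The threshold $2956$ should be exactly the value of $d(v)$ at which the guaranteed
negative head contribution dominates the accumulated positive error from the sum;
I would verify negativity at the boundary degree and then appeal to monotonicity in
$d(v)$ to cover all larger degrees, deferring the finitely many explicit numerical
checks (one per $D$-branch combination and per admissible $z$ in $[15,131]$) to an
appendix, as the paper does for its neighboring lemmas.

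The main obstacle I anticipate is not any single inequality but the bookkeeping of
the telescoping sum: one must argue that, across \emph{every} feasible combination
of $D_z$- and $D_{z+1}$-branches consistent with the counting bounds of
Section~2, the total positive slack from the $\sum_i$ term never overtakes the
negative head term at $d(v)=2956$.  Because $f(x,y)$ is only piecewise controlled
and the neighbor degrees $d(v_i)$ range over the $D$-branch centers (degrees near
$z+1\in[16,132]$) as well as $B_3$-centers (degree $4$), establishing a uniform
bound requires either a clean worst-case estimate — taking all $d(v_i)$ at the value
maximizing the summand — or a finite case split mirroring Tables
\ref{table-small-z} and \ref{table-combin-small-z}.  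I would aim for the worst-case
estimate first and fall back on the exhaustive appendix computation only where the
crude bound is too weak near the transition region $z\approx 49\text{--}54$, which
is precisely where the previous lemma needed its sharper transformation
$\mathcal{T}_{7}$.
```
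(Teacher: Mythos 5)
Your skeleton matches the paper's: assume some $B_3$-branches sit at the root, relocate them onto $D_z$-branches (raising those centers from degree $z+1$ to $z+2$), and settle the sign of the resulting change by the monotonicity propositions plus numerical verification --- this is exactly the paper's transformation $\mathcal{T}_{8}$ and equation (\ref{eq-lemma-no-B-branches-to-root}). The genuine gap is in how you propose to close the sign analysis. Your primary route, ``a clean worst-case estimate --- taking all $d(v_i)$ at the value maximizing the summand,'' with $D$-branch sizes allowed to range over all of $[15,131]$, fails numerically. By Proposition~\ref{pro-20} each summand $-f(d(v),d(v_i))+f(d(v)-x,d(v_i))$ increases in $d(v_i)$, so the crude worst case puts every neighbor at degree $133$. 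Take $x=1$, $z=131$, $d(v)=2956$: the head terms give $-f(2956,132)+f(2955,133)\approx -0.00032$, $131\,(f(133,4)-f(132,4))\approx -0.00370$, and $f(133,4)-f(2956,4)\approx +0.00358$, a net of only about $-0.00045$, while the telescoping sum over roughly $2954$ neighbors of degree $133$ contributes about $+0.0019$; the total is positive, and it remains positive until $d(v)$ exceeds roughly $18000$. So the crude estimate certifies nothing at the threshold $2956$, and no choice of threshold near it can be extracted this way.

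The missing idea is the narrowing step the paper performs \emph{before} touching the transformation: since at most $919$ $B$-branches can be attached to the root (Lemma~\ref{thm-bound-B-branches-to-root}), Lemmas~\ref{le-UpperBoundOnNumberOf_Dz-10} and~\ref{le-LowerBoundOnNumberOf_Dz} (Tables~\ref{table-large-z} and~\ref{table-small-z}) show that a root of degree at least $2956$ can only be reached by combinations of $D_{50}$- and $D_{51}$-, $D_{51}$- and $D_{52}$-, or $D_{52}$- and $D_{53}$-branches (plus at most one exceptional branch containing a $B_2$- or $B_3^{**}$-branch); every other size profile yields $d(v)<2956$ outright. Only after this reduction --- which caps the neighbor degrees near $54$ and pins $z$ near $52$ --- do the negative head terms dominate the telescoping error, and the finitely many remaining configurations are what Appendix~\ref{appendix-no-B-branches-to-root} checks. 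Note that your intuition about where the hard work lies is inverted: $z\approx 50$--$53$ is not where the crude bound ``is too weak''; it is the \emph{only} region where the transformation is needed (and works), whereas outside it the counting bounds give an immediate contradiction. Two smaller points: the exclusion of $B_2$- and $B_3^{**}$-branches at the root rests on the switching transformation (Lemma~\ref{lemma-switching}) alone --- Theorems~\ref{noB1B4} and~\ref{noB2B4} are vacuous here because no $B_4$-branches exist --- and a complete argument must also handle the possible single $D$-branch containing a $B_2$- or $B_3^{**}$-branch among the root's children, which your sketch never mentions but the paper's appendix accounts for when bounding the neighbor degrees.
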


\begin{proof}
Assume that the claim of the lemma is not true and there are some $B$-branches attached to the root.
We apply the transformation $\mathcal{T}_{8}$ depicted in Figure~\ref{fig-no-B-branches}.
Since $d(v) \geq 2956$, by Lemma~\ref{lemma-no-B4-branches} there are no $B_4$-branches.
Here similarly as in Lemma~\ref{lemma-no-B4-branches},
due to Lemmas~\ref{le-UpperBoundOnNumberOf_Dz-10}~and~\ref{le-LowerBoundOnNumberOf_Dz} (Tables \ref{table-large-z} and \ref{table-small-z}) and the fact that $d(v) \geq 2956$,
the only possible combinations of $D$-branches are:
$D_{50}$- and $D_{51}$-branches; $D_{51}$- and $D_{52}$-branches; and $D_{52}$- and $D_{53}$-branches, together with a possible $D$-branch containing $B_2$- or $B_3^{**}$-branch.
\begin{figure}[!ht]
\begin{center}
\includegraphics[scale=0.75]{./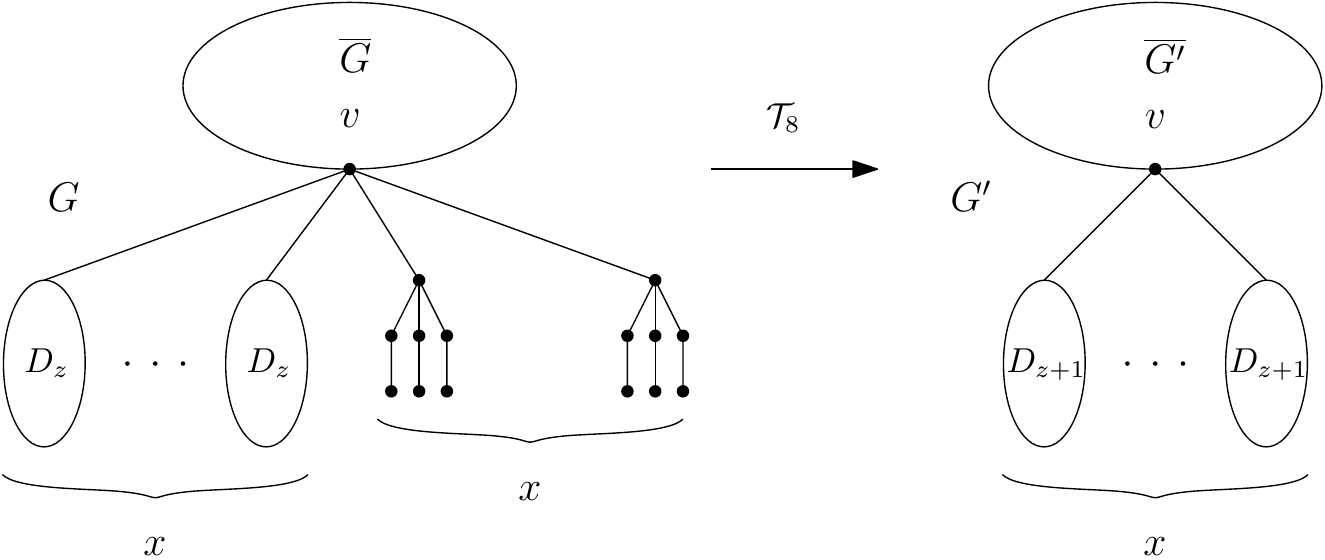}
\caption{The transformation $\mathcal{T}_{8}$, which shows that there are no
$B$-branches attached to the root, when the minimal-ABC tree is sufficiently large. }
\label{fig-no-B-branches}
\end{center}
\end{figure}

The change of the ABC index after applying $\mathcal{T}_{8}$ is
\begin{eqnarray}  \label{eq-lemma-no-B-branches-to-root}
ABC (G') - ABC (G) &=& x( -f(d(v),z+1)+f(d(v)-x,z+2)) +x z (-f(z+1,4)+f(z+2,4))  \nonumber \\
&&  +x( -f(d(v),4)+f(z+2,4)) \nonumber \\
&& + \sum_{i = 1}^{d(v)-2x} (- f(d(v),d(v_i))+f(d(v)-x,d(v_i))),
\end{eqnarray}
where $v_i$, $i = 1,\dots, d(v) - 2x$, are the children of $v$ in $\overline{G}$ ({$\overline{G'}$}).
A straightforward verification shows that the right-hand side of (\ref{eq-lemma-no-B-branches-to-root}) is negative. The detailed deductions can refer to Appendix \ref{appendix-no-B-branches-to-root}.
\end{proof}


\section[On the size and existence of $D_{z,1}^2$-, $D_{z,2}^2$- and $D_{z}^{**}$-branches]
{On the size and existence of $D_{z,1}^2$-, $D_{z,2}^2$- and $D_{z}^{**}$-branches}

\subsection[$D_{z,1}^2$- and $D_{z,2}^2$-branches]
{$D_{z,1}^2$- and $D_{z,2}^2$-branches}

\begin{pro} \label{pro-Dz-B2-10}
If a minimal-ABC tree contains a $D_{z,2}^2$-branch, then $ 25 \leq z \leq 50$.
If a minimal-ABC tree contains a $D_{z,1}^2$-branch, then $19 \leq z \leq 97$.
\end{pro}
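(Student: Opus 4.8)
The plan is to prove the two chains of inequalities by the standard local-transformation method used throughout the paper: assuming a minimal-ABC tree $G$ contains a $D_{z,x}^2$-branch whose size $z$ lies outside the claimed interval, I exhibit a tree $G'$ on the same number of vertices with $\ABC(G') < \ABC(G)$, contradicting minimality. Recall that the center $c$ of such a branch has degree $z+1$, being joined to the root $v$ of degree $d(v)$ and to $z$ child-branches, of which exactly $x$ are $B_2$-branches (centers of degree $3$) and $z-x$ are $B_3$-branches (centers of degree $4$); as elsewhere, $f(a,b)=\sqrt{(a+b-2)/(ab)}$ denotes the per-edge weight. Since Lemma~\ref{le-Dz-B2-UpperBound} and Remark~\ref{remark} already supply the crude bounds $15 \le z \le 131$, it suffices to sharpen both endpoints for each of $x=1,2$.

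For the upper bounds I would reuse the splitting transformation of Figure~\ref{fig-Dz_size10} (the one behind Lemma~\ref{le-Dz-B2-UpperBound}), but carried out so that the $x$ $B_2$-branches are kept together on the detached piece. Writing the ABC change as a sum of $f$-terms and passing to the limit $d(v)\to\infty$ (legitimate because the root degree is large and, by Proposition~\ref{pro-10}, the $d(v)$-dependent terms are monotone), the difference reduces to an expression in $z$ and $x$ alone. As observed in the proof of Lemma~\ref{le-Dz-B2-UpperBound}, replacing a $B_3$-branch by a $B_2$-branch only makes this difference more negative (there the quantity $A_{2,x}-A_1<0$), so the threshold at which the expression turns negative drops below $131$; a direct monotonicity analysis via Propositions~\ref{pro-10} and~\ref{pro-20} should place it at $z=50$ for $x=2$ and $z=97$ for $x=1$, after which the sign is fixed for all larger $z$.

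For the lower bounds I would instead apply a transformation that eliminates the suboptimal $B_2$-branches when $z$ is small --- detaching the $x$ $B_2$-branches and reattaching their material so as to enlarge the surrounding $D$-structure (equivalently, running the splitting/merging transformation in the merging direction). Again passing to the $d(v)\to\infty$ limit and invoking Proposition~\ref{pro-20} to sign the resulting $f$-differences, one obtains an inequality in $z$ and $x$ whose failure for small $z$ forces $\ABC(G')<\ABC(G)$; I expect this to yield $z \ge 25$ for $x=2$ and $z \ge 19$ for $x=1$. The monotone dependence on $x$ --- more $B_2$-branches raising the lower bound and lowering the upper bound --- is exactly what the $B_2$-versus-$B_3$ comparison, and the spirit of Proposition~\ref{pro-diff-Dz}, would predict.

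The main obstacle will be pinning down the exact thresholds $25,50$ and $19,97$ rather than merely showing that some bound better than $131$ exists. Unlike the cleanly monotone single-term estimates in Lemmas~\ref{le-Dz-UpperBound} and~\ref{le-Dz-B4-UpperBound}, the combined expression here mixes terms that increase in $z$ with terms that decrease in $z$, so the sign cannot be read off from a single limit. I would therefore split the range $15\le z\le 131$ into subintervals on which each summand is monotone (using Propositions~\ref{pro-10} and~\ref{pro-20}) and verify negativity at the appropriate endpoints for each fixed $x\in\{1,2\}$, much as the paper defers analogous finite checks to its appendices.
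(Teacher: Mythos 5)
There is a genuine gap here, and it is the decisive one: the entire mathematical content of this proposition consists of the four numerical thresholds $25$, $50$, $19$, $97$, and your argument never establishes any of them. For the upper bounds you assert that a ``direct monotonicity analysis \emph{should} place'' the sign change at $z=50$ and $z=97$, and for the lower bounds you state what you ``expect'' an unspecified merging transformation to yield. Propositions~\ref{pro-10} and~\ref{pro-20} can only sign individual $f$-differences; they cannot locate where a sum of competing terms (some increasing, some decreasing in $z$) changes sign, so a concrete transformation together with an explicit finite verification \emph{is} the proof, and that part is absent. For the lower bounds the problem is worse: you never specify the reattachment scheme, so there is no vertex-count accounting and no expression whose negativity could even be checked in principle.

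Moreover, the tool you chose for the upper bounds cannot plausibly deliver them. Reusing the three-way splitting of Figure~\ref{fig-Dz_size10} together with the observation $A_{2,x}<A_1$ (from Lemma~\ref{le-Dz-B2-UpperBound}) yields negativity of $A_{2,x}$ only where $A_1$ is already known to be negative, namely $z\ge 132$; the correction $A_{2,x}-A_1$ swaps one $f(\cdot,4)$-type difference for one $f(\cdot,3)$-type difference per $B_2$-branch and is numerically minute (of order $10^{-3}$), so there is no reason to believe -- and you do not verify -- that it drags the profitability threshold from $132$ down to $98$, let alone to $51$ for $x=2$. This is exactly why the paper proceeds differently: it does not reprove three of the four bounds at all, but quotes them from earlier work (\cite[Lemmas 14 and 15]{dd-mabctb2-2020}), and its only new contribution is the bound $z\le 97$ for $D_{z,1}^2$-branches, obtained via the transformation $\mathcal{T}_{9}$, which replaces the $D_{z,1}^2$-branch by \emph{two} branches of half size (center degree $(z-1)/2$) plus two $B_4$-branches attached to the root; a two-way split becomes profitable at much smaller $z$ than a three-way split. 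The resulting change (\ref{eq-Lemma-Dz*-10-00}) is then shown to be negative for the whole remaining range $98\le z\le 131$ (treating the two parities separately) in Appendix~\ref{appendix-nonexist-B2-1}. Without either the citation to prior work or a transformation of this kind plus its numerical verification, your argument does not prove the proposition.
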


\begin{proof}
The above upper and lower bounds on $z$ for a $D_{z,2}^2$-branch were presented in \cite[Lemma 14]{dd-mabctb2-2020}, and the lower bound on $z$ for
 a $D_{z,1}^2$-branch was given in \cite[Lemma 15]{dd-mabctb2-2020}. Here, we need only to show that if a minimal-ABC tree contains a $D_{z,1}^2$-branch, then $z \leq 97$.

Assume that there is a $D_{z,1}^2$-branch in a minimal-ABC tree, with $z \geq 98$. Recall that $z \le 131$ from Lemma~\ref{le-Dz-B2-UpperBound}.
Also, assume that $z$ is odd (the case with even $z$ can be proved analogously).
Now, apply the transformation $\mathcal{T}_{9}$ depicted in Figure~\ref{fig-Dz*_size20}.
\begin{figure}[!ht]
\begin{center}
\includegraphics[scale=0.75]{./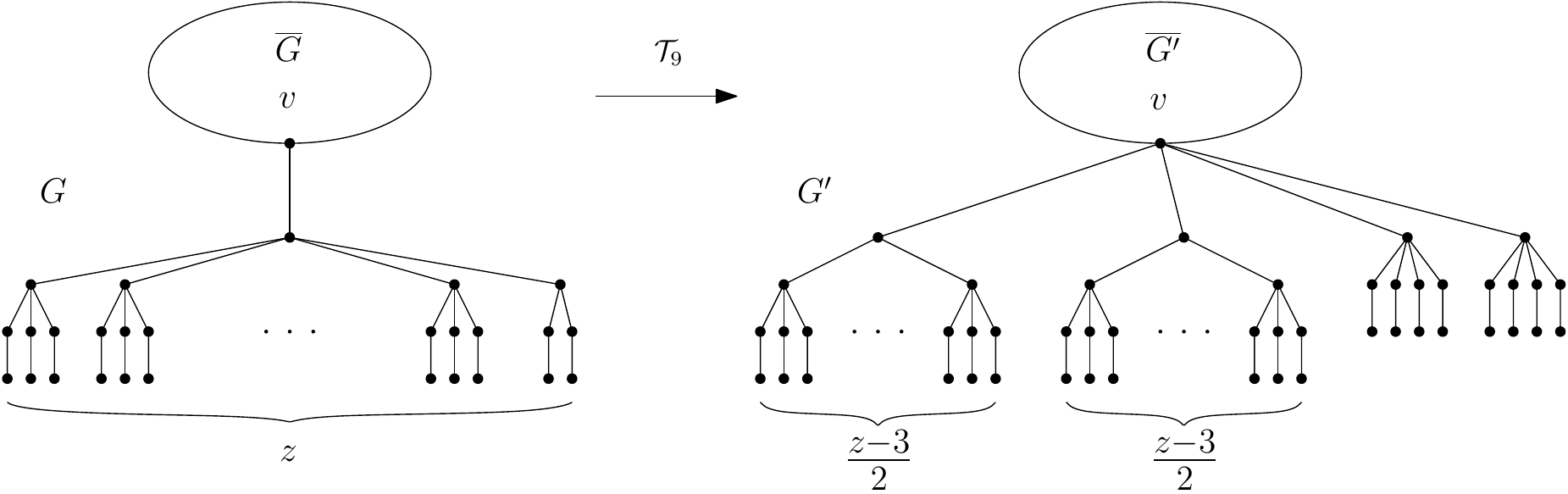}
\caption{The transformation applied to obtain an upper bound on the size of a $D_{z,1}^2$-branch.}
\label{fig-Dz*_size20}
\end{center}
\end{figure}
After applying $\mathcal{T}_{9}$, the change of the ABC index is
\begin{eqnarray} \label{eq-Lemma-Dz*-10-00}
%
ABC (G') - ABC (G) &=& \sum_{x v \in E(\bar{G})} ( - f(d(v), d(x)) + f (d(v) + 3, d(x))) - f (d(v),z+1)  \nonumber \\
&& + 2 f \left( d(v) + 3, \frac{z-1}{2} \right)   - f (z+1,3) + 2 f ( d(v) + 3, 5) \nonumber \\
&& - (z-1) f (z+1,4) + (z-3) f \left( \frac{z-1}{2}, 4 \right).
\end{eqnarray}
We can show that the right-hand side of \eqref{eq-Lemma-Dz*-10-00} is always negative when $99 \le z \ge 131$ (note that $z$ is odd here), in which the details are given in
Appendix \ref{appendix-nonexist-B2-1}.
\end{proof}


Next, we consider the difference between $D$-branches.
The following proposition is a modification of Proposition~\ref{pro-diff-Dz} and includes only the fact
that the vertex $y$ may have a $B_2$-branch as a child.
\begin{pro} \label{pro-P3-20}
Let $x$ and $y$ be vertices of a minimal-ABC tree $G$ that have a common parent vertex $z$,
such that $d(x) \geq d(y) \geq 7$. If $x$ has only $B_3$-branches as children and $y$ has
$B_3$-branches and $one$ $B_2$-branch as children, then $d(x)  \leq d(y) +5$.
If $y$ has  $B_3$-branches and $two$ $B_2$-branches as children, then $d(x)  \leq d(y) + 9$.
\end{pro}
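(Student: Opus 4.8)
The plan is to mimic the proof structure of Proposition~\ref{pro-diff-Dz}, which this proposition directly modifies, but to account carefully for the effect of replacing one or two $B_3$-branches at $y$ by $B_2$-branches. Since $x$ and $y$ share a common parent $z$ and both have only $B$-branches as children, the natural tool is a transformation that moves a $B_3$-branch from $x$ (the higher-degree vertex) to $y$ (the lower-degree vertex), and then to invoke minimality: because $G$ is a minimal-ABC tree, the resulting change in the ABC index must be nonnegative. First I would set up this transformation explicitly: detach one pendant path of length two (the core of a $B_3$-branch) from $x$ and reattach it to $y$, so that $d(x)$ decreases by one and $d(y)$ increases by one, while the degrees inside the moved branch are unchanged.

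The second step is to write the change $\ABC(G')-\ABC(G)$ as a sum of edge-contribution differences using the function $h$ (equivalently $f$) from Proposition~\ref{pro-10}. The edges affected are: the edge $zx$ (whose $x$-endpoint degree drops), the edge $zy$ (whose $y$-endpoint degree rises), the edge connecting $x$ to each of its remaining child-branch centers, the edge connecting $y$ to each of its child-branch centers, and the single edge joining the moved branch to its new parent $y$ versus its old parent $x$. The key analytic input is the monotonicity packaged in Propositions~\ref{pro-10} and~\ref{pro-20}: moving degree from the larger vertex $x$ to the smaller vertex $y$ changes each such term in a controlled, monotone direction. I would collect these into an inequality of the form $\ABC(G')-\ABC(G) \le \Phi(d(x),d(y))$ for some explicit expression $\Phi$, and then nonnegativity of the left side (by minimality) forces $\Phi(d(x),d(y)) \ge 0$, which after rearrangement yields an upper bound on $d(x)-d(y)$.

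The crucial point distinguishing this from Proposition~\ref{pro-diff-Dz} is the presence of the $B_2$-branch(es) at $y$. A $B_2$-branch has a center of degree $3$ rather than the degree-$4$ center of a $B_3$-branch, so the edge from $y$ into each $B_2$-branch contributes an $h(d(y),3)$ term rather than $h(d(y),4)$. When $d(y)$ increases by one, these terms shift by a different (larger) amount than the $B_3$ terms would, which is exactly what loosens the bound from $d(x)\le d(y)+1$ in the pure-$B_3$ case to $d(x)\le d(y)+5$ with one $B_2$-branch and $d(x)\le d(y)+9$ with two. I would treat the two cases (one versus two $B_2$-branches) by the same computation, the only difference being the multiplicity of the degree-$3$ center terms, and I expect the two thresholds $5$ and $9$ to emerge from solving the respective scalar inequalities $\Phi \ge 0$ with the hypothesis $d(y)\ge 7$ used to control the remaining $d(v)$-dependent terms.

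The main obstacle I anticipate is verifying that the combined monotone bound $\Phi(d(x),d(y))$ is genuinely tight enough to give the clean integer thresholds $5$ and $9$ rather than something weaker, and in particular ensuring the constant $d(y)\ge 7$ hypothesis is exactly what is needed so that the derivative/difference estimates in $d(x)$ and $d(y)$ from Propositions~\ref{pro-10} and~\ref{pro-20} point the right way over the whole admissible range. I would therefore carry out the final inequality $\Phi \ge 0$ as a careful scalar estimate in the variable $d(x)-d(y)$, checking the boundary value $d(y)=7$ and using monotonicity in $d(y)$ to extend to all $d(y)\ge 7$, and confirming that at $d(x)=d(y)+6$ (respectively $d(x)=d(y)+10$) the change becomes strictly negative, contradicting minimality.
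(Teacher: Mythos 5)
Your overall strategy coincides with the paper's: you move an entire $B_3$-branch from $x$ to $y$ (note that what is moved is the whole branch --- a degree-$4$ center together with its three pendant paths --- not ``one pendant path of length two''), write the resulting change of the ABC index edge by edge exactly as in the paper's equation (\ref{eq-pro-P3-10-00}), bound the two $d(z)$-dependent terms via Propositions \ref{pro-10} and \ref{pro-20} so that the resulting upper bound decreases in $d(z)$, and invoke minimality of $G$ to get a contradiction whenever that bound is negative. Up to this point the proposal is sound and matches the paper's transformation $\mathcal{T}_{10}$.

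The gap is in how you close the argument. You propose to treat the final inequality as a scalar estimate, check the boundary value $d(y)=7$, and extend to all $d(y)\ge 7$ by monotonicity in $d(y)$, expecting the thresholds $5$ and $9$ to emerge analytically. Neither step is available. Every term in the bound is $O(1/d)$, so the whole expression tends to $0$ as $d(x),d(y)\to\infty$; its monotonicity in $d(y)$ (or in $d(x)-d(y)$) is never established and would be delicate, and without upper bounds on $d(x)$ and $d(y)$ you face an infinite verification. The paper instead makes the problem finite by importing structural bounds proved elsewhere: since $x$ has only $B_3$-branches as children it is the center of a $D$-branch, so $16 \le d(x) \le 132$ by Lemmas \ref{le-Dz-lowerBound} and \ref{le-Dz-UpperBound}; since $y$ carries a $B_2$-branch it is the center of a $D_{k,1}^2$-branch, so $20 \le d(y) \le 98$ by Proposition \ref{pro-Dz-B2-10}. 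Negativity under $d(x) \ge d(y)+6$ (respectively $+10$) is then verified by direct computation over this finite box: once at $d(z)=10000$, which by the monotone decrease in $d(z)$ covers all $d(z)\ge 10000$, and by direct calculation of (\ref{eq-pro-P3-10-00}) for $d(x) \le d(z) < 10000$. In particular, the hypothesis $d(y)\ge 7$ is not what drives the proof: the inequality is only ever checked for $d(y)\ge 20$, and for $7\le d(y)\le 19$ the proposition holds vacuously because no such vertex occurs in a minimal-ABC tree; your plan of certifying the inequality at $d(y)=7$ is therefore neither necessary nor known to succeed.
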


\begin{proof}

First, assume that $y$ has one $B_2$-branch as a child.
We estimate the differences between $d(x)$ and $d(y)$ by applying transformation $\mathcal{T}_{10}$ illustrated in Figure~\ref{fig-tz}.
\begin{figure}[!ht]
\begin{center}
\includegraphics[scale=0.75]{./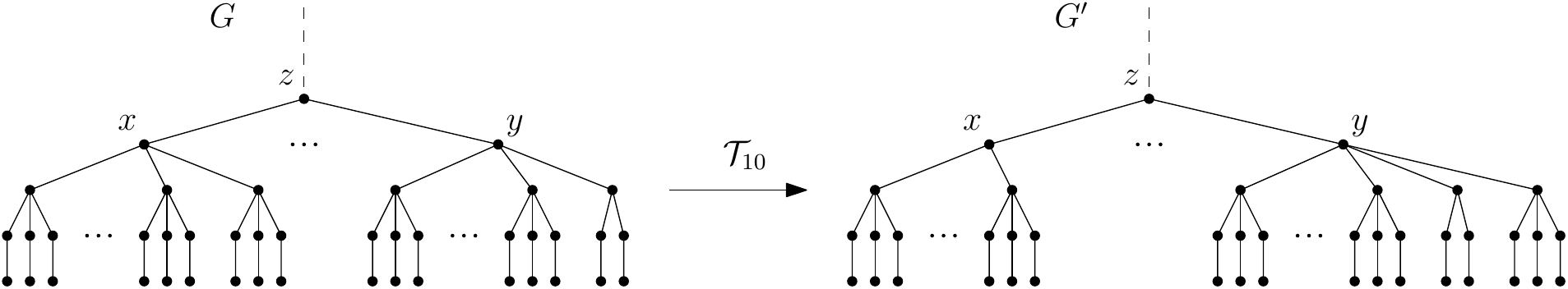}
\caption{Suggested transformation, which shows that the differences between the sizes of $D$-branches are small.}
\label{fig-tz}
\end{center}
\end{figure}

After applying $\mathcal{T}_{10}$, the degree of the vertex $x$ decreases by $1$,
while the degree of $y$ increases by $1$.
The rest of the vertices do not change their degrees.
The change of the ABC index is then
\begin{eqnarray} \label{eq-pro-P3-10-00}
ABC (G') - ABC (G) &=&-f(d(z),d(x))+f(d(z),d(x)-1)  -f(d(z),d(y))+f(d(z),d(y)+1)  \nonumber \\
&& +(d(x)-2)(-f(d(x),4)+f(d(x)-1,4)) -f(d(x),4)+f(d(y)+1,4)  \nonumber \\
&& +(d(y)-2)(-f(d(y),4)+f(d(y)+1,4))  -f(d(y),3)+f(d(y)+1,3).
\end{eqnarray}
By Proposition~\ref{pro-10}, the expression
$
-f(d(z),d(x))+f(d(z),d(x)-1)
$ increases in $d(z)$,
i.e.,
\begin{eqnarray*}
-f(d(z),d(x))+f(d(z),d(x)-1) &\le& \lim_{d(z) \rightarrow + \infty}(-f(d(z),d(x))+f(d(z),d(x)-1) ) \\
&=& -\sqrt{\frac{1}{d(x)}} + \sqrt{\frac{1}{d(x)-1}},
\end{eqnarray*}
while by Proposition~\ref{pro-20},
$
-f(d(z),d(y))+f(d(z),d(y)+1)
$
decreases in $d(z)$.
Thus,
\begin{eqnarray} \label{eq-pro-P3-10-00-1}
ABC (G') - ABC (G)  &\le&-\sqrt{\frac{1}{d(x)}} + \sqrt{\frac{1}{d(x)-1}} -f(d(z),d(y))+f(d(z),d(y)+1)  \nonumber \\
&&+(d(x)-2)(-f(d(x),4)+f(d(x)-1,4)) -f(d(x),4)+f(d(y)+1,4)  \nonumber \\
&& +(d(y)-2)(-f(d(y),4)+f(d(y)+1,4)) -f(d(y),3)+f(d(y)+1,3),
\end{eqnarray}
and it decreases in $d(z)$.

By Lemmas~\ref{le-Dz-lowerBound} and \ref{le-Dz-UpperBound}, we have that $16 \le d(x) \le 132$,
and by Proposition~\ref{pro-Dz-B2-10} that $20 \le d(y) \le 98$.
When $d(z) \ge 10000$ and $d(x) \ge d(y) + 6$, it can be verified
that the right-hand side of \eqref{eq-pro-P3-10-00-1} is negative, for any $16 \le d(x) \le 132$ and $20 \le d(y) \le 98$
(observe that since the right-hand side of \eqref{eq-pro-P3-10-00-1} decreases in $d(z)$, it suffices to verify the negativity of \eqref{eq-pro-P3-10-00-1} when
$d(z)=10000$).
As to $d(x) \le d(z) < 10000$ and $d(x) \ge d(y) + 6$, it can be verified that the right-hand side of \eqref{eq-pro-P3-10-00}
is always negative, from direct calculations, for any combination of $16 \le d(x) \le 132$ and $20 \le d(y) \le 98$.

The case when there are two $B_2$-branches attached to $y$ can be analogously considered as the case with one
$B_2$-branch attached to $y$, and therefore, to avoid the repetitions of same arguments, we omit the proof in this case.
%
%
\end{proof}

\begin{lemma}\label{lemma-NO-D_k,2_2}
A minimal-ABC tree with at least $65$ $D_z$-branches and with the root of degree at least $146$
does not contain a $D_{k,2}^{2}$-branch.
\end{lemma}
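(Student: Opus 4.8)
The plan is to argue by contradiction in the transformational style used throughout the paper. Suppose $G$ is a minimal-ABC tree satisfying the hypotheses---at least $65$ $D_z$-branches and a root $v$ with $d(v) \ge 146$---yet containing a $D_{k,2}^2$-branch with center $y$, so that $y$ carries two $B_2$-branches (its remaining children being $B_3$-branches) and $d(y) = k+1$. By the $x=2$ case of Proposition~\ref{pro-Dz-B2-10} I may assume $25 \le k \le 50$, and by Remark~\ref{remark} together with Lemma~\ref{le-Dz-UpperBound} every pure-$B_3$ $D_z$-branch satisfies $15 \le z \le 131$.

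First I would pin down the coexisting sizes. By Proposition~\ref{pro-diff-Dz} any two centers of pure-$B_3$ $D$-branches sharing the parent $v$ differ in degree by at most one, so all of the (at least $65$) $D_z$-branches have sizes in $\{z_0-1, z_0\}$ for a single $z_0$, the larger center thus having degree $z_0+1$. Pairing this larger center $x$ with $y$ in Proposition~\ref{pro-P3-20} (the two-$B_2$ case), whenever $d(x) \ge d(y)$ I obtain $z_0+1 \le (k+1)+9$, i.e.\ $z_0 \le k+9$; combined with $25 \le k \le 50$ and $z_0 \ge 15$, this leaves only the bounded range $15 \le z_0 \le 59$ to examine, with the complementary regime $z_0 < k$ recorded separately but equally finite. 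The point of this step is that Propositions~\ref{pro-diff-Dz} and~\ref{pro-P3-20} collapse an a priori infinite search into a finite one.

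Next I would introduce a transformation (following the numbering, $\mathcal{T}_{11}$) that dismantles the $D_{k,2}^2$-branch by recombining the pendant paths of its two $B_2$-branches into full $B_3$-branches, absorbing the leftover vertices either into existing $D_z$-branches or at the high-degree root. The thresholds are exactly what make this feasible: the supply of at least $65$ $D_z$-branches guarantees enough recipient $B_3$-centers, and $d(v) \ge 146$ provides the degree slack at $v$ needed to perform the edge swaps (via Lemma~\ref{lemma-switching}) without creating forbidden adjacencies. Writing $\ABC(G') - \ABC(G)$ for $\mathcal{T}_{11}$ as a sum of terms of the form $-f(\cdot,\cdot) + f(\cdot,\cdot)$, I would bound the root-dependent contributions using the monotonicity of Propositions~\ref{pro-10} and~\ref{pro-20} (replacing $d(v)$ by its least value $146$), thereby reducing the claim to the negativity of a fixed expression over the finitely many surviving pairs $(z_0, k)$.

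I expect the main obstacle to be designing $\mathcal{T}_{11}$ so that a single transformation is simultaneously feasible and sign-correct for every configuration permitted by the earlier steps, and then verifying the negativity of the resulting difference uniformly across the bounded parameter range $25 \le k \le 50$, $15 \le z_0 \le 59$. As with the analogous lemmas in this paper, that last part is a bounded numerical verification most naturally deferred to an appendix; the genuine conceptual work lies in the first two steps, where the structural propositions confine the problem to a finite check.
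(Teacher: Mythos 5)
Your setup---argue by contradiction, use Proposition~\ref{pro-Dz-B2-10} for $25 \le k \le 50$ and Propositions~\ref{pro-diff-Dz} and~\ref{pro-P3-20} to make the parameter space finite---is in the right spirit, but the hypotheses $65$ and $146$ are not consumed where you place them, and this matters quantitatively. In the paper they enter through Lemmas~\ref{le-UpperBoundOnNumberOf_Dz-10} and~\ref{le-LowerBoundOnNumberOf_Dz} (Tables~\ref{table-large-z} and~\ref{table-small-z}): having at least $65$ $D_z$-branches rules out $z \ge 57$, and together with $d(v)\ge 146$ it rules out $z \le 42$, giving $43 \le z \le 56$ (and then $34 \le k \le 50$ via $k \le z \le k+9$). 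Your narrowing skips these lemmas and only yields $15 \le z_0 \le 59$, which is too weak: the transformation that actually works consumes $x = 2z-k$ donor $D_z$-branches, and over the paper's range the maximum is exactly $2\cdot 56 - 47 = 65$, whereas over your range it can reach $2\cdot 59 - 50 = 68 > 65$, so feasibility fails precisely where you claim the threshold ``$65$'' saves you. Likewise, reading $d(v)\ge 146$ as ``degree slack for edge swaps via Lemma~\ref{lemma-switching}'' is not its role; it is needed to invoke the relevant entries of Table~\ref{table-small-z} and as the lower end of the range of $d(v)$ over which the negativity of the ABC-change is verified (Appendix~\ref{appendix-nonexist-B2-2}).

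The second, larger gap is that the transformation itself---the entire technical content of the proof---is left undesigned, and the sketch you do give is unworkable: two $B_2$-branches contribute $10$ vertices, which cannot be locally ``recombined into full $B_3$-branches'' ($7$ vertices each) with a cleanly attachable remainder. The paper's $\mathcal{T}_{11}$ is global rather than local: it merges $x$ entire $D_z$-branches together with the $D_{k,2}^2$-branch and redistributes all of their $B_3$-branches into $(x+2)$ $D$-branches of size $z-1$, the surplus vertices forming one $B_4$-branch attached to the root; conservation of $B_3$-branches, $xz + (k-2) = (x+2)(z-1)$, forces $x = 2z-k$, which is exactly why the bound $x \le 65$ and hence the hypothesis of $65$ $D_z$-branches makes the move feasible. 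The resulting difference (\ref{eq-no-D_2,2^2-10}) is then shown negative for all $z \in [43,56]$, $k \in [z-9,\min\{z,50\}]$, $d(v) \ge 146$, using the monotonicity arguments of Propositions~\ref{pro-10} and~\ref{pro-20}. Without exhibiting a transformation of this kind (and a correctly narrowed parameter range on which it is both feasible and sign-correct), the proposal does not yet constitute a proof.
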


\begin{proof}
Assume that there are at least $65$ $D_z$-branches, the degree of the root vertex is at least $146$,
and the minimal-ABC tree does contain a $D_{k,2}^{2}$-branch.
By Lemmas~\ref{le-UpperBoundOnNumberOf_Dz-10} and \ref{le-LowerBoundOnNumberOf_Dz} (Tables \ref{table-large-z} and \ref{table-small-z}),
we may assume that $43 \leq z \leq 56$, and by Proposition~\ref{pro-Dz-B2-10} that $25 \leq k \leq 50$.
It holds also that $k \leq z$, and by Proposition~\ref{pro-P3-20}, $z \leq k +9$.
So we further assume that $34 \le k \le 50$.
Here we apply the transformation $\mathcal{T}_{11}$ depicted in Figure~\ref{fig-no-Dz,2_2-v2}.

\begin{figure}[!ht]
\begin{center}
\includegraphics[scale=0.75]{./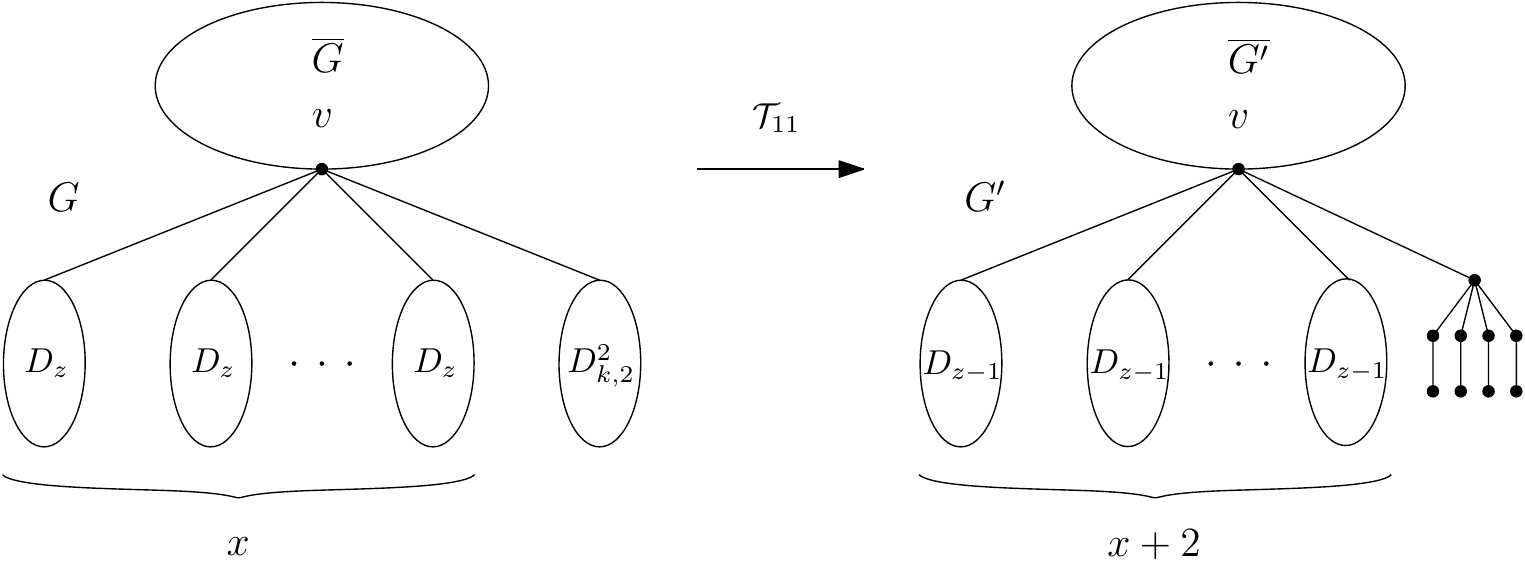}
\caption{The transformation which shows that under certain  conditions there is no $D_{k,2}^2$-branch.}
\label{fig-no-Dz,2_2-v2}
\end{center}
\end{figure}

The change of the ABC index after applying $\mathcal{T}_{11}$ is
\begin{eqnarray}  \label{eq-no-D_2,2^2-10}
%
ABC (G') - ABC (G)&=& \sum_{y v \in E(\bar{G})} ( - f(d(v), d(y)) + f (d(v) + 2, d(y))) -x \, f(d(v),z+1) -x \, z \, f(z+1,4) \nonumber \\
&& - f(d(v),k+1)   -(k-2) f(k+1,4) -2f(k+1,3) + (x+2) f(d(v)+2,z)   \nonumber \\
&& + (x+2)(z-1) f(z,4) +f(d(v)+2, 5).
\end{eqnarray}
The number of $B_3$-branches in $G$ (outside $\bar{G}$) is equal to the number of $B_3$-branches in $G'$ (outside {$\overline{G'}$}).
Thus, it follows that $x z + k -2= (x+2)(z-1)$, or equivalently, $x = 2z-k$.
It can be verified that
for each $z \in [43,56]$ with constraints $k \in [z-9, \min\{z, 50\}]$
and $x = 2z-k$ ,
the change of the ABC index (the right-hand side of (\ref{eq-no-D_2,2^2-10})) is always negative.
The technical details are presented in Appendix \ref{appendix-nonexist-B2-2}.
Notice that with the above given possible values of $z$ and $k$,
$x$ is at most $65$, which guarantees that the transformation $\mathcal{T}_{11}$ is feasible.
\end{proof}

\begin{lemma}\label{lemma-D_k,1_2}
A minimal-ABC tree with at least $261$ $D_z$-branches and with the root of degree at least $1228$
does not contain a $D_{k,1}^{2}$-branch.
\end{lemma}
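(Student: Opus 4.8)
The plan is to mirror the structure of the immediately preceding Lemma~\ref{lemma-NO-D_k,2_2}, since the two statements are of identical type: we assume for contradiction that the minimal-ABC tree contains a $D_{k,1}^2$-branch (a $D$-branch whose center carries one $B_2$-branch together with some $B_3$-branches), and we exhibit a local transformation that strictly lowers the ABC index, contradicting minimality. First I would pin down the admissible ranges of the parameters. From Proposition~\ref{pro-Dz-B2-10} we have $19 \le k \le 97$ for a $D_{k,1}^2$-branch, and from Lemmas~\ref{le-UpperBoundOnNumberOf_Dz-10} and \ref{le-LowerBoundOnNumberOf_Dz} (together with the hypotheses of at least $261$ $D_z$-branches and root-degree at least $1228$) the coexisting ordinary $D_z$-branches are confined to a narrow band of $z$-values. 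I would then invoke Proposition~\ref{pro-P3-20}, whose second case gives $d(x) \le d(y)+5$ when $y$ carries one $B_2$-branch, to force $z \le k+5$; combined with $k \le z$ this collapses $k$ and $z$ into a small two-dimensional window.

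Next I would introduce the transformation, analogous to $\mathcal{T}_{11}$ of Figure~\ref{fig-no-Dz,2_2-v2}: the single $B_2$-branch hanging off the $D_{k,1}^2$-center is detached and its material is redistributed so as to enlarge the coexisting $D_z$-branches while the center's degree drops. The key bookkeeping constraint is conservation of the total number of $B_3$-branches (equivalently, conservation of vertices), which yields a linear relation tying the number $x$ of participating $D_z$-branches to $z$ and $k$, just as $x=2z-k$ arose in Lemma~\ref{lemma-NO-D_k,2_2}. I would write $\mathrm{ABC}(G')-\mathrm{ABC}(G)$ as a sum of terms of the form $-f(\cdot,\cdot)+f(\cdot,\cdot)$, split off the root-degree-dependent telescoping sum $\sum_{yv\in E(\bar G)}(-f(d(v),d(y))+f(d(v)+c,d(y)))$, and use Propositions~\ref{pro-10} and \ref{pro-20} to bound that sum monotonically in $d(v)$, reducing everything to a function of finitely many integer parameters.

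The final step is a finite verification: over the small grid of $(z,k)$ pairs allowed by the range analysis, with $x$ determined by the conservation relation, one checks numerically that the resulting expression is strictly negative. I expect the main obstacle to be exactly this range-pinning step — ensuring that the lower bound of $261$ $D_z$-branches and the root-degree bound of $1228$ really do force both the number of participating $D_z$-branches to be large enough for the transformation to be feasible (so that enough donor branches exist) and the $z$-window to be tight enough that the finite check is tractable; the monotonicity arguments and the sign verification itself are routine once the parameter box is correctly delimited. I would relegate the detailed per-$(z,k)$ computations to an appendix, as the authors do for the companion lemmas.
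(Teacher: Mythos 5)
Your outline matches the paper's proof only on part of the parameter window, and the missing part is a genuine gap. The hypotheses pin $z\in\{50,51,52\}$, as you say, and for $z\in\{51,52\}$ the paper does exactly what you propose: transformation $\mathcal{T}_{12}$ dissolves the $D_{k,1}^{2}$-branch together with $x$ $D_z$-branches into $x+6$ $D_{z-1}$-branches, $B_3$-conservation gives $xz+k-1=(x+6)(z-1)$, i.e.\ $x=6z-k-5\le 260$ (precisely the source of the threshold $261$, which you correctly anticipated), and negativity is verified over the finite grid with monotonicity in $d(v)$ (Appendix~\ref{appendix-nonexist-B2-3}). But at $z=50$ this transformation is \emph{not} profitable for all admissible configurations, so no finite check can close that case: the gain hinges on $-xz\,f(z+1,4)+(x+6)(z-1)f(z,4)$, i.e.\ on shrinking $D_z$- into $D_{z-1}$-branches, and pushing $D_{50}$-branches down to $D_{49}$-branches lies on the wrong side of the optimal branch size. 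Concretely, for $z=k=50$, $x=245$, $d(v)=1228$, the part of $ABC(G')-ABC(G)$ not involving edges into $\bar G$ is roughly $+0.007$, while the helping sum over $E(\bar G)$ is only roughly $-0.002$ in the admissible configuration where most remaining root-children are $B_3$-branches (each such term being $-f(1228,4)+f(1233,4)\approx -1.5\cdot 10^{-6}$); the total change is then positive.

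The paper closes $z=50$ with a different argument that your proposal lacks: it shows the hypothesis itself is contradictory there, i.e.\ $261$ $D_{50}$-branches cannot coexist with $d(v)\ge 1228$ at all --- via Table~\ref{table-small-z} ($n_{50}\le 182$) when $d(v)\ge 1358$, and via the $\mathcal{T}_{5}$-based bound (\ref{eq-Lemma-Dz-30-20-second}) from Appendix~\ref{appendix-exist-B4} (with $n_4=0$) showing $n_{50}<222$ when $1228\le d(v)<1358$. You need this counting contradiction (or an equivalent) before your uniform transformation-plus-grid plan becomes a proof. A secondary, fixable point: as described, detaching the $B_2$-branch and using its material to \emph{enlarge} coexisting $D_z$-branches cannot conserve vertices (a $B_2$-branch has only five vertices, too few to build new $B_3$-branches); vertex conservation forces the opposite direction --- more, smaller branches --- which is what $\mathcal{T}_{12}$ does, and which is exactly why the move cannot pay at $z=50$.
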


\begin{proof}
Assume that there are at least $261$ $D_z$-branches, the root vertex has a degree of at least $1228$, and the minimal-ABC tree does contain a $D_{k,1}^{2}$-branch.
By Proposition \ref{pro-Dz-B2-10}, it holds that $19 \le k \le 97$.
Since there are (at least) $261$ $D_z$-branches and  the root of degree at least $1228$, by Lemmas~\ref{le-UpperBoundOnNumberOf_Dz-10}
and \ref{le-LowerBoundOnNumberOf_Dz}  (Tables \ref{table-large-z} and \ref{table-small-z}),
it follows that $z \in \{50, 51, 52 \}$.
By Proposition~\ref{pro-P3-20}, $z \leq k +5$. Also it holds that $k \leq z$.

Assume that $z = 50$. Clearly, it remains to argue that the claim  is true for $1228 \leq d(v) < 1358$. Otherwise,
$n_{50} \le 182$ from Table \ref{table-small-z}, which is a contradiction to the hypothesis that $n_{50} \ge 261$.
Recall that in the proof of Appendix \ref{appendix-exist-B4}, we have shown that $n_{50} \ge 222$
is impossible when $1228 \le d(v) < 1358$ by using (\ref{eq-Lemma-Dz-30-20-second})
(here the only difference is $n_4 = 0$ instead of $n_4 = 1$ in Appendix \ref{appendix-exist-B4},
 but it does not affect the negativity of the change of ABC index). It implies that $n_{50} < 222$, which is a contradiction to the hypothesis that $n_{50} \ge 261$.

For $z \in \{51, 52 \}$, we apply the transformation $\mathcal{T}_{12}$ depicted in Figure~\ref{fig-no-Dz,1_2}.
\begin{figure}[!ht]
\begin{center}
\includegraphics[scale=0.75]{./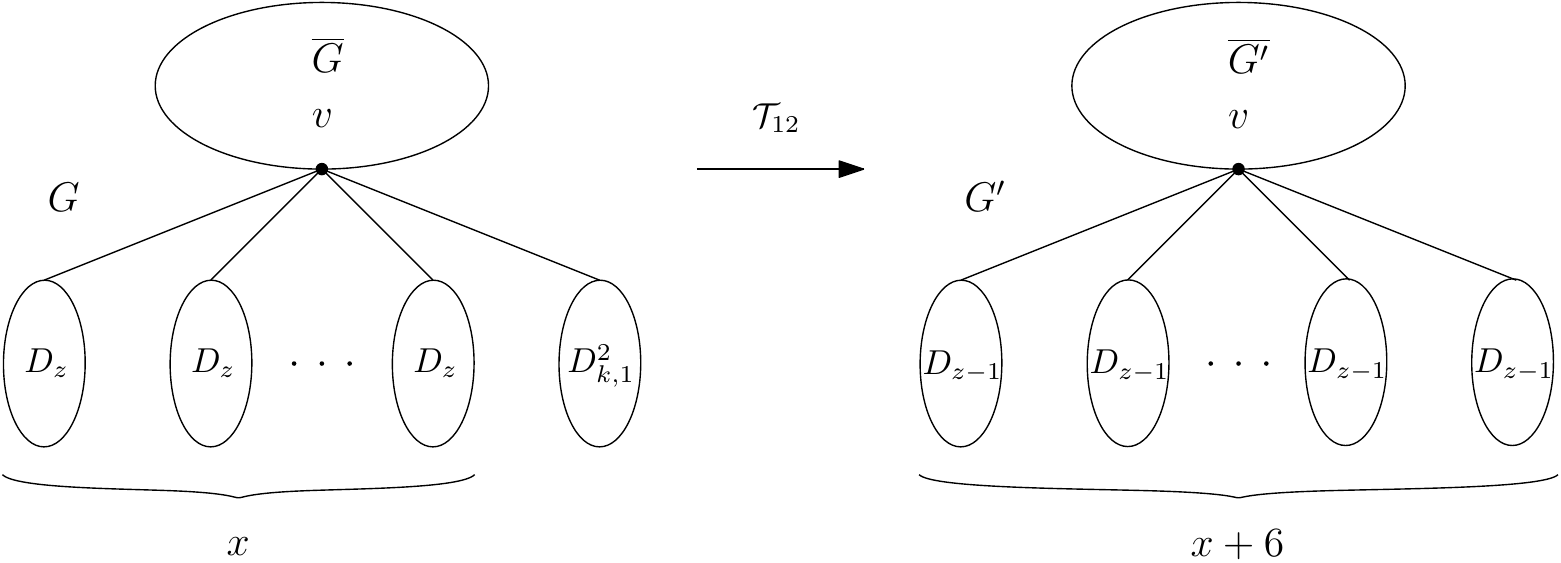}
\caption{The transformation which shows that under certain  conditions there is no $D_{k,1}^2$-branch.}
\label{fig-no-Dz,1_2}
\end{center}
\end{figure}
The change of the ABC index after applying $\mathcal{T}_{12}$ is
\begin{eqnarray}  \label{eq-no-Dz,1_2-10}
%
ABC (G') - ABC (G) &=& \sum_{y v \in E(\bar{G})} ( - f(d(v), d(y)) + f (d(v) + 5, d(y))) -x \, f(d(v),z+1) -x \, z \, f(z+1,4)  \nonumber \\
&& - f(d(v),k+1) -(k-1) f(k+1,4) -f(k+1,3) - 2 f(3,2) - 2 f(2,1)  \nonumber \\
&&  + (x+6) f(d(v)+5,z)  + (x+6)(z-1) f(z,4).
\end{eqnarray}
The number of $B_3$-branches in $G$
 is equal to the number of $B_3$-branches in $G'$ (the ones outside $\overline{G}$ ({$\overline{G'}$})),
i.e., $x z + k -1= (x+6)(z-1)$,  or equivalently, $x = 6z-k -5$.
From $z \in \{51, 52 \}$, $k \in [z-5,z]$
and $x = 6z - k - 5$, it follows that $x \leq 260$ and thus, the transformation $\mathcal{T}_{12}$ is feasible.
A verification shows that the right-hand side of (\ref{eq-no-Dz,1_2-10}) is negative under the given parameters.
The details are presented in Appendix \ref{appendix-nonexist-B2-3}.
\end{proof}

\begin{re}
The above upper bound $261$ of $D_z$-branches can be improved by involving particular structural properties
of $D_{k,1}^2$-branches, but the current upper bound suffices for the proofs of the main and computational results in Section~\ref{main-results}.
\end{re}

\subsection[$D_{z}^{**}$-branch]
{$D_{z}^{**}$-branch}

\begin{pro} \label{le-Dz**-UpperBound-10}
If a minimal-ABC tree contains a $D_z^{**}$-branch, then $47 \leq z \leq 74$.
\end{pro}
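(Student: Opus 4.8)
The plan is to mirror the two-sided strategy already used for $D_{z,x}^2$-branches in Lemma~\ref{le-Dz-B2-UpperBound} and Proposition~\ref{pro-Dz-B2-10}: establish the lower bound $z \geq 47$ and the upper bound $z \leq 74$ by two separate transformations, each of whose ABC-index change is shown to be negative outside the claimed range. For the upper bound, I would first invoke Lemma~\ref{le-Dz-UpperBound-star}, which already guarantees $z \leq 131$, so only the window $75 \leq z \leq 131$ needs to be excluded. The natural device is the same ``collapse the $D_z^{**}$-branch and redistribute its $B_3$-branches'' transformation that drove Lemma~\ref{le-Dz-UpperBound}; since the remark following Lemma~\ref{le-Dz-UpperBound} notes its proof is valid for $D_z^{**}$-branches, I would take that transformation as the baseline $A_1$ and only account for the single extra pendant edge that distinguishes a $D_z^{**}$- from a $D_z$-branch. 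As in Lemma~\ref{le-Dz-B2-UpperBound}, the correction term should be expressible as a difference of $f$-values of the form $\bigl(-f(z+1,3)+f(\cdot,3)\bigr)-\bigl(-f(z+1,4)+f(\cdot,4)\bigr)$, whose sign is pinned down directly by Proposition~\ref{pro-20}.

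For the lower bound $z \geq 47$, I would apply a transformation in the opposite direction — enlarging a $D_z^{**}$-branch (or splitting off a $B_3$-branch from an adjacent branch into it) — and show the ABC change is negative when $z$ is too small. Here Remark~\ref{remark} supplies the crude starting point $z \geq 15$, so the real work is squeezing the interval up to $47$; I expect this to require handling the window $15 \leq z \leq 46$ by monotonicity in the relevant parameters plus a finite check. The degree $d(v)$ of the parent of the branch center enters the change expression, so as in the proofs of Proposition~\ref{pro-P3-20} and Lemma~\ref{le-Dz-B4-UpperBound}, I would bound the $d(v)$-dependent summands using Propositions~\ref{pro-10} and \ref{pro-20} (taking the limit $d(v)\to\infty$ where a term increases in $d(v)$, and using the value at the smallest admissible $d(v)$ where it decreases), thereby reducing to an expression in $z$ alone that can be checked to be negative on the finite range.

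The main obstacle I anticipate is not the analytic structure — which is routine given Propositions~\ref{pro-10} and \ref{pro-20} — but rather getting the bookkeeping of the $D_z^{**}$-branch exactly right, since the extra pendant vertex (the defining feature of the ``${**}$'' decoration, analogous to how $B_k^{**}$ extends $B_k^*$) alters both the degree sequence at the branch center and the count of $B_3$-branches that must be conserved across the transformation. Precisely as in Lemma~\ref{lemma-NO-D_k,2_2} and Lemma~\ref{lemma-D_k,1_2}, a conservation identity (number of $B_3$-branches in $G$ equals that in $G'$) will fix the free parameter, and care is needed to ensure the transformation remains feasible, i.e.\ that enough branches are present to carry it out. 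The remaining boundary cases near $z=47$ and $z=74$, where the sign of the change is delicate, would be dispatched by a direct finite computation, with the detailed numerics relegated to an appendix in the same style as the other lemmas in this section.
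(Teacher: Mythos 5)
Your lower-bound plan is essentially the paper's. The paper applies a transformation ($\mathcal{T}_{14}$) that is internal to the $D_z^{**}$-branch itself (the $B_3^{**}$-branch is dismantled and reattached so that the center's degree grows from $z+1$ to $z+2$), bounds the single $d(v)$-dependent term $-f(d(v),z+1)+f(d(v),z+2)$ via Proposition~\ref{pro-20} by its value at $d(v)=z+1$, and checks that the resulting expression in $z$ alone is negative for $15 \le z \le 46$, exactly the monotonicity-plus-finite-check scheme you describe. One caution: your parenthetical variant, importing a $B_3$-branch from an adjacent branch, would make the argument depend on the existence of a suitable sibling branch, which the proposition does not assume; the paper's self-contained transformation avoids this feasibility issue, so prefer that form.

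The upper bound is where your proposal has a genuine gap. The Lemma~\ref{le-Dz-B2-UpperBound}-style comparison — write the change for the $D_z^{**}$ case as $A_1$ plus a correction whose negativity follows from Proposition~\ref{pro-20} — can only transfer to $D_z^{**}$-branches a bound already known for $A_1$. But the negativity of $A_1$ is established in \cite{dd-scbvmabct-2020} only for $z \ge 132$; that is precisely why this comparison yields Lemma~\ref{le-Dz-UpperBound-star} (i.e.\ $z \le 131$ for $D_z^{**}$-branches) and nothing stronger. On the window $75 \le z \le 131$ that you must exclude, $A_1$ is not known to be negative (if it were, the bound $131$ would not be the stated one), so knowing only the \emph{sign} of the correction term, and not a magnitude estimate that dominates $A_1$ over that whole range, cannot give the required negativity. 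The paper instead introduces a genuinely different transformation $\mathcal{T}_{13}$: it splits the $D_z^{**}$-branch into two branches of size roughly $(z+1)/2$ attached to the root (treating odd and even $z$ separately, with the extra pendant structure of the $B_3^{**}$-branch absorbed into a degree-5 attachment), and the monotonicity reductions in Appendix~\ref{appendix-upper-D**} show this change is negative for all $75 \le z \le 131$. This halving transformation is strictly more efficient than the redistribution underlying Lemma~\ref{le-Dz-UpperBound}, and it is what makes the bound $74$, rather than $131$, reachable; without it or an equally strong replacement, your plan cannot close the window $75 \le z \le 131$.
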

\begin{proof}
Assume that there is a $D_z^{**}$-branch in a minimal-ABC tree, with $z \geq 75$. Recall that $z \le 131$ from Lemma \ref{le-Dz-UpperBound-star}.
Assume, also that $z$ is odd (the case when $z$ is even can be handled analogously).
Then apply the transformation $\mathcal{T}_{13}$ depicted in Figure~\ref{fig-Dz**_size10-2}.
\begin{figure}[!ht]
\begin{center}
\includegraphics[scale=0.75]{./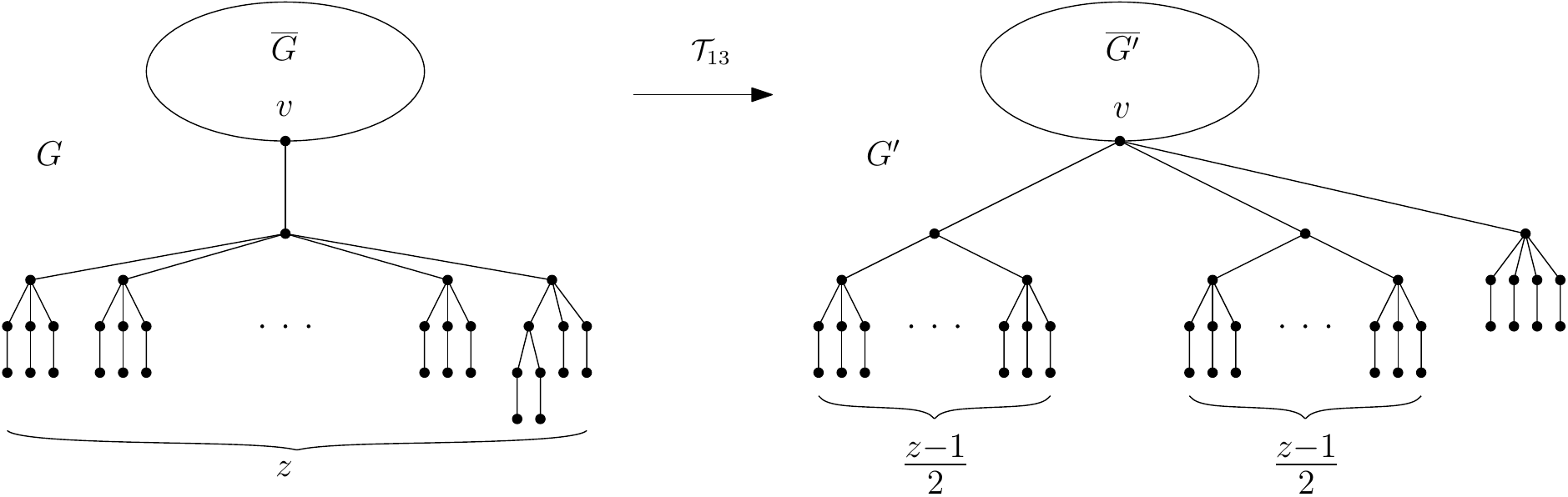}
\caption{The transformation applied to obtain an upper bound on the size of a $D_{z}^{**}$-branch.}
\label{fig-Dz**_size10-2}
\end{center}
\end{figure}
After applying $\mathcal{T}_{13}$, the change of the ABC index is
\begin{eqnarray} \label{eq-Lemma-Dz**-10}
&& ABC (G') - ABC (G) \nonumber \\
&=& \sum_{x v \in E(\bar{G})} ( - f(d(v), d(x)) + f (d(v) + 2, d(x))) - f (d(v),z+1) + 2 f \left( d(v) + 2, \frac{z+1}{2} \right)  \nonumber \\
&& - z \, f (z+1,4) + (z-1) f \left( \frac{z+1}{2}, 4 \right) - f(4,3) + f (v+2,5).
\end{eqnarray}
In Appendix~\ref{appendix-upper-D**}, we show that the right-hand side of \eqref{eq-Lemma-Dz**-10} is negative for $75 \le z \le 131$.

For the lower bound on the size $z$ of a $D_z^{**}$-branch, consider the transformation $\mathcal{T}_{14}$ depicted in Figure~\ref{fig-Dz**_size20}.
\begin{figure}[!ht]
\begin{center}
\includegraphics[scale=0.75]{./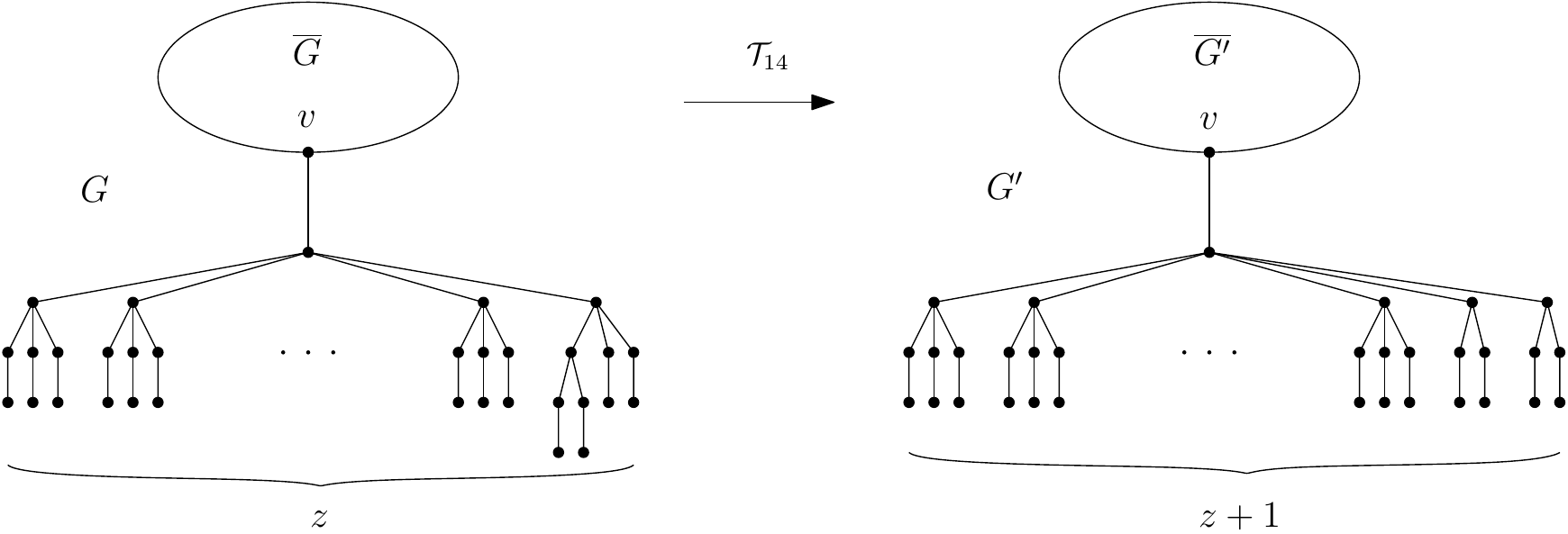}
\caption{The transformation applied to obtain a lower bound on the size of a $D_{z}^{**}$-branch.}
\label{fig-Dz**_size20}
\end{center}
\end{figure}
The change of the ABC index after applying $\mathcal{T}_{14}$ is
\begin{eqnarray*}
%
ABC (G') - ABC (G)  &=& -f(d(v),z+1)+f(d(v),z+2)   + (z-1) (-f(z+1,4)+f(z+2,4) )   \\
&&  -f(z+1,4) - f(4,3) + 2 f(z+2,3).
\end{eqnarray*}
By Proposition~\ref{pro-20}, $-f(d(v),z+1)+f(d(v),z+2)$ decreases in $d(v) \ge z+1$. Thus we have the following upper bound on $ABC (G') - ABC (G)$:
\begin{eqnarray*}
%
ABC (G') - ABC (G) &\le& -f(z+1,z+1)+f(z+1,z+2)   + (z-1) (-f(z+1,4)+f(z+2,4) )  \\
&&  -f(z+1,4) - f(4,3) + 2 f(z+2,3),
\end{eqnarray*}
which is negative for $15 \le z \leq 46$.
\end{proof}

\begin{lemma} \label{lemma-no-D_B3**-branches-v2}
If there are at least $56$ $D_z$-branches, then the minimal-ABC tree does not contain a $D_{k}^{**}$-branch.
\end{lemma}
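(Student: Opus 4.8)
The plan is to argue by contradiction, in exactly the spirit of the proofs of Lemma~\ref{lemma-NO-D_k,2_2} and Lemma~\ref{lemma-D_k,1_2}: assume a minimal-ABC tree $G$ has at least $56$ $D_z$-branches and in addition carries a $D_k^{**}$-branch, and then exhibit a local transformation producing a tree $G'$ with $\ABC(G') < \ABC(G)$, contradicting minimality. Notice that, unlike the two preceding lemmas, no hypothesis on the root degree is imposed; the reason is that the size parameters can here be confined by a purely structural argument rather than by counting incidences at the root.

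First I would pin down the admissible ranges of the two sizes. From Proposition~\ref{le-Dz**-UpperBound-10} the $D_k^{**}$-branch satisfies $47 \le k \le 74$. The crucial structural input is Proposition~\ref{pro-diff-Dz}, which is stated precisely so as to allow one child to be a $B_3^{**}$-branch: applying it to the center of any $D_z$-branch and the center of the $D_k^{**}$-branch (both are children of the root, both carry only $B_3$-branches with at most one of the pair being $B_3^{**}$, and both have degree well above $5$), I conclude that their degrees differ by at most one, i.e. $|z-k|\le 1$. Hence every $D_z$-branch present has $z\in\{k-1,k,k+1\}$, and combined with the range of $k$ this restricts the whole verification to the finite box $46 \le z \le 75$, $47 \le k \le 74$, $|z-k|\le 1$.

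Next I would design a transformation $\mathcal{T}$ that detaches the two surplus pendant vertices carried by the $B_3^{**}$-branch inside the $D_k^{**}$-branch and reattaches them as a fresh pendant path of length two on the centers of the participating $D_z$-branches, thereby turning the $D_k^{**}$-branch into an ordinary $D_k$-branch and enlarging the $D_z$-branches accordingly. As in the two earlier lemmas, the number $x$ of $D_z$-branches taking part is fixed by demanding that the total count of $B_3$-branches be preserved, which yields a linear relation among $x$, $z$ and $k$; over the finite box this relation forces $x$ to stay below $56$, so the hypothesis of at least $56$ $D_z$-branches guarantees that $\mathcal{T}$ is feasible. Writing $\ABC(G')-\ABC(G)$ as a sum of differences of $h$ and bounding the root-incident terms by the monotonicity of Propositions~\ref{pro-10} and~\ref{pro-20} (and, if needed, normalizing what hangs off the root via the switching transformation of Lemma~\ref{lemma-switching}), I reduce the claim to checking negativity of a single expression in $(z,k)$, which is then verified over the finite box by a direct (computer-assisted) evaluation relegated to an appendix. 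The main obstacle is the correct specification of $\mathcal{T}$: one must make sure the redistribution of the $B_3^{**}$ surplus produces legitimate $B_3$- and $D_z$-branches, that the $B_3$-conservation identity indeed caps $x$ below $56$ across the whole box, and that the resulting ABC change is uniformly negative; once $\mathcal{T}$ is pinned down, the monotonicity bounds and the finite check are routine.
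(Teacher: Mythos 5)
Your overall architecture (argue by contradiction, confine $(z,k)$ to a finite box, apply a branch-redistributing transformation whose feasibility is ensured by the $56$-branch hypothesis, then verify negativity by monotonicity plus a finite check) is the same as the paper's, but there are two genuine gaps. The first and most serious: you never use the hypothesis ``at least $56$ $D_z$-branches'' to bound $z$ itself, and that is precisely what the paper needs it for. Having at least $56$ $D_z$-branches rules out, via Lemma~\ref{le-UpperBoundOnNumberOf_Dz-10} and Lemma~\ref{le-LowerBoundOnNumberOf_Dz} (Tables~\ref{table-large-z} and~\ref{table-small-z}), every $z$ outside $[38,57]$; intersecting this with Proposition~\ref{pro-diff-Dz} and with $47\le k\le 74$ from Proposition~\ref{le-Dz**-UpperBound-10} gives the paper's box $46\le z\le 57$, $47\le k\le 58$. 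Your box, obtained from $|z-k|\le 1$ alone, lets $z$ run up to $75$. That is fatal for feasibility: the $B_3$-conservation identity of the paper's transformation forces $x=z-1$ participating $D_z$-branches, so on your box $x$ can be as large as $74>56$, and the hypothesis of $56$ available $D_z$-branches no longer guarantees the transformation can be carried out. Your assertion that ``over the finite box this relation forces $x$ to stay below $56$'' is simply false as stated; it becomes true only after the table-based restriction $z\le 57$, which your argument omits.

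The second gap is the specification of the transformation, which you yourself flag as the main obstacle but then resolve incorrectly. A $B_3^{**}$-branch carries \emph{three} surplus vertices relative to a $B_3$-branch (a $D_z^{**}$-branch has $7z+4$ vertices versus $7z+1$ for a $D_z$-branch), not two, and reattaching that surplus ``as a fresh pendant path of length two'' on a $D_z$-center neither produces legitimate $D$-branches (the center would acquire a degree-two child, not a new $B_3$-branch) nor changes the number of $B_3$-branches at all, so it cannot generate the linear relation among $x,z,k$ on which your feasibility argument rests. The paper's $\mathcal{T}_{15}$ is genuinely different: it removes one $B_3$-branch from each of $x=z-1$ participating $D_z$-branches (demoting them to $D_{z-1}$-branches), assembles those freed $B_3$-branches together with the spare vertex of the $B_3^{**}$ into one brand-new $D_{z-1}$-branch, turns the $D_k^{**}$-branch into a $D_{k-1}$-branch, and reshapes the remaining nine vertices of the $B_3^{**}$-branch into a $B_4$-branch attached to the root; only with this bookkeeping does the conservation identity $xz+k-1=(x+1)(z-1)+k-1$, i.e.\ $x=z-1\le 56$, hold, and only then is there a well-defined expression (\ref{eq-no-Dz_**-v2}) whose negativity can be checked over the finite box. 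As written, your proof has no such expression to verify, so the concluding ``routine'' computation has no content.
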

\begin{proof}
Assume that there are at least $56$ $D_z$-branches and the minimal-ABC tree does contain a $D_{k}^{**}$-branch.
By Lemma~\ref{le-Dz**-UpperBound-10}, it holds that $47 \le k \le 74$.
Since there are at least $56$ $D_z$-branches, by Lemma~\ref{le-UpperBoundOnNumberOf_Dz-10}
(see the results in Table \ref{table-large-z})  and  Lemma~\ref{le-LowerBoundOnNumberOf_Dz}
(see Table \ref{table-small-z}),
it follows that $38 \leq z \leq 57$.
By Proposition~\ref{pro-diff-Dz}, $k=z-1,$ $z$ or $z+1$. Thus, we may assume that $47 \le k \le 58$ and $46 \le z \le 57$.

Next, we apply the transformation $\mathcal{T}_{15}$ depicted in Figure~\ref{fig-no-Dz_**-v3}.
\begin{figure}[!ht]
\begin{center}
\includegraphics[scale=0.75]{./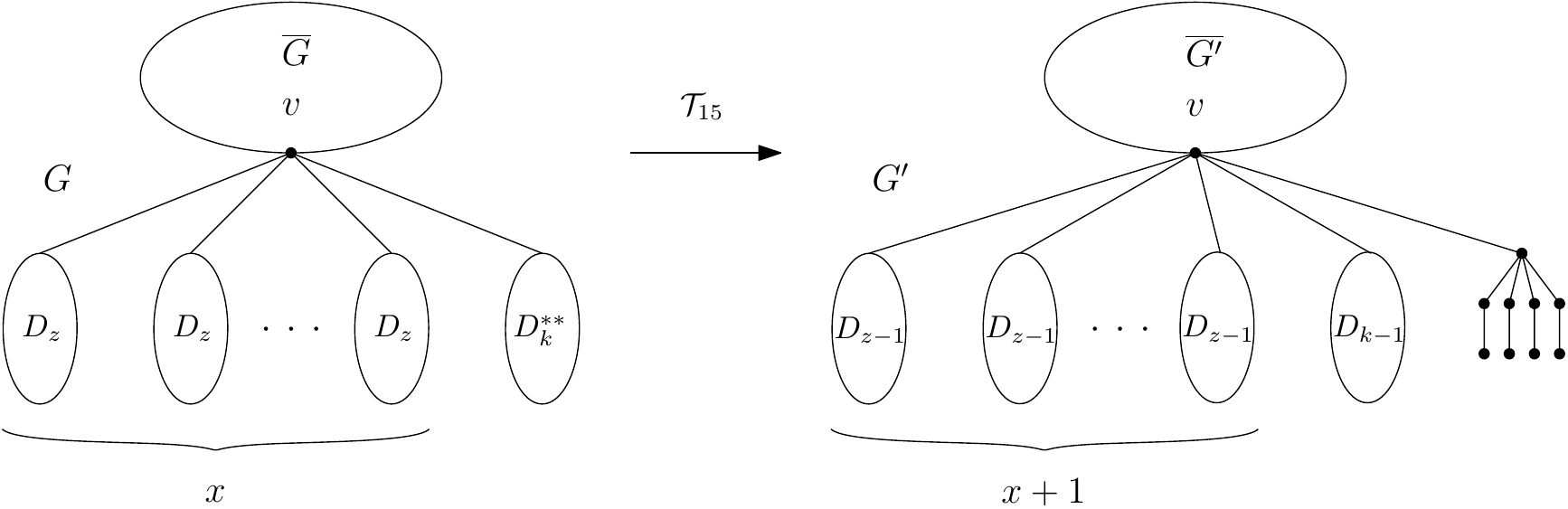}
\caption{The transformation which shows that under certain  conditions there is no $D_{z}^{**}$-branch.}
\label{fig-no-Dz_**-v3}
\end{center}
\end{figure}
The change of the ABC index after applying $\mathcal{T}_{15}$ is
\begin{eqnarray}  \label{eq-no-Dz_**-v2}
ABC (G') - ABC (G) &=& \sum_{y v \in E(\bar{G})} ( - f(d(v), d(y)) + f (d(v) + 2, d(y))) -x \, f(d(v),z+1) -x \,  z \,  f(z+1,4) \nonumber \\
&&  - f(d(v),k+1) - k \, f(k+1,4) - f(4,3) + (x+1) f(d(v)+2,z)  \nonumber \\
&&   + (x+1)(z-1) f(z,4) +f(d(v)+2,k)  + (k-1) f(k,4)  +f(d(v)+2, 5).
\end{eqnarray}
The number of $B_3$-branches in $G$ (outside $\bar{G}$) is equal to the number of $B_3$-branches in $G'$ (outside {$\overline{G'}$}),
i.e., $x \, z  + k - 1= (x+1)(z-1) + k - 1$, that is, $x = z-1$.
From $z \in [46,57]$, $k \in [\max\{47,z - 1\},z+1]$
and $x = z-1$, it follows that $x \le 56$, and therefore, the transformation $\mathcal{T}_{15}$ is feasible.
For these values of $z$, $k$, and $x$, it can be shown that the right-hand side of (\ref{eq-no-Dz_**-v2}) is negative.
The details are presented in Appendix \ref{appendix-nonexist-B3**}.
\end{proof}

\section[Main results]{Main results}\label{main-results}

By Proposition~\ref{pro-diff-Dz} and
Lemmas~\ref{lemma-no-B4-branches}, \ref{lemma-NO-D_k,2_2}, \ref{lemma-D_k,1_2}
and \ref{lemma-no-D_B3**-branches-v2} we obtain the
following result, which is an affirmative answer to Conjecture~\ref{conjecture2}.

\begin{te} \label{co-withB3_to_root}
A minimal-ABC tree with the root vertex of degree at least $1228$ is comprised only of a root and $D_z$- and $D_{z+1}$-branches,
where $z \in \{50, 51, 52\}$
and maybe of $B_3$-branches and at most one of $D_{t,1}^2$-branch, $z - 5 \le t \le z$, attached to the root.
\end{te}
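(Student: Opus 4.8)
The plan is to synthesize the accumulated structural lemmas, showing that under the hypothesis $d(v)\ge 1228$ every admissible component of the tree has been pinned down to the claimed short list. First I would invoke Conjecture~\ref{conjecture1} (now a theorem, from \cite{dd-scbvmabct-2020}): the big vertices induce a star, so the tree consists of a root $v$ together with branches hanging off $v$, each branch being either a $B$-branch attached directly to the root or a $D$-branch whose center is a child of $v$. This reduces the problem to identifying which branch types can survive. By Lemma~\ref{lemma-no-B4-branches}, since $d(v)\ge 1228$ there are no $B_4$-branches; Theorems~\ref{noB1B4} and \ref{noB2B4} then become moot for excluding $B_4$, but more importantly the absence of $B_4$ means no $B_1$- and no $B_2$-branches are forced away by that route, so I must separately control $B_2$, $B_3^*$, and $B_3^{**}$ occurrences. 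The combination of Lemmas~\ref{le-Dz-lowerBound} and \ref{le-Dz-UpperBound} (with Remark~\ref{remark}) confines any $D_z$-branch to $15\le z\le 131$.

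Next I would narrow the value of $z$ itself. Applying Lemmas~\ref{le-UpperBoundOnNumberOf_Dz-10} and \ref{le-LowerBoundOnNumberOf_Dz} together with the tabulated occurrence counts (Tables~\ref{table-large-z} and \ref{table-small-z}) and the hypothesis $d(v)\ge 1228$, the only way to accumulate enough branches at the root to reach degree at least $1228$ is for the dominant $D_z$-branches to have $z\in\{50,51,52\}$. Here I would argue by a counting contradiction exactly as in the proof of Lemma~\ref{lemma-no-B4-branches}: for any other value of $z$, the maximum number of $D_z$- and $D_{z\pm1}$-branches permitted by Lemma~\ref{le-LowerBoundOnNumberOf_Dz}, plus at most $919$ $B$-branches at the root from Lemma~\ref{thm-bound-B-branches-to-root}, sums to strictly less than $1228$. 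Proposition~\ref{pro-diff-Dz} then forces the $D_z$-branches to have sizes differing by at most one, giving the $D_z$- and $D_{z+1}$-pattern.

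It then remains to eliminate every aberrant $D$-branch type. Lemma~\ref{lemma-NO-D_k,2_2} rules out $D_{k,2}^2$-branches (its hypotheses---at least $65$ $D_z$-branches and root degree at least $146$---are satisfied, since the required count of $D_z$-branches with $z\in\{50,51,52\}$ to reach degree $1228$ far exceeds $65$); Lemma~\ref{lemma-no-D_B3**-branches-v2} rules out $D_k^{**}$-branches under the hypothesis of at least $56$ $D_z$-branches, again satisfied; and Lemma~\ref{lemma-D_k,1_2} would rule out $D_{k,1}^2$-branches entirely, except that lemma requires at least $261$ $D_z$-branches---so when the number of $D_z$-branches lies between the thresholds of Lemma~\ref{lemma-NO-D_k,2_2} and Lemma~\ref{lemma-D_k,1_2}, a single $D_{k,1}^2$-branch may persist, which is precisely the exceptional ``at most one $D_{t,1}^2$-branch'' clause in the statement. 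I would pin the range $z-5\le t\le z$ by combining Proposition~\ref{pro-P3-20} (giving $z\le t+5$) with $t\le z$. Finally, the $B$-branches attached to the root: any $B_2$- or $B_3^{**}$-branch there can be removed by a switching transformation (Lemma~\ref{lemma-switching}) against a $B_3$-branch inside a $D$-branch, as noted in the preamble to Lemma~\ref{thm-bound-B-branches-to-root}, leaving only $B_3$-branches at the root.

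The main obstacle I anticipate is the bookkeeping around the $D_{k,1}^2$-branch: unlike the other aberrant types, it is \emph{not} unconditionally excluded at degree $1228$, so the argument must carefully track the regime where the $D_z$-branch count is too small to trigger Lemma~\ref{lemma-D_k,1_2} yet large enough for the other exclusion lemmas, and confirm that in that regime at most one such branch fits. Verifying that the three thresholds ($56$, $65$, $261$) interlock correctly against the degree bound $1228$, rather than any of the exclusions silently failing, is the delicate step; everything else is an assembly of the preceding lemmas.
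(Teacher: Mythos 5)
Your overall architecture (exclude $B_4$-, $D_{k,2}^2$-, $D_k^{**}$-branches via the occurrence-count lemmas, keep the possible single $D_{t,1}^2$-branch because the threshold of $261$ in Lemma~\ref{lemma-D_k,1_2} need not be met, and pin $z-5\le t\le z$ by Proposition~\ref{pro-P3-20}) matches the paper. But the step where you narrow $z$ to $\{50,51,52\}$ has a genuine gap: you assert that for every other $z$ the tabulated bounds on the numbers of $D_z$- and $D_{z\pm1}$-branches, plus the bound of $919$ $B$-branches, sum to strictly less than $1228$, so a pure counting contradiction finishes. That is numerically false for the two borderline combinations. For $D_{53}$- and $D_{54}$-branches the bounds are $n_{53}\le 260$, $n_{54}\le 130$ (Table~\ref{table-large-z}) and at most $882$ $B$-branches (Table~\ref{tbl-maxB-2}), which allows $d(v)$ up to $260+130+882=1272\ge 1228$; for $D_{49}$- and $D_{50}$-branches the bound $n_{50}\le 182$ from Table~\ref{table-small-z} applies only when $d(v)\ge 1358$, so in the window $1228\le d(v)<1358$ counting alone gives nothing. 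These are exactly the cases to which the paper devotes essentially the whole proof: it kills the $D_{49}$--$D_{50}$ case by re-running the transformation argument of Appendix~\ref{appendix-exist-B4} built on (\ref{eq-Lemma-Dz-30-20-second}) (showing, e.g., that $n_{50}\ge 222$ would force a decrease of the ABC index), and kills the $D_{53}$--$D_{54}$ case by applying $\mathcal{T}_4$ with refined parameters ($z=53$, $x=216$, $k=52$, $n_3=882$) after deducing $n_{53}\ge 1228-882-130=216$ from counting.

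Moreover, in that last case the paper must split according to whether the $D_{t,1}^2$-branch is present, because its presence shifts the count by one: when $d(v)=1228$ and the $D_{t,1}^2$-branch exists, one only gets $n_{53}+n_{54}\ge 345$, and a further application of (\ref{eq-Lemma-Dz-30-00-1}) (with $z=54$, $x=130$) is needed to improve the bound to $n_{54}\le 129$ and recover $n_{53}\ge 216$. None of this bookkeeping---which is the actual mathematical content of the theorem beyond assembling earlier lemmas---appears in your plan. The delicate step is not the interlocking of the thresholds $56$, $65$, $261$ (that part you handle correctly, as does the paper), but the elimination of the two borderline branch combinations, which requires new transformation estimates rather than counting.
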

\begin{proof}
In general, if the degree of the root vertex is at least $1228$, besides the $B$-branches, there are at least
$1228-919=309$ $D$-branches (since there are at most $919$ $B$-branches attached to the root vertex
from Lemma \ref{thm-bound-B-branches-to-root}), and thus,  by Lemmas~\ref{lemma-NO-D_k,2_2}
and \ref{lemma-no-D_B3**-branches-v2}, it follows that
there is no $D_{z,2}^2$-branch and no $D_{z}^{**}$-branch.
In addition, by Lemma~\ref{lemma-no-B4-branches}, when $d(v) \geq 1228$ there are no $B_4$-branches. Observe that a branch with one $B_2$-branch, say $D_{t,1}^2$-branch for some $t$ ($z - 5 \le t \le z$ from Proposition~\ref{pro-P3-20}), may exist, there is at most one of such branch with $B_2$-branch.

As in the proof of Lemma \ref{lemma-no-B4-branches}, we have shown that when the root vertex has degree at least $1228$,
the cases when a minimal-ABC tree contains a combination of $D_{z}$- and $D_{z+1}$-branches (maybe with a possible $D_{t,1}^2$-branch), for $15 \leq z \leq 48$ and $54 \leq z \leq 131$,
are not possible. So we are remaining to eliminate the two cases:
the combination of $D_{49}$- and $D_{50}$-branches, and the combination of $D_{53}$- and $D_{54}$-branches.

The same as in Appendix \ref{appendix-exist-B4}, with the help of (\ref{eq-Lemma-Dz-30-20-second}),
the case that $G$ contains a combination of $D_{49}$- and $D_{50}$-branches (whether the $D_{t,1}^2$-branch exists or not) is also impossible (here $n_4 = 0$ and the worst case is when $n_3$ is also equal to $0$).

Now, consider the last remaining case when $G$ contains a combination $D_{53}$- and $D_{54}$-branches, maybe with a possible $D_{t,1}^2$-branch.
Here, the minimal-ABC tree may contain at most $882$ $B_3$-branches (see Tables~\ref{tbl-maxB-2} and \ref{tbl-maxB-2-1}), since no $B_4$-branch occurs when $d(v) \geq 1228$ from Lemma~\ref{lemma-no-B4-branches}.

\smallskip
\noindent
{\em Case 1: There is no $D_{t,1}^2$-branch for any $t$}.

There can be in addition $260$ $D_{53}$-branches and $130$ $D_{54}$-branches (see Table~\ref{table-large-z}).
Since there are at most $882$ $B_3$-branches, the number of $D_{53}$- and $D_{54}$-branches must be at least
$1228 - 882 = 346$.
Here, we can apply the transformation $\mathcal{T}_{4}$ and obtain a new upper bound on the change of the ABC index with $z = 53$,
which is an improvement of the upper bound (\ref{eq-Lemma-Dz-30-00-1}) presented in Appendix~\ref{appendix-upper_bound}:
\begin{eqnarray}  \label{eq-Lemma-Dz-30-00-1-v2}
ABC (G') - ABC (G)  &\le&-x \, f(d(v),z+1) -x \, z \, f(z+1,4) - 3 f(4,2) - 3 f(2,1)  + n_{k}  f(d(v)+7,k+1)   \nonumber \\
&&  + n_{k-1}  f(d(v)+7,k)   + k \, n_{k}  f(k+1,4)  +  (k-1) n_{k-1}  f(k,4)  \nonumber \\
&& + n_3 ( - f(d(v),4) + f(d(v)+7,4))   \nonumber \\
&& + (d(v) - x - n_3) ( - f(d(v),z+1) + f(d(v)+7,z+1)).  \nonumber
\end{eqnarray}
Clearly, the worst case  is when $n_3$ as large as possible, i.e., $n_3 = 882$.
Recall that $n_{53} + n_{54} \ge 346$ and $n_{54} \le 130$, thus $n_{53} \ge 346 - 130 = 216$. So by setting $z = 53$, $x = 216$, $k = 52$ ($n_{52} = 74$ and $n_{51} = 149$), $n_3 = 882$, the right-hand side of last inequality above is negative for $1228 \leq d(v) \leq 882 + 260 + 130 = 1272$.

\smallskip
\noindent
{\em Case 2: There is $D_{t,1}^2$-branch for some $t$}.

In this case, the upper bound on the change of ABC index after applying transformation $\mathcal{T}_{4}$ with $z = 53$ is:
\begin{eqnarray}  \label{eq-Lemma-Dz-30-00-1-v2}
ABC (G') - ABC (G) &\le&-x \, f(d(v),z+1) -x \, z \, f(z+1,4) - 3 f(4,2) - 3 f(2,1)  + n_{k}  f(d(v)+7,k+1)   \nonumber \\
&&  + n_{k-1}  f(d(v)+7,k)   + k \, n_{k}  f(k+1,4)  +  (k-1) n_{k-1}  f(k,4) \nonumber \\
&& + n_3 ( - f(d(v),4) + f(d(v)+7,4))  + ( - f(d(v),t+1) + f(d(v)+7,t+1)) \nonumber \\
&& + (d(v) - x - n_3 - 1) ( - f(d(v),z+1) + f(d(v)+7,z+1)).  \nonumber
\end{eqnarray}

When $d(v) \ge 1229$, $n_{53} + n_{54} \ge 1229 - 882 - 1 = 346$ (we minus one here because of the occurrence of $D_{t,1}^2$-branch), and thus $n_{53} \ge 346 - 130 = 216$.
As above, by setting $z = 53$, $x = 216$, $k = 52$ ($n_{52} = 74$ and $n_{51} = 149$), $n_3 = 882$, $t = 48$ (because $t \ge 53 - 5 = 48$ from Proposition~\ref{pro-P3-20}), the right-hand side of last inequality above is negative for $1229 \leq d(v) \leq 882 + 260 + 130 + 1= 1273$.

When $d(v) = 1228$, $n_{53} + n_{54} \ge 345$. In order to get $n_{53} \ge 216$ as the case $d(v) \ge 1229$, we resort to the right-hand side of (\ref{eq-Lemma-Dz-30-00-1}), by setting $d(v) = 1228$, $z = 54$, $x = 130$, $k = 52$ ($n_{51} = 105$ and $n_{52} = 32$), leading to a negative value for the right-hand side of (\ref{eq-Lemma-Dz-30-00-1}). It implies that $n_{54} \le 129$ when $d(v) = 1228$, so $n_{53} \ge 345 - 129 = 216$ follows. The remaining parts are the same as the case $d(v) \ge 1229$.

The proof is completed.
\end{proof}

If we want to eliminate the existence of $D_{t,1}^2$-branch in Theorem \ref{co-withB3_to_root}, Lemma \ref{lemma-D_k,1_2} could help us a lot, when the root vertex has a somewhat larger degree than $1228$.

\begin{co} \label{co-conj2_and_more-add}
A minimal-ABC tree with the root vertex of degree at least $1441$ is comprised only of a root and $D_z$- and $D_{z+1}$-branches,
where $z \in \{50, 51, 52\}$
and maybe of $B_3$-branches attached to the root.
\end{co}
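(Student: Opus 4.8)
The plan is to leverage Theorem~\ref{co-withB3_to_root} and to upgrade it by invoking Lemma~\ref{lemma-D_k,1_2} to rule out the remaining exceptional $D_{t,1}^2$-branch, at the cost of requiring a slightly larger root degree. Theorem~\ref{co-withB3_to_root} already tells us that any minimal-ABC tree whose root has degree at least $1228$ consists solely of a root with $D_z$- and $D_{z+1}$-branches ($z \in \{50,51,52\}$), possibly together with $B_3$-branches and \emph{at most one} $D_{t,1}^2$-branch with $z-5 \le t \le z$, all attached to the root. So the only thing standing between this theorem and the cleaner statement of the corollary is the possible presence of that single $D_{t,1}^2$-branch. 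Since the degree $1441$ exceeds $1228$, the structural conclusion of Theorem~\ref{co-withB3_to_root} applies immediately, and the entire task reduces to showing that under the stronger hypothesis $d(v) \ge 1441$ the $D_{t,1}^2$-branch cannot occur.

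First I would note that Lemma~\ref{lemma-D_k,1_2} asserts precisely that a minimal-ABC tree with at least $261$ $D_z$-branches and root degree at least $1228$ contains no $D_{k,1}^2$-branch. The central step, therefore, is to verify that when $d(v) \ge 1441$ the tree necessarily has at least $261$ $D_z$-branches, so that Lemma~\ref{lemma-D_k,1_2} becomes applicable and forces the $D_{t,1}^2$-branch out of existence. To count the $D_z$-branches, I would use the bound from Lemma~\ref{thm-bound-B-branches-to-root} that at most $919$ $B$-branches are attached to the root; combined with the (at most one) $D_{t,1}^2$-branch permitted by Theorem~\ref{co-withB3_to_root}, this means at least $1441 - 919 - 1 = 521$ genuine $D_z$- and $D_{z+1}$-branches hang off the root. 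Since $521 \ge 261$, the hypothesis of Lemma~\ref{lemma-D_k,1_2} is met (one should check that the count of $D_z$-branches for the dominant value of $z$ meets the threshold, but with over five hundred $D$-branches distributed among at most two adjacent sizes, at least one size clears $261$, and the argument of Lemma~\ref{lemma-D_k,1_2} is indifferent to which of $z,z+1$ plays the role of the counted branch).

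Having established that there are enough $D_z$-branches, Lemma~\ref{lemma-D_k,1_2} directly gives that no $D_{k,1}^2$-branch exists, eliminating the last qualifying clause of Theorem~\ref{co-withB3_to_root}. What remains is exactly a root together with $D_z$- and $D_{z+1}$-branches for $z \in \{50,51,52\}$ and possibly $B_3$-branches attached to the root, which is the assertion of the corollary. I expect the main obstacle to be the bookkeeping in the branch count: one must argue carefully that the surplus of $D$-branches is distributed so that the numerical threshold of $261$ in Lemma~\ref{lemma-D_k,1_2} is genuinely satisfied for the relevant size $z$, rather than merely in aggregate across two sizes. The precise value $1441$ is presumably chosen as the smallest root degree for which $d(v) - 919 - 1 \ge 261$ together with the pigeonhole distribution among $\{z,z+1\}$ guarantees the requisite count; confirming that $1441$ (and not some smaller bound) is exactly what the two lemmas jointly demand is the only delicate point, and it is a finite arithmetic check rather than a new transformation.
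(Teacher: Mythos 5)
Your proposal is correct and follows essentially the same route as the paper's own proof: apply Theorem~\ref{co-withB3_to_root} since $1441 \ge 1228$, count at least $1441-919-1=521$ $D$-branches via Lemma~\ref{thm-bound-B-branches-to-root}, and conclude by pigeonhole that one of $n_z$, $n_{z+1}$ is at least $\lceil 521/2 \rceil = 261$, so Lemma~\ref{lemma-D_k,1_2} excludes the $D_{t,1}^2$-branch. The ``delicate point'' you flag is handled in the paper exactly as you suggest, by the pigeonhole distribution between the two adjacent sizes.
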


\begin{proof}
From Lemma \ref{lemma-D_k,1_2}, we need at least $261$ $D_z$-branches to guarantee the nonexistence of $D_{t,1}^2$-branch. When $d(v) \ge 1441$, besides the $B$-branches and one possible $D$-branch with $B_2$-branch, there are at least
$1441-919-1=521$ $D$-branches (recall that there are at most $919$ $B$-branches attached to the root vertex
from Lemma \ref{thm-bound-B-branches-to-root}). Assume that such $D$-branches are $D_z$ or $D_{z+1}$-branches. Clearly, one of $n_{z}$ or $n_{z+1}$ is at least $\lceil \frac{521}{2} \rceil = 261$.
\end{proof}

By Lemma~\ref{lemma-no-B-branches-10}, when the degree of the root is at least $2956$,
then there are also no $B_3$-branches attached to the root.

\begin{co} \label{co-conj2_and_more}
A minimal-ABC tree with the root vertex of degree at least $2956$ is comprised only of a root and $D_z$- and $D_{z+1}$-branches,
where $z \in \{50, 51, 52\}$.
\end{co}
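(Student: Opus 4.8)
The plan is to assemble this corollary directly from the two immediately preceding results, since essentially all of the structural work has already been carried out. First I would observe that the hypothesis $d(v)\ge 2956$ in particular implies $d(v)\ge 1441$, so Corollary~\ref{co-conj2_and_more-add} applies and tells us that the minimal-ABC tree is comprised only of a root together with $D_z$- and $D_{z+1}$-branches, where $z\in\{50,51,52\}$, and possibly some $B_3$-branches attached to the root. Thus the only discrepancy between this description and the claimed conclusion is the potential presence of $B_3$-branches hanging off the root.

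The second and decisive step is to eliminate those $B_3$-branches. Since $d(v)\ge 2956$, Lemma~\ref{lemma-no-B-branches-10} applies and guarantees that there are no $B$-branches attached to the root at all; in particular there are no $B_3$-branches adjacent to the root. Combining this with the structural description supplied by Corollary~\ref{co-conj2_and_more-add} leaves exactly a root with $D_z$- and $D_{z+1}$-branches, $z\in\{50,51,52\}$, which is the asserted conclusion.

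I do not anticipate any genuine obstacle here: the threshold $2956$ was precisely chosen in Lemma~\ref{lemma-no-B-branches-10} so that its conclusion is available, and the threshold $1441$ from Corollary~\ref{co-conj2_and_more-add} is comfortably below it, so the two results dovetail without any further estimates or transformations. The only points that require care are bookkeeping ones: confirming that the two degree thresholds are mutually consistent (which is immediate since $2956\ge 1441$) and noting that Lemma~\ref{lemma-no-B-branches-10} rules out $B_3$-branches specifically (which it does, since $B_3$-branches are among the $B$-branches it forbids at the root). Hence the corollary follows with no additional computation.
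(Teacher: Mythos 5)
Your proof is correct and follows exactly the paper's (implicit) argument: the paper derives this corollary by combining Corollary~\ref{co-conj2_and_more-add} (applicable since $2956 \ge 1441$) with Lemma~\ref{lemma-no-B-branches-10}, which removes the possible $B_3$-branches at the root. No further comment is needed.
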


This confirmed the conjectured structure of the minimal-ABC tree from \cite{adgh-dctmabci-14}  depicted in Figure~\ref{fig-dd-4}.
More details about the structure of minimal-ABC trees,
 when the degree of the root is at least $2956$,
are presented in the following theorem.

\begin{te} \label{te-combinations-D-branches}
Let $v$ be the root of a minimal-ABC tree $G$.
\begin{itemize}
\item
If   $2956 \leq d(v) \leq 3241$, then $G$ must contain $D_{51}$-branches and in addition
\begin{itemize}
\item   either $D_{52}$-branches,
\item or at most $123$ $D_{50}$-branches.
\end{itemize}
In particular, when $2956 \le d(v) \le 3185$, $G$ contains at most $357$ $D_{52}$-branches.
\item
If  $d(v) \ge 3242$, then $G$ must contain $D_{52}$-branches and
\begin{itemize}
\item  either at most $718$ $D_{51}$-branches.
\item  or at most $178$ $D_{53}$-branches.
\end{itemize}
In particular, when $d(v) \geq 3249$,  $G$ contains at most $364$ $D_{51}$-branches.
\end{itemize}
\end{te}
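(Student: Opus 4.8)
The plan is to start from Corollary~\ref{co-conj2_and_more}: as $d(v)\ge 2956$, the tree $G$ is the root together with $D_z$- and $D_{z+1}$-branches only, with $z\in\{50,51,52\}$ and \emph{no} $B$-branch attached to the root. Writing $n_z$ for the number of $D_z$-branches, this gives the clean identity $d(v)=\sum_z n_z$, in which only two consecutive sizes can be nonzero. Hence exactly one of
\[
\text{(I)}\ (D_{50},D_{51}),\quad \text{(II)}\ (D_{51},D_{52}),\quad \text{(III)}\ (D_{52},D_{53})
\]
occurs, and the theorem reduces to deciding which combination is compatible with a given $d(v)$ and to bounding the less numerous branch type in each case.

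The single most useful observation is that, because no $B_3$-branch sits at the root, every neighbour $v_i$ of $v$ in $\overline{G}$ is the centre of a $D$-branch of degree $z{+}1$ or $z{+}2$. This lets me replace, in the residual sums of the $\mathcal{T}_4$-change \eqref{eq-Lemma-Dz-30-00} and the $\mathcal{T}_5$-change \eqref{eq-Lemma-Dz-30-00-second-pre}, the weak $f(d(v),4)$-type terms by the much stronger $f(d(v),z)$-type terms. It is precisely this refinement that sharpens the generic count bounds of Lemmas~\ref{le-UpperBoundOnNumberOf_Dz-10} and \ref{le-LowerBoundOnNumberOf_Dz}, and I expect it to convert the crude bound $182$ on $n_{50}$ into the stated $123$, and likewise to deliver $357$, $718$ and $178$.

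I would then argue range by range. For the first bullet the task is to rule out (III) when $2956\le d(v)\le 3241$. I plan to combine the bound $n_{53}\le 178$ (from the refined $\mathcal{T}_4$-inequality at $z=53$) with the refined $\mathcal{T}_4$-inequality at $z=52$: the latter forbids $n_{52}$ from reaching the value $d(v)-n_{53}$ that (III) would require precisely while $d(v)\le 3241$, and only at $d(v)=3242$ do enough $D_{52}$-branches become admissible, which is what pins the threshold. Once (III) is excluded, $D_{51}$ is present, and the two sub-cases are read off directly: in (I) the refined $\mathcal{T}_5$-inequality at $z=50$ gives $n_{50}\le 123$, while in (II) the refined $\mathcal{T}_4$-inequality at $z=52$ with $x=358$ gives $n_{52}\le 357$ for $d(v)\le 3185$. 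Symmetrically, for the second bullet I would exclude (I) when $d(v)\ge 3242$: the refined $\mathcal{T}_5$-inequalities at $z=50$ and $z=51$ cap $n_{50}$ and $n_{51}$ so tightly that their sum falls below $3242$, forcing $D_{52}$ to appear; the minority bounds $n_{51}\le 718$ and $n_{53}\le 178$ come from the same two transformations, and the sharper $n_{51}\le 364$ is exactly Lemma~\ref{le-LowerBoundOnNumberOf_Dz} once $d(v)\ge 3249$.

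The hard part will be pinning the crossover at $d(v)=3241/3242$ and extracting the exact constants, since the three thresholds $3185$, $3241$ and $3249$ lie so close that only the refined (no-$B$-branch) forms of \eqref{eq-Lemma-Dz-30-00} and \eqref{eq-Lemma-Dz-30-00-second-pre} can separate them. My plan is to invoke the monotonicities of Propositions~\ref{pro-10} and \ref{pro-20} to collapse each multi-parameter change, subject to the constraints \eqref{eq-Lemma-Dz-10-1}, into a single expression monotone in $d(v)$, and then verify negativity at the one worst-case degree. The narrow gap ranges $3186\le d(v)\le 3241$ and $3242\le d(v)\le 3248$, where the hypothesis of Lemma~\ref{le-LowerBoundOnNumberOf_Dz} is unavailable, will have to be handled by direct evaluation of the refined inequalities with parameters tuned to those specific degrees; this finite but delicate bookkeeping, of the kind deferred to the appendices throughout the paper, is where the real effort concentrates.
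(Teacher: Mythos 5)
Your sketch follows essentially the same route as the paper's own proof: the same case split at $d(v)=3242$, the same counting contradictions driven by Corollary~\ref{co-conj2_and_more} (excluding the $D_{52}$--$D_{53}$ combination below the threshold and forcing $D_{52}$ above it), and the same refined transformation inequalities yielding the constants $123$, $357$, $718$, $178$, with $n_{51}\le 364$ read off from Lemma~\ref{le-LowerBoundOnNumberOf_Dz} for $d(v)\ge 3249$. One small inaccuracy that does not affect the argument: the term-replacement you describe (upgrading $f(d(v),4)$ to $f(d(v),z)$-type terms) is genuinely the key only for the $\mathcal{T}_4$-based bounds ($357$ and $178$), whereas for the $\mathcal{T}_5$-based bounds ($123$ and $718$) the paper's inequality \eqref{eq-Lemma-Dz-30-00-second} already uses $d(v_i)\le z+2$, and the sharpening there comes instead from re-tuning $x$ and $k$ over the restricted $d(v)$-window.
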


\begin{proof}
We partition our proof into two parts with respect to $d(v)$,
namely the cases when $2956 \leq d(v) \leq 3241$  and $d(v) \ge 3242$.

\smallskip
\noindent
{\em Case 1: $2956 \leq d(v) \leq 3241$}.

We claim that $G$ must contain $D_{51}$-branches in this case. Otherwise, from Corollary \ref{co-conj2_and_more}, $G$ contains
a combination of $D_{52}$- and $D_{53}$-branches, or $G$ contains only $D_{50}$-branches. The latter is clearly impossible, recall from Table \ref{table-small-z}, $n_{50} \le 182$, and thus $d(v) = n_{50} \le 182$ (since there is no $B$-branch attached to $v$ from Lemma \ref{lemma-no-B-branches-10}).

Assume that $G$ contains a combination of $D_{52}$- and $D_{53}$-branches. By virtue of the transformation $\mathcal{T}_{4}$ depicted in Figure~{\ref{fig-Dz_size30}}, we can show that $n_{52} \le 357$. Observe that now there is no $B$-branch attached to the root, instead of the
term $(d(v)-x) ( - f(d(v),4) + f(d(v)+7,4) )$  in (\ref{eq-Lemma-Dz-30-00-1}), we have the term
$(d(v)-x) ( - f(d(v),z + 1) + f(d(v)+7,z + 1) )$ with $z =52$. We can rewrite (\ref{eq-Lemma-Dz-30-00-1}) as:
\begin{eqnarray} \label{eq-Lemma-Dz-30-00-v2}
%
ABC (G') - ABC (G)  &\le& -x f(d(v),z+1) -x \, z \, f(z+1,4) - 3 f(4,2) - 3 f(2,1)   \nonumber \\
&&  + n_{k}  f(d(v)+7,k+1)  + n_{k-1}  f(d(v)+7,k) + k n_{k}  f(k+1,4)   \nonumber \\
&&  +  (k-1) n_{k-1}  f(k,4) + (d(v)-x) ( - f(d(v),z + 1) + f(d(v)+7,z + 1)). \quad
\end{eqnarray}
A straightforward verification shows that for $z=52$, $x = 358$, $k = 52$ (thus $n_{51}=365$ and $n_{52}=0$),
the expression (\ref{eq-Lemma-Dz-30-00-v2}) is negative for $2956 \leq d(v) \leq 3241$. Together with $n_{53} \le 260$ from Table \ref{table-large-z}, we obtain $d(v) = n_{52} + n_{53} \le 357 + 260 = 617 < 2956$, a contradiction.

Besides the $D_{51}$-branches, $G$ can only contain $D_{50}$- or $D_{52}$-branches. Moreover, $n_{50} \le 123$ by using the transformation $\mathcal{T}_{5}$, more precisely, the right-hand side of (\ref{eq-Lemma-Dz-30-00-second}) is negative when $2956 \leq d(v) \leq 3241$, $z = 50$, $x = 124$, $k = 53$ ($n_{53} = 117$ and $n_{52} = 0$). And $n_{52} \le 357$ when $2956 \leq d(v) \leq 3185$ from the transformation $\mathcal{T}_{4}$ again, here the difference from (\ref{eq-Lemma-Dz-30-00-v2}) is that we need to replace the term $(d(v)-x) ( - f(d(v),z + 1) + f(d(v)+7,z + 1) )$ by $(d(v)-x) ( - f(d(v),z) + f(d(v)+7,z) )$ with $z = 52$, i.e.,
\begin{eqnarray}  \label{eq-Lemma-Dz-30-00-v2-1}
%
ABC (G') - ABC (G) &\le& -x \, f(d(v),z+1) -x \, z \, f(z+1,4) - 3 f(4,2) - 3 f(2,1)   \nonumber \\
&&  + n_{k}  f(d(v)+7,k+1)  + n_{k-1}  f(d(v)+7,k) + k n_{k}  f(k+1,4)   \nonumber \\
&&  +  (k-1) n_{k-1}  f(k,4) + (d(v)-x) ( - f(d(v),z) + f(d(v)+7,z) ).
\end{eqnarray}
By the same setting that $z=52$, $x = 358$, $k = 52$ (thus $n_{51}=365$ and $n_{52}=0$) as above, the right-hand side of (\ref{eq-Lemma-Dz-30-00-v2-1}) is negative for $2956 \leq d(v) \leq 3185$.

\smallskip
\noindent
{\em Case 2: $d(v) \ge 3242$}.

From Corollary  \ref{co-conj2_and_more}, the possible $D$-branches occurring in $G$ are $D_{50}$-, $D_{51}$-, $D_{52}$-, $D_{53}$-branches.
From Tables \ref{table-large-z} and \ref{table-small-z}, $n_{50} \le 182$, $n_{53} \le 260$. By using the transformation $\mathcal{T}_{5}$, we have $n_{51} \le 718$, which is obtained from the fact that the right-hand side of (\ref{eq-Lemma-Dz-30-00-second}) is negative when $d(v) \geq 3242$, $z = 51$, $x = 719$, $k = 52$ ($n_{52} = 358$ and $n_{51} = 354$). It is worth mentioning that the upper bound of $n_{51}$ can be improved into $n_{51} \le 364$ when $d(v) \ge 3249$, from Table \ref{table-small-z}. These upper bounds imply that $d(v) < 2956$ unless $D_{52}$-branches occur.

In addition to $D_{52}$-branches, $G$ can contain either $D_{51}$- or $D_{53}$-branches. We have known that $n_{51} \le 718$. As to $n_{53}$, Table \ref{table-large-z} reflects that $n_{53} \le 260$, actually when we involve in the hypothesis that $d(v) \geq 3242$, the upper bound would be improved into $n_{53} \le 178$ by using the transformation $\mathcal{T}_{4}$, that is to say, the right-hand side of (\ref{eq-Lemma-Dz-30-00-v2-1}) is negative when $d(v) \ge 3242$, $z = 53$, $x = 179$, $k = 52$ ($n_{52} = 0$ and $n_{51} = 186$). The detailed deduction about the negativity of right-hand side of (\ref{eq-Lemma-Dz-30-00-v2-1}) when $z = 53$ is similar to that in Appendix \ref{appendix-upper_bound}. In a rough aspect, the right-hand side of (\ref{eq-Lemma-Dz-30-00-2}) in Appendix \ref{appendix-upper_bound} is negative when $d(v) = 700000$, so is $d(v) \ge 700000$ (since it decreases in $d(v)$), and for $3242 \le d(v) < 700000$, the right-hand side of (\ref{eq-Lemma-Dz-30-00-v2-1}) is negative from direct calculations.
\end{proof}

\subsection{The optimization function and  \\determining the parameters of the minimal-ABC trees}\label{section-compuation}

The obtained theoretical results have significantly reduced the search space. By applying them the
parameters that fully characterize the  minimal-ABC trees  can be efficiently computed.
Denote by $\mathcal{T}$ all trees of a given order $n$ that are comprised of:
a combination of $D_z$- and $D_{z+1}$-branches, $15 \leq z \leq 131$; at most one $D_{z}^{**}$-, $D_{k,1}^2$-  or $D_{k,2}^2$-branch;
at most $4$ $B_4$-branches and at most $919$ $B$-branches attached to the root vertex.
{\footnote{Notice that we may assume that the $D^{**}$-branch is of size $z$.
Otherwise, we can apply the switching transformation, switching one $B_3$-branch from a $D_z$-branch (if it exist) with the $B_3^{**}$-branch.
If the minimal ABC tree does not contain $D_z$-branches, but only $D_{z+1}$-branches, then this case is considered when we increase $z$ by one.}

A minimal-ABC tree of order at least $1000$ is one of the trees from $\mathcal{T}$
(the minimal-ABC trees of orders at most $1100$ were reported in \cite{lcwdh-csltmabci-18}).
Let $T \in \mathcal{T}$.
Then
\begin{flalign}  \label{eq-n-minABC-00}
&ABC (T) = n_z f(d(v), z+1) +  n_{z+1} f(d(v), z+2) +  n_z z f(z+1,4) +  n_{z+1} (z+1) f(z+2,4)     \nonumber \\
  &  + 6 n_z z \frac{\sqrt{2}}{2} + 6 n_{z+1} (z+1)  \frac{\sqrt{2}}{2}  + b_3\left( n_3 f(d(v),4)  + 6 n_3  \frac{\sqrt{2}}{2} \right)  + b_4 \left(n_4f(d(v),5) + 8 n_4  \frac{\sqrt{2}}{2} \right)  \nonumber \\ 
 &+ b_1 \left(  f(d(v), k_1+1)  + (k_1-1)f(k_1+1,4)  + f(k_1+1,3)  + 6(k_1-1)\frac{\sqrt{2}}{2} +4 \frac{\sqrt{2}}{2} \right)  \nonumber \\
 &+  b_2 \left(  f(d(v), k_2+1)  + (k_2-2)f(k_2+1,4)  + 2 f(k_2+1,3) + 6(k_2-2)\frac{\sqrt{2}}{2} +2 \cdot 4 \cdot  \frac{\sqrt{2}}{2} \right)  \nonumber \\
 &+ b_{*} \left(f(d(v), z+1) + z f(z+1,4)+(6 z +2)\frac{\sqrt{2}}{2} + f(4,3) \right),   \nonumber
\end{flalign}
where $n_z$ and $n_{z+1}$ are, respectively, the numbers of  $D_z$- and $D_{z+1}$-branches,
$n_3$ and $n_4$ are, respectively, the numbers of $B_3$- and $B_4$-branches attached to the root,
$k_1$ is the size of the $D_{k,1}^2$-branch ($(k-1)$ $B_3$-branches and one $B_2$-branch, which comprise a $D_{k,1}^2$-branch),
$k_2$ is the size of the $D_{k,2}^2$-branch ($(k-2)$ $B_3$-branches and two $B_2$-branches, which comprise a $D_{k,2}^2$-branch),
and parameters $b_*, b_1, b_2, b_3, b_4 \in \{ 0, 1\}$ are related to the existences of  $D_{z}^{**}$-, $D_{k,1}^2$-, $D_{k,2}^2$-,
$B_3$- and  $B_4$-branches attached to the root vertex, respectively - for example $b_*=0/1$ means that $T$ {\it does not contain/ does contain} $D_{z}^{**}$-branch.

The parameters of a minimal-ABC tree of order $n$ can be determined by solving the following optimization problem:

\smallskip
\noindent
\begin{eqnarray}  \label{eq-n-minABC-10}
\hspace*{-10cm}  min_{T \in \mathcal{T}} ABC(T)  \nonumber
\end{eqnarray}

\noindent
subject to
\begin{itemize}
\item[(1)] $d(v) = n_z + n_{z+1} + b_* + b_1 + b_2 + b_3 \cdot n_3 + b_4 \cdot n_4$
\item[(2)] $n = 1+ n_z (7 z +1) + n_{z+1} (7(z+1)+1)  + b_1(7(k_1-1) + 5 +1)  + b_2 (7(k_2-2) + 10 +1) + b_3 \cdot n_3 \cdot 7 + b_4 \cdot n_4 \cdot 9 +b_*\cdot (7z+4)$
\item[(3)] $15 \leq z \leq 131$      \hspace*{\fill} (by Lemmas~\ref{le-Dz-lowerBound} and  \ref{le-Dz-UpperBound})
\item[(4)] $z-5 \leq k_1 \leq z$     \hspace*{\fill} (by Proposition \ref{pro-P3-20})
\item[(5)] $z-9 \leq k_2 \leq z$   \hspace*{\fill} (by Proposition \ref{pro-P3-20})
\item[(6)] $0 \leq n_4 \leq 4$      \hspace*{\fill} (by Theorem \ref{thm-20})
\item[(7)] $0 \leq n_3 + n_4 \leq 919$  \hspace*{\fill} (by Lemma \ref{thm-bound-B-branches-to-root})
\item[(8)] $b_*, b_1, b_2, b_3, b_4 \in \{ 0, 1\}$
\item[(9)] at most one of $b_*, b_1, b_2, b_4$ can be $1$  \hspace*{\fill} (by Theorems \ref{noB1B4} and \ref{noB2B4}).
\end{itemize}

\bigskip

With the growth of the order of the trees, the above optimization problem becomes simpler.

When $d(v) \geq 1228$, by Theorem~\ref{co-withB3_to_root}, it follows that $n_4=0$, $b_* = b_1 = b_2 = b_4 =0$, and $50 \leq z \leq 52$.
Further, when $d(v) \geq 2956$, by Corollary~\ref{co-conj2_and_more}, it follows that $n_3 = n_4=0$, $b_* = b_1 = b_2 = b_3= b_4 =0$ and $50 \leq z \leq 52$.
Moreover, by Theorem~\ref{te-combinations-D-branches}, when $d(v) \geq 3249$, we have $0 \leq n_{51} \leq 364$ and $0 \leq n_{53} \leq 178$.
That allows us to compute the parameters of the minimal-ABC tree for an arbitrary $n$ in constant time.

We have implemented the solution to the above optimization problem in C++.
The minimal-ABC trees and their structures are given in Appendix  \ref{appendix-figures}.
The minimal-ABC trees of orders up to $1100$ were already known \cite{lcwdh-csltmabci-18}.

The computational results lead to the following observations. All bounds are sharp and are attained by some minimal-ABC tree.

\begin{observation}
A minimal-ABC tree of order larger than $1072$ can contain at most  two $B_4$-branches.
\end{observation}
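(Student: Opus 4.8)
The plan is to reduce this to a finite, explicit computation and then invoke the solver of Section~\ref{section-compuation}. The decisive structural input is Lemma~\ref{lemma-no-B4-branches}: any minimal-ABC tree that contains a $B_4$-branch must have root degree $d(v) < 1228$. I would first pin down the shape of such a tree. Since a $B_4$-branch is present, Theorems~\ref{noB1B4} and~\ref{noB2B4} rule out every $B_1$- and $B_2$-branch, and the corollary following Lemma~\ref{lemma-B4-root} rules out $D_{z,x}^4$-branches; by Theorem~\ref{thm-20} and Lemma~\ref{lemma-B4-root} there are at most four $B_4$-branches and all of them lie at the root. Within the admissible family $\mathcal{T}$ this means constraint~(9) forces $b_* = b_1 = b_2 = 0$, so every branch hanging at the root is a $D_z$-branch, a $B_3$-branch, or a $B_4$-branch, and by Lemmas~\ref{le-Dz-lowerBound} and~\ref{le-Dz-UpperBound} the sizes satisfy $15 \le z \le 131$.

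The next step is to bound the order from above whenever $B_4$-branches occur. A $D_z$-branch has $7z+1 \le 918$ vertices, a $B_3$-branch has $7$, and a $B_4$-branch has $9$; since $d(v) < 1228$ there are at most $1227$ branches, so
\[
n \;\le\; 1 + 1227 \cdot 918 \;=:\; N_{\max}.
\]
Hence for every order $n > N_{\max}$ no minimal-ABC tree can carry a $B_4$-branch at all, and the assertion holds trivially since $0 \le 2$. This confines the real work to the finite window $1072 < n \le N_{\max}$.

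For that window I would run the optimization problem of Section~\ref{section-compuation} for each order $n$, solving $\min_{T \in \mathcal{T}} ABC(T)$ subject to constraints~(1)--(9), and read off the value of $n_4$ in an optimal tree; the verification is that $n_4 \le 2$ for every minimizer and every such $n$. To certify sharpness I would additionally exhibit one order $n > 1072$ whose minimal-ABC tree attains $n_4 = 2$. The orders $n \le 1100$ can be cross-checked directly against the tabulated minimal-ABC trees of~\cite{lcwdh-csltmabci-18}.

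The main obstacle is the scale and trustworthiness of the computation rather than any conceptual gap: $N_{\max}$ is of order $10^6$, so the loop ranges over roughly a million orders, and for each one the solver must range over all admissible $z$, over the split of the $D$-branches into $D_z$ and $D_{z+1}$, and over $n_3$ and $0 \le n_4 \le 4$. I would keep this tractable by exploiting that $ABC(T)$ is a sum of independent per-branch contributions, so that for fixed $z$ the optimal allocation between $B_3$-branches and $D$-branches is obtained in closed form rather than by exhaustive search; and I would treat ties carefully, since the statement ``at most two $B_4$-branches'' must hold for \emph{every} minimal-ABC tree of order above $1072$, not merely for one representative.
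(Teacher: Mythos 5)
Your proposal is correct and takes essentially the same route as the paper: the paper establishes this observation purely computationally, by running the solver of Section~\ref{section-compuation} (the optimization problem with constraints (1)--(9)) over the relevant orders and reading off $n_4$ from the minimizers, exactly as you describe. Your explicit finiteness bound $n \le 1 + 1227\cdot 918$, obtained by combining Lemma~\ref{lemma-no-B4-branches} with the per-branch vertex counts, just makes explicit what the paper leaves implicit (its theoretical results force $n_4=0$ once $d(v)\ge 1228$, and its computations cover far more than this window), so it is a welcome but not essentially different refinement.
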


\begin{observation}
A minimal-ABC tree of order larger than $8507$ can contain at most one $B_4$-branch.
\end{observation}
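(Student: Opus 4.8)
The plan is to read the statement off the solution of the finite optimization problem (\ref{eq-n-minABC-10}) and to back this up with an exchange argument that pins down the threshold. First I would observe that the mere presence of any $B_4$-branch already forces a very restricted structure: by Lemma~\ref{lemma-no-B4-branches} the root has degree $d(v)<1228$, by Theorems~\ref{noB1B4} and \ref{noB2B4} there are no $B_1$- or $B_2$-branches, and hence every $D$-branch is a pure $D_z$-branch (with at most one $B_3^{**}$); by Proposition~\ref{pro-diff-Dz} all of them have size $z$ or $z+1$ for a single $z$. Together with Lemma~\ref{lemma-B4-root} (all $B_4$-branches sit at the root) and Theorem~\ref{thm-20} ($n_4\le 4$), a tree carrying $B_4$-branches is completely described by the variables $(z,n_z,n_{z+1},n_3,n_4)$ of (\ref{eq-n-minABC-10}), so the claim is equivalent to: whenever the optimizer of (\ref{eq-n-minABC-10}) has $n>8507$, its optimal value of $n_4$ is at most $1$.

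Second, I would establish the inequality by a local exchange. A $B_4$-branch at the root is strictly costlier than a $B_3$-branch, so the only reason to include more than one is to meet the exact order $n$; the point is that for large $n$ the sizes $7z+1$, $7$ and $9$ of the available branches leave enough arithmetic slack to realize $n$ with $n_4\le 1$. Concretely, starting from a candidate with $n_4\ge 2$ I would convert one $B_4$-branch into a $B_3$-branch by deleting one of its pendant paths (keeping the root degree unchanged) and reabsorb the two freed vertices elsewhere — for instance by promoting $B_3$-branches to $B_3^{**}$, by nudging the common size $z$, or by trading a $D_z$- for a $D_{z+1}$-branch — and then show via Propositions~\ref{pro-10} and \ref{pro-20} that $\ABC(G')-\ABC(G)<0$ once $n>8507$ (equivalently, once the root degree and the number of available $D$-branches are large enough to absorb the reshuffled vertices). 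Sharpness follows by exhibiting the optimizer of (\ref{eq-n-minABC-10}) at $n=8507$, which does carry two $B_4$-branches, so the bound cannot be lowered.

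The main obstacle is the order-preserving bookkeeping. Unlike the transformations used earlier in the paper, here the total number of vertices must be held fixed while $n_4$ decreases, and because a $B_4$-branch contributes $9=7+2$ vertices the ``two extra'' vertices per removed $B_4$-branch must be reabsorbed exactly; each reabsorption route then has to be checked to keep the configuration inside the feasible region of (\ref{eq-n-minABC-10}) (constraints (3)--(9)) and to keep the ABC change negative. Tracking all residue classes of $n$ modulo the branch sizes, and verifying that the crossover happens precisely at $8507$, is where the explicit computation of Section~\ref{section-compuation} does the decisive work; the analytic exchange argument only guarantees that some finite threshold exists, and the value $8507$ (together with its sharpness) is then certified by the solver.
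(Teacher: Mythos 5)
Your proposal is correct and, in substance, follows the same route as the paper: this observation has no analytic proof there — it is read off from the computer solution of the optimization problem (\ref{eq-n-minABC-10}) over the search space reduced by the structural lemmas (Theorems~\ref{thm-20}, \ref{noB1B4}, \ref{noB2B4}, Lemmas~\ref{lemma-B4-root}, \ref{lemma-no-B4-branches}, Proposition~\ref{pro-diff-Dz}), which is exactly what your first and third paragraphs do. One caution about the middle paragraph: the sketched exchange cannot work as stated, since converting a $B_4$- into a $B_3$-branch frees exactly $2$ vertices while every reabsorption route you name changes the order by a multiple of $7$ (nudging $z$, trading $D_z$ for $D_{z+1}$) or by $3$ (promoting a $B_3$ to the unique allowed $B_3^{**}$), and $2$ is not expressible as $7a+3b$ with $b\in\{-1,0,1\}$ — but this flaw is harmless because you ultimately, and correctly, let the solver certify both the threshold $8507$ and its sharpness, just as the paper does.
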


\begin{observation} \label{observation-3}
A minimal-ABC tree of order larger than $19040$ does not contain $B_4$-branches.
A minimal-ABC tree whose root vertex is of degree at least $368$ does not contain $B_4$-branches attached to the root vertex.
\end{observation}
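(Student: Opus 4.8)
The plan is to treat Observation~\ref{observation-3} as a finite computational consequence of the optimization framework of Section~\ref{section-compuation} together with the structural results already established. The guiding reduction is that a $B_4$-branch can occur only in a tightly constrained tree: by Theorems~\ref{noB1B4} and~\ref{noB2B4} the presence of a $B_4$-branch rules out $B_1$- and $B_2$-branches, so every $D$-branch is a plain $D_z$-branch with $15 \le z \le 131$ (Lemmas~\ref{le-Dz-lowerBound} and~\ref{le-Dz-UpperBound}); by Theorem~\ref{thm-20} there are at most four $B_4$-branches; and by Lemma~\ref{lemma-B4-root} all of them are attached to the root. Hence any minimal-ABC tree containing a $B_4$-branch is comprised solely of a root, $D_z$-branches, and $B_3$/$B_4$-branches attached to the root, and the objective reduces to the specialization of the formula for $\ABC(T)$ in Section~\ref{section-compuation} with $b_* = b_1 = b_2 = 0$.

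First I would bound the root degree. By Lemma~\ref{lemma-no-B4-branches}, a minimal-ABC tree whose root has degree at least $1228$ contains no $B_4$-branch; therefore it suffices to examine trees with $d(v) \le 1227$. Next I would bound the order. Each $D_z$-branch contributes $7z+1$ vertices, and the per-size counts of $D_z$-branches are capped by Lemmas~\ref{le-UpperBoundOnNumberOf_Dz-10} and~\ref{le-LowerBoundOnNumberOf_Dz} (Tables~\ref{table-large-z} and~\ref{table-small-z}), while the number of $B$-branches attached to the root is at most $919$ by Lemma~\ref{thm-bound-B-branches-to-root}. Combining $d(v) \le 1227$, $z \le 131$, and these count bounds yields an explicit upper bound $N_0$ on the order of any minimal-ABC tree carrying a $B_4$-branch (crudely, $N_0 \le 1 + 1227 \cdot (7 \cdot 131 + 1)$, refined downward by the count caps).

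For the first assertion I would then solve the optimization problem of Section~\ref{section-compuation} for every order $n$ in the range $1100 < n \le N_0$ (the minimizers for $n \le 1100$ being already tabulated in~\cite{lcwdh-csltmabci-18}), restricted to the $B_4$-admissible family above, and record for which orders the minimizer has $n_4 > 0$. The claim is that the largest such order is exactly $19040$, so that no minimal-ABC tree of order exceeding $19040$ contains a $B_4$-branch; sharpness is certified by exhibiting the minimizer of order $19040$. For the second assertion I would run the same search organized by root degree: for each $d(v)$ with $368 \le d(v) \le 1227$ (the only regime not already covered by Lemma~\ref{lemma-no-B4-branches}) I would verify that the minimizer has $n_4 = 0$, which sharpens the threshold $1228$ of Lemma~\ref{lemma-no-B4-branches} down to $368$; here sharpness follows from an explicit extremal tree with $d(v) = 367$ that carries a $B_4$-branch.

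The main obstacle is not the arithmetic, which is routine once the objective is specialized, but securing a clean and provably complete finite reduction: one must argue that the $B_4$-admissible structures are genuinely exhausted by the enumeration---in particular that the interaction of the per-$z$ count bounds with $z \le 131$ really does cap the order at $N_0$, and that no composition of $D_z$-, $B_3$- and $B_4$-branches escapes the search. Establishing the two sharp constants $19040$ and $368$ then reduces to confirming that the recorded extremal minimizers are optimal and that nothing larger survives, which the solved optimization problem certifies.
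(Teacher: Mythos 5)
Your proposal takes essentially the same route as the paper: Observation~\ref{observation-3} is not proved by hand there but is recorded as a computational finding from solving the optimization problem of Section~\ref{section-compuation} in the restricted family $\mathcal{T}$, with Theorems~\ref{thm-20}, \ref{noB1B4}, \ref{noB2B4} and Lemmas~\ref{lemma-B4-root}, \ref{thm-bound-B-branches-to-root}, \ref{lemma-no-B4-branches} guaranteeing (exactly as you argue) that any $B_4$-carrying minimal-ABC tree has root degree at most $1227$ and hence bounded order, so the verification and the sharpness of the constants $19040$ and $368$ reduce to a finite computer search. Your explicit order bound $N_0$ merely formalizes the finiteness reduction that the paper leaves implicit in its C++ computations, so the two arguments coincide in substance.
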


\begin{observation} \label{observation-4}
A minimal-ABC tree of order larger than $1017676$ does not contain $B_3$-branches attached to the root vertex.
A minimal-ABC tree whose root vertex is of degree at least $2838$ does not contain $B_3$-branches attached to the root vertex.
\end{observation}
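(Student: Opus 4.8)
The plan is to treat the two sentences separately: the degree statement (root degree $\ge 2838$ forbids $B_3$-branches at the root) is the core fact, and the order statement will be derived from it. I would first observe that the degree statement is a sharpening of Lemma~\ref{lemma-no-B-branches-10}, which already yields the conclusion for $d(v)\ge 2956$. Indeed, once $d(v)\ge 1228$, Lemma~\ref{lemma-no-B4-branches} excludes $B_4$-branches, and the switching argument preceding Lemma~\ref{thm-bound-B-branches-to-root} excludes $B_2$- and $B_3^{**}$-branches at the root; hence the only $B$-branch that can be attached to the root is a $B_3$-branch, so ``no $B$-branch at the root'' and ``no $B_3$-branch at the root'' coincide in this regime. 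Thus it suffices to re-run the transformation $\mathcal{T}_{8}$ of Figure~\ref{fig-no-B-branches} and show that the change \eqref{eq-lemma-no-B-branches-to-root} of the ABC index is already negative for $d(v)\ge 2838$.

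First I would restrict the admissible structures. For $2838\le d(v)<2956$, Theorem~\ref{co-withB3_to_root} forces the non-$B$ part of the tree to consist solely of $D_z$- and $D_{z+1}$-branches with $z\in\{50,51,52\}$ (plus at most one $D_{t,1}^2$-branch, $z-5\le t\le z$ by Proposition~\ref{pro-P3-20}). Substituting these finitely many values of $z$ into \eqref{eq-lemma-no-B-branches-to-root} and using that the moved branch is specifically a $B_3$-branch, I would verify that the right-hand side is negative for every such $z$ and every $d(v)\ge 2838$, exactly as in the proof of Lemma~\ref{lemma-no-B-branches-10} but with the sharper threshold. Monotonicity of the relevant differences in $d(v)$ (Propositions~\ref{pro-10} and~\ref{pro-20}) reduces this to checking the boundary value $d(v)=2838$, a finite computation. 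Sharpness is confirmed by exhibiting, via the optimization \eqref{eq-n-minABC-10}, an explicit minimal-ABC tree with $d(v)=2837$ that does carry a $B_3$-branch at the root.

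For the order statement I would pass from degree to order. Since every $D$-branch has at most $7\cdot 131+1$ vertices (Lemma~\ref{le-Dz-UpperBound}) and, in the tail, at most $7\cdot 53+1$ vertices, the number of branches at the root—and hence $d(v)$—grows with the order $n$; concretely, $d(v)$ is a weakly nondecreasing function of $n$ along the family of minimal-ABC trees, so there is a threshold order $n^\star$ beyond which $d(v)\ge 2838$ and, by the degree statement, no $B_3$-branch is attached to the root. To pin down $n^\star=1017676$ I would solve the constant-time optimization \eqref{eq-n-minABC-10} for orders in the transition window: for each candidate the solver returns $n_z,n_{z+1},n_3$ and $d(v)=n_z+n_{z+1}+n_3$, and one reads off the largest order whose minimal-ABC tree still has $n_3\ge 1$; sharpness follows because that extremal order is attained.

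The hard part will be the passage from the degree bound to the exact order bound. A crude estimate $n\le 1+d(v)\cdot(7\cdot 53+1)$ only gives $n\lesssim 1.05\times 10^6$, which overshoots $1017676$, so the exact constant is not visible from such counting and genuinely requires solving the optimization, together with the monotonicity of $d(v)$ in $n$ so that a single threshold suffices. Establishing that monotonicity rigorously—ruling out a minimal-ABC tree of very large order that nonetheless keeps a small root degree by using atypically large branches—is the delicate point; here one leans on Corollary~\ref{co-conj2_and_more} and Theorem~\ref{te-combinations-D-branches}, which confine the branch sizes to $z\in\{50,51,52\}$ once $d(v)\ge 2956$ and thereby force $d(v)$ to scale linearly with $n$ in the tail.
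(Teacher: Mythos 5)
Your plan founders on its central step, and the failure is structural, not a matter of checking harder. The paper has no transformation proof of Observation~\ref{observation-4} at all: it is one of the purely computational observations of Section~\ref{section-compuation}, obtained by solving the reduced optimization problem for every order and reading off when $n_3$ last becomes nonzero; the ``related theoretical result'' is exactly Lemma~\ref{lemma-no-B-branches-10} with threshold $2956$, and the paper is explicit that the theoretical bounds are only ``close to'' the sharp computational ones. Your proposal to re-run $\mathcal{T}_{8}$ and verify that \eqref{eq-lemma-no-B-branches-to-root} is negative for all $d(v)\ge 2838$ is precisely the step that cannot succeed. By the sharpness you yourself invoke, there is a minimal-ABC tree $T^{*}$ with root degree $2837$ carrying a $B_3$-branch at the root; minimality of $T^{*}$ forces the $\mathcal{T}_{8}$ change on $T^{*}$ to be nonnegative. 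The admissible configurations at $d(v)=2838$ differ from that of $T^{*}$ by one unit of root degree (and one more $D$-branch), and the terms of \eqref{eq-lemma-no-B-branches-to-root} vary only by $O(d(v)^{-2})\approx 10^{-7}$ per unit of $d(v)$, so the change cannot flip to being uniformly negative at $2838$. The threshold $2838$ records where the \emph{global} optimum changes shape, not where a fixed \emph{local} transformation becomes profitable; no single switching/merging move certifies a sharp computational threshold, which is why the paper's lemma stops at $2956$. (A smaller mechanical error: even in the paper's own appendix, the exact change is not monotone in $d(v)$; only weakened upper bounds are, and those are negative only for much larger $d(v)$, e.g.\ $\ge 3990$ or $\ge 4199$, with the intermediate range handled by value-by-value direct calculation. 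So ``check only $d(v)=2838$'' is not available either.)

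The second half has a gap as well: the claimed weak monotonicity of $d(v)$ as a function of $n$ along minimal-ABC trees is never proved, is not asserted anywhere in the paper, and you concede it is the delicate point. The paper does not need it. Its route is: (i) for each order $n$ the optimization \eqref{eq-n-minABC-10} is solved outright (in constant time per order once the theoretical constraints are in place), and the constant $1017676$ is simply the largest order whose computed minimizer has $n_3\ge 1$; (ii) for orders beyond any computed range, the bounded size of the permitted branches (a $D_z$-branch with $z\le 131$ has at most $7\cdot 131+1$ vertices) forces $d(v)\ge (n-1)/918$, so once $n$ exceeds a few million one has $d(v)\ge 2956$ and Corollary~\ref{co-conj2_and_more} applies directly. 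That removes any need for monotonicity of $d(v)$ in $n$. In short: both constants in the observation are outputs of the exact computation over the theoretically reduced search space, and neither can be recovered by sharpening the transformation lemmas as you propose.
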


\begin{observation} \label{observation-5}
A minimal-ABC tree may have at most $738$ $B_3$-branches attached to the root vertex.
\end{observation}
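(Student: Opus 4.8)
The plan is to treat this as a finite verification guided entirely by the structural reductions already in hand. First I would invoke Lemma~\ref{lemma-no-B-branches-10}: as soon as the root degree reaches $2956$, no $B$-branch whatsoever — in particular no $B_3$-branch — is attached to the root. Consequently any minimal-ABC tree that does carry a root-adjacent $B_3$-branch has root degree at most $2955$, confining the question to a finite window of root degrees. Within that window Lemma~\ref{thm-bound-B-branches-to-root} already caps the count by $n_3 \le 919$, so the quantity we wish to maximize is a priori bounded.

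Next I would appeal to the fact that, by the theorems of Section~\ref{main-results}, every candidate minimal-ABC tree is completely described by the finitely many parameters of the optimization problem~\eqref{eq-n-minABC-10}: the size $z \in [15,131]$, the branch counts $n_z, n_{z+1}$, the number $n_4 \le 4$ of $B_4$-branches, the count $n_3$, and the indicator/size data of an at-most-one exceptional $D$-branch. For each order $n$ one solves~\eqref{eq-n-minABC-10} subject to constraints~(1)--(9) and reads off the optimal value of $n_3$. Running this over all orders for which the root degree can lie in the admissible window then produces the largest attained value of $n_3$, which the C++ implementation reports to be $738$; the trees of small order below the computational threshold are covered directly by the tabulated results from \cite{lcwdh-csltmabci-18}.

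Sharpness is then certified by exhibiting the explicit optimizer: the minimal-ABC tree recorded among the computational results (Appendix~\ref{appendix-figures}) whose root is adjacent to exactly $738$ $B_3$-branches. This single witness simultaneously shows that $738$ is attained and, together with the scan above, that no minimal-ABC tree exceeds it.

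The hard part is not analytic but a matter of completeness and trust in the search. One must be certain that the lemmas genuinely render the parameter space finite, so that no admissible configuration is silently omitted, and that the optimization has been carried out for \emph{every} relevant order rather than merely sampled. Because the bound $738$ is specific and sharp, there is no shortcut through a closed-form estimate: the entire content lies in verifying that the reduced feasible set of~\eqref{eq-n-minABC-10} captures all minimal-ABC trees possessing root-adjacent $B_3$-branches and that the program enumerates it faithfully.
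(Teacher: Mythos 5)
Your proposal is correct and coincides with the paper's own treatment: Observation~\ref{observation-5} is presented there purely as a computational consequence of solving the optimization problem of Section~\ref{section-compuation} over the search space made finite by Lemmas~\ref{thm-bound-B-branches-to-root} and~\ref{lemma-no-B-branches-10} (and the other structural results), with the value $738$ read off from the C++ enumeration and sharpness certified by an explicit optimizer. The paper gives no separate analytic derivation of $738$, so your finite-verification scheme is essentially the same argument.
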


\begin{observation}
A minimal-ABC tree of order larger than $999$ does not contain a $B_3^{**}$-branch,
and thus any such minimal-ABC tree cannot have a $D_{k}^{**}$-branch.
\end{observation}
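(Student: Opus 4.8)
The plan is to reduce the statement about $B_3^{**}$-branches to the nonexistence of $D_k^{**}$-branches, then to settle large orders analytically and the remaining finite range by the computation of Section~\ref{section-compuation}.

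For the reduction I would argue the equivalence ``no $B_3^{**}$-branch $\iff$ no $D_k^{**}$-branch,'' which simultaneously yields the concluding clause. By the switching transformation (Lemma~\ref{lemma-switching}) a $B_3^{**}$-branch cannot sit at the root, as noted before Lemma~\ref{thm-bound-B-branches-to-root}. Consequently every $B_3^{**}$-branch occurring in the tree must be attached to the center of some $D$-branch, and by the definition of the $D$-branches this makes that branch a $D_k^{**}$-branch; conversely a $D_k^{**}$-branch contains a $B_3^{**}$-branch by construction. Hence it suffices to show that no $D_k^{**}$-branch occurs whenever the order exceeds $999$.

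For the analytic step I would exploit Theorem~\ref{co-withB3_to_root}: once the root has degree at least $1228$, the tree consists only of $D_z$- and $D_{z+1}$-branches with $z\in\{50,51,52\}$, together with $B_3$-branches and at most one $D_{t,1}^2$-branch at the root---in particular no $D^{**}$-branch is present. It remains to convert the degree bound into an order bound. Since every minimal-ABC tree of order at least $1000$ lies in the class $\mathcal{T}$ of Section~\ref{section-compuation}, it is a root together with branches each of which is a $B$- or $D$-branch of bounded size; writing $M$ for the maximum number of vertices in a single such branch (which is explicit, as $z\le 131$ by Lemma~\ref{le-Dz-UpperBound}), we get $n-1\le d(v)\,M$, so $d(v)\ge (n-1)/M$. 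Thus for every order $n$ exceeding the explicit threshold $N:=1227\,M+1$ the root degree is forced to be at least $1228$, and Theorem~\ref{co-withB3_to_root} rules out a $D_k^{**}$-, hence a $B_3^{**}$-branch.

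Finally, for the finite window $1000\le n\le N$ I would solve the optimization problem~\eqref{eq-n-minABC-10} and check that every optimal solution has $b_*=0$; the orders up to $1100$ are already tabulated in \cite{lcwdh-csltmabci-18}, and the C++ implementation covers the rest. Sharpness of the bound would be witnessed by exhibiting a minimal-ABC tree of order $999$ carrying a $B_3^{**}$-branch. The main obstacle I anticipate is not the equivalence but the size of this finite window: the crude bound $N\approx 1227\,M$ is on the order of $10^6$, so the decisive work is making the optimization sweep provably exhaustive over the entire range, which relies on the efficiency of the solver and on the sharper structural bounds (Corollary~\ref{co-conj2_and_more}, Theorem~\ref{te-combinations-D-branches}) that collapse the search for the larger orders in the window.
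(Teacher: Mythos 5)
Your proposal is correct and takes essentially the same route as the paper: there the observation is likewise a computational by-product of solving the optimization problem over $\mathcal{T}$, with Theorem~\ref{co-withB3_to_root} already forcing $b_{*}=0$ in the optimization once $d(v)\ge 1228$, so that only a finite window of small orders is left to the computer sweep. Your explicit conversion of the degree threshold into an order threshold $N\approx 1227\,M$ merely makes that finite window precise; the paper's C++ computation (covering all orders up to $1.329\times 10^{16}$, in constant time per order for large roots) settles exactly that range and also supplies the sharpness example at order $999$.
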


\begin{observation}
A minimal-ABC tree of order larger than $305$ does not contain more than one $B_2$-branch,
and thus any such minimal-ABC tree cannot have a $D_{k,2}^2$-branch.
\end{observation}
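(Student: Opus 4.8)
The two assertions in the observation are equivalent under the structural constraints already in force, so it suffices to handle the first. Indeed, $B_2$-branches cannot be attached to the root (as noted before Lemma~\ref{thm-bound-B-branches-to-root}), so every $B_2$-branch of a minimal-ABC tree lies inside a $D$-branch, and by the very definition of the $D$-branch types the only carriers of $B_2$-branches are $D_{k,1}^2$-branches (one $B_2$-branch) and $D_{k,2}^2$-branches (two $B_2$-branches). By the description of the admissible family $\mathcal{T}$ and constraint (9) in Section~\ref{section-compuation}, at most one such special $D$-branch occurs. Hence the number of $B_2$-branches is $0$, $1$, or $2$, and it exceeds $1$ \emph{precisely} when the tree contains a $D_{k,2}^2$-branch. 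Thus the plan reduces to proving that no minimal-ABC tree of order $n > 305$ contains a $D_{k,2}^2$-branch.

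The plan is to split the range of orders into a large regime settled by the theory and a bounded regime settled by computation. First I would invoke Proposition~\ref{pro-Dz-B2-10}, which forces $25 \le k \le 50$ for any $D_{k,2}^2$-branch, and Lemma~\ref{lemma-NO-D_k,2_2}, which excludes a $D_{k,2}^2$-branch as soon as the tree carries at least $65$ $D_z$-branches and has a root of degree at least $146$. Together these confine any potential $D_{k,2}^2$-branch to trees whose root degree and number of $D_z$-branches are bounded; via the order equations (1) and (2) of the optimization problem this yields an explicit upper bound $N_0$ on the order below which a $D_{k,2}^2$-branch could possibly occur, leaving only the finite window $305 < n \le N_0$ to inspect directly.

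Within that finite window I would solve the optimization problem $\min_{T \in \mathcal{T}} \ABC(T)$ of Section~\ref{section-compuation} for each admissible order, recording the value of the indicator $b_2$ at the minimizer. The claim is that $b_2 = 0$ at every optimum with $n > 305$, so no $D_{k,2}^2$-branch is present; combined with the large-regime argument this proves the first clause for all $n > 305$, and the second clause follows immediately. Sharpness of the threshold is then witnessed by exhibiting a minimal-ABC tree of order at most $305$ whose optimal configuration has $b_2 = 1$ (two $B_2$-branches) and confirming that $305$ is the largest order for which this occurs.

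The hard part will be the bounded-regime computation: one must verify that the window $305 < n \le N_0$ left uncovered by Lemma~\ref{lemma-NO-D_k,2_2} is genuinely finite and small enough to enumerate, and, near the crossover, one must compare the $\ABC$ value of a candidate with $b_2 = 1$ against all competing configurations with $b_2 = 0$ with enough numerical precision to pin the transition down to exactly $n = 305$, since the entire sharp statement rests on locating that single crossover order.
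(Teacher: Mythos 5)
Your proposal is correct and follows essentially the same route as the paper: there this observation is recorded as a computational result, obtained by solving the optimization problem $\min_{T \in \mathcal{T}} \ABC(T)$ of Section~\ref{section-compuation} --- whose search space the theoretical results (Proposition~\ref{pro-Dz-B2-10}, Lemma~\ref{lemma-NO-D_k,2_2}, Theorem~\ref{co-withB3_to_root}) have already confined to a finite window of orders where $b_2$ could possibly equal $1$ --- and checking that $b_2 = 0$ at every optimum with $n > 305$, with sharpness witnessed by an optimum of order at most $305$ having $b_2 = 1$. Your two-regime split (theory excluding $D_{k,2}^2$-branches for large roots, exhaustive computation over the remaining finite window, plus the crossover check) is precisely this scheme.
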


\begin{observation}
A minimal-ABC tree of order larger than $6739$ does not contain a $B_2$-branch, and therefore, it does not contain a $D_{k,1}^2$-branch.
\end{observation}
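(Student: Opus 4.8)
The plan is to reduce the statement to the optimization problem \eqref{eq-n-minABC-10} and to combine a structural argument for large orders with a finite computational check on the remaining range. The second assertion is immediate from the first: by definition a $D_{k,1}^2$-branch contains exactly one $B_2$-branch, so the absence of any $B_2$-branch forbids a $D_{k,1}^2$-branch. For the first assertion, recall that a $B_2$-branch can never be attached to the root (this was noted via the switching transformation just before Lemma~\ref{thm-bound-B-branches-to-root}), so in the reduced family $\mathcal{T}$ the only carriers of a $B_2$-branch are the $D_{k,1}^2$- and $D_{k,2}^2$-branches; equivalently, ``no $B_2$-branch'' means $b_1=b_2=0$ in \eqref{eq-n-minABC-00}. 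Since $6739>305$, the preceding observation already yields $b_2=0$ (no $D_{k,2}^2$-branch, because it would require two $B_2$-branches), so it remains only to force $b_1=0$, i.e.\ to rule out a $D_{k,1}^2$-branch for every order $n>6739$.

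For the structural half I would invoke Corollary~\ref{co-conj2_and_more-add}: once the root degree satisfies $d(v)\ge 1441$, the tree consists solely of a root together with $D_z$- and $D_{z+1}$-branches ($z\in\{50,51,52\}$) and possibly $B_3$-branches, none of which is a $B_2$-carrier, so no $B_2$-branch can occur. It then remains to translate this degree condition into an order condition. Using the order relation~(2) and the degree relation~(1) of the optimization problem, together with the fact that every $D$-branch has at most $7\cdot 131+1$ vertices and that only two consecutive branch sizes occur (Proposition~\ref{pro-diff-Dz}), one obtains an explicit threshold $N_0$ such that $n>N_0$ forces $d(v)\ge 1441$; beyond $N_0$ the claim is therefore settled with no further computation.

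For the finite window $6739<n\le N_0$ I would solve the optimization problem \eqref{eq-n-minABC-10} for each order with the C++ implementation and verify that every minimizer has $b_1=0$ (and $b_2=0$). The sharpness asserted before the list of observations is handled separately: I would exhibit the minimizer at $n=6739$ and check that it genuinely carries a $D_{k,1}^2$-branch, so that the threshold cannot be lowered.

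The main obstacle I anticipate is the size of this finite window. Because a $D_{k,1}^2$-branch can persist up to root degrees on the order of $1441$, and each $D$-branch contributes on the order of a few hundred vertices, $N_0$ is large (of order a few hundred thousand), so the computational verification must run over a wide range of orders; the delicate points are ensuring that the parameter enumeration $(z,k_1,n_z,n_{z+1},n_3,n_4,b_*,b_1,b_2,b_3,b_4)$ is exhaustive for every such $n$ and that the transition at $N_0$ dovetails exactly with the structural bound, rather than leaving an unverified gap, while also confirming the sharp value $6739$ instead of a mere over-estimate.
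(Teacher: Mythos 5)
Your proposal is correct and follows essentially the same route as the paper: the observation is established computationally by solving the optimization problem over $\mathcal{T}$ (whose validity rests on the theoretical reductions), with the large-order regime forced structurally — the paper's Section~\ref{section-compuation} constraints, which for root degree at least $1441$ give $b_1=b_2=0$ via Corollary~\ref{co-conj2_and_more-add} — and sharpness at $6739$ read off from the computed minimizers. Your only addition is to spell out the explicit degree-to-order threshold $N_0$ (via the bound of roughly $7\cdot 131+1$ vertices per branch), which the paper leaves implicit in its claim that the parameters can be computed for arbitrary $n$.
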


\begin{observation}
A minimal-ABC tree of order larger than $1142737$ does not contain a $D_{50}$-branch.
A minimal-ABC tree whose root vertex is of degree larger than $3192$ does not contain $D_{50}$-branches.
\end{observation}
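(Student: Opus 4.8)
The plan is to fix a minimal-ABC tree $G$ with root $v$ that contains a $D_{50}$-branch, first reduce its global structure, then sharpen the admissible range of $d(v)$ down to $d(v)\le 3192$, and finally convert this degree bound into the order bound $n\le 1142737$.

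\emph{Structural reduction.} By Proposition~\ref{pro-diff-Dz} all $D$-branches of $G$ have sizes differing by at most one, so the presence of a $D_{50}$-branch forces every $D$-branch to have size in $\{49,50,51\}$; once $d(v)\ge 2956$, Corollary~\ref{co-conj2_and_more} together with Lemma~\ref{le-Dz-lowerBound} restricts the sizes to $\{50,51,52\}$, and hence the only combination compatible with a $D_{50}$-branch is the pair $D_{50}/D_{51}$. Theorem~\ref{te-combinations-D-branches} then tells us this combination can occur only for $d(v)\le 3241$, and in that window $n_{50}\le 123$; moreover Lemma~\ref{lemma-no-B-branches-10} guarantees that no $B_3$-branch is attached to $v$. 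Thus $G$ is built solely from $n_{50}$ $D_{50}$- and $n_{51}$ $D_{51}$-branches with $d(v)=n_{50}+n_{51}$, and the whole problem is confined to the finite window $2956\le d(v)\le 3241$, $1\le n_{50}\le 123$.

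\emph{Sharpening the degree bound to $3192$.} To eliminate $D_{50}$-branches on the residual range $3192<d(v)\le 3241$, I would compare $G$ with a competitor of the \emph{same order} whose $D$-branches have sizes $51$ and $52$. Concretely, apply an order-preserving variant of the transformations $\mathcal{T}_4$, $\mathcal{T}_5$ that absorbs the at most $123$ $D_{50}$-branches into $D_{51}$- and $D_{52}$-branches, thereby lowering the root degree, and write out $ABC(G')-ABC(G)$ in the $f$-notation. The root-degree-dependent terms of the form $-f(d(v),\cdot)+f(d(v)',\cdot)$ are controlled by Propositions~\ref{pro-10} and~\ref{pro-20}, exactly as in the proofs of Lemmas~\ref{le-UpperBoundOnNumberOf_Dz-10}, \ref{le-LowerBoundOnNumberOf_Dz} and of Theorem~\ref{te-combinations-D-branches}, reducing the claim to verifying negativity of the change over the finite box $3193\le d(v)\le 3241$, $1\le n_{50}\le 123$. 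Negativity contradicts the minimality of $G$, giving $d(v)\le 3192$ whenever a $D_{50}$-branch is present, which is the second assertion.

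\emph{From degree to order, and the main obstacle.} Given $d(v)\le 3192$, the tree consists of $n_{50}$ $D_{50}$-branches (each with $7\cdot 50+1=351$ vertices) and $n_{51}$ $D_{51}$-branches (each with $358$ vertices), any $B_3$-branches at the root only decreasing the order, so $n = 1 + 351\,n_{50} + 358\,n_{51} = 1 + 351\,d(v) + 7\,n_{51}$. Maximizing $n$ over the admissible configurations, subject to $d(v)\le 3192$, $n_{50}\ge 1$ and $n_{50}\le 123$, yields $n\le 1142737$; hence any minimal-ABC tree of order larger than $1142737$ cannot contain a $D_{50}$-branch. I expect the main obstacle to be the sharpening step: one must choose an order-preserving transformation that is always feasible on this window, i.e. that there are always enough $D_{51}$-branches to rebalance against the $D_{50}$-branches being removed, and then verify the sign of $ABC(G')-ABC(G)$ uniformly. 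As in the appendices, this ultimately rests on a finite but delicate numerical check governed by the monotonicity of $-f(x,y)+f(x\mp\Delta x,\,y\pm\Delta y)$ supplied by Propositions~\ref{pro-10} and~\ref{pro-20}.
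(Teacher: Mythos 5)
Your opening reduction is sound, but note first that the paper never proves this observation theoretically: it is one of the computational facts read off from the exact solution of the optimization problem of Section~\ref{section-compuation}, and the paper explicitly defers computer-free proofs of the observations to future work. The theory the paper does supply is precisely what you assemble in your first step---Proposition~\ref{pro-diff-Dz}, Corollary~\ref{co-conj2_and_more}, Lemma~\ref{lemma-no-B-branches-10} and Theorem~\ref{te-combinations-D-branches}---and it stops at the threshold $3242$: a $D_{50}$-branch forces a pure $D_{50}/D_{51}$ structure with $d(v)\le 3241$ and $n_{50}\le 123$, while orders with $n>1+372\cdot 3241$ force $d(v)\ge 3242$ and hence are covered theoretically; the finitely many remaining orders are settled by the computation. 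So you are attempting a genuinely different, purely theoretical route, and its success hinges entirely on the two steps you have not carried out.

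Both of those steps have concrete problems. The sharpening from $3241$ down to $3192$ is not a routine adaptation of $\mathcal{T}_4$/$\mathcal{T}_5$: the transformation $\mathcal{T}_5$ removes $x$ equal-size $D_z$-branches, is only defined for $x\ge 8$ (since $n_k+n_{k-1}=x-7$), and its negativity is established in the paper only down to $x=124$ for $z=50$ (that is exactly how $n_{50}\le 123$ is obtained via (\ref{eq-Lemma-Dz-30-00-second})), so it cannot eliminate the last few $D_{50}$-branches. Any order-preserving rebalancing of a $D_{50}/D_{51}$ configuration into a $D_{51}/D_{52}$ one must satisfy the integer relation $351a+358b=358a'+365b'$, whose smallest feasible instances move hundreds of branches at once (e.g. $1\cdot 351+363\cdot 358=357\cdot 365$, lowering $d(v)$ by $7$); these are new transformations with no sign analysis anywhere in the paper, and near the claimed threshold the ABC differences are minuscule, so there is no guarantee your finite check succeeds at $d(v)=3193$. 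Second, your degree-to-order conversion does not produce the stated constant: under your own constraints ($d(v)\le 3192$, $n_{50}\ge 1$, pure $D_{50}/D_{51}$) the maximum order is $1+351+358\cdot 3191=1142730$, not $1142737$. Indeed, a tree of order $1142737$ containing a $D_{50}$-branch would need $358\mid n_{50}$ (because $351n_{50}+358n_{51}=358\cdot 3192$ and $\gcd(351,358)=1$), hence $n_{50}\ge 358>123$ and $d(v)\ge 3199$, which Theorem~\ref{te-combinations-D-branches} itself forbids. This mismatch shows that the two halves of the observation are independent outputs of the computation rather than one being a counting corollary of the other, and that the step you defer to ``a finite but delicate numerical check'' is exactly where the entire content of the observation lies.
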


For most of the above observations, we have related theoretical results.
Some of them are close to the optimal ones obtained by the computations.
Anyway, the theoretically proven bounds reduce enough the search space such that we can compute
the minimal-ABC trees up to some  given order, forced by the limitations of the numerical representations of the numbers
(we can obtain numerically consistent results for trees of orders up to $13290000000000000$).
However, formal proofs of some of the above observations, without the use of a computer, could be done in the future, although
that will not add any new value regarding the understanding of the structure of the minimal-ABC trees.

\bigskip

\noindent
{\bf Acknowledgements.}
This work was partially supported by the Slovenian research agency
ARRS (Program No. P1-0383), the bilateral cooperation between China and Slovenia
(Project No. Bi-CN-18-20-015), and the National Natural Science Foundation of China (Grant No. 11701505).


%
%

\bibliographystyle{siam}
\bibliography{abc-mp}

\begin{thebibliography}{10}

\bibitem{adgh-dctmabci-14}
{\sc M.~B. Ahmadi, D.~Dimitrov, I.~Gutman, and S.~A. Hosseini}, {\em Disproving
  a conjecture on trees with minimal atom-bond connectivity index}, MATCH
  Commun. Math. Comput. Chem., 72 (2014), pp.~685--698.

\bibitem{cg-eabcig-11}
{\sc J.~Chen and X.~Guo}, {\em Extreme atom-bond connectivity index of graphs},
  MATCH Commun. Math. Comput. Chem., 65 (2011), pp.~713--722.

\bibitem{cg-abccbg-12}
{\sc J.~{C}hen and X.~Guo}, {\em The atom-bond connectivity index of chemical
  bicyclic graphs}, Appl. Math. J. Chinese Univ., 27 (2012), pp.~243--252.

\bibitem{clg-subabcig-12}
{\sc J.~Chen, J.~Liu, and X.~Guo}, {\em Some upper bounds for the atom-bond
  connectivity index of graphs}, Appl. Math. Lett., 25 (2012), pp.~1077--1081.

\bibitem{cll-abcbsp-13}
{\sc J.~Chen, J.~Liu, and Q.~Li}, {\em Some upper bounds for the atom-bond
  connectivity index of graphs}, Discrete Dyn. Nat. Soc., 25 (2013).

\bibitem{cr-psevvdbtiat-19}
{\sc R.~Cruz and J.~Rada}, {\em The path and the star as extremal values of
  vertex-degree-based topological indices among trees}, MATCH Commun. Math.
  Comput. Chem., 82 (2019), pp.~715--732.

\bibitem{dgf-abci-11}
{\sc K.~C. Das, I.~Gutman, and B.~Furtula}, {\em On atom-bond connectivity
  index}, Chem. Phys. Lett., 511 (2011), pp.~452--454.

\bibitem{dmga-cbabcig-16}
{\sc K.~C. Das, M.~A. Mohammed, I.~Gutman, and K.~A. Atan}, {\em Comparison
  between atom-bond connectivity indices of graphs}, MATCH Commun. Math.
  Comput. Chem., 76 (2016), pp.~159--170.

\bibitem{dt-cbfgaiabci-10}
{\sc K.~C. {D}as and N.~Trinajsti{\' c}}, {\em Comparison between first
  geometric-arithmetic index and atom-bond connectivity index}, Chem. Phys.
  Lett., 497 (2010), pp.~149--151.

\bibitem{d-sptmabci-2014}
{\sc D.~Dimitrov}, {\em On structural properties of trees with minimal
  atom-bond connectivity index}, Discrete Appl. Math., 172 (2014), pp.~28--44.

\bibitem{d-sptmabci-2-2015}
{\sc D.~{Dimitrov}}, {\em On structural properties of trees with minimal
  atom-bond connectivity index {II}: Bounds on {$B_1$-} and {$B_2$}-branches},
  Discrete Appl. Math., 204 (2016), pp.~90--116.

\bibitem{d-etrabci-2017}
{\sc D.~Dimitrov}, {\em Extremal trees with respect to the atom-bond
  connectivity index}, in Bounds in Chemical Graph Theory - Mainstreams,
  Mathematical Chemistry Monographs, I.~Gutman, B.~Furtula, K.~C. Das, and
  I.~M. E.~Milovanovi{\' c}, eds., vol.~20, University of Kragujevac and
  Faculty of Science Kragujevac, 2017, pp.~53--67.

\bibitem{d-sptmabci-4-2017}
{\sc D.~{Dimitrov}}, {\em On structural properties of trees with minimal
  atom-bond connectivity index {IV}: Solving a conjecture about the pendent
  paths of length three}, Appl. Math. Comput., 313 (2017), pp.~418--430.

\bibitem{dd-scbvmabct-2020}
{\sc D.~Dimitrov and Z.~Du}, {\em A solution of the conjecture about big
  vertices of minimal-{ABC} trees}, Appl. Math. Comput., 397 (2021).

\bibitem{ddf-sptmabci-3-2015}
{\sc D.~Dimitrov, Z.~Du, and C.~M. da~Fonseca}, {\em On structural properties
  of trees with minimal atom-bond connectivity index {III}: Trees with pendent
  paths of length three}, Appl. Math. Comput., 282 (2016), pp.~276--290.

\bibitem{ddf-mabctb1b-2018}
{\sc D.~{Dimitrov}, Z.~Du, and C.~M. da~Fonseca}, {\em The minimal-{ABC} trees
  with {$B_1$}-branches}, PLoS ONE, 13 (2018).

\bibitem{DiDuFo2018}
{\sc D.~Dimitrov, Z.~Du, and C.~M. da~Fonseca}, {\em Some forbidden
  combinations of branches in minimal-{ABC} trees}, Discrete Appl. Math., 236
  (2018), pp.~165--182.

\bibitem{dis-rmabciggp-17}
{\sc D.~Dimitrov, B.~Ikica, and R.~{\v S}krekovski}, {\em Remarks on maximum
  atom-bond connectivity index with given graph parameters}, Discrete Appl.
  Math., 222 (2017), pp.~222--226.

\bibitem{dm-ectmabcir-2018}
{\sc D.~Dimitrov and N.~Milosavljevi{\' c}}, {\em Efficient computation of
  trees with minimal atom-bond connectivity index revisited}, MATCH Commun.
  Math. Comput. Chem., 79 (2018), pp.~431--450.

\bibitem{d-abcircg-2015}
{\sc Z.~Du}, {\em On the atom-bond connectivity index and radius of connected
  graphs}, J. Inequal. Appl.,  (2015).

\bibitem{df-ftmabc-2015}
{\sc Z.~Du and C.~M. da~Fonseca}, {\em On a family of trees with minimal
  atom-bond connectivity index}, Discrete Appl. Math., 202 (2016), pp.~37--49.

\bibitem{dd-mabctb1b2-2018}
{\sc Z.~Du and D.~Dimitrov}, {\em The minimal-{ABC} trees with {$B_1$}-branches
  {II}}, IEEE Access, 6 (2018), pp.~66350--66366.

\bibitem{dd-mabctb2-2020}
{\sc Z.~Du and D.~Dimitrov}, {\em The minimal-{ABC} trees with
  {$B_2$}-branches}, Comput. Appl. Math., 39 Art Nr. 85 (2020).

\bibitem{e-abcm-17}
{\sc {E.~Estrada}}, {\em The {ABC} matrix}, J. Math. Chem., 55 (2017),
  pp.~1021--1033.

\bibitem{e-abceba-08}
{\sc E.~Estrada}, {\em Atom-bond connectivity and the energetic of branched
  alkanes}, Chem. Phys. Lett., 463 (2008), pp.~422--425.

\bibitem{etrg-abc-98}
{\sc E.~Estrada, L.~Torres, L.~Rodr{\' i}guez, and I.~Gutman}, {\em An
  atom-bond connectivity index: {M}odelling the enthalpy of formation of
  alkanes}, Indian J. Chem., 37 (1998), pp.~849--855.

\bibitem{ftvzag-siabcigo-2011}
{\sc G.~H. Fath-Tabar, B.~Vaez-Zadeh, A.~R. Ashrafi, and A.~Graovac}, {\em Some
  inequalities for the atom-bond connectivity index of graph operations},
  Discrete Appl. Math., 159 (2011), pp.~1323--1330.

\bibitem{fgv-abcit-09}
{\sc B.~Furtula, A.~Graovac, and D.~Vuki{\v c}evi{\' c}}, {\em Atom-bond
  connectivity index of trees}, Discrete Appl. Math., 157 (2009),
  pp.~2828--2835.

\bibitem{fgiv-cstmabci-12}
{\sc B.~Furtula, I.~Gutman, M.~Ivanovi{\' c}, and D.~Vuki{\v c}evi{\' c}}, {\em
  Computer search for trees with minimal {ABC} index}, Appl. Math. Comput., 219
  (2012), pp.~767--772.

\bibitem{ghl-srabcig-11}
{\sc L.~Gan, H.~Hou, and B.~Liu}, {\em Some results on atom-bond connectivity
  index of graphs}, MATCH Commun. Math. Comput. Chem., 66 (2011), pp.~669--680.

\bibitem{gly-abctgds-12}
{\sc L.~Gan, B.~Liu, and Z.~You}, {\em The {ABC} index of trees with given
  degree sequence}, MATCH Commun. Math. Comput. Chem., 68 (2012), pp.~137--145.

\bibitem{gs-sabcitnpv-16}
{\sc Y.~Gao and Y.~Shao}, {\em The smallest {ABC} index of trees with n pendent
  vertices}, MATCH Commun. Math. Comput. Chem., 76 (2016), pp.~141--158.

\bibitem{gg-nwabci-10}
{\sc A.~Graovac and M.~Ghorbani}, {\em A new version of the atom-bond
  connectivity index}, Acta Chim. Slov., 57 (2010), pp.~609--612.

\bibitem{gf-tsabci-12}
{\sc I.~Gutman and B.~Furtula}, {\em Trees with smallest atom-bond connectivity
  index}, MATCH Commun. Math. Comput. Chem., 68 (2012), pp.~131--136.

\bibitem{gfahsz-abcic-2013}
{\sc I.~Gutman, B.~Furtula, M.~B. Ahmadi, S.~A. Hosseini, P.~S. Nowbandegani,
  and M.~Zarrinderakht}, {\em The {ABC} index conundrum}, Filomat, 27 (2013),
  pp.~1075--1083.

\bibitem{gtrm-abcica-12}
{\sc I.~Gutman, J.~To{\v s}ovi{\' c}, S.~Radenkovi{\' c}, and S.~Markovi{\'
  c}}, {\em On atom-bond connectivity index and its chemical applicability},
  Indian J. Chem., 51 (2012), pp.~690--694.

\bibitem{hma-esabcmt-2021}
{\sc S.~A. Hosseini, B.~Mohar, and M.~B. Ahmadi}, {\em The evolution of the
  structure of {ABC}-minimal trees}, to appear in J. Comb. Theory Ser. B.
\newblock https://doi.org/10.1016/j.jctb.2021.07.001.

\bibitem{hdw-abcig-19}
{\sc H.~Hua, K.~C. Das, and H.~Wang}, {\em On atom--bond connectivity index of
  graphs}, J. Math. Anal. Appl., 479 (2019), pp.~1099--1114.

\bibitem{k-abcibsfc-12}
{\sc X.~Ke}, {\em Atom-bond connectivity index of benzenoid systems and
  fluoranthene congeners}, Polycycl. Aromat. Comp., 32 (2012), pp.~27--35.

\bibitem{ks-ca-98}
{\sc D.~L. Kreher and D.~R. Stinson}, {\em Combinatorial Algorithms:
  Generation, Enumeration, and Search}, CRC Press, Boca Raton, 1998.

\bibitem{lcmzczj-twmabciatgnl-16}
{\sc W.~Lin, J.~Chen, C.~Ma, Y.~Zhang, J.~Chen, D.~Zhang, and F.~Jia}, {\em On
  trees with minimal {ABC} index among trees with given number of leaves},
  MATCH Commun. Math. Comput. Chem., 76 (2016), pp.~131--140.

\bibitem{lcwdh-csltmabci-18}
{\sc W.~Lin, J.~Chen, Z.~Wu, D.~Dimitrov, and L.~Huang}, {\em Computer search
  for large trees with minimal {ABC} index}, Appl. Math. Comput., 338 (2018),
  pp.~221--230.

\bibitem{lgcl-mabcicggds-13}
{\sc W.~Lin, T.~Gao, Q.~Chen, and X.~Lin}, {\em On the minimal {ABC} index of
  connected graphs with given degree sequence}, MATCH Commun. Math. Comput.
  Chem., 69 (2013), pp.~571--578.

\bibitem{rb-egevvdbti-19}
{\sc J.~Rada and S.~Bermudo}, {\em Is every graph the extremal value of a
  vertex-degree-based topological index?}, MATCH Commun. Math. Comput. Chem.,
  81 (2019), pp.~315--323.

\bibitem{xzd-abcicg-2011}
{\sc R.~Xing, B.~Zhou, and F.~Dong}, {\em On atom-bond connectivity index of
  connected graphs}, Discrete Appl. Math., 159 (2011), pp.~1617--1630.

\bibitem{xzd-frabcit-2010}
{\sc R.~Xing, B.~Zhou, and Z.~Du}, {\em Further results on atom-bond
  connectivity index of trees}, Discrete Appl. Math., 158 (2011),
  pp.~1536--1545.

\bibitem{yxc-abcbsp-11}
{\sc J.~Yang, F.~Xia, and H.~Cheng}, {\em The atom-bond connectivity index of
  benzenoid systems and phenylenes}, Int. Math. Forum, 6 (2011),
  pp.~2001--2005.

\bibitem{zlcl-bgabci-20}
{\sc R.~Zheng, J.~Liu, J.~Chen, and B.~Liu}, {\em Bounds on the general
  atom-bond connectivity indices}, MATCH Commun. Math. Comput. Chem., 83
  (2020), pp.~143--166.

\bibitem{zc-rbabcfgai-2015}
{\sc L.~Zhong and Q.~Cui}, {\em On a relation between the atom-bond
  connectivity and the first geometric-arithmetic indices}, Discrete Appl.
  Math., 185 (2015), pp.~249--253.

\end{thebibliography}

\appendix

\section{Appendix}

\subsection{The size of $D_z$-branches--the negativity of right-hand side of (\ref{eq-Lemma-Dz-30-00})} \label{appendix-upper_bound}

The relations presented in \eqref{eq-Lemma-Dz-10-1} can be regarded as a system of two equations with unknowns $n_k$ and $n_{k-1}$ when parameters $z$, $x$, $k$ are given. The solution is
\begin{eqnarray} \label{eq-nk-nk-1}
n_k = x(z - k + 1) - 7k + 6  \quad \text{and} \quad n_{k-1} = x(k - z) + 7k + 1,
\end{eqnarray}
which means that $n_k$ and $n_{k-1}$ are determined by the parameters $z$, $x$, $k$.
It can be shown that
the right-hand side of (\ref{eq-Lemma-Dz-30-00}) is negative for the following values of the above parameters:

\begin{table}
\begin{center}
\caption{Some values of the parameters for which the right-hand side of (\ref{eq-Lemma-Dz-30-00}) is negative.} \label{table-large-z}
\begin{tabular}{cclccl} 
\hline
$z$ & $x$ &  & $z$ & $x$ &  \\
\hline
 $53$ & $261$ & ($k=52$, $n_{51}=104$, $n_{52}=164$) & $54$ & $131$ &  ($k=52$, $n_{51}=103$, $n_{52}=35$) \\
 \hline
 $55$ & $90$ & ($k=52$, $n_{51}=95$, $n_{52}=2$) &    $56$ & $72$ & ($k=52$, $n_{51}=77$, $n_{52}=2$) \\
  \hline
 $57$ & $60$ & ($k=52$, $n_{51}=65$, $n_{52}=2$) &    $58$ & $52$ & ($k=52$, $n_{51}=53$, $n_{52}=6$) \\
 \hline
 $59$ & $45$ & ($k=52$, $n_{51}=50$, $n_{52}=2$) &    $60$ & $40$ & ($k=52$, $n_{51}=45$, $n_{52}=2$)\\
 \hline
 $61$ & $36$ & ($k=52$, $n_{51}=41$, $n_{52}=2$) &    $62$ & $33$ & ($k=52$, $n_{51}=35$, $n_{52}=5$) \\
 \hline
 $63$ & $30$ & ($k=52$, $n_{51}=35$, $n_{52}=2$) &    $64$ & $28$ & ($k=52$, $n_{51}=29$, $n_{52}=6$) \\
 \hline
 $65$ & $26$ & ($k=52$, $n_{51}=27$, $n_{52}=6$) &    $66$ & $25$ & ($k=52$, $n_{51}=15$, $n_{52}=17$) \\
 \hline
 $67$ & $23$ & ($k=52$, $n_{51}=20$, $n_{52}=10$) &  $68$ & $22$ & ($k=52$, $n_{51}=13$, $n_{52}=16$) \\
 \hline
 $69$ & $21$ & ($k=52$, $n_{51}=8$, $n_{52}=20$) &  $70$ & $21$ & ($k=53$, $n_{52}=15$, $n_{53}=13$) \\
 \hline
 $71$ & $20$ & ($k=53$, $n_{52}=12$, $n_{53}=15$) & $72$ & $19$ &  ($k=53$, $n_{52}=11$, $n_{53}=15$) \\
 \hline
 $73$ & $19$ & ($k=54$, $n_{53}=18$, $n_{54}=8$) &    $74$ & $18$ & ($k=54$,  $n_{53}=19$, $n_{54}=6$) \\
 \hline
 $75$ & $17$ & ($k=54$,  $n_{53}=22$, $n_{54}=2$) &    $76$ & $17$ & ($k=54$,  $n_{53}=5$, $n_{54}=19$) \\
 \hline
 $77$ & $17$ & ($k=55$,  $n_{54}=12$, $n_{55}=12$) &    $78$ & $16$ & ($k=55$,  $n_{54}=18$, $n_{55}=5$)\\
 \hline
 $79$ & $16$ & ($k=55$, $n_{54}=2$, $n_{55}=21$) &    $80$ & $16$ & ($k=56$, $n_{55}=9$, $n_{56}=14$) \\
 \hline
 $81$ & $15$ & ($k=56$, $n_{55}=18$, $n_{56}=4$) &    $82$ & $15$ & ($k=56$, $n_{55}=3$, $n_{56}=19$) \\
 \hline
 $83$ & $15$ & ($k=57$, $n_{56}=10$, $n_{57}=12$) &    $84$ & $15$ & ($k=58$, $n_{57}=17$, $n_{58}=5$) \\
 \hline
 $85$ & $15$ & ($k=58$, $n_{57}=2$, $n_{58}=20$) &  $86$ & $14$ & ($k=58$, $n_{57}=15$, $n_{58}=6$) \\
 \hline
 $87$ & $14$ & ($k=58$, $n_{57}=1$, $n_{58}=20$) &  $88$ & $13$ & ($k=58$, $n_{57}=17$, $n_{58}=3$) \\
 \hline
 $89$ & $13$ & ($k=58$, $n_{57}=4$, $n_{58}=16$) &  $90$ & $13$ &  ($k=59$, $n_{58}=11$, $n_{59}=9$) \\
 \hline
 $91$ & $13$ &($k=60$, $n_{59}=18$, $n_{60}=2$) &  $92$ & $13$ & ($k=60$,  $n_{59}=5$, $n_{60}=15$) \\
 \hline
 $93$ & $13$ &($k=61$,  $n_{60}=12$, $n_{61}=8$) &  $94$ & $13$ & ($k=62$,  $n_{61}=19$, $n_{62}=1$) \\
 \hline
 $95$ & $13$ & ($k=62$,  $n_{61}=6$, $n_{62}=14$) &  $96$ & $13$ & ($k=63$,  $n_{62}=13$, $n_{63}=7$) \\
 \hline
 $97$ & $13$ & ($k=63$,  $n_{62}=0$, $n_{63}=20$) &  $98$ & $13$ & ($k=64$,  $n_{63}=7$, $n_{64}=13$) \\
 \hline
 $99$ & $13$ & ($k=65$,  $n_{64}=14$, $n_{65}=6$) &  $100$ & $13$ &  ($k=65$,  $n_{64}=1$, $n_{65}=19$) \\
 \hline
 $101$ & $13$ & ($k=66$,  $n_{65}=8$, $n_{66}=12$) &  $102$ & $13$ &  ($k=67$,  $n_{66}=15$, $n_{67}=5$) \\
 \hline
 $103$ & $13$ & ($k=67$, $n_{66}=2$, $n_{67}=18$) &  $104$ & $13$ &  ($k=68$, $n_{67}=9$, $n_{68}=11$) \\
 \hline
 $105$ & $13$ & ($k=69$, $n_{68}=16$, $n_{69}=4$) &  $106$ & $13$ &  ($k=69$, $n_{68}=3$, $n_{69}=17$) \\
 \hline
 $107$ & $13$ & ($k=70$, $n_{69}=10$, $n_{70}=10$) &  $108$ & $13$ &  ($k=71$,  $n_{70}=17$, $n_{71}=3$) \\
 \hline
 $109$ & $13$ & ($k=71$,  $n_{70}=4$, $n_{71}=16$) &  $110$ & $13$ &  ($k=72$,  $n_{71}=11$, $n_{72}=9$) \\
 \hline
 $111$ & $13$ & ($k=73$,  $n_{72}=18$, $n_{73}=2$) &  $112$ & $13$ &  ($k=73$,  $n_{72}=5$, $n_{73}=15$) \\
 \hline
 $113$ & $13$ & ($k=74$,  $n_{73}=12$, $n_{74}=8$) &  $114$ & $13$ &  ($k=75$,  $n_{74}=19$, $n_{75}=1$) \\
 \hline
 $115$ & $13$ & ($k=75$,  $n_{74}=6$, $n_{75}=14$) &  $116$ & $13$ &  ($k=76$,  $n_{75}=13$, $n_{76}=7$) \\
 \hline
 $117$ & $13$ & ($k=76$,  $n_{75}=0$, $n_{76}=20$) &  $118$ & $13$ &  ($k=77$,  $n_{76}=7$, $n_{77}=13$) \\
 \hline
 $119$ & $13$ & ($k=78$,  $n_{77}=14$, $n_{78}=6$) &  $120$ & $13$ &  ($k=78$,  $n_{77}=1$, $n_{78}=19$) \\
 \hline
 $121$ & $13$ & ($k=79$,  $n_{78}=8$, $n_{79}=12$) &  $122$ & $13$ &  ($k=80$,  $n_{79}=15$, $n_{80}=5$) \\
 \hline
 $123$ & $13$ & ($k=80$,  $n_{79}=2$, $n_{80}=18$) &  $124$ & $13$ &  ($k=81$,  $n_{80}=9$, $n_{81}=11$) \\
 \hline
 $125$ & $13$ & ($k=82$,  $n_{81}=16$, $n_{82}=4$) &  $126$ & $13$ &  ($k=82$,  $n_{81}=3$, $n_{82}=17$) \\
 \hline
 $127$ & $13$ & ($k=83$,  $n_{82}=10$, $n_{83}=10$) &  $128$ & $13$ &  ($k=84$,  $n_{83}=17$, $n_{84}=3$) \\
 \hline
 $129$ & $13$ & ($k=84$,  $n_{83}=4$, $n_{84}=16$) &  $130$ & $13$ &  ($k=85$,  $n_{84}=11$, $n_{85}=9$) \\
 \hline
 $131$ & $13$ & ($k=86$,  $n_{85}=18$, $n_{86}=2$) & \multicolumn{1}{l}{} &  \multicolumn{1}{l}{} &   \multicolumn{1}{l}{} \\
 \cline{1-3} 
  \end{tabular}
  \end{center}
  \end{table}


By Proposition~\ref{pro-10},  $-f(d(v),d(v_i))+f(d(v)+7,d(v_i))$ decreases in $d(v_i)$. Thus, the right-hand side of (\ref{eq-Lemma-Dz-30-00})
has its maximum value when $d(v_i)=4$, $i=1, \dots d(v)-x$, i.e.,
\begin{eqnarray}  \label{eq-Lemma-Dz-30-00-1}
ABC (G') - ABC (G) &\le&-x \, f(d(v),z+1) -x \, z \, f(z+1,4) - 3 f(4,2) - 3 f(2,1)  + n_{k}  f(d(v)+7,k+1)    \nonumber \\
&& + n_{k-1}  f(d(v)+7,k)  + k n_{k}  f(k+1,4)  +  (k-1) n_{k-1}  f(k,4) \nonumber \\
&& + ( d(v)-x ) ( - f(d(v),4) + f(d(v)+7,4)).
\end{eqnarray}

Next, we show that there exists an upper bound on \eqref{eq-Lemma-Dz-30-00-1} decreasing in $d(v)$.
Observe that the terms related to $d(v)$ in the right-hand side of (\ref{eq-Lemma-Dz-30-00-1}) are
\begin{eqnarray*}
&&-x \, f(d(v),z+1) + n_{k}  f(d(v)+7,k+1)  + n_{k-1}  f(d(v)+7,k)  + ( d(v)-x ) ( - f(d(v),4) + f(d(v)+7,4))  \\
&=&( d(v) -x + 7) f(d(v) + 7, 4) - (d(v) - x) f (d(v),4) + n_k ( - f (d(v) + 7, k) + f (d(v) + 7, k + 1))  \\
&&  + (x + 7) ( - f (d(v) + 7, 4) + f (d(v) + 7, k) ) + x ( - f (d(v), z + 1) + f (d(v) + 7, 4) ).
\end{eqnarray*}

It is not hard to verify that $( d(v) -x + 7) f(d(v) + 7, 4) - (d(v) - x) f (d(v),4)$ decreases in $d (v) \ge x > 0$, by verifying the negativity of its derivative.
From Propositions \ref{pro-10} and \ref{pro-20}, we have
\begin{itemize}
\item
$- f (d(v) + 7, k) + f (d(v) + 7, k + 1)$ decreases in $d (v)$;

\item
$- f (d(v) + 7, 4) + f (d(v) + 7, k)$ decreases in $d (v)$;

\item
$- f (d(v), z + 1) + f (d(v) + 7, 4)$ increases in $d (v)$, i.e.,
\begin{eqnarray*}
- f (d(v), z + 1) + f (d(v) + 7, 4) &\le&   \lim_{d(v) \rightarrow + \infty}(- f (d(v), z + 1) + f (d(v) + 7, 4))   \\
 &=&  - \sqrt{\frac{1}{z+1}} + \sqrt{\frac{1}{4}}.
\end{eqnarray*}

\end{itemize}

Thus, the right-hand side of (\ref{eq-Lemma-Dz-30-00-1}) has another (weaker) upper bound:
\begin{eqnarray}  \label{eq-Lemma-Dz-30-00-2}
%
ABC (G') - ABC (G) &\le&( d(v) -x + 7) f(d(v) + 7, 4) - (d(v) - x) f (d(v),4)  + n_k ( - f (d(v) + 7, k)  \nonumber  \\
&&+ f (d(v) + 7, k + 1))   + (x + 7) ( - f (d(v) + 7, 4) + f (d(v) + 7, k) )    \nonumber  \\
&& + x \left( - \sqrt{\frac{1}{z+1}} + \sqrt{\frac{1}{4}} \right)  -x \, z \, f(z+1,4) - 3 f(4,2) - 3 f(2,1)   \nonumber  \\
&& + k n_{k}  f(k+1,4)  +  (k-1) n_{k-1}  f(k,4),
\end{eqnarray}
which decreases in $d (v)$.

Let us consider the case $z=53$ and $x=261$ as a representative (since it is the worst case with the largest valid $x$) to illustrate how to show that the right-hand side of (\ref{eq-Lemma-Dz-30-00}) is negative. With the assumption that $z=53$ and $x=261$, we set $k=52$, which implies that $n_{51}=104$ and  $n_{52}=164$ from \eqref{eq-nk-nk-1}.
Now, let us substitute $z=53$, $x=261$, $k=52$, $n_{51}=104$, $n_{52}=164$ into (\ref{eq-Lemma-Dz-30-00-2}), we can see that the right-hand side of  (\ref{eq-Lemma-Dz-30-00-2})
is negative when $d(v) = 259226$, and so is it for $d(v) \ge 259226$.
As to $x = 261 \le d(v) < 259226$, we resort to the right-hand side of (\ref{eq-Lemma-Dz-30-00-1}), which is always negative by direct calculations.
Thus, we have verified that a minimal-ABC tree cannot contain (at least) $x = 261$ $D_z$-branches with $z = 53$.

Similarly, it can be verified that for the rest of the combinations of the listed parameters in Table \ref{table-large-z}, the change of the ABC index is negative.

\subsection{The size of $D_z$-branches--the negativity of right-hand side of (\ref{eq-Lemma-Dz-30-00-second-pre})} \label{appendix-upper_bound-2}

The relations from \eqref{eq-Lemma-Dz-10-12} can be reformulated as
\begin{eqnarray} \label{eq-nk-nk-2}
n_k = x(z - k + 1) + 7k - 6  \quad \text{and} \quad n_{k-1} = x(k - z) - 7k - 1,
\end{eqnarray}
which means that $n_k$ and $n_{k-1}$ are determined by the parameters $x$, $z$, and $k$.
It can be easily verified that for the following values of the above parameters (including a lower bound of $d(v)$), the right-hand side of (\ref{eq-Lemma-Dz-30-00-second-pre}) is negative:

\begin{table}
\begin{center}
\caption{ Some values of the parameters for which the right-hand side of (\ref{eq-Lemma-Dz-30-00-second-pre}) is negative.} \label{table-small-z}
\begin{tabular}{ccc ccc}
\hline
$z$ & $x$ & $d(v)$   & $z$ & $x$ & $d(v)$    \\
\hline
 $51$ & $365$ &  $\geq 3249$  &  $50$ & $183$ &  $\geq 1358$   \\
   \multicolumn{3}{c}{($k=52$, $n_{51}=0$, $n_{52}=358$)}  &  \multicolumn{3}{c}{($k=52$, $n_{51}=0$, $n_{52}=358$)}  \\
 \hline
 $49$ & $122$ &  $\geq 825$ &   $48$ & $92$ &  $\geq 572$    \\
  \multicolumn{3}{c}{($k=52$, $n_{51}=1$, $n_{52}=114$) }  &  \multicolumn{3}{c}{($k=52$, $n_{51}=3$, $n_{52}=82$)}  \\
 \hline
  $47$ & $73$ &  $\geq 430$ &    $46$ & $61$ &  $\geq 333$   \\
    \multicolumn{3}{c}{($k=52$, $n_{51}=0$, $n_{52}=66$)}  &  \multicolumn{3}{c}{($k=52$, $n_{51}=1$, $n_{52}=53$)}  \\
   \hline
    $45$ & $53$ &  $\geq 263$ &    $44$ & $46$ &  $\geq 215$    \\
   \multicolumn{3}{c}{($k=52$, $n_{51}=6$, $n_{52}=40$)}  &  \multicolumn{3}{c}{($k=52$, $n_{51}=3$, $n_{52}=36$)}  \\
    \hline
    $43$ & $41$ &  $\geq 177$ &    $42$ & $37$ &  $\geq 146$    \\
    \multicolumn{3}{c}{($k=52$, $n_{51}=4$, $n_{52}=30$)}  &  \multicolumn{3}{c}{($k=52$, $n_{51}=5$, $n_{52}=25$)}  \\
    \hline
     $41$ & $34$ &  $\geq 120$  &  $40$ & $31$ &  $\geq 100$  \\
     \multicolumn{3}{c}{($k=52$, $n_{51}=9$, $n_{52}=18$)}  &  \multicolumn{3}{c}{($k=52$, $n_{51}=7$, $n_{52}=17$) }  \\
     \hline
     $39$ & $29$ &  $\geq 80$ &    $38$ & $27$ &  $\geq 64$   \\
      \multicolumn{3}{c}{($k=52$, $n_{51}=12$, $n_{52}=10$)}  &  \multicolumn{3}{c}{($k=52$, $n_{51}=13$, $n_{52}=7$) }  \\
     \hline
     $37$ & $25$ &  $\geq 47$ &     $36$ & $23$ &  $\geq z$    \\
     \multicolumn{3}{c}{($k=52$, $n_{51}=10$, $n_{52}=8$)}  &  \multicolumn{3}{c}{($k=52$, $n_{51}=3$, $n_{52}=13$) }  \\
     \hline
     $35$ & $22$ &  $\geq z $ &     $34$ & $21$ &  $\geq z$    \\
     \multicolumn{3}{c}{($k=52$, $n_{51}=9$, $n_{52}=6$)}  &  \multicolumn{3}{c}{ ($k=52$, $n_{51}=13$, $n_{52}=1$)}  \\
     \hline
     $33$ & $20$ &  $\geq z $  &  $32$ & $19$ &  $\geq z$    \\
     \multicolumn{3}{c}{($k=51$, $n_{50}=2$, $n_{51}=11$) }  &  \multicolumn{3}{c}{($k=51$, $n_{50}=3$, $n_{51}=9$) }  \\
     \hline
      $31$ & $18$ &  $\geq z $ &  $30$ & $18$ &  $\geq z$    \\
      \multicolumn{3}{c}{ ($k=51$, $n_{50}=2$, $n_{51}=9$) }  &  \multicolumn{3}{c}{ ($k=50$, $n_{49}=9$, $n_{50}=2$)  }  \\
      \hline
       $29$ & $17$ &  $\geq z $ &  $28$ & $16$ &  $\geq z$    \\
        \multicolumn{3}{c}{  ($k=50$, $n_{49}=6$, $n_{50}=4$) }  &  \multicolumn{3}{c}{  ($k=50$, $n_{49}=1$, $n_{50}=8$)  }  \\
      \hline
       $27$ & $16$ &  $\geq z $    &  $26$ & $15$ &  $\geq z$     \\
       \multicolumn{3}{c}{  ($k=49$, $n_{48}=8$, $n_{49}=1$) }  &  \multicolumn{3}{c}{  ($k=49$, $n_{48}=1$, $n_{49}=7$)  }  \\
      \hline
      $25$ & $15$ &  $\geq z $    &  $24$ & $15$ &  $\geq z$     \\
      \multicolumn{3}{c}{ ($k=48$, $n_{47}=8$, $n_{48}=0$) }  &  \multicolumn{3}{c}{  ($k=46$, $n_{45}=7$, $n_{46}=1$)  }  \\
     \hline
      $23$ & $15$ &  $\geq z $    &  $22$ & $15$ &  $\geq z$     \\
       \multicolumn{3}{c}{ ($k=44$, $n_{43}=6$, $n_{44}=2$) }  &  \multicolumn{3}{c}{ ($k=42$, $n_{41}=5$, $n_{42}=3$) }  \\
     \hline
     $21$ & $15$ &  $\geq z $ &     $20$ & $15$ &  $\geq z$    \\
     \multicolumn{3}{c}{ ($k=40$, $n_{39}=4$, $n_{40}=4$) }  &  \multicolumn{3}{c}{ ($k=38$, $n_{37}=3$, $n_{38}=5$)  }  \\
    \hline
     $19$ & $15$ &  $\geq z$ &     $18$ & $15$ &  $\geq z$    \\
     \multicolumn{3}{c}{ ($k=36$, $n_{35}=2$, $n_{36}=6$) }  &  \multicolumn{3}{c}{  ($k=34$, $n_{33}=1$, $n_{34}=7$) }  \\
    \hline
     $17$ & $15$ &  $\geq z $    &  $16$ & $15$ &  $\geq z$    \\
     \multicolumn{3}{c}{ ($k=32$, $n_{31}=0$, $n_{32}=8$) }  &  \multicolumn{3}{c}{ ($k=31$, $n_{30}=7$, $n_{31}=1$) }  \\
     \hline
     $15$ & $14$ &  $\geq z$ &   &   &      \\
      \multicolumn{3}{c}{($k=31$, $n_{30}=6$, $n_{31}=1$) }  &  \multicolumn{3}{c}{  }  \\
      \cline{1-3}
  \end{tabular}
  \end{center}
  \end{table}

The above difference of $ABC (G') - ABC (G)$, in the right-hand side of \eqref{eq-Lemma-Dz-30-00-second-pre}, is bounded from above by
\begin{eqnarray}  \label{eq-Lemma-Dz-30-00-second}
%
ABC (G') - ABC (G)  &\le&-x \, f(d(v),z+1) -x \, z \, f(z+1,4) + 3 f(4,2) + 3 f(2,1)  \nonumber \\
&&  + n_{k}  f(d(v)-7,k+1)  + n_{k-1}  f(d(v)-7,k) + k n_{k}  f(k+1,4)    \nonumber \\
&& +  (k-1) n_{k-1}  f(k,4)   + (d(v)-x)(- f(d(v), z+2) + f(d(v)-7, z+2)), \qquad \;
\end{eqnarray}
by Proposition~\ref{pro-20} and the fact that $d(v_i) \le z + 2$ from Proposition~\ref{pro-diff-Dz}, $i=1, \dots, d(v)-x$.
The terms related to $d(v)$ in the right-hand side of (\ref{eq-Lemma-Dz-30-00-second}) are
\beq
&-&x f(d(v),z+1) + n_{k}  f(d(v)-7,k+1)  + n_{k-1}  f(d(v)-7,k)  + ( d(v)-x ) ( - f(d(v),z+2) + f(d(v)-7,z+2) ), \nonumber \\ 
\eeq
which is equal to
\begin{eqnarray*}
&&( d(v) -x - 7) f(d(v) - 7, z+2) - (d(v) - x) f (d(v),z+2)   + n_k ( - f (d(v) - 7, k) + f (d(v) - 7, k + 1)) \\
&& + x ( - f (d(v), z+1) + f (d(v) - 7, k) ) + 7 ( - f (d(v) - 7, k) + f (d(v) - 7, z+2) ).
\end{eqnarray*}

For each combination of $z$ and $x$ listed in Table \ref{table-small-z}, it is not hard to verify that the first derivative of
$( d(v) -x - 7) f(d(v) - 7, z+2) - (d(v) - x) f (d(v),z+2)$ with respect to $d(v)$ is positive,
and thus
this expression increases in $d (v)$, so
\begin{eqnarray*}
&&( d(v) -x - 7) f(d(v) - 7, z+2) - (d(v) - x) f (d(v),z+2) \\
&\le& \lim_{d(v) \rightarrow + \infty}(( d(v) -x - 7) f(d(v) - 7, z+2) - (d(v) - x) f (d(v),z+2)) \quad  = \quad - \frac{7} {\sqrt{z+2}}.
\end{eqnarray*}
By Propositions \ref{pro-10} and \ref{pro-20}, we have
\begin{itemize}
\item
$- f (d(v) - 7, k) + f (d(v) - 7, k + 1)$
decreases in $d (v)$;

\item
$- f (d(v), z+1) + f (d(v) - 7, k)$
decreases in $d (v)$, since $k \ge z + 1$;

\item
$- f (d(v) - 7, k) + f (d(v) - 7, z+2)$
decreases in $d (v)$ when $z+2 > k$ (only when $z = 51$ and $k = 52$), is equal to $0$ when $z + 2 = k$ (only when $z = 50$ and $k = 52$), 
and increases in $d (v)$, so
\begin{eqnarray*}
 - f (d(v) - 7, k) + f (d(v) - 7, z+2)  \;\; \le  \;\;  \lim_{d(v) \rightarrow + \infty}(- f (d(v) - 7, k) + f (d(v) - 7, z+2)) \;\;  = \;\;  - \sqrt{\frac{1} {k}} + \sqrt{\frac{1} {z+2}}, \;\; \\ 
\end{eqnarray*}
when $z + 2 < k$ (i.e., $15 \le z \le 49$).

\end{itemize}

Therefore, we get a weaker upper bound on $ABC (G') - ABC (G)$ than the right-hand side of \eqref{eq-Lemma-Dz-30-00-second}:
\begin{itemize}

\item
when $z + 2 > k$ (more precisely, when $z = 51$ and $k = 52$),
\begin{eqnarray}  \label{eq-Lemma-Dz-30-00-third-1}
ABC (G') - ABC (G) &\le& - \frac{7} {\sqrt{z+2}}  + n_k ( - f (d(v) - 7, k) + f (d(v) - 7, k + 1))   \nonumber\\
&& + x ( - f (d(v), z+1) + f (d(v) - 7, k) )   \nonumber \\
&& + 7 \left( - f (d(v) - 7, k) + f (d(v) - 7, z+2) \right)  -x \, z \, f(z+1,4) \nonumber \\
&& + 3 f(4,2) + 3 f(2,1)  + k n_{k}  f(k+1,4)  +  (k-1) n_{k-1}  f(k,4),
\end{eqnarray}
and it  decreases in $d (v)$.

\item
When $z + 2 \le k$ (i.e., $15 \le z \le 50$),
\begin{eqnarray}  \label{eq-Lemma-Dz-30-00-third-2}
ABC (G') - ABC (G)  &\le& - \frac{7} {\sqrt{z+2}}  + n_k ( - f (d(v) - 7, k) + f (d(v) - 7, k + 1))  \nonumber\\
&&+ x ( - f (d(v), z+1) + f (d(v) - 7, k) )   + 7 \left(- \sqrt{\frac{1} {k}} + \sqrt{\frac{1} {z+2}} \right)  \nonumber\\
&&  -x \, z \, f(z+1,4) + 3 f(4,2) + 3 f(2,1) + k n_{k}  f(k+1,4) \nonumber \\
&&  +  (k-1) n_{k-1}  f(k,4),   \nonumber
\end{eqnarray}
and it  decreases in $d (v)$, too.
\end{itemize}

We take the case $z=51$, $x=365$, and $d(v) \geq 3249$ as a representative to illustrate how to show that the right-hand side of (\ref{eq-Lemma-Dz-30-00-second-pre}) is negative. Observe that as previously we are considering the worst case with the largest valid $x$.  With the assumption that $z=51$, $x=365$, and $d(v) \geq 3249$, we set $k=52$, which implies that $n_{51}=0$ and  $n_{52}=358$ from \eqref{eq-nk-nk-2}.
After substituting $z=51$, $x=365$, $k=52$, $n_{51}=0$, $n_{52}=358$ into the right-hand side of (\ref{eq-Lemma-Dz-30-00-third-1}), we get that the right-hand side of  (\ref{eq-Lemma-Dz-30-00-third-1}) is negative when $d(v) =3863$, and thus also for $d(v) \ge 3863$.
When $3249 \le d(v) < 3863$, we resort to the right-hand side of (\ref{eq-Lemma-Dz-30-00-second}), which is always negative by direct calculations.
Thus, we have verified that a minimal-ABC tree cannot contain (at least) $x = 365$ $D_z$-branches with $z = 51$ when $d(v)\geq 3249$.

For the rest of the combinations of the listed parameters in Table \ref{table-small-z}, it can be verified similarly that the change of the ABC index is negative.

\subsection{The number of $B$-branches adjacent to the root--the negativity of right-hand side of (\ref{eq-thm-bound-B3-branches-to-root})} \label{appendix-bound-B-branches-to-root}

Since $d(v')=d(v)-358$, by Proposition~\ref{pro-20} it follows that
$-f(d(v),4)+f(d(v'),4) < -f(d(v),5)+f(d(v'),5)$. Thus,
we may assume that $n_4 = 4$ in right-hand side of \eqref{eq-thm-bound-B3-branches-to-root}, which is the worst case.


Assume that the maximal size of $D$-branches in $G$ is $k$, where $15 \le k \le 131$.
Notice that the expression $-f(d(v),d(v_i))+f(d(v'),d(v_i))$ increases in $d(v_i)$, in which the maximal value for $d(v_i)$ is $k + 1$.
Therefore, $\sum_{i=1}^{d(v)-x}( -f(d(v),d(v_i))+f(d(v'),d(v_i)))$ is bounded from above by
$(d(v)-x) ( -f(d(v),k+1)+f(d(v'),k+1))$.
So the right-hand side of (\ref{eq-thm-bound-B3-branches-to-root}) has an upper bound:
\begin{eqnarray} \label{eq-thm-bound-B3-branches-to-root-11}
ABC (G') - ABC (G) &\le& (d(v)-x) ( -f(d(v),k+1)+f(d(v'),k+1)) -f(d(v),4)+f(d(v'),53) \nonumber \\
&& +6(-f(2,1)+f(d(v'),53)) +364(-f(d(v),4)+f(53,4)) \nonumber \\
&& +(x-369)(-f(d(v),4)+f(d(v'),4)) +4(-f(d(v),5)+f(d(v'),5)).
\end{eqnarray}

First, we claim that the right-hand side of (\ref{eq-thm-bound-B3-branches-to-root-11}) decreases in $x$.
Because the coefficients of $x$ in the right-hand side of (\ref{eq-thm-bound-B3-branches-to-root-11}) are
$$
(-f(d(v),4)+f(d(v'),4)) - ( -f(d(v),k+1)+f(d(v'),k+1)),
$$
which is negative
from Proposition \ref{pro-20}, it implies that the right-hand side of (\ref{eq-thm-bound-B3-branches-to-root-11}) decreases in $x$.

Next, we get an upper bound on $ABC (G') - ABC (G)$ which decreases in $d(v)$.
After rearranging the terms in the right-hand side of (\ref{eq-thm-bound-B3-branches-to-root-11}), we have
 \begin{eqnarray} \label{eq-thm-bound-B3-branches-to-root-2}
ABC (G') - ABC (G) &\le& (d(v') - x) f (d(v'),k+1) - (d(v)-x) f (d(v),k+1) \nonumber \\
&& + 365 ( - f(d(v),4) + f (d(v'),53))  +  358( -f(d(v'),53)+f(d(v'),k+1))   \nonumber \\
&& +(x-369)(-f(d(v),4)+f(d(v'),4)) +4(-f(d(v),5)+f(d(v'),5))  \nonumber \\ 
&& - 6 f(2,1) + 364 f(53,4). \nonumber
\end{eqnarray}

It is not hard to check that $(d(v') - x) f (d(v'),k+1) - (d(v)-x) f (d(v),k+1)$ increases in $d(v) \ge x$, for $15 \le k \le 131$ and $x \ge 400$,  further
\begin{eqnarray*}
&&(d(v') - x) f (d(v'),k+1) - (d(v)-x) f (d(v),k+1) \\
&\le& \lim_{d(v) \rightarrow + \infty}((d(v') - x) f (d(v'),k+1) - (d(v)-x) f (d(v),k+1))  \quad  =  \quad - \frac{358}{\sqrt{k+1}}. \\
\end{eqnarray*}
And from Proposition~\ref{pro-20}, all the following terms decrease in $d(v)$:
\begin{itemize}
\item
$- f(d(v),4) + f (d(v'),53)$,
\item
$-f(d(v),4)+f(d(v'),4)$,
\item
$-f(d(v),5)+f(d(v'),5)$.
\end{itemize}
Again by Propositions~\ref{pro-10} and \ref{pro-20}, the exclusive term $-f(d(v'),53)+f(d(v'),k+1)$ decreases in $d(v)$ if $k > 52$, increases in $d(v)$ if $k < 52$, and is equal to $0$ if $k = 52$. Moreover, when $k < 52$,
\begin{eqnarray*}
&&-f(d(v'),53)+f(d(v'),k+1)\quad \le \quad  \lim_{d(v) \rightarrow + \infty}(-f(d(v'),53)+f(d(v'),k+1)) \quad =  \quad  - \frac{1}{\sqrt{53}} + \frac{1}{\sqrt{k+1}}.
\end{eqnarray*}

Thus, we obtain a weaker upper bound on $ABC (G') - ABC (G)$:
When $k \ge 52$,
 \begin{eqnarray} \label{eq-thm-bound-B3-branches-to-root-further}
ABC (G') - ABC (G) &\le& - \frac{358}{\sqrt{k+1}} + 365 ( - f(d(v),4) + f (d(v'),53))  \nonumber \\
&& +  358( -f(d(v'),53)+f(d(v'),k+1)) +(x-369)(-f(d(v),4)+f(d(v'),4))  \nonumber \\
&& + 4(-f(d(v),5)+f(d(v'),5)) - 6 f(2,1) + 364 f(53,4),
\end{eqnarray}
which decreases in $d(v)$, and when $k < 52$,
 \begin{eqnarray} \label{eq-thm-bound-B3-branches-to-root-further-1}
ABC (G') - ABC (G) &\le& - \frac{358}{\sqrt{k+1}} + 365 ( - f(d(v),4) + f (d(v'),53))  +  358 \left( - \frac{1}{\sqrt{53}} + \frac{1}{\sqrt{k+1}} \right)  \nonumber \\
&&  +(x-369)(-f(d(v),4) +f(d(v'),4)) +4(-f(d(v),5)+f(d(v'),5)) \nonumber \\
&& - 6 f(2,1) + 364 f(53,4) \nonumber\\
&=&  365 ( - f(d(v),4) + f (d(v'),53))  -  \frac{358}{\sqrt{53}} +(x-369)(-f(d(v),4) +f(d(v'),4)) \nonumber \\
&&  +4(-f(d(v),5)+f(d(v'),5)) - 6 f(2,1) + 364 f(53,4),
\end{eqnarray}
which also decreases in $d(v)$.

Set $d(v) = 5300$, $52 \le k \le 131$ and $x = 400$ in the right-hand side of \eqref{eq-thm-bound-B3-branches-to-root-further},
and $d(v) = 3400$, $15 \le k \le 51$ and $x = 400$ in the right-hand side of \eqref{eq-thm-bound-B3-branches-to-root-further-1},
the corresponding upper bound on $ABC (G') - ABC (G)$ is always negative, so are when $d(v) \ge 5300$ and $d(v) \ge 3400$, respectively.
It also implies that the right-hand side of \eqref{eq-thm-bound-B3-branches-to-root-11} is negative for:
\begin{itemize}
\item
$d(v) \ge 5300$, $52 \le k \le 131$ and $x \ge 400$,
\item
$d(v) \ge 3400$, $15 \le k \le 51$ and $x \ge 400$.
\end{itemize}

%

When $k=52$,
we assume that there are only $D$-branches of size $52$ (which is the worst case).
For $x \leq d(v) < 5300$ and $x = 917$,
by direct calculations we obtain that the right-hand side of (\ref{eq-thm-bound-B3-branches-to-root-11}) is negative.

Similarly as for $k=52$, in the cases $k=51, 50$,
we may assume that there are only $D$-branches of size $51$ and $D$-branches of size $50$, respectively.
Then we obtain in those cases that the right-hand side of (\ref{eq-thm-bound-B3-branches-to-root-11}) is negative
for $x = 909$ and $x = 901$, respectively, and  $x \leq d(v) < 3400$.

As to the remaining cases with small $d(v)$, in order to get more precise results, we partition our proofs into two parts: each $D$-branch is a $D_z$-branch for some $z$; there exists some $D$-branch containing $B_2$-, or $B_3^{**}$-branch.

\noindent
{\em Case 1: Each $D$-branch is a $D_z$-branch for some $z$}.

In this case, besides $D_k$-branches, it can only contain $D_{k-1}$-branches, by Proposition \ref{pro-diff-Dz}.
From Lemmas \ref{le-UpperBoundOnNumberOf_Dz-10} and \ref{le-LowerBoundOnNumberOf_Dz}
(Tables \ref{table-large-z} and \ref{table-small-z}), we can find the upper bound on the number of $D_{k}$-branches,
denoted here by $n_k^{max}$ (e.g., $n_{53}^{max} = 260$, the value of the corresponding $x$ in Table \ref{table-large-z} minus $1$).
It is worth mentioning that $n_{52}^{max}$ does not exist.

Let us introduce a refined version of \eqref{eq-thm-bound-B3-branches-to-root-11},
by noting that the number of $D_k$-branches is bounded from above by $n_k^{max}$
(see Tables \ref{table-large-z} and \ref{table-small-z}), unless $k = 52$.
Thus, when $k \in \{ 15, \dots, 131 \} \setminus \{52,53\}$,
\begin{eqnarray} \label{eq-thm-bound-B3-branches-to-root-11-refine}
ABC (G') - ABC (G)  &=& (d(v)-x-n_k-n_{k-1}) ( -f(d(v),4)+f(d(v'),4) )  \nonumber \\
&&+ n_{k-1} ( -f(d(v),k)+f(d(v'),k) ) + n_k ( -f(d(v),k + 1)+f(d(v'),k+1) ) \nonumber \\
&& -f(d(v),4)+f(d(v'),53)  +6(-f(2,1)+f(d(v'),53)) \nonumber \\
&& +364(-f(d(v),4)+f(53,4))  +(x-369)(-f(d(v),4)+f(d(v'),4)) \nonumber \\ 
&& +4(-f(d(v),5)+f(d(v'),5)) \nonumber \\
&=& (d(v)-n_k-n_{k-1}) ( -f(d(v),4)+f(d(v'),4) )  \nonumber \\
&&+ n_{k-1} ( -f(d(v),k)+f(d(v'),k) ) + n_k ( -f(d(v),k + 1)+f(d(v'),k+1) ) \nonumber \\
&&  -f(d(v),4)+f(d(v'),53) +6(-f(2,1)+f(d(v'),53)) \nonumber \\
&& +364(-f(d(v),4)+f(53,4))  -369(-f(d(v),4)+f(d(v'),4)) \nonumber \\
&& +4(-f(d(v),5)+f(d(v'),5)),
\end{eqnarray}
where $n_i$ is the number of $D_i$-branches, $0 \leq n_i \leq n_i^{max}$, $i \in \{ 15, \dots, 131 \} \setminus 52$. In particular,
when $k = 53$,
\begin{eqnarray} \label{eq-thm-bound-B3-branches-to-root-11-refine-1}
ABC (G') - ABC (G)  &\le& (d(v)-x-n_k) ( -f(d(v),k)+f(d(v'),k) ) \nonumber \\
&&+ n_k ( -f(d(v),k + 1)+f(d(v'),k+1) ) -f(d(v),4)+f(d(v'),53) \nonumber \\
&& +6(-f(2,1)+f(d(v'),53)) +364(-f(d(v),4)+f(53,4)) \nonumber \\
&& +(x-369)(-f(d(v),4)+f(d(v'),4)) +4(-f(d(v),5)+f(d(v'),5)) \nonumber \\
&=& (d(v)-x-n_{53}) ( -f(d(v),53)+f(d(v'),53) ) \nonumber \\
&&+ n_{53} ( -f(d(v),54)+f(d(v'),54) ) -f(d(v),4)+f(d(v'),53) \nonumber \\
&& +6(-f(2,1)+f(d(v'),53)) +364(-f(d(v),4)+f(53,4)) \nonumber \\
&& +(x-369)(-f(d(v),4)+f(d(v'),4)) +4(-f(d(v),5)+f(d(v'),5)). \quad \quad \quad
\end{eqnarray}

Let us consider the case when $k = 53$, i.e., $G$ contains $D_{53}$-branches, possibly together with $D_{52}$-branches.
With the constraints $x + n_{53} \leq d(v) < 5300$ and $0 \leq n_{53} \leq n_{53}^{max} = 260$ (Table \ref{table-large-z}),
set $x = 920$,
it can be verified by direct calculations that the right-hand side of (\ref{eq-thm-bound-B3-branches-to-root-11-refine-1}) is negative.
And with $x = 919$, it is possible that the change is positive. It means that
the smallest value of $x$ for which we obtain the negative value of (\ref{eq-thm-bound-B3-branches-to-root-11-refine-1}) is $920$.

Now, it remains to show that
we obtain a negative change of the ABC index
for $920 \leq x + n_k + n_{k-1} \leq d(v) < 5300$ when $54 \leq k \leq 131$,
and for $920 \leq x + n_k + n_{k-1} \leq d(v) < 3400$ when $15 \leq k \leq 49$.
Here we always use the right-hand side of (\ref{eq-thm-bound-B3-branches-to-root-11-refine}).

When $k = 54$, i.e., $G$ may contain $D_{54}$-branches and $D_{53}$-branches.
For  $0 \leq n_{54} \leq 130$, $0 \leq n_{53} \leq 260$ (Table \ref{table-large-z}), and
 $x = 883$,  $x + n_{54} + n_{53} \leq d(v)< 5300$, it can be verified by direct calculations that
 the right-hand side of (\ref{eq-thm-bound-B3-branches-to-root-11-refine}) is negative.
 Similarly, as in the case  $k = 54$, we can argue for the case $55 \leq k \leq 131$.
 The corresponding values of the maximum numbers of $B$-branches attached to the root are given in Table~\ref{tbl-maxB-2}.

\begin{table}[ht]
\begin{center}
\caption{ The possible combinations of $D_k$- and $D_{k-1}$-branches, $53 \leq k \leq 131$, and the corresponding upper bounds on the  numbers of $B$-branches are presented. By obtaining the above results, the occurrence of $D_k$-branches is necessary, while the $D_{k-1}$-branches may or may not occur in a minimal-ABC tree.} \label{tbl-maxB-2}
\begin{tabular}{cccccc}
\hline
combination  & upper & combination  & upper   & combination  & upper  \\
                     &  bound &                     & bound   &                      & bound  \\
\hline
 $D_{52}$ and $D_{53}$ &  $919$ & $D_{53}$ and $D_{54}$ &  $882$ & $D_{54}$ and $D_{55}$ & $798$  \\
 $D_{55}$ and $D_{56}$ &  $749$  &  $D_{56}$ and $D_{57}$ &  $717$ & $D_{57}$ and $D_{58}$ & $693$  \\
 $D_{58}$ and $D_{59}$ &  $672$  & $D_{59}$ and $D_{60}$ &  $654$ & $D_{60}$ and $D_{61}$ & $639$   \\
 $D_{61}$ and $D_{62}$ &  $627$ & $D_{62}$ and $D_{63}$ &  $615$ & $D_{63}$ and $D_{64}$ & $605$  \\
 $D_{64}$ and $D_{65}$ &  $597$  & $D_{65}$ and $D_{66}$ &  $591$ & $D_{66}$ and $D_{67}$ & $584$  \\
 $D_{67}$ and $D_{68}$ &  $577$  & $D_{68}$ and $D_{69}$ &  $573$ & $D_{69}$ and $D_{70}$ & $571$  \\
 $D_{70}$ and $D_{71}$ &  $569$  & $D_{71}$ and $D_{72}$ &  $564$ & $D_{72}$ and $D_{73}$ & $562$  \\
 $D_{73}$-and $D_{74}$ &  $560$  & $D_{74}$ and $D_{75}$ &  $554$ & $D_{75}$ and $D_{76}$ &  $552$  \\
 $D_{76}$ and $D_{77}$ &  $553$  & $D_{77}$ and $D_{78}$ &  $550$ & $D_{78}$ and $D_{79}$ &  $547$  \\
 $D_{79}$ and $D_{80}$ &  $548$  & $D_{80}$ and $D_{81}$ &  $545$ & $D_{81}$ and $D_{82}$ &  $542$  \\
 $D_{82}$ and $D_{83}$ &  $543$  & $D_{83}$ and $D_{84}$ &  $544$ & $D_{84}$ and $D_{85}$ &  $545$  \\
 $D_{85}$- and $D_{86}$ &  $541$  &  $D_{86}$ and $D_{87}$ &  $538$ & $D_{87}$ and $D_{88}$ &  $535$ \\
 $D_{88}$ and $D_{89}$ &  $531$  & $D_{89}$ and $D_{90}$ &  $532$ & $D_{90}$ and $D_{91}$ &  $533$  \\
 $D_{91}$ and $D_{92}$ &  $533$  & $D_{92}$ and $D_{93}$ &  $534$ & $D_{93}$ and $D_{94}$ &  $535$  \\
 $D_{94}$ and $D_{95}$ &  $535$  & $D_{95}$ and $D_{96}$ &  $536$ & $D_{96}$ and $D_{97}$ &  $536$  \\
 $D_{97}$ and $D_{98}$ &  $537$  & $D_{98}$ and $D_{99}$ &  $538$ & $D_{99}$ and $D_{100}$ &  $538$  \\
 $D_{100}$ and $D_{101}$ &  $539$  & $D_{101}$ and $D_{102}$ &  $539$ & $D_{102}$ and $D_{103}$ &  $540$  \\
 $D_{103}$ and $D_{104}$ &  $540$  & $D_{104}$ and $D_{105}$ &  $541$ & $D_{105}$ and $D_{106}$ &  $541$  \\
 $D_{106}$ and $D_{107}$ &  $542$  & $D_{107}$ and $D_{108}$ &  $543$ & $D_{108}$ and $D_{109}$ &  $543$  \\
 $D_{109}$ and $D_{110}$ &  $544$  & $D_{110}$ and $D_{111}$ &   $544$ & $D_{111}$ and $D_{112}$ &  $545$  \\
 $D_{112}$ and $D_{113}$ &  $545$  & $D_{113}$ and $D_{114}$ &   $546$ & $D_{114}$ and $D_{115}$ &  $546$  \\
 $D_{115}$ and $D_{116}$ &  $547$  & $D_{116}$ and $D_{117}$ &   $547$ & $D_{117}$ and $D_{118}$ &  $548$  \\
 $D_{118}$ and $D_{119}$ &  $548$  & $D_{119}$ and $D_{120}$ &   $549$ & $D_{120}$ and $D_{121}$ &  $549$  \\
 $D_{121}$ and $D_{122}$ &  $550$  & $D_{122}$ and $D_{123}$ &   $550$ & $D_{123}$ and $D_{124}$ &  $551$  \\
 $D_{124}$ and $D_{125}$ &  $551$  & $D_{125}$ and $D_{126}$ &   $552$ & $D_{126}$ and $D_{127}$ &  $552$  \\
 $D_{127}$ and $D_{128}$ &  $553$  & $D_{128}$ and $D_{129}$ &   $553$ & $D_{129}$ and $D_{130}$ &  $553$  \\
  $D_{130}$ and $D_{131}$ &  $554$   &  &  & & \\
 \hline
  \end{tabular}
  \end{center}
  \end{table}

 For the cases $15 \leq k \leq 49$, we proceed as in the cases $k = 49$ and $k = 48$, which we elaborate next. When $k = 49$, recall that $0 \leq n_{49} \leq n_{49}^{max} = 121$ when $d(v) \ge 825$, $0 \leq n_{48} \leq n_{48}^{max} = 91$ when $d(v) \ge 572$ (Table \ref{table-small-z}), the right-hand side of (\ref{eq-thm-bound-B3-branches-to-root-11-refine}) is negative from direct calculations, for $x + n_{49} + n_{48} \leq d(v) < 3400$ and $x = 825$.
 When $k = 48$, with the following constraints: $0 \leq n_{48} \leq n_{48}^{max} = 91$ when $d(v) \ge 572$, $0 \leq n_{47} \leq n_{47}^{max} = 72$ when $d(v) \ge 430$ (Table \ref{table-small-z}), and
 $x = 724$, $x + n_{48} + n_{47} \leq d(v) < 3400$, it can be verified by direct calculations that
 the right-hand side of (\ref{eq-thm-bound-B3-branches-to-root-11-refine}) is negative.
The corresponding values of the maximum numbers of $B$-branches attached to the root, when
$15 \leq k \leq 52$, are given in Table~\ref{table-combin-small-z}.

\begin{table}[ht]
\begin{center}
\caption{ The possible combinations of $D_k$- and $D_{k-1}$-branches, $15 \leq k \leq 52$, and the corresponding
 upper bounds on the  numbers of $B$-branches are presented. By obtaining the above results, the occurrence of $D_k$-branches
 is necessary, while the $D_{k-1}$-branches may or may not occur in a minimal-ABC tree.
 } \label{table-combin-small-z}
\begin{tabular}{cccccc}
\hline
combination  & upper & combination  & upper   & combination  & upper  \\
                     &  bound &                     & bound   &                      & bound  \\
\hline
 $D_{51}$ and $D_{52}$ & $916$  & $D_{50}$ and $D_{51}$ &  $908$  & $D_{49}$ and $D_{50}$ & $900$  \\
 $D_{48}$ and $D_{49}$ &  $824$  &  $D_{47}$ and $D_{48}$ &  $723$  & $D_{46}$ and $D_{47}$ & $688$  \\
 $D_{45}$ and $D_{46}$ &  $661$ &$D_{44}$ and $D_{45}$ &  $638$   & $D_{43}$ and $D_{44}$ & $618$  \\
 $D_{42}$ and $D_{43}$ &  $601$ & $D_{41}$ and $D_{42}$ &  $587$   & $D_{40}$ and $D_{41}$ & $574$  \\
 $D_{39}$ and $D_{40}$ &  $563$ & $D_{38}$ and $D_{39}$ &  $553$  & $D_{37}$ and $D_{38}$  & $543$ \\
$D_{36}$ and $D_{37}$ &   $533$ & $D_{35}$ and $D_{36}$ &  $525$  & $D_{34}$ and $D_{35}$  & $519$ \\
$D_{33}$ and $D_{34}$ &  $513$  & $D_{32}$ and $D_{33}$ &  $507$  & $D_{31}$ and $D_{32}$  &  $500$ \\
$D_{30}$ and $D_{31}$ &  $496$  & $D_{29}$ and $D_{30}$ &  $492$  & $D_{28}$ and $D_{29}$  &  $486$ \\
$D_{27}$ and $D_{28}$ &  $482$  & $D_{26}$ and $D_{27}$ &  $478$  & $D_{25}$ and $D_{26}$  &  $474$ \\
$D_{24}$ and $D_{25}$ &  $472$  & $D_{23}$ and $D_{24}$ &  $469$  & $D_{22}$ and $D_{23}$  &  $467$ \\
$D_{21}$ and $D_{22}$ &  $465$  & $D_{20}$ and $D_{21}$ &  $462$  & $D_{19}$ and $D_{20}$  &  $460$ \\
$D_{18}$ and $D_{19}$ &  $457$  & $D_{17}$ and $D_{18}$ &  $454$  & $D_{16}$ and $D_{17}$  &  $451$ \\
$D_{15}$ and $D_{16}$ & $449$ & $D_{15}$  & $426$  & & \\
 \hline
  \end{tabular}
  \end{center}
  \end{table}

\noindent
{\em Case 2: There exists some $D$-branch containing $B_2$-, or $B_3^{**}$-branch}.

In this case, there is exactly one $D$-branch containing one or two $B_2$-branches, or one $D$-branch containing a $B_3^{**}$-branch, other $D$-branches are $D_z$-branches for some $z$. And $n_4 = 0$, from Theorems \ref{noB1B4} and \ref{noB2B4}. As above, we can get a upper bound of right-hand side of (\ref{eq-thm-bound-B3-branches-to-root}), which can be regarded as a revised version of right-hand sides of (\ref{eq-thm-bound-B3-branches-to-root-11-refine}) and (\ref{eq-thm-bound-B3-branches-to-root-11-refine-1}):
When $k \in \{ 15, \dots, 131 \} \setminus \{52,53\}$,

\begin{eqnarray} \label{eq-thm-bound-B3-branches-to-root-11-refine-add}
ABC (G') - ABC (G) &\le& (d(v)-x - 1 -n_k-n_{k-1}) ( -f(d(v),4)+f(d(v'),4) ) -f(d(v),k + 1) \nonumber \\
&&+f(d(v'),k + 1)  + n_{k-1} ( -f(d(v),k)+f(d(v'),k) ) \nonumber \\
&&+ n_k ( -f(d(v),k + 1)+f(d(v'),k+1) ) -f(d(v),4)+f(d(v'),53)  \nonumber \\
&&+6(-f(2,1)+f(d(v'),53))  +364(-f(d(v),4)+f(53,4)) \nonumber \\ 
&& +(x-365)(-f(d(v),4)+f(d(v'),4))  \nonumber \\
&=& (d(v) - 1 -n_k-n_{k-1}) ( -f(d(v),4)+f(d(v'),4) ) -f(d(v),k + 1)\nonumber \\
&&+f(d(v'),k + 1) ) + n_{k-1} ( -f(d(v),k)+f(d(v'),k) )  \nonumber \\
&&+ n_k ( -f(d(v),k + 1)+f(d(v'),k+1) ) -f(d(v),4)+f(d(v'),53)  \nonumber \\
&&+6(-f(2,1)+f(d(v'),53)) +364(-f(d(v),4)+f(53,4)) \nonumber \\
&& -365(-f(d(v),4)+f(d(v'),4)),
\end{eqnarray}
where $n_i$ is the number of $D_i$-branches, $0 \leq n_i \leq n_i^{max}$, $i \in \{ 15, \dots, 131 \} \setminus 52$. In particular,
when $k = 53$,

\begin{eqnarray} \label{eq-thm-bound-B3-branches-to-root-11-refine-1-add}
ABC (G') - ABC (G) &\le& (d(v)-x- 1 -n_k) ( -f(d(v),k)+f(d(v'),k) ) -f(d(v),k + 1)  \nonumber \\
&&+f(d(v'),k+1) + n_k ( -f(d(v),k + 1)+f(d(v'),k+1) ) -f(d(v),4) \nonumber \\
&& +f(d(v'),53) +6(-f(2,1)+f(d(v'),53)) +364(-f(d(v),4)+f(53,4)) \nonumber \\
&& +(x-365)(-f(d(v),4)+f(d(v'),4)) \nonumber \\
&=& (d(v)-x-1-n_{53}) ( -f(d(v),53)+f(d(v'),53) ) \nonumber \\
&&+ (n_{53} + 1) ( -f(d(v),54)+f(d(v'),54) ) -f(d(v),4)+f(d(v'),53) \nonumber \\
&& +6(-f(2,1)+f(d(v'),53)) +364(-f(d(v),4)+f(53,4)) \nonumber \\
&& +(x-365)(-f(d(v),4)+f(d(v'),4)).
\end{eqnarray}
Analogous to the arguments mentioning in Case $1$, we can get the corresponding values of the maximum numbers of $B$-branches attached to the root, which is listed in Tables \ref{tbl-maxB-2-1} and \ref{table-combin-small-z-1}.
%
\begin{table}[ht]
\begin{center}
 \caption{ The corresponding upper bounds on the  numbers of $B$-branches, when the maximal size of $D$-branches in $G$ is $k$, $53 \leq k \leq 131$, and $G$ contains some $B_2$-, or $B_3^{**}$-branch.} \label{tbl-maxB-2-1}
\begin{tabular}{cccccc}
\hline
$k$  & upper bound & $k$   &   upper bound  & $k$   & upper bound  \\
\hline
 $53$ &  $919$ & $54$ &  $882$ & $55$ & $798$  \\
 $56$ &  $749$  &  $57$ &  $718$ & $58$ & $694$  \\
 $59$ &  $673$  & $60$ &  $655$ & $61$ & $640$   \\
 $62$ &  $628$ & $63$ &  $617$ & $64$ & $607$  \\
 $65$ &  $599$  & $66$ &  $593$ & $67$ & $586$  \\
 $68$ &  $579$  & $69$ &  $575$ & $70$ & $573$  \\
 $71$ &  $571$  & $72$ &  $566$ & $73$ & $564$  \\
 $74$ &  $562$  & $75$ &  $557$ & $76$ &  $554$  \\
 $77$ &  $555$  & $78$ &  $553$ & $79$ &  $550$  \\
 $80$ &  $551$  & $81$ &  $548$ & $82$ &  $545$  \\
 $83$ &  $546$  & $84$ &  $547$ & $85$ &  $548$  \\
 $86$ &  $544$  &  $87$ &  $541$ & $88$ &  $538$ \\
 $89$ &  $535$  & $90$ &  $535$ & $91$ &  $536$  \\
 $92$ &  $537$  & $93$ &  $537$ & $94$ &  $538$  \\
 $95$ &  $538$  & $96$ &  $539$ & $97$ &  $540$  \\
 $98$ &  $540$  & $99$ &  $541$ & $100$ &  $542$  \\
 $101$ &  $542$  & $102$ &  $543$ & $103$ &  $543$  \\
 $104$ &  $544$  & $105$ &  $544$ & $106$ &  $545$  \\
 $107$ &  $546$  & $108$ &  $546$ & $109$ &  $547$  \\
 $110$ &  $547$  & $111$ &   $548$ & $112$ &  $548$  \\
 $113$ &  $549$  & $114$ &   $549$ & $115$ &  $550$  \\
 $116$ &  $550$  & $117$ &   $551$ & $118$ &  $551$  \\
 $119$ &  $552$  & $120$ &   $553$ & $121$ &  $553$  \\
 $122$ &  $554$  & $123$ &   $554$ & $124$ &  $554$  \\
 $125$ &  $555$  & $126$ &   $555$ & $127$ &  $556$  \\
 $128$ &  $556$  & $129$ &   $557$ & $130$ &  $557$  \\
  $131$ &  $558$   &  &  & & \\
 \hline
  \end{tabular}
  \end{center}
  \end{table}
  %
\begin{table}[ht]
\begin{center}
 \caption{ The corresponding
 upper bounds on the  numbers of $B$-branches, when the maximal size of $D$-branches in $G$ is $k$, $15 \leq k \leq 52$, and $G$ contains some $B_2$-, or $B_3^{**}$-branch.
 }  \label{table-combin-small-z-1}
 \begin{tabular}{cccccc}
\hline
$k$  & upper bound & $k$   &   upper bound  & $k$   & upper bound  \\
\hline
 $52$ & $916$  & $51$ &  $908$  & $50$ & $900$  \\
 $49$ &  $824$  &  $48$ &  $724$  & $47$ & $688$  \\
 $46$ &  $661$ &$45$ &  $639$   & $44$ & $618$  \\
 $43$ &  $602$ & $42$ &  $588$   & $41$ & $575$  \\
 $40$ &  $564$ & $39$ &  $554$  & $38$  & $544$ \\
$37$ &   $534$ & $36$ &  $526$  & $35$  & $520$ \\
$34$ &  $514$  & $33$ &  $508$  & $32$  &  $501$ \\
$31$ &  $497$  & $30$ &  $493$  & $29$  &  $487$ \\
$28$ &  $483$  & $27$ &  $479$  & $26$  &  $475$ \\
$25$ &  $472$  & $24$ &  $470$  & $23$  &  $468$ \\
$22$ &  $465$  & $21$ &  $463$  & $20$  &  $460$ \\
$19$ &  $457$  & $18$ &  $454$  & $17$  &  $451$ \\
$16$ & $448$ & $15$  & $426$  & & \\
 \hline
  \end{tabular}
  \end{center}
  \end{table}

\subsection{The existence of $B_4$-branches--the negativity of right-hand side of (\ref{eq-lemma-no-B4-10})} \label{appendix-exist-B4}

Let $n_z$ be the number of $D_z$-branches adjacent to the root vertex $v$, for $49 \le z \le 54$.
Further evaluating the right-hand side of (\ref{eq-lemma-no-B4-10}), we get the following upper bound on $ABC (G') - ABC (G)$:
If $G$ contains a combination of $D_{z}$- and $D_{z+1}$-branches, then
\begin{eqnarray}  \label{eq-lemma-no-B4-10-1-all-add-1}
ABC (G') - ABC (G) &\le& -(2z-3) f(d(v),z+1)  -(2z-3) z \, f(z+1,4) -  f(d(v),5) - f(4,2)     \nonumber \\
&&  - f(2,1)   + (2z-1) f(d(v)+1,z) + (2z-1) (z-1) f(z,4)   \nonumber \\
&& + n_3 (-f(d(v),4)+f(d(v)+1,4)) + (n_4 - 1) (-f(d(v),5)+f(d(v)+1,5))   \nonumber \\
&&  + (d(v) - 2z + 3 - n_3 - n_4) (- f(d(v),z+1) + f(d(v) + 1, z+1)),
\end{eqnarray}
and
if $G$ contains a combination of $D_{z-1}$- and $D_{z}$-branches, then
\begin{eqnarray}  \label{eq-lemma-no-B4-10-1-all-add-2}
ABC (G') - ABC (G)  &\le& -(2z-3) f(d(v),z+1)  -(2z-3) z \, f(z+1,4) -  f(d(v),5) - f(4,2) \nonumber \\
&&    - f(2,1)  + (2z-1) f(d(v)+1,z)   + (2z-1) (z-1) f(z,4)  \nonumber \\
&&  + n_3 (-f(d(v),4)+f(d(v)+1,4)) + (n_4 - 1) (-f(d(v),5)+f(d(v)+1,5))  \nonumber \\
&& + (d(v) - 2z + 3 - n_3 - n_4) (- f(d(v),z) + f(d(v) + 1, z)).
\end{eqnarray}

Recall from Tables \ref{tbl-maxB-2} and \ref{table-combin-small-z}, $n_3 + n_4$ is bounded from above, say $n_3 + n_4 \le M_{z,z+1}$ when $G$ contains a combination of $D_{z}$- and $D_{z+1}$-branches (e.g., $M_{52,53} = 919$). Observe that the coefficient of $n_3$ in both (\ref{eq-lemma-no-B4-10-1-all-add-1}) and (\ref{eq-lemma-no-B4-10-1-all-add-2}) is positive, i.e.,
\beq
-f(d(v),4)+f(d(v)+1,4) &>& - f(d(v),z) + f(d(v) + 1, z) \nonumber \\ 
                                  & > & - f(d(v),z+1) + f(d(v) + 1, z+1)  \nonumber
\eeq
from Proposition~\ref{pro-10}, which implies that both inequalities increase in $n_3$. Substitute $n_3 = M_{z,z+1} - n_4$ in (\ref{eq-lemma-no-B4-10-1-all-add-1}) and $n_3 = M_{z-1,z} - n_4$ in (\ref{eq-lemma-no-B4-10-1-all-add-2}) (the upper bounds of $n_3$), we get: If $G$ contains a combination of $D_{z}$- and $D_{z+1}$-branches, then
\begin{eqnarray}  \label{eq-lemma-no-B4-10-1-all-add-3-1}
ABC (G') - ABC (G)  &\le& -(2z-3) f(d(v),z+1)  -(2z-3) z \, f(z+1,4) -  f(d(v),5) - f(4,2)   \nonumber \\
&&   - f(2,1)  + (2z-1) f(d(v)+1,z)  + (2z-1) (z-1) f(z,4)   \nonumber \\
&& + (M_{z,z+1} - n_4) (-f(d(v),4)+f(d(v)+1,4))   \nonumber \\
&& + (n_4 - 1) (-f(d(v),5)+f(d(v)+1,5))  \nonumber \\
&& + (d(v) - 2z + 3 - M_{z,z+1}) (- f(d(v),z+1) + f(d(v) + 1, z+1)),
\end{eqnarray}
and
if $G$ contains a combination of $D_{z-1}$- and $D_{z}$-branches, then
\begin{eqnarray}  \label{eq-lemma-no-B4-10-1-all-add-4-1}
ABC (G') - ABC (G) &\le& -(2z-3) f(d(v),z+1)  -(2z-3) z \, f(z+1,4) -  f(d(v),5) - f(4,2)   \nonumber \\
&&  - f(2,1)  + (2z-1) f(d(v)+1,z)   + (2z-1) (z-1) f(z,4)  \nonumber \\
&&  + (M_{z-1,z} - n_4) (-f(d(v),4)+f(d(v)+1,4))  \nonumber \\
&& + (n_4 - 1) (-f(d(v),5)+f(d(v)+1,5))   \nonumber \\
&& + (d(v) - 2z + 3 - M_{z-1,z}) (- f(d(v),z) + f(d(v) + 1, z)).
\end{eqnarray}
Further, the coefficient of $n_4$ in both (\ref{eq-lemma-no-B4-10-1-all-add-3-1}) and (\ref{eq-lemma-no-B4-10-1-all-add-4-1}) is negative, i.e.,
$$
-f(d(v),5)+f(d(v)+1,5) < -f(d(v),4)+f(d(v)+1,4)
$$
from Proposition~\ref{pro-10}, which implies that both inequalities decrease in $n_4$. Set $n_4 = 1$ (the lower bound of $n_4$), it follows that: If $G$ contains a combination of $D_{z}$- and $D_{z+1}$-branches, then
\begin{eqnarray}  \label{eq-lemma-no-B4-10-1-all-add-3}
ABC (G') - ABC (G) &\le& -(2z-3) f(d(v),z+1)  -(2z-3) z \, f(z+1,4) -  f(d(v),5) - f(4,2)   \nonumber \\
&&  - f(2,1)  + (2z-1) f(d(v)+1,z)   + (2z-1) (z-1) f(z,4)   \nonumber \\
&& + (M_{z,z+1} - 1) (-f(d(v),4)+f(d(v)+1,4))  \nonumber \\
&& + (d(v) - 2z + 3 - M_{z,z+1}) (- f(d(v),z+1) + f(d(v) + 1, z+1)),
\end{eqnarray}
and
if $G$ contains a combination of $D_{z-1}$- and $D_{z}$-branches, then
\begin{eqnarray}  \label{eq-lemma-no-B4-10-1-all-add-4}
ABC (G') - ABC (G) &\le& -(2z-3) f(d(v),z+1)  -(2z-3) z \, f(z+1,4) -  f(d(v),5) - f(4,2)   \nonumber \\
&&   - f(2,1)  + (2z-1) f(d(v)+1,z)  + (2z-1) (z-1) f(z,4)     \nonumber \\
&& + (M_{z-1,z} - 1) (-f(d(v),4)+f(d(v)+1,4))  \nonumber \\ 
&& + (d(v) - 2z + 3 - M_{z-1,z}) (- f(d(v),z) + f(d(v) + 1, z)).
\end{eqnarray}

In the sequel, we consider the five possible cases separately.

\smallskip

\noindent
$\bullet$ {\it $G$ contains a combination of $D_{53}$- and $D_{54}$-branches}.

In this case it holds that  $n_3 + n_4 \leq 882$ (see Table~{\ref{tbl-maxB-2}}), i.e., $M_{53,54} = 882$.
It holds also that $n_{53} \leq 260$ and  $n_{54} \leq 130$ (see Table~{\ref{table-large-z}}).
It implies that $d(v) \le 882 + 260 + 130 = 1272$.
A straightforward verification shows that the right-hand side of (\ref{eq-lemma-no-B4-10-1-all-add-3}) when $z = 53$, and the right-hand side of (\ref{eq-lemma-no-B4-10-1-all-add-4}) when $z = 54$, are negative for $1228 \le d(v) \le 1272$.

\smallskip

\noindent
$\bullet$ {\it $G$ contains a combination of $D_{52}$- and $D_{53}$-branches}.

\smallskip

Here, we have  $n_3 + n_4 \leq 919$  (see Table~{\ref{tbl-maxB-2}}), i.e., $M_{52,53} = 919$, $n_{53} \leq 260$ (from Table~{\ref{table-large-z}}).

If $n_{52} < 2\cdot 52 - 3 = 101$, then it must hold that $n_{53} \ge 2 \cdot 53 - 3 = 103$. Then, we choose $z = 53$, and note that $d(v) \le 919 + 100 + 260 = 1279$.
A straightforward verification shows that
the right-hand side of (\ref{eq-lemma-no-B4-10-1-all-add-4}) is negative for $1228 \le d(v) \le 1279$.

If $n_{52} \geq 101$, then we choose $z=52$.
From (\ref{eq-lemma-no-B4-10-1-all-add-3}), the change of the ABC index in this case is
\begin{eqnarray}  \label{eq-lemma-no-B4-20-4}
%
ABC (G') - ABC (G) &\le& - 101 f(d(v),53)  -5252 f(53,4) -  f(d(v),5) - f(4,2) - f(2,1)   \nonumber \\
&& + 103 f(d(v)+1,52)  + 5253 f(52,4) + 918(-f(d(v),4)+f(d(v)+1,4))    \nonumber \\
&& + (d(v)-1020) (- f(d(v),53) + f(d(v) + 1, 53)).
\end{eqnarray}
At this stage, we consider the terms in right-hand side of \eqref{eq-lemma-no-B4-20-4} related to $d(v)$, which are
\begin{eqnarray} \label{eq-lemma-no-B4-20-5}
&&- 101 f(d(v),53) -  f(d(v),5) + 103 f(d(v)+1,52) + 918 (-f(d(v),4)+f(d(v)+1,4))  \nonumber \\
&& + (d(v)-1020) (- f(d(v),53) + f(d(v) + 1, 53)) \nonumber \\
&=&102 (-f(d(v),53) + f(d(v)+1,52)) + 918 (-f(d(v),4)+f(d(v)+1,4)) \nonumber \\
&&+ (d(v)-1020) f(d(v) + 1, 53) - (d(v)-1021)  f(d(v),53) +f(d(v)+1,52) -  f(d(v),5) \nonumber \\
&<&102 (-f(d(v),53) + f(d(v)+1,52))  + (d(v)-1020) f(d(v) + 1, 53) - (d(v)-1021)  f(d(v),53)  \nonumber \\
&& +f(d(v)+1,52) -  f(d(v),5).  \nonumber
\end{eqnarray}

By Proposition \ref{pro-10}, we have that
$- f(d(v),53) + f(d(v)+1,52)$
increases in $d(v)$, and thus
\begin{eqnarray*}
- f(d(v),53) + f(d(v)+1,52)  &\le&  \lim_{d(v) \rightarrow + \infty}(- f(d(v),53) + f(d(v)+1,52))   \quad = \quad  - \sqrt{\frac{1}{53}} + \sqrt{\frac{1}{52}}.
\end{eqnarray*}
It is also not hard to verify that both
$(d(v)-1020) f(d(v) + 1, 53) - (d(v)-1021)  f(d(v),53) $
and $f(d(v) + 1, 52) - f(d(v), 5)$
decrease in $d(v) \ge 3$.
Thus, we get a weaker upper bound than that in \eqref{eq-lemma-no-B4-20-4}:
\begin{eqnarray}  \label{eq-lemma-no-B4-20-6}
ABC (G') - ABC (G) &<&    102 \left( - \sqrt{\frac{1}{53}} + \sqrt{\frac{1}{52}} \right) + (d(v) - 1020) f(d(v) + 1, 53) - (d(v) - 1021) f(d(v), 53)   \nonumber \\
&& + f(d(v) + 1, 52) - f(d(v), 5)  -5252 f(53,4)- 2f(2,1)+ 5253 f(52,4), \nonumber
\end{eqnarray}
which decreases in $d(v)$. In particular, when $d(v) = 1228$, the right-hand side of above inequality is equal to $-0.00201013$.
Consequently, we can conclude that  for $d (v) \ge 1228$, we can always get a negative change after applying the transformation $\mathcal{T}_{7}$.

\bigskip

\noindent
$\bullet$ {\it $G$ contains a combination of $D_{51}$- and $D_{52}$-branches}.

\smallskip

Here, it holds that  $n_3 + n_4 \leq 916$ (see Table~\ref{table-combin-small-z}), i.e., $M_{51,52} = 916$,
and $n_{51} \leq 364$, when $d(v) \geq 3249$ (see Table~{\ref{table-small-z}}).

We consider first the case $z=51$. In this case, we may assume that $n_{52} < 2 \cdot 52 - 3 = 101$. When $d(v) \geq 3249$, $n_{51} \leq 364$, so $d(v) \le 916 + 364 + 100 = 1380$, a contradiction. When $d(v) < 3249$, the right-hand side of (\ref{eq-lemma-no-B4-10-1-all-add-3}) is negative for $1228 \le d(v) < 3249$.
When $z=52$ ($n_{52} \ge 101$), the deduction is very similar to the previous case that $G$ contains a combination of $D_{52}$- and $D_{53}$-branches. Therefore, we omit the repetition of the same arguments here.

\smallskip

\noindent

\noindent
$\bullet$ {\it $G$ contains a combination of $D_{50}$- and $D_{51}$-branches}.

\smallskip

In this case, it holds that  $n_3 + n_4 \leq 908$ (see Table~\ref{table-combin-small-z}), i.e., $M_{50,51} = 908$, $n_{50} \leq 182$ when $d(v) \geq 1358$,
and  $n_{51} \leq 364$ when $d(v) \geq 3249$ (see Table~{\ref{table-small-z}}).

If $d(v) \geq 3249$, then we have $d(v) \leq 908 + 182 + 364 = 1454$, which is a contradiction. Thus $1228 \le d(v) < 3249$.

If $z = 51$, then the right-hand side of (\ref{eq-lemma-no-B4-10-1-all-add-4}) is always negative from direct calculations, for $1228 \leq d(v) < 3249$.
Next suppose that $z = 50$. Assume that $n_{51} < 2 \cdot 51 - 3 = 99$. Note that $d(v) < 1358$, otherwise, $n_{50} \leq 182$, and thus $d(v) \le 908 + 182 + 98 = 1188$, a contradiction. Moreover, $n_{50} \ge 222$, otherwise, $d(v) \le 908 + 221 + 98 = 1227$, a contradiction again.

Here we consider in addition again the transformation $\mathcal{T}_{5}$, to show that particular configurations are impossible.
Adapting to the notation and possible configurations used here, we have the following upper bound on the change of the ABC index after applying $\mathcal{T}_{5}$:
\begin{eqnarray}  \label{eq-Lemma-Dz-30-20-second}
ABC (G') - ABC (G) & \le& -x \, f(d(v),z+1) -x \, z \, f(z+1,4)   \nonumber \\
&& + 3 f(4,2) + 3 f(2,1)  + n_{k}  f(d(v)-7,k+1)  + n_{k-1}  f(d(v)-7,k)  \nonumber \\
&&  + k \, n_{k}  f(k+1,4)  +  (k-1) n_{k-1}  f(k,4) \nonumber \\
&&  + (d(v) - x - n_3 -n_4)(- f(d(v), z+2) + f(d(v)-7, z+2)) \nonumber \\
&& + n_3 (-f(d(v),4)+f(d(v)-7,4))+n_4 (-f(d(v),5)+f(d(v)-7,5)). \qquad \quad
\end{eqnarray}
It is easy to see that the right-hand side of (\ref{eq-Lemma-Dz-30-20-second}) decreases in $n_3$ and $n_4$, respectively (just by noting that the coefficients about $n_3$ and $n_4$ are both negative, from Proposition~\ref{pro-20}). So the worst case is when $n_3 = 0$ and $n_4 = 1$ (the possible minimum values of $n_3$ and $n_4$).
Here, as in the Appendix \ref{appendix-upper_bound-2} (Table~\ref{table-small-z}), we set $z =50$, $x = 222$, $k = 52$ (so $n_{52}=136$ and $n_{51}=79$).
Together with $n_3 = 0$ and $n_4 = 1$, one can check the negativity of right-hand side of (\ref{eq-Lemma-Dz-30-20-second}) directly, for $1228 \le d(v) < 1358$.

\noindent
$\bullet$ {\it $G$ contains a combination of $D_{49}$- and $D_{50}$-branches}.

\smallskip

Here it holds that  $n_3 + n_4 \leq 900$ (see Table~\ref{table-combin-small-z}), i.e., $M_{49,50} = 900$.
We have also that  $n_{49} \leq 121$ when $d(v) \geq 825$, and $n_{50} \leq 182$ when $d(v) \geq 1358$ (see Table~{\ref{table-small-z}}).
Thus, when $d(v) \geq 1358$, then $d(v) \leq 900 + 121 + 182 = 1203$, which is a contradiction to the assumption that
$d(v) \geq 1358$. Thus $1228 \le d(v) < 1358$.

If $n_{50} < 2 \cdot 50 - 3 = 97$, then $d(v) \le 900 + 121 + 96 = 1117$, a contradiction.
So we must have $n_{50} \ge 97$, i.e., $z = 50$.

We claim that $n_{49} \le 94$. Otherwise, in the right-hand side of (\ref{eq-Lemma-Dz-30-20-second}), set $z = 49$, $x = 95$, $k = 53$ (so $n_{53} = 80$ and $n_{52} = 8$), together with $n_3 = 0$ and $n_4 = 1$, one can find that the right-hand side of (\ref{eq-Lemma-Dz-30-20-second}) is negative from direct calculations, for $1228 \le d(v) < 1358$.
Now from $n_{49} \le 94$, we further have $n_{50} \ge 234$, otherwise, $d(v) \le 900 + 94 + 233 = 1227$, a contradiction. In last case, we have shown that when $n_{50} \ge 222$, the right-hand side of (\ref{eq-Lemma-Dz-30-20-second}) would be negative for $1228 \le d(v) < 1358$. Applying it here, we can also deduce a negative change of ABC index (since $n_{50} \ge 234 > 222$).

\subsection{$B$-branches attached to the root--the negativity of right-hand side of (\ref{eq-lemma-no-B-branches-to-root})} \label{appendix-no-B-branches-to-root}

From Proposition \ref{pro-20}, $- f(d(v),d(v_i)) + f(d(v) - x, d(v_i))$ increases in $d(v_i)$,
thus the worst case is when $d(v_i)$ is chosen as large as possible.
Now we get further upper bounds on $ABC (G') - ABC (G)$: If $G$ contains $D$-branches of size $z+1$, in addition to $D_z$-branches, then
\begin{eqnarray}  \label{eq-lemma-no-B-branches-to-root-1}
ABC (G') - ABC (G) &\le& x( -f(d(v),z+1)+f(d(v)-x,z+2)) +x z (-f(z+1,4)+f(z+2,4))    \nonumber \\
&& +x( -f(d(v),4)+f(z+2,4))  \nonumber \\
&& + (d(v)-2x) (- f(d(v),z+2)+f(d(v)-x,z+2)),
\end{eqnarray}
otherwise,
\begin{eqnarray}  \label{eq-lemma-no-B-branches-to-root-1-1}
ABC (G') - ABC (G) &\le& x( -f(d(v),z+1)+f(d(v)-x,z+2)) +x z (-f(z+1,4)+f(z+2,4))   \nonumber \\
&& +x( -f(d(v),4)+f(z+2,4))   \nonumber \\
&& + (d(v)-2x) (- f(d(v),z+1)+f(d(v)-x,z+1)),
\end{eqnarray}
in either case, $1 \le x\le 919$, from Lemma \ref{thm-bound-B-branches-to-root}.

If there is no $D_{52}$-branch in $G$, then no matter $z = 50,51$ or $53$, a direct check would show that the right-hand side of (\ref{eq-lemma-no-B-branches-to-root-1}) or (\ref{eq-lemma-no-B-branches-to-root-1-1}) is always negative, since $d(v)$ is bounded (from Tables \ref{table-large-z} and \ref{table-small-z}).
In the remaining cases, $D_{52}$-branches must occur, so we assume that $z = 52$ in the following.  More precisely, we may assume that $G$ contains at least $x$ $D_{52}$-branches, where $1 \le x\le 919$.

First assume that there are $D$-branches of size $53$ in $G$.
Set $z = 52$ in (\ref{eq-lemma-no-B-branches-to-root-1}), it leads to
\begin{eqnarray}  \label{eq-lemma-no-B-branches-to-root-2}
ABC (G') - ABC (G) &\le& x( -f(d(v),53)+f(d(v)-x,54)) +52x (-f(53,4)+f(54,4))   \nonumber \\
&& +x( -f(d(v),4)+f(54,4)) + (d(v)-2x) (- f(d(v),54)+f(d(v)-x,54)).   \nonumber
\end{eqnarray}
Recall that there can be at most $260$ $D_{53}$-branches (Table \ref{table-large-z}), and at most one $D_{53}^{**}$-branch, thus a more precise version of above estimation of
the difference $ABC (G') - ABC (G)$ should be
\begin{eqnarray}  \label{eq-lemma-no-B-branches-to-root-3}
ABC (G') - ABC (G) &\le& x( -f(d(v),53)+f(d(v)-x,54)) +52x (-f(53,4)+f(54,4)) \nonumber \\
&&  +x( -f(d(v),4)+f(54,4))  + 261 (- f(d(v),54)+f(d(v)-x,54)) \nonumber \\
&& + (d(v)-2x-261) (- f(d(v),53)+f(d(v)-x,53)).
\end{eqnarray}

The terms in the right-hand side of \eqref{eq-lemma-no-B-branches-to-root-3} related to $d(v)$ are
\begin{eqnarray*}
&&x( -f(d(v),53)+f(d(v)-x,54))  - x \, f(d(v),4)  + 261(- f(d(v),54)+f(d(v)-x,54))  \\
&&+ (d(v)-2x - 261)(- f(d(v),53)+f(d(v)-x,53)) \\
&=& x( -f(d(v),53)+f(d(v)-x,54)) + 261(- f(d(v),54)+f(d(v)-x,54)) + x ( - f(d(v),4) + f(d(v),53)) \\
&& - (d(v)-x - 261) f(d(v),53) + (d(v)-2x - 261) f(d(v)-x,53).
\end{eqnarray*}
By Proposition~\ref{pro-20}, we have the following observations:
\begin{itemize}
\item

$-f(d(v),53)+f(d(v)-x,54)$ decreases in $d(v)$;

\item
$- f(d(v),54)+f(d(v)-x,54)$ decreases in $d(v)$;

\item
$- f(d(v),4) + f(d(v),53)$ decreases in $d(v)$.

\end{itemize}
It is not hard to verify that
$- (d(v)-x - 261) f(d(v),53) + (d(v)-2x - 261) f(d(v)-x,53)$
increases in $d(v)$, and thus
\begin{eqnarray*}
&&- (d(v)-x - 261) f(d(v),53) + (d(v)-2x - 261) f(d(v)-x,53)\\
&\le& \lim_{d(v) \rightarrow + \infty}(- (d(v)-x - 261) f(d(v),53) + (d(v)-2x - 261) f(d(v)-x,53) ) \\ 
&= &  - \frac{x}{\sqrt{53}}.
\end{eqnarray*}
Now, we get another upper bound on $ABC (G') - ABC (G)$:
\begin{eqnarray}  \label{eq-lemma-no-B-branches-to-root-4}
ABC (G') - ABC (G) &\le& x( -f(d(v),53)+f(d(v)-x,54)) + 261(- f(d(v),54)+f(d(v)-x,54))\nonumber \\
&& + x ( - f(d(v),4) + f(d(v),53)) - \frac{x}{\sqrt{53}}  \nonumber \\
&&  +52x (-f(53,4)+f(54,4))+x \, f(54,4),
\end{eqnarray}
which decreases in $d(v)$. When $d(v) = 4199$, thus so is for $d(v) \ge 4199$, the right-hand side of \eqref{eq-lemma-no-B-branches-to-root-4} is always negative for $1 \le x \le 919$.
As to $2956 \le d(v) < 4199$, we resort to the right-hand side of (\ref{eq-lemma-no-B-branches-to-root-3}), which is always negative for $1 \le x \le 919$.

Next assume that there is no $D$-branch of size $53$. In this case, we use (\ref{eq-lemma-no-B-branches-to-root-1-1}) by setting $z = 52$, i.e.,
\begin{eqnarray}  \label{eq-lemma-no-B-branches-to-root-6}
ABC (G') - ABC (G) &\le& x( -f(d(v),53)+f(d(v)-x,54)) + 52x  (-f(53,4)+f(54,4))  \nonumber \\
&&  +x( -f(d(v),4)+f(54,4))  \nonumber \\
&& + (d(v)-2x) (- f(d(v),53)+f(d(v)-x,53)).
\end{eqnarray}
As above, we can get
\begin{eqnarray}  \label{eq-lemma-no-B-branches-to-root-5}
ABC (G') - ABC (G) &\le& x( -f(d(v),53)+f(d(v)-x,54)) + x ( - f(d(v),4) + f(d(v),53))\nonumber \\
&& - \frac{x}{\sqrt{53}} +52x (-f(53,4)+f(54,4))+x \, f(54,4),
\end{eqnarray}
which decreases in $d(v)$. For $d(v) \ge 3990$, the right-hand side of \eqref{eq-lemma-no-B-branches-to-root-5} is always negative for $1 \le x \le 919$ (actually we need only to check the case when $d(v) = 3990$). For the remaining case $2956 \le d(v) < 3990$, we use the right-hand side of (\ref{eq-lemma-no-B-branches-to-root-6}), which is again always negative for $1 \le x \le 919$.

\subsection{The upper bound of size of $D_{z,1}^2$-branches--the negativity of right-hand side of (\ref{eq-Lemma-Dz*-10-00})} \label{appendix-nonexist-B2-1}

Clearly, every neighbor of $v$ in $\bar{G}$ is of degree at least $4$, thus
$$
\sum_{x v \in E(\bar{G})} ( - f(d(v), d(x)) + f (d(v) + 3, d(x))) \le (d(v) - 1) ( - f(d(v), 4) + f (d(v) + 3, 4)),
$$
by Proposition~\ref{pro-10}. Thus,
we get
\begin{eqnarray} \label{eq-Lemma-Dz*-10-00-1}
%
ABC (G') - ABC (G) &\le& (d(v) - 1) ( - f(d(v), 4) + f (d(v) + 3, 4)) - f (d(v),z+1)   \nonumber \\
&&+ 2 f \left( d(v) + 3, \frac{z-1}{2} \right)  - f (z+1,3) + 2 f ( d(v) + 3, 5)   - (z-1) f (z+1,4)  \nonumber \\
&& + (z-3) f \left( \frac{z-1}{2}, 4 \right)  \nonumber \\
&=& - (d(v) - 1) f(d(v), 4) + ( d(v) + 2) f (d(v) + 3, 4) \nonumber \\
&& + 2 ( -  f (d(v) + 3, 4) +   f ( d(v) + 3, 5) )  -  f (d(v) + 3, 4)  \nonumber \\
&&+ f \left( d(v) + 3, \frac{z-1}{2} \right)  - f (d(v),z+1) +  f \left( d(v) + 3, \frac{z-1}{2} \right)  \nonumber \\
&&- f (z+1,3)  - (z-1) f (z+1,4) + (z-3) f \left( \frac{z-1}{2}, 4 \right).
\end{eqnarray}

Let us focus on the terms related to $d(v)$ on the right-hand side of (\ref{eq-Lemma-Dz*-10-00-1}):
\begin{itemize}

\item
It is not hard to verify that $- (d(v) - 1) f(d(v), 4) + ( d(v) + 2) f (d(v) + 3, 4)$ decreases in $d(v)$.

\item
Both $- f (d(v) + 3, 4) +  f ( d(v) + 3, 5)$ and $-  f (d(v) + 3, 4) + f \left( d(v) + 3, \frac{z-1}{2} \right)$
decrease in $d(v)$ by Proposition~\ref{pro-20}.

\item
$- f (d(v),z+1) +  f \left( d(v) + 3, \frac{z-1}{2} \right)$ increases in $d(v)$ by Proposition~\ref{pro-10},
which implies that
\begin{eqnarray*}
&& - f (d(v),z+1) +  f \left( d(v) + 3, \frac{z-1}{2} \right) \\ 
&\le& \lim_{d(v) \rightarrow + \infty}\left(- f (d(v),z+1) +  f \left( d(v) + 3, \frac{z-1}{2} \right)\right) \quad  = \quad -\sqrt{\frac{1}{z+1}} + \sqrt{\frac{1}{\frac{z-1}{2}}}.
\end{eqnarray*}

\end{itemize}
Therefore, a (weaker) upper bound on $ABC (G') - ABC (G)$ follows:
\begin{eqnarray} \label{eq-Lemma-Dz*-10-00-2}
%
ABC (G') - ABC (G) &\le&  - (d(v) - 1) f(d(v), 4) + ( d(v) + 2) f (d(v) + 3, 4)  \nonumber \\
&& + 2 ( - f (d(v) + 3, 4) +  f ( d(v) + 3, 5) ) \nonumber \\
&& -  f (d(v) + 3, 4) + f \left( d(v) + 3, \frac{z-1}{2} \right) -\sqrt{\frac{1}{z+1}} + \sqrt{\frac{1}{\frac{z-1}{2}}}  \nonumber \\
&&- f (z+1,3)  - (z-1) f (z+1,4) + (z-3) f \left( \frac{z-1}{2}, 4 \right),
\end{eqnarray}
which decreases in $d(v) \ge z + 1$.

Setting $d(v) = z + 1$ in the right-hand side of (\ref{eq-Lemma-Dz*-10-00-2}), it leads to
\begin{eqnarray} \label{eq-Lemma-Dz*-10-00-3}
%
ABC (G') - ABC (G) &\le&  - z \, f(z + 1, 4) + ( z + 3) f (z + 4, 4)  + 2 (-  f (z + 4, 4) +  f ( z + 4, 5) )  \nonumber \\
&& -  f (z + 4, 4) + f \left( z + 4, \frac{z-1}{2} \right) -\sqrt{\frac{1}{z+1}} + \sqrt{\frac{1}{\frac{z-1}{2}}} \nonumber \\
&& - f (z+1,3)  - (z-1) f (z+1,4) + (z-3) f \left( \frac{z-1}{2}, 4 \right).
\end{eqnarray}
The right-hand side  of (\ref{eq-Lemma-Dz*-10-00-3}) is negative when $113 \le z \le 131$.
When $99 \le z \le 111$ ($z$ is odd), we set $d(v) = 173$ in (\ref{eq-Lemma-Dz*-10-00-2}), and then the right-hand side is negative for $99 \le z \le 111$.
For the remaining cases $99 \le z \le 111$ and $z + 1 \le d(v) \le 172$, we resort to the right-hand side of (\ref{eq-Lemma-Dz*-10-00-1}), which can be also verified that is  negative.

\subsection{The existence of $D_{z,2}^2$-branches--the negativity of right-hand side of (\ref{eq-no-D_2,2^2-10})} \label{appendix-nonexist-B2-2}

For each $y$ such that $y v \in E(\bar{G})$,
by Proposition \ref{pro-10}, $- f(d(v),d(y)) + f(d(v) + 2, d(y))$ decreases in $d(y)$.
Thus, the worst case is when $d(y)$ as small as possible, i.e., $d(y) = 4$.
Now we get an upper bound on $ABC (G') - ABC (G)$:
\begin{eqnarray}  \label{eq-no-D_2,2^2-10-1}
%
ABC (G') - ABC (G) &\le& (d(v) - x - 1) ( - f(d(v), 4) + f (d(v) + 2, 4)) -x \, f(d(v),z+1)  \nonumber \\
&& -x \, z \, f(z+1,4)  - f(d(v),k+1)  -(k-2) f(k+1,4) -2f(k+1,3) \nonumber \\
&& + (x+2) f(d(v)+2,z)  + (x+2)(z-1) f(z,4) +f(d(v)+2, 5)  \nonumber \\
&=& - (d(v) - x - 1) f (d(v), 4) + (d(v) - x + 1) f(d(v) + 2, 4) \nonumber \\
&& + 2 ( - f(d(v) + 2, 4) + f (d(v), z + 1))  - f(d(v), k + 1) + f (d(v) + 2, 5)  \nonumber \\
&& + (x+2) (- f (d(v), z+ 1) + f (d(v) + 2, z))  -x \, z \, f(z+1,4)  \nonumber \\
&&   -(k-2) f(k+1,4) -2f(k+1,3)  + (x+2)(z-1) f(z,4).
\end{eqnarray}

It is easy to verify that $- (d(v) - x - 1) f (d(v), 4) + (d(v) - x + 1) f(d(v) + 2, 4)$ decreases in $d(v)$, for each fixed positive $x$. By Propositions \ref{pro-10} and \ref{pro-20},
\begin{itemize}

\item

$- f(d(v) + 2, 4) + f (d(v), z + 1)$ decreases in $d(v)$;

\item

$- f(d(v), k + 1) + f (d(v) + 2, 5)$ increases in $d(v)$, and thus,
\begin{eqnarray*}
- f(d(v), k + 1) + f (d(v) + 2, 5)
&\le& \lim_{d(v) \rightarrow + \infty}(- f(d(v), k + 1) + f (d(v) + 2, 5))  \quad = \quad -\sqrt{\frac{1}{k+1}} + \sqrt{\frac{1}{5}}; 
\end{eqnarray*}

\item

$- f (d(v), z+ 1) + f (d(v) + 2, z)$ increases in $d(v)$, and therefore,
\begin{eqnarray*}
- f (d(v), z+ 1) + f (d(v) + 2, z)
&\le& \lim_{d(v) \rightarrow + \infty}(- f (d(v), z+ 1) + f (d(v) + 2, z))  \quad = \quad -\sqrt{\frac{1}{z+1}} + \sqrt{\frac{1}{z}}.
\end{eqnarray*}

\end{itemize}
In conclusion, it leads to another (weaker) upper bound on $ABC (G') - ABC (G)$:
\begin{eqnarray}  \label{eq-no-D_2,2^2-10-2}
%
ABC (G') - ABC (G) &\le& - (d(v) - x - 1) f (d(v), 4) + (d(v) - x + 1) f(d(v) + 2, 4)  \nonumber \\
&&  + 2 ( - f(d(v) + 2, 4) + f (d(v), z + 1)) -\sqrt{\frac{1}{k+1}} + \sqrt{\frac{1}{5}}  \nonumber \\
&&  + (x+2) \left( -\sqrt{\frac{1}{z+1}} + \sqrt{\frac{1}{z}} \right) -x \, z \, f(z+1,4) -(k-2) f(k+1,4) \nonumber \\
&& -2f(k+1,3)  + (x+2)(z-1) f(z,4),  \nonumber
\end{eqnarray}
which decreases in $d(v) \ge 146$.

When $d(v) = 2400$, and thus $d(v) \ge 2400$, the right-hand side of  the above inequality is negative, for any combination of $x,k,z$ satisfying
$z \in [43,56]$, $k \in [34,50]$ and $x = 2z-k$
(where $x,k,z$ are all bounded). With the same constraints about $x,k,z$, when $146 \le d(v) < 2400$, the right-hand side of \eqref{eq-no-D_2,2^2-10-1} is negative again.

\subsection{The existence of $D_{z,1}^2$-branches--the negativity of right-hand side of (\ref{eq-no-Dz,1_2-10})} \label{appendix-nonexist-B2-3}

In the same manner as  in Appendix \ref{appendix-nonexist-B2-2}, we
get also here the following upper bound on $ABC (G') - ABC (G)$ after applying transformation $\mathcal{T}_{12}$:
\begin{eqnarray}  \label{eq-no-Dz,1_2-10-3}
%
ABC (G') - ABC (G) &\le& - (d(v) - x - 1) f (d(v), 4) + (d(v) - x + 4) f(d(v) + 5, 4)  \nonumber \\
&& + 5 ( - f (d(v) + 5, 4) + f (d(v), k + 1))   -x \, z \, f(z+1,4) -(k-1) f(k+1,4) \nonumber \\
&& + 6 ( - f (d(v),k+1) + f (d(v), z + 1)) + (x+6) (- f(d(v),z+1)   \nonumber \\
&&  + (x+6) (- f(d(v),z+1)+ f (d(v) + 5, z)) - f(k+1,3)  \nonumber \\
&&  - 2f(3,2)   - 2f(2,1) + (x+6)(z-1) f(z,4), \qquad
\end{eqnarray}
and further get another (weaker) upper bound which decreases in $d(v) \ge x + 1$:
\begin{eqnarray}  \label{eq-no-Dz,1_2-10-2}
ABC (G') - ABC (G) &\le& - (d(v) - x - 1) f (d(v), 4) + (d(v) - x + 4) f(d(v) + 5, 4)  \nonumber \\
&& + 5 ( - f (d(v) + 5, 4) + f (d(v), k + 1))   -x \, z \, f(z+1,4) -(k-1) f(k+1,4) \nonumber \\
&& + 6 ( - f (d(v),k+1) + f (d(v), z + 1)) + (x+6)  \left( -\sqrt{\frac{1}{z+1}} + \sqrt{\frac{1}{z}} \right)  \nonumber \\
&&  - f(k+1,3)   - 2f(3,2)   - 2f(2,1) + (x+6)(z-1) f(z,4).
\end{eqnarray}
Substituting $d(v) = 4100$ into the right-hand side of (\ref{eq-no-Dz,1_2-10-2}), it can be verified that the right-hand side of (\ref{eq-no-Dz,1_2-10-2}),
for each  $z \in \{51, 52 \}$ with constraints  $k \in [z-5,z]$ and $x = 6z - k - 5$,
is negative. Due to the monotonicity on $d (v)$,
it follows that it is also negative for $d (v) \ge 4100$. For the remaining part $x + 1 \le d(v) < 4100$, by direct calculations,
the right-hand side of (\ref{eq-no-Dz,1_2-10-3}) is negative under the same constraints on $x$, $k$, and $z$.

\subsection{The upper bound of size of $D_z^{**}$-branches--the negativity of right-hand side of (\ref{eq-Lemma-Dz**-10})} \label{appendix-upper-D**}

Observe that
$d(x) \ge 4$ for each $x$ with $x v \in E(\bar{G})$, and thus
$$
\sum_{x v \in E(\bar{G})} ( - f(d(v), d(x)) + f (d(v) + 2, d(x))) \le (d(v) - 1) ( - f(d(v), 4) + f (d(v) + 2, 4)), \qquad
$$
by Proposition~\ref{pro-10}. So we have the following (weaker) upper bound on $ABC (G') - ABC (G)$:
\begin{eqnarray} \label{eq-Lemma-Dz**-10-1}
%
ABC (G') - ABC (G) &\le& (d(v) - 1) ( - f(d(v), 4) + f (d(v) + 2, 4)) - f (d(v),z+1)  \nonumber \\
&& + 2 f \left( d(v) + 2, \frac{z+1}{2} \right)  - z \cdot f (z+1,4) + (z-1) f \left( \frac{z+1}{2}, 4 \right)  \nonumber \\
&&  - f(4,3) + f (d(v)+2,5) \nonumber \\
&=& - (d(v) - 1) f(d(v), 4) + (d(v) + 1 ) f (d(v) + 2, 4)   \nonumber \\
&& + 2 \left( -  f (d(v) + 2, 4) +  f \left( d(v) + 2, \frac{z+1}{2} \right)\right) - f (d(v),z+1)  \nonumber \\
&& + f (d(v)+2,5) - z \, f (z+1,4) + (z-1) f \left( \frac{z+1}{2}, 4 \right)- f(4,3).
\end{eqnarray}

As to the terms related to $d(v)$ in the right-hand side of (\ref{eq-Lemma-Dz**-10-1}), we have the following observations:
\begin{itemize}
\item

It is not hard to verify that $- (d(v) - 1) f(d(v), 4) + (d(v) + 1 ) f (d(v) + 2, 4)$ decreases in $d(v)$;

\item

$-  f (d(v) + 2, 4) +  f \left( d(v) + 2, \frac{z+1}{2} \right)$ decreases in $d(v)$, by Proposition~\ref{pro-20};

\item

$- f (d(v),z+1) + f (d(v)+2,5)$ increases in  $d(v)$, by Proposition~\ref{pro-10}, i.e.,
\begin{eqnarray*}
- f (d(v),z+1) + f (d(v)+2,5)
&\le& \lim_{d(v) \rightarrow + \infty}(- f (d(v),z+1) + f (d(v)+2,5)) \\
&=& -\sqrt{\frac{1}{z+1}} + \sqrt{\frac{1}{5}}.
\end{eqnarray*}
\end{itemize}
Therefore, we obtain one more weaker upper bound on $ABC (G') - ABC (G)$:
\begin{eqnarray} \label{eq-Lemma-Dz**-10-2}
%
ABC (G') - ABC (G) &\le&  - (d(v) - 1) f(d(v), 4) + (d(v) + 1 ) f (d(v) + 2, 4) \nonumber \\
&&+ 2 \left( -  f (d(v) + 2, 4) +  f \left( d(v) + 2, \frac{z+1}{2} \right) \right) \nonumber \\
&&-\sqrt{\frac{1}{z+1}} + \sqrt{\frac{1}{5}} - z \, f (z+1,4) + (z-1) f \left( \frac{z+1}{2}, 4 \right)- f(4,3), \qquad
\end{eqnarray}
which decreases in $d(v) \ge z + 1$.
After setting $d(v) = z + 1$ in the right-hand side of (\ref{eq-Lemma-Dz**-10-2}),  we have
\begin{eqnarray} \label{eq-Lemma-Dz**-10-3}
%
ABC (G') - ABC (G) &\le& - z \, f(z +1, 4) + (z + 2 ) f (z + 3, 4) + 2 \left( -  f (z + 3, 4) +  f \left( z + 3, \frac{z+1}{2} \right)\right)  \nonumber \\
&&-\sqrt{\frac{1}{z+1}} + \sqrt{\frac{1}{5}}  - z \, f (z+1,4) + (z-1) f \left( \frac{z+1}{2}, 4 \right)- f(4,3),   \nonumber
\end{eqnarray}
in which the right-hand side is always negative for $117 \le z \le 131$.
As to $75 \le z \le 115$ ($z$ is odd), set $d(v) = 264$ in the right-hand side of (\ref{eq-Lemma-Dz**-10-2}), the right-hand side would be again always negative for $75 \le z \le 115$.
For the remaining parts when $75 \le z \le 115$ and $z + 1 \le d(v) \le 263$, we resort to the right-hand side of (\ref{eq-Lemma-Dz**-10-1}), which is always negative from direct calculations.

\subsection{The existence of $D_{z}^{**}$-branches--the negativity of right-hand side of (\ref{eq-no-Dz_**-v2})} \label{appendix-nonexist-B3**}

Similarly as in Appendix \ref{appendix-nonexist-B2-2}, we get a new upper bound on $ABC (G') - ABC (G)$:
\begin{eqnarray}  \label{eq-no-Dz_**-v2-1}
%
ABC (G') - ABC (G) &\le& - (d(v) - x - 1) f (d(v), 4) + (d(v) - x + 1) f(d(v) + 2, 4) \nonumber \\
&& + ( -  f(d(v) + 2, 4) + f (d(v), z + 1) )  + ( - f (d(v) + 2, 4) + f (d(v) + 2, k) )  \nonumber \\
&&  + ( - f(d(v), k + 1)+f(d(v)+2,5) ) \nonumber \\
&&+ (x+1) (- f(d(v), z + 1) + f (d(v) + 2, z))  -x \, z \, f(z+1,4)  \nonumber \\
&&  - k \, f(k+1,4)  - f(4,3) + (x+1)(z-1) f(z,4) + (k-1) f(k,4). 
\end{eqnarray}
Further we obtain
\begin{eqnarray}  \label{eq-no-Dz_**-v2-2}
%
ABC (G') - ABC (G) &\le& - (d(v) - x - 1) f (d(v), 4) + (d(v) - x + 1) f(d(v) + 2, 4) \nonumber \\
&&+ ( -  f(d(v) + 2, 4) + f (d(v), z + 1) )  + ( - f (d(v) + 2, 4) + f (d(v) + 2, k)   ) \nonumber \\
&&+  \left( -\sqrt{\frac{1}{k+1}} + \sqrt{\frac{1}{5}} \right)+ (x+1) \left( -\sqrt{\frac{1}{z+1}} + \sqrt{\frac{1}{z}} \right) -x \, z \, f(z+1,4) \nonumber \\
&&   - k \, f(k+1,4)  - f(4,3) + (x+1)(z-1) f(z,4) + (k-1) f(k,4),
\end{eqnarray}
decreasing in $d(v)$.
After substituting $d(v)$ by  $556$ into the right-hand side of (\ref{eq-no-Dz_**-v2-2}),
a straightforward verification shows that the right-hand side of (\ref{eq-no-Dz_**-v2-2}) is
negative for each $z \in [46,57]$, $k \in [\max\{47,z - 1\},z+1]$ and $x = z-1$.
Since the right-hand side of (\ref{eq-no-Dz_**-v2-2}) decreases in $d(v) \ge x + 1$, it follows that it is negative for
any $d(v)  \geq 556$ under the above mentioned constraints.
When $x + 1 \le d(v) < 556$, we resort to the right-hand side of (\ref{eq-no-Dz_**-v2-1}),
which can be shown that is negative by direct calculations.

\subsection{All $B_4$-branches are adjacent to the root vertex--the negativity of right-hand side of (\ref{eq:B4-2-ori})} \label{appendix-B4-to-root}

Here we use again the techniques in Appendix \ref{appendix-nonexist-B2-2}. Then an upper bound on the change of the ABC index comes:
\begin{eqnarray}\label{eq:B4-2-ori-1}
ABC (G') - ABC (G) &\le& - (d(v) + x - 5) f (d(v), 16) + (d(v) + 2x - 5) f (d(v) + x, 16)  \nonumber \\
&& + (4-x) (-f(d(v),5) + f (d(v) + x, 5)) \nonumber \\
&&+ x (-f(d(v)+x,16) + f(d(v) + x, 5)) \nonumber \\
&& + (z - x) (-f(z+1,4) + f (z + 1 - x,4)) \nonumber \\
&& + ( - f (d(v),z+1) + f(d(v) + x, z -x + 1)) - x \, f(z + 1, 5).
\end{eqnarray}
Aiming to obtain a (weaker) upper bound which decreases in $d(v)$, we get:
\begin{eqnarray}\label{eq:B4-2-ori-2}
ABC (G') - ABC (G) &\le& - (d(v) + x - 5) f (d(v), 16) + (d(v) + 2x - 5) f (d(v) + x, 16) \nonumber \\ 
 &&+ x \left( - \sqrt{\frac{1}{16}} + \sqrt{\frac{1}{5}} \right) + (z - x) (-f(z+1,4) + f (z + 1 - x,4))  \nonumber \\
&& + \left( - \sqrt{\frac{1}{z+1}} + \sqrt{\frac{1}{z-x+1}} \right) - x \, f(z + 1, 5).
\end{eqnarray}
By direct calculations, we deduce that the right-hand side of (\ref{eq:B4-2-ori-2}) is negative for $d(v) \ge 300$ (actually set $d(v) = 300$ is enough due to the monotonicity on $d(v)$), $1 \le x \le 4$ and $15 \le z \le 215$. For the remaining part $z + 1 \le d(v) < 300$, $1 \le x \le 4$ and $15 \le z \le 215$, the right-hand side of (\ref{eq:B4-2-ori-1}) could help us, which is negative from direct calculations again.

\subsection{Minimal-ABC trees} \label{appendix-figures}

In this section, we present the structures of the minimal-ABC trees.
The exact parameters are given only for trees of smaller orders.
The parameters for an arbitrary tree can be determined as it was elaborated in Section~\ref{section-compuation}.
For example, it can be obtained that:
\begin{itemize}
\item
When $n = 5047$, then $z = 50$, $n_z=7$, $n_{z+1}=4$, $n_3 = 164$, and $n_4 = 1$;

\item
When $n = 6956$, then $z = 49$, $n_z=1$, $n_{z+1}=15$, $n_3 = 191$, and $n_4 = 1$;

\item
When $n = 16443$, then $z = 49$, $n_z=0$, $n_{z+1}=41$, and $n_3 = 293$;

\item
When $n = 1014814$, then $z = 51$, $n_z=2594$, $n_{z+1}=236$, and $n_3 = 3$;

\item
When $n = 1142741$, then $z = 51$, $n_z=3035$, $n_{z+1}=154$;

\item
When $n = 1257073$, then $z = 51$, $n_z=259$, $n_{z+1}=3190$;

\item
When $n = 13290000000000000$, then $z = 51$, $n_z=178$, $n_{z+1}=36410958903935$.

\end{itemize}

\begin{figure}[!ht]
\begin{center}
\includegraphics[scale=0.739]{./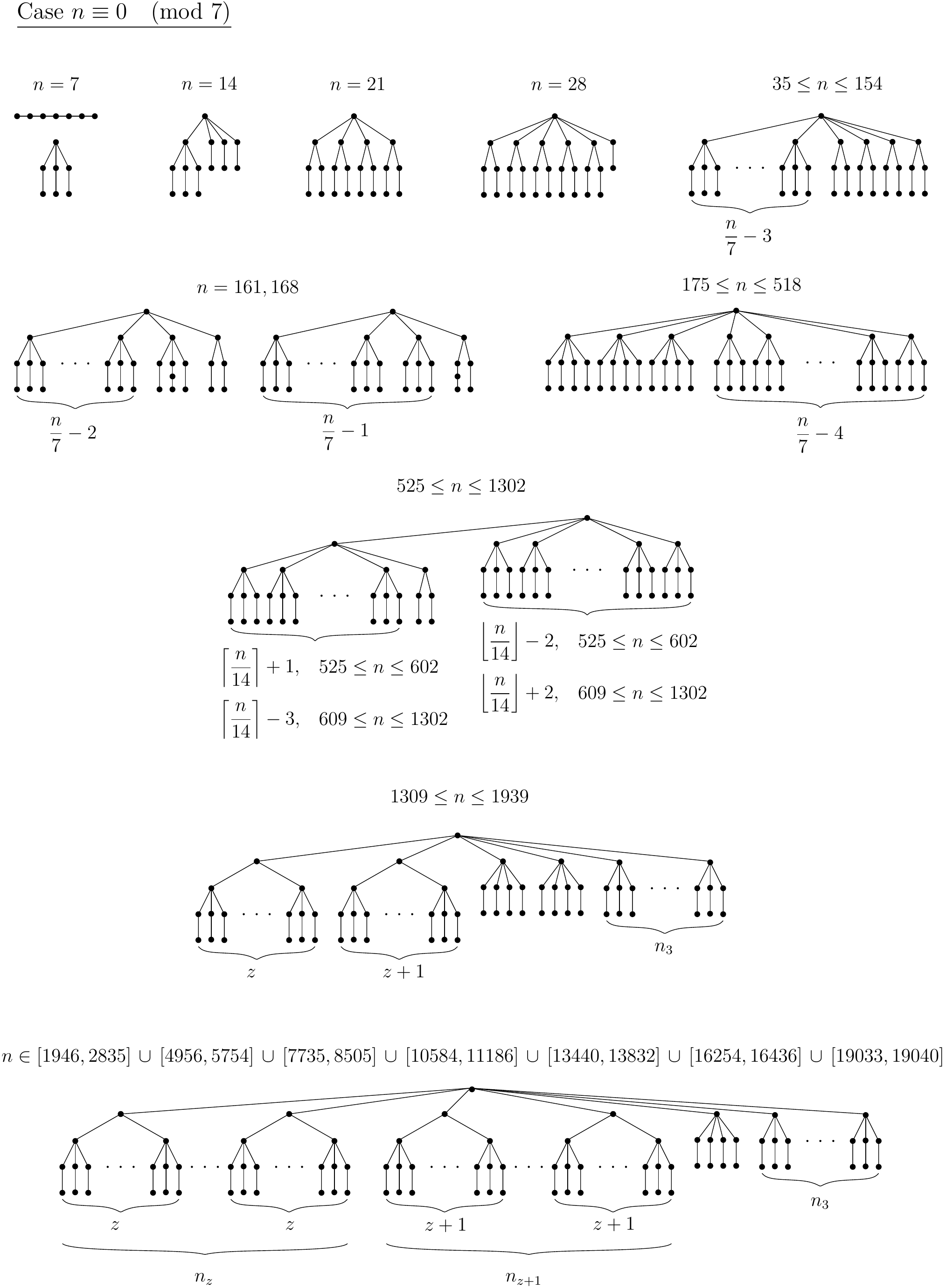}
\caption{Minimal-ABC trees with $n$ vertices, where $n \equiv 0 \pmod{7}$.}
\label{fig_minABC-mod_0}
\end{center}
\end{figure}
\begin{figure}[!ht]
\begin{center}
\includegraphics[scale=0.75]{./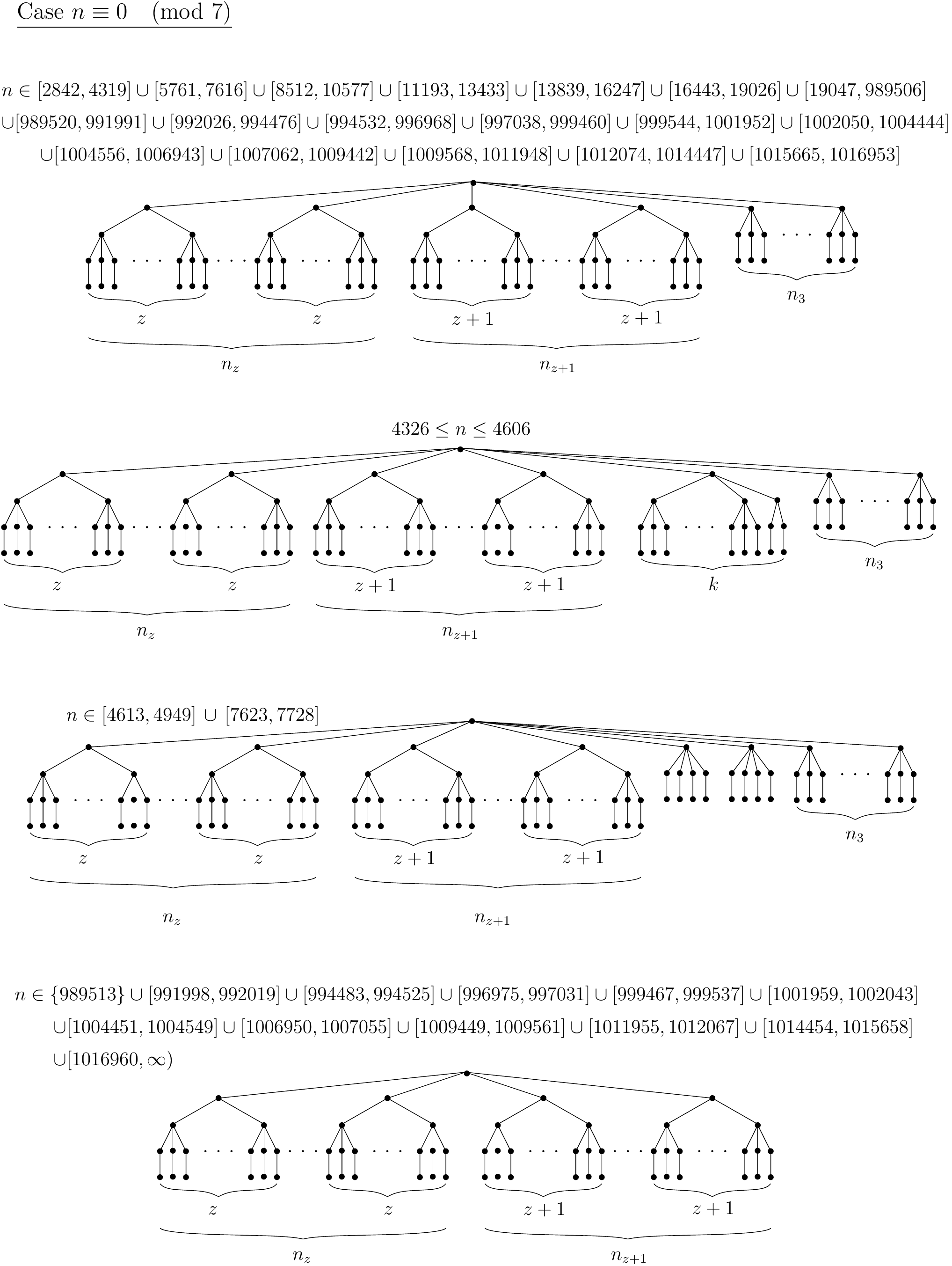}
\caption{Minimal-ABC trees with $n$ vertices, where $n \equiv 0 \pmod{7}$.
}
\label{fig_minABC-mod_0}
\end{center}
\end{figure}
\begin{figure}[!ht]
\begin{center}
\includegraphics[scale=0.75]{./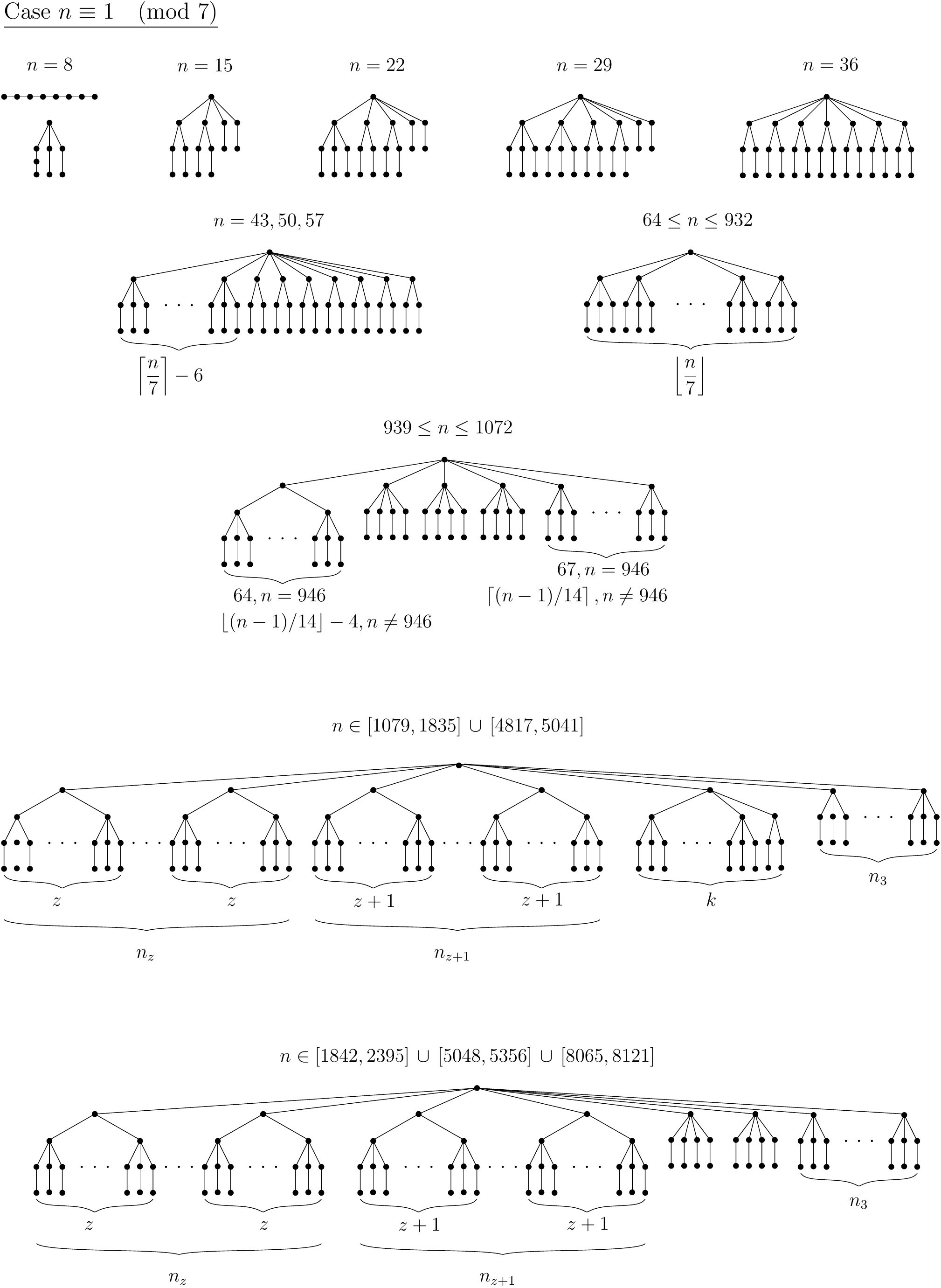}
\caption{Minimal-ABC trees with $n$ vertices, where $n \equiv 1 \pmod{7}$.}
\label{fig_minABC-mod_1}
\end{center}
\end{figure}
\begin{figure}[!ht]
\begin{center}
\includegraphics[scale=0.75]{./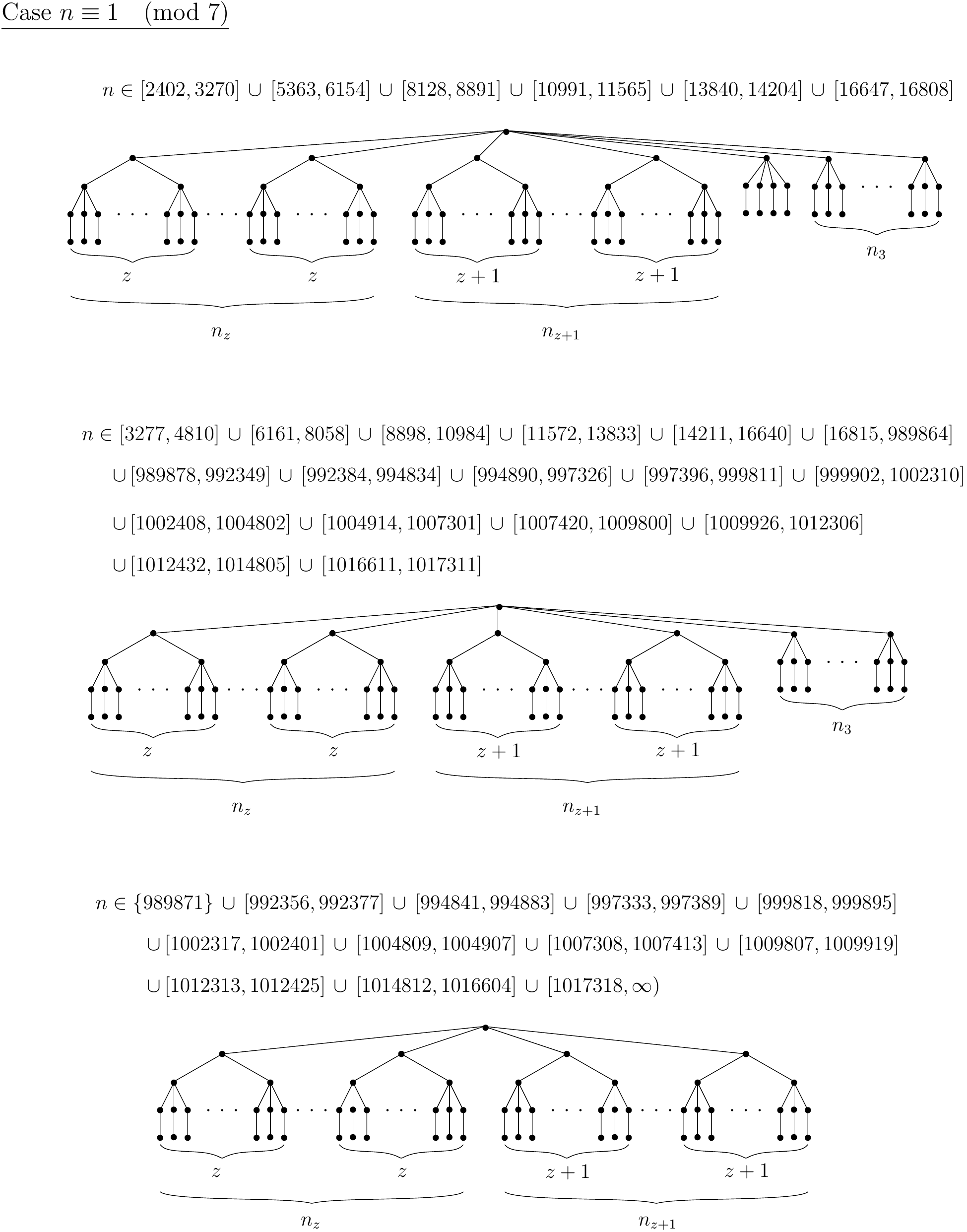}
\caption{Minimal-ABC trees with $n$ vertices, where $n \equiv 1 \pmod{7}$.}
\label{fig_minABC-mod_1}
\end{center}
\end{figure}
\begin{figure}[!ht]
\begin{center}
\includegraphics[scale=0.75]{./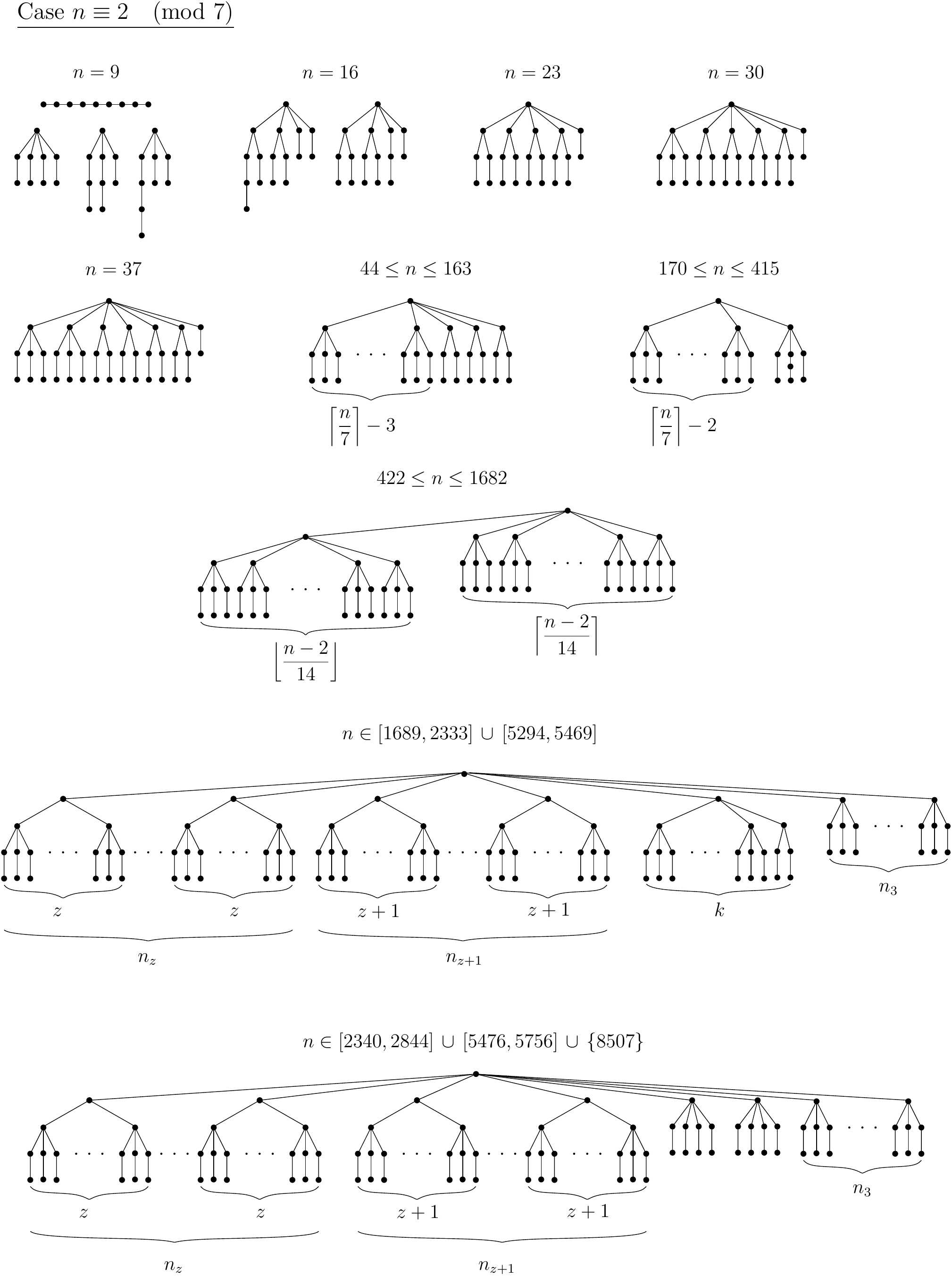}
\caption{Minimal-ABC trees with $n$ vertices, where $n \equiv 2 \pmod{7}$.}
\label{fig_minABC-mod_2}
\end{center}
\end{figure}
\begin{figure}[!ht]
\begin{center}
\includegraphics[scale=0.75]{./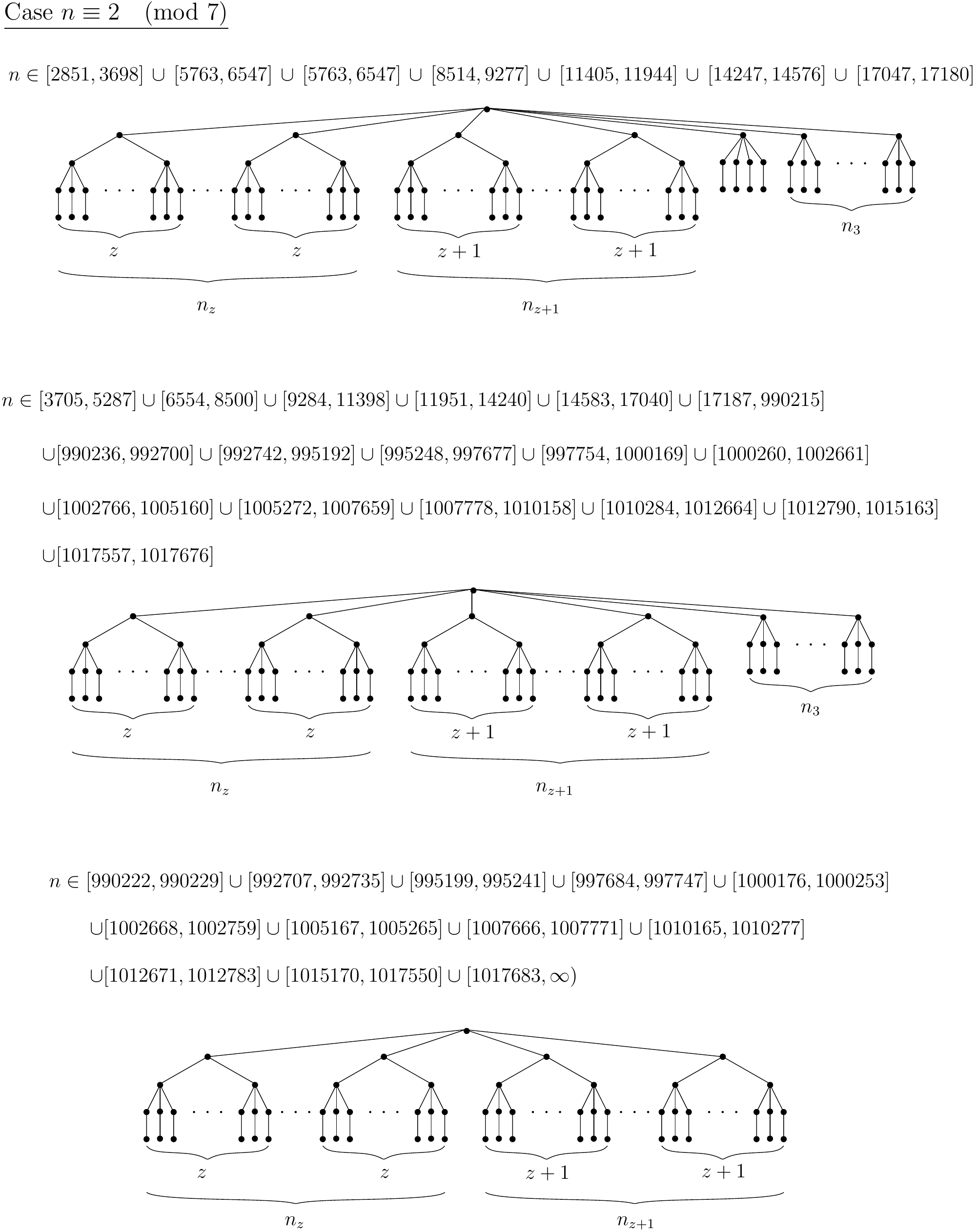}
\caption{Minimal-ABC trees with $n$ vertices, where $n \equiv 2 \pmod{7}$.}
\label{fig_minABC-mod_2}
\end{center}
\end{figure}
\begin{figure}[!ht]
\begin{center}
\includegraphics[scale=0.75]{./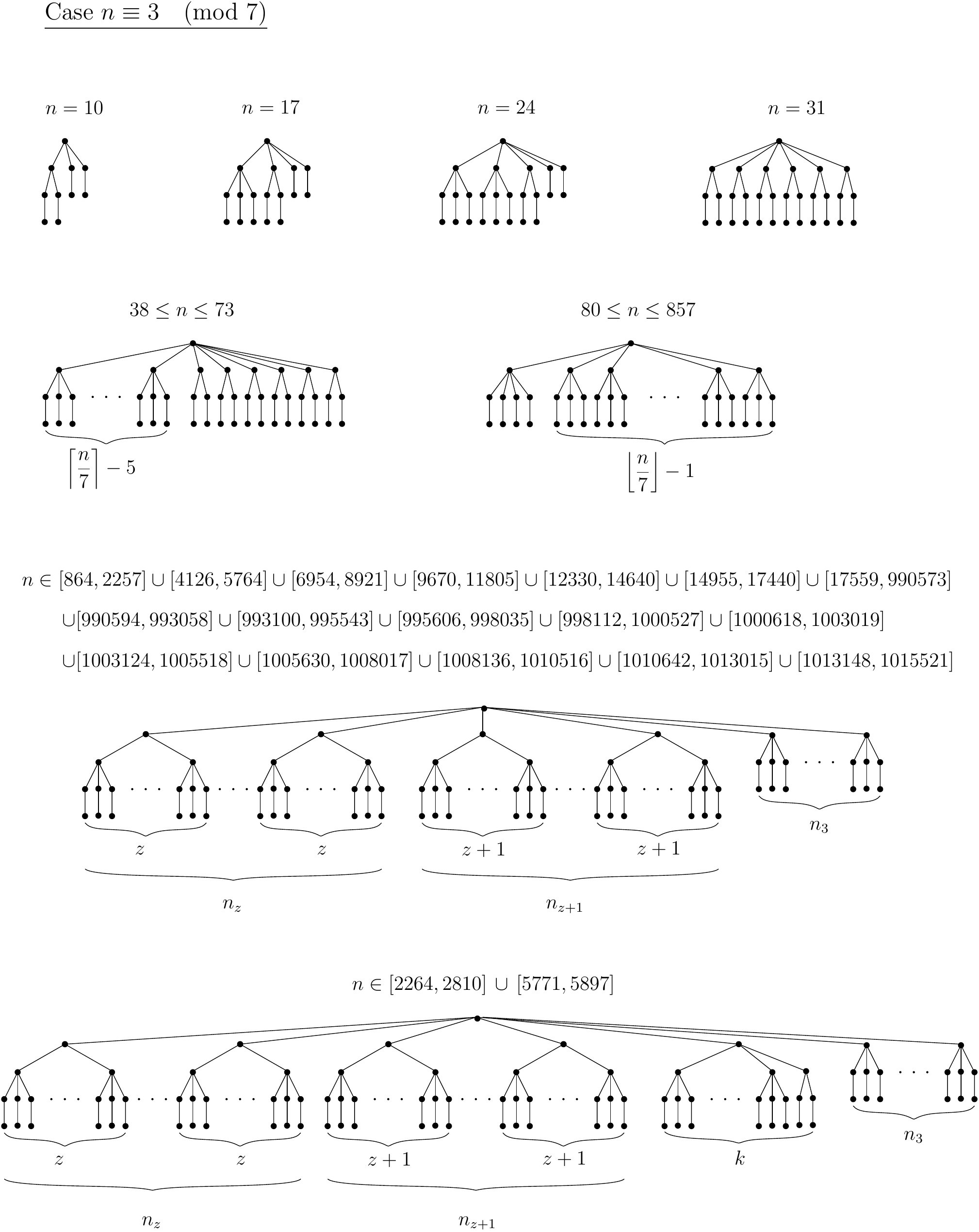}
\caption{Minimal-ABC trees with $n$ vertices, where $n \equiv 3 \pmod{7}$.}
\label{fig_minABC-mod_3}
\end{center}
\end{figure}
\begin{figure}[!ht]
\begin{center}
\includegraphics[scale=0.75]{./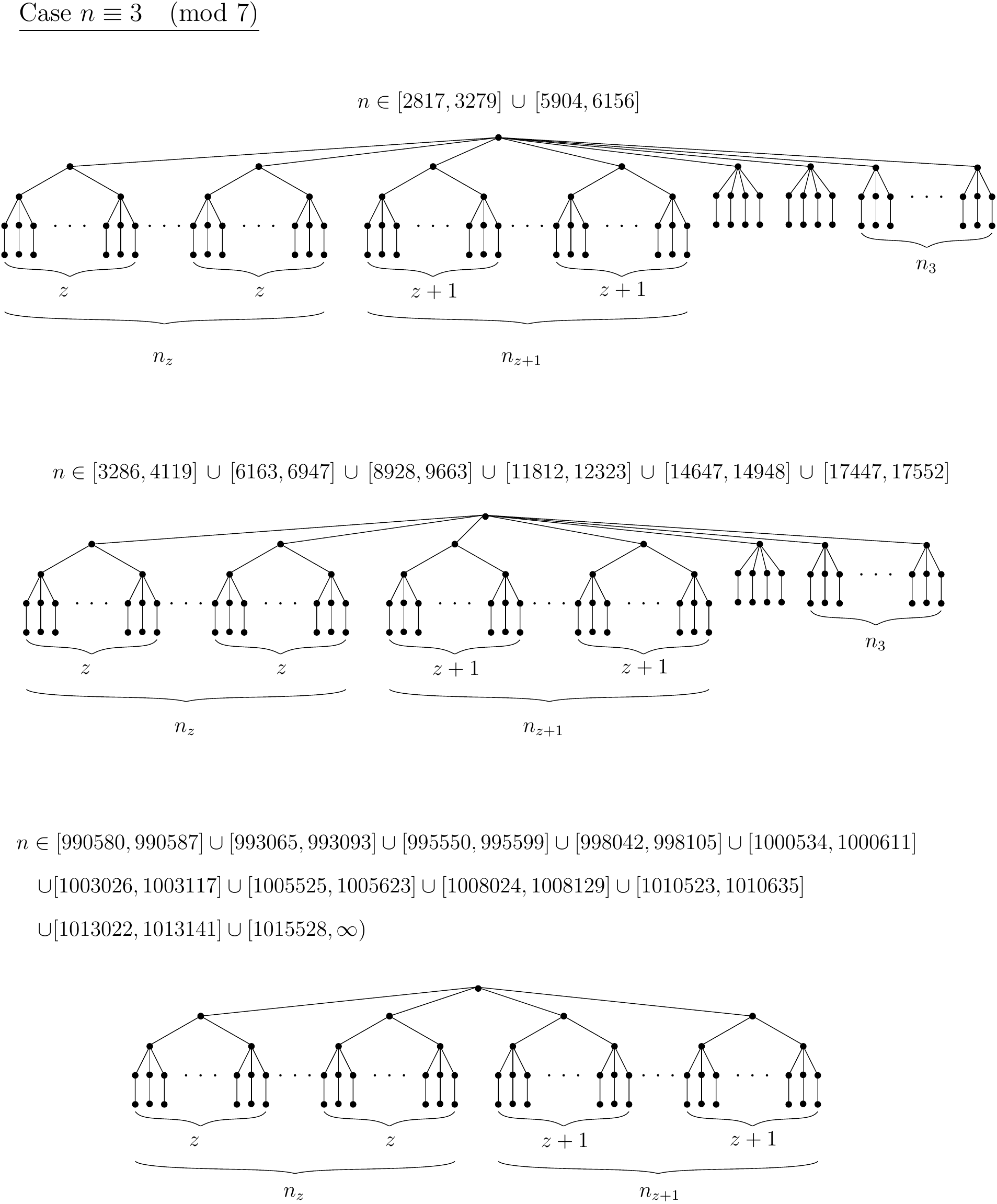}
\caption{Minimal-ABC trees with $n$ vertices, where $n \equiv 3 \pmod{7}$.}
\label{fig_minABC-mod_3}
\end{center}
\end{figure}
\begin{figure}[!ht]
\begin{center}
\includegraphics[scale=0.75]{./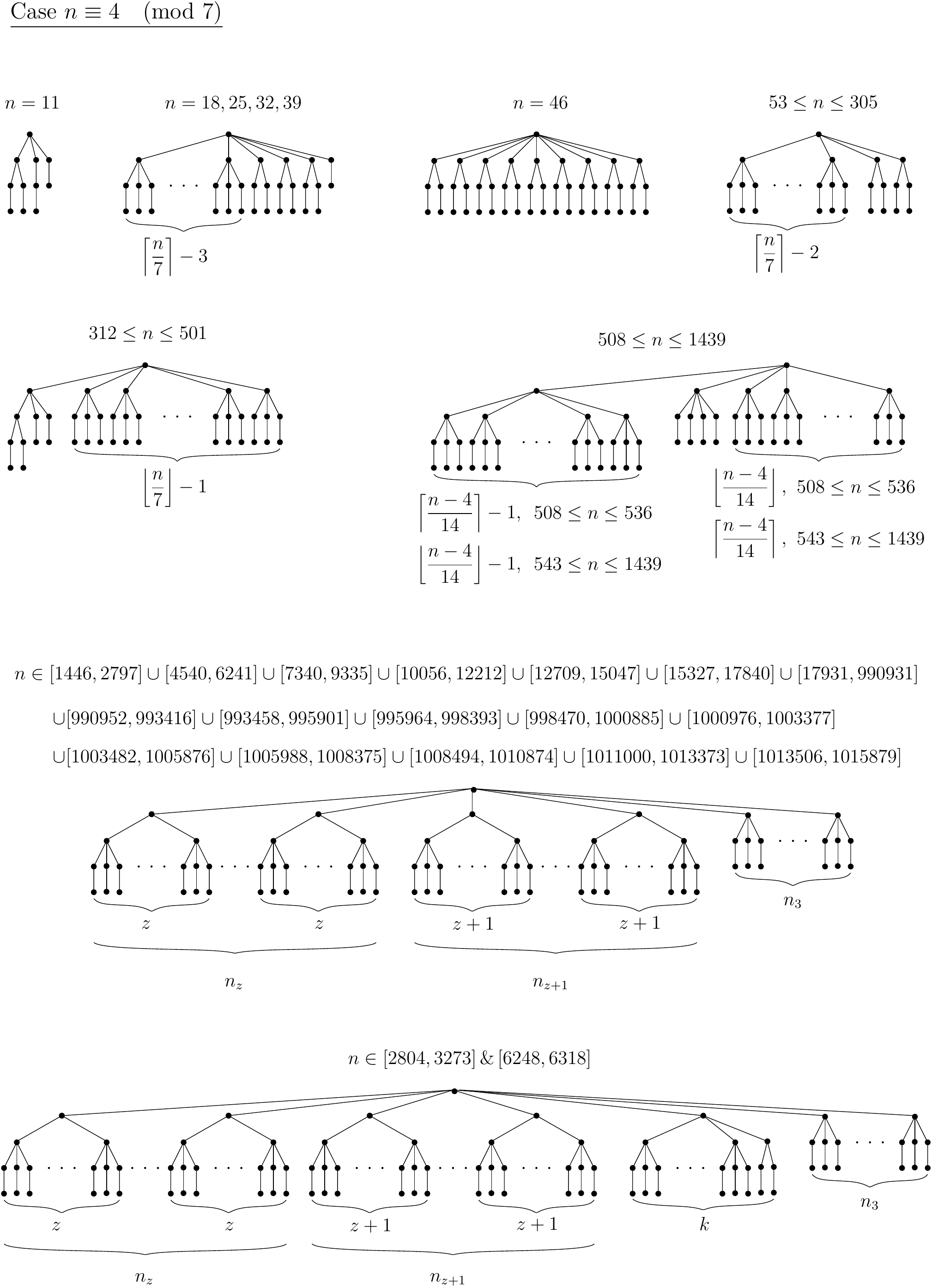}
\caption{Minimal-ABC trees with $n$ vertices, where $n \equiv 4 \pmod{7}$.}
\label{fig_minABC-mod_4}
\end{center}
\end{figure}
\begin{figure}[!ht]
\begin{center}
\includegraphics[scale=0.75]{./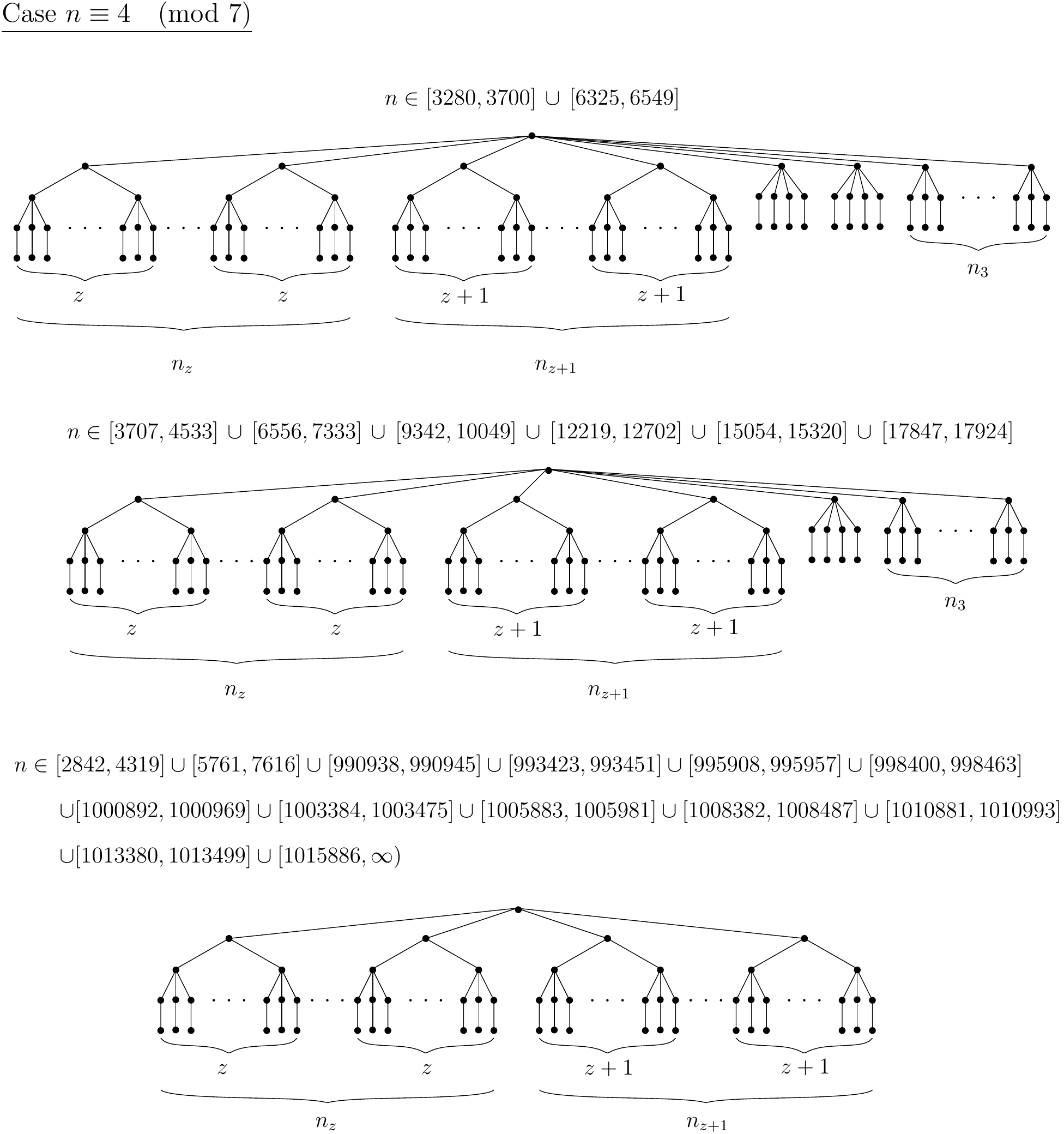}
\caption{Minimal-ABC trees with $n$ vertices, where $n \equiv 4 \pmod{7}$.}
\label{fig_minABC-mod_4}
\end{center}
\end{figure}
\begin{figure}[!ht]
\begin{center}
\includegraphics[scale=0.75]{./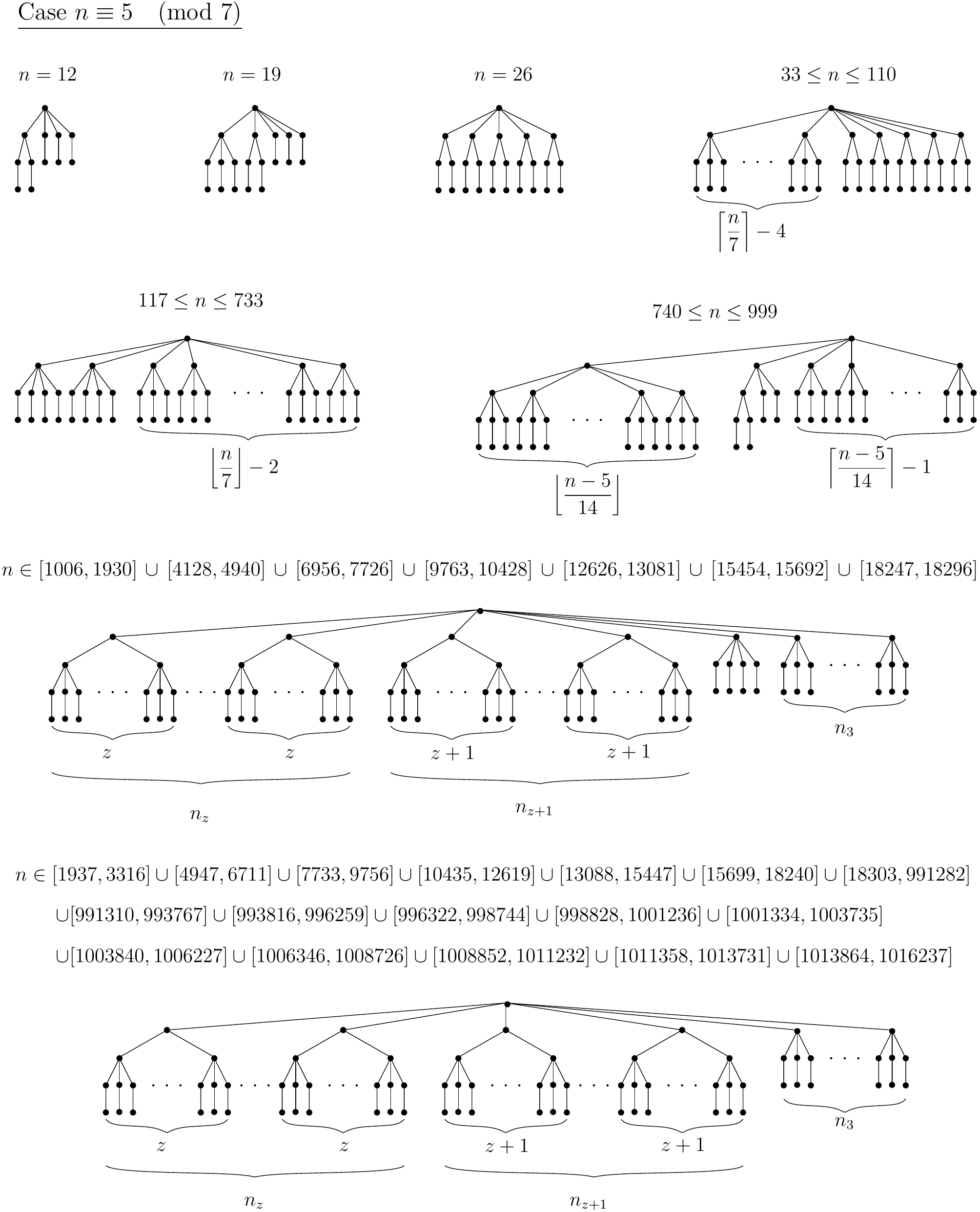}
\caption{Minimal-ABC trees with $n$ vertices, where $n \equiv 5 \pmod{7}$.}
\label{fig_minABC-mod_5}
\end{center}
\end{figure}
\begin{figure}[!ht]
\begin{center}
\includegraphics[scale=0.75]{./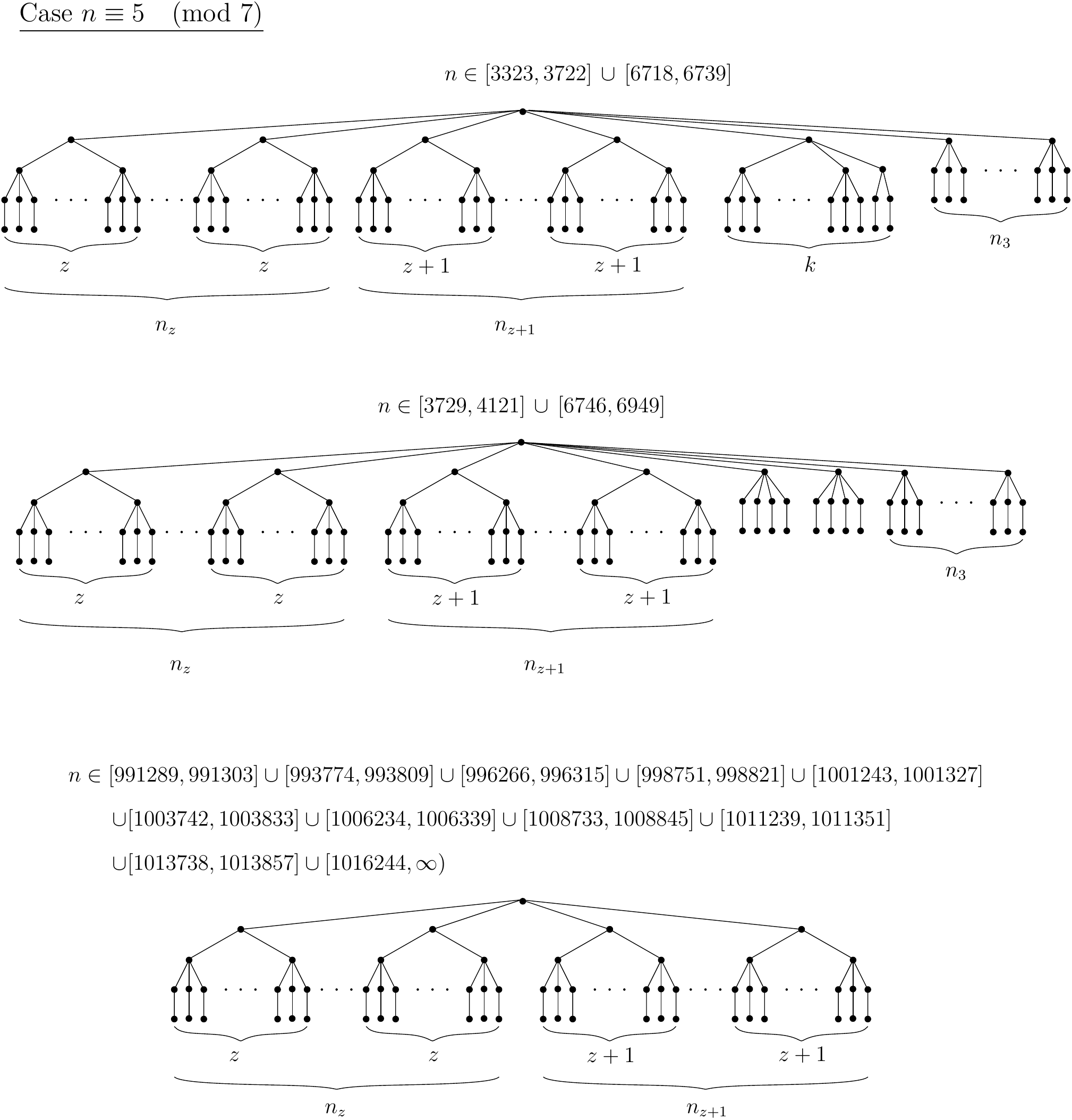}
\caption{Minimal-ABC trees with $n$ vertices, where $n \equiv 5 \pmod{7}$.}
\label{fig_minABC-mod_5}
\end{center}
\end{figure}
\begin{figure}[!ht]
\begin{center}
\includegraphics[scale=0.75]{./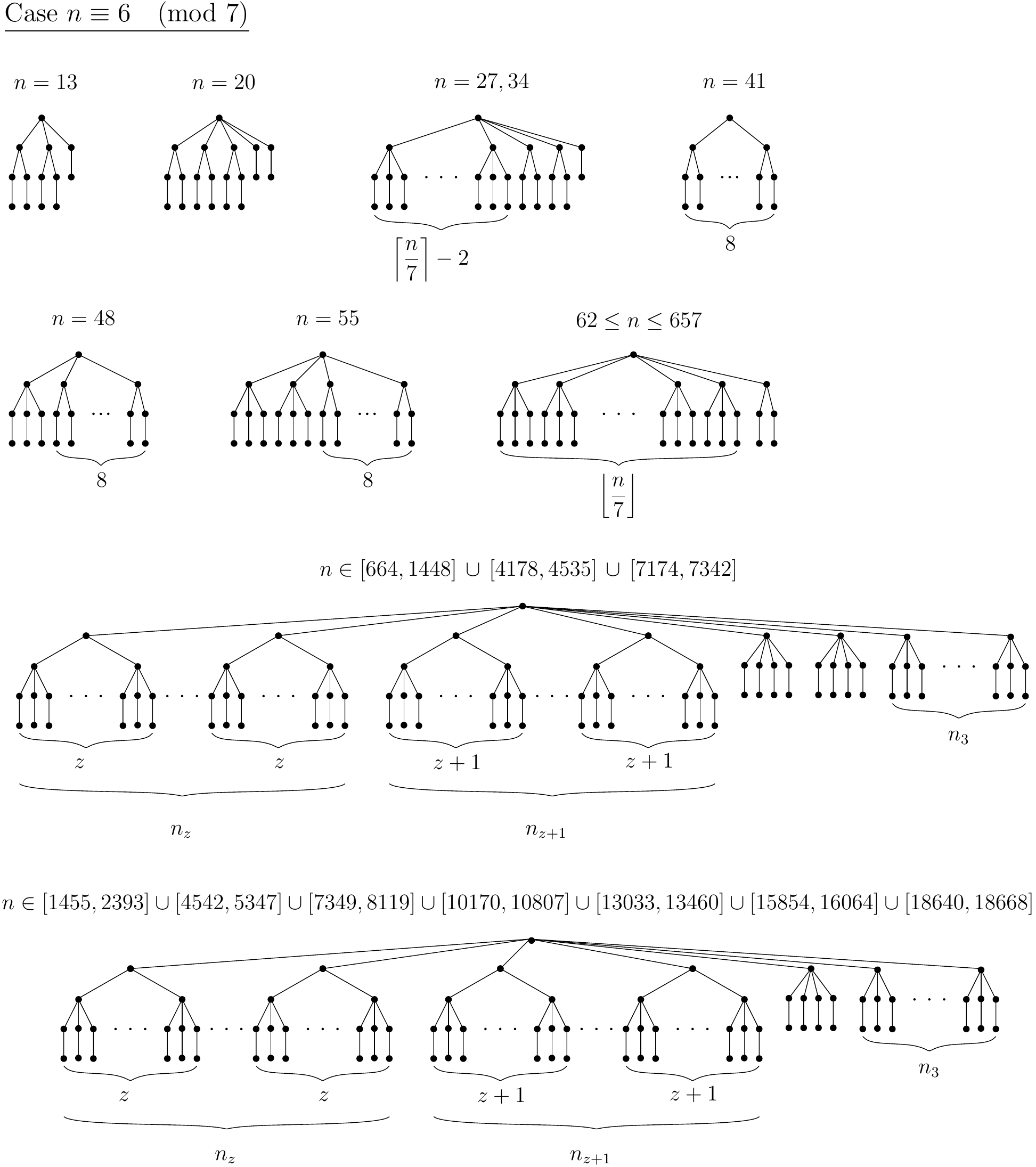}
\caption{Minimal-ABC trees with $n$ vertices, where $n \equiv 6 \pmod{7}$.}
\label{fig_minABC-mod_6}
\end{center}
\end{figure}
\begin{figure}[!ht]
\begin{center}
\includegraphics[scale=0.75]{./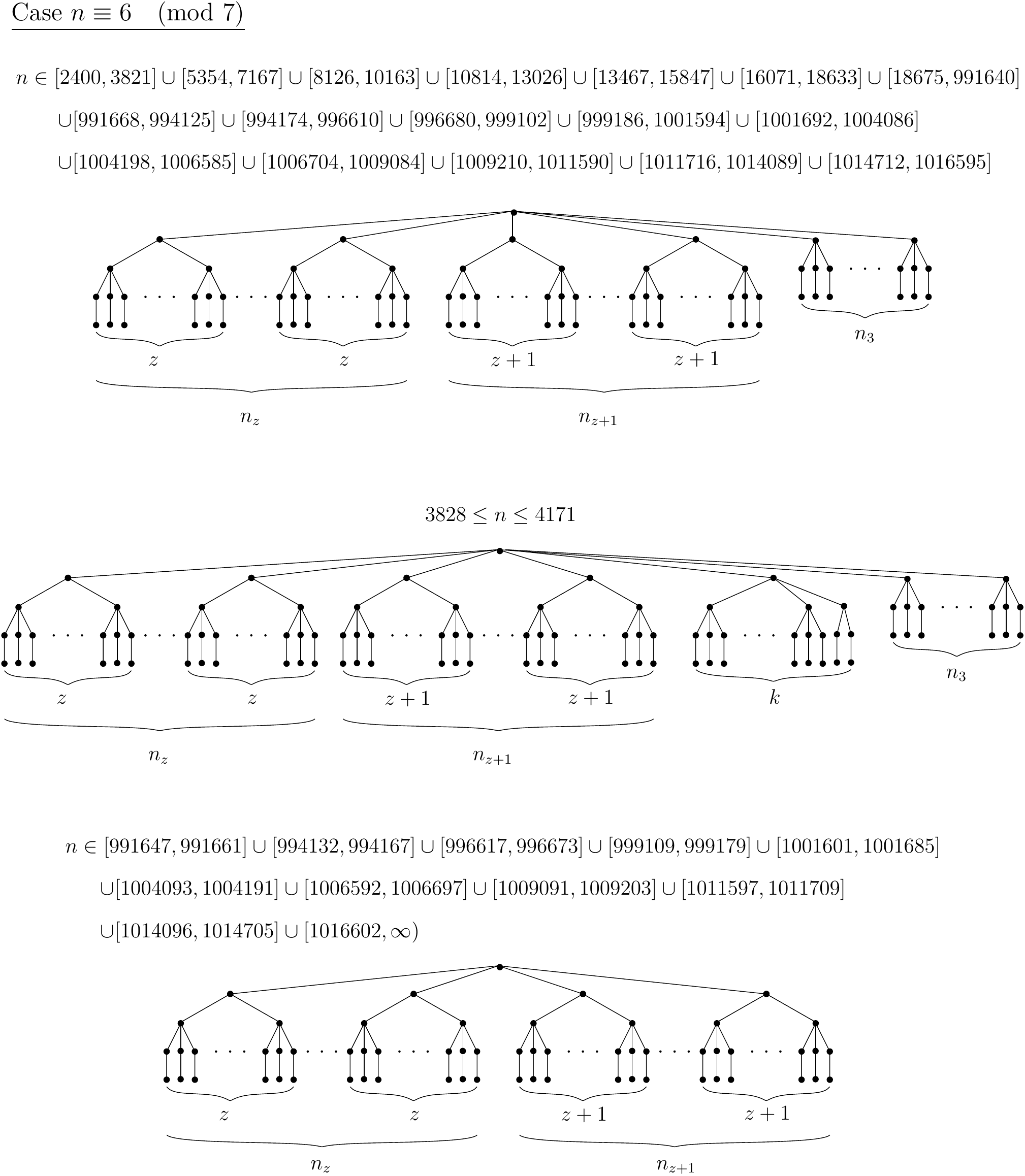}
\caption{Minimal-ABC trees with $n$ vertices, where $n \equiv 6 \pmod{7}$.}
\label{fig_minABC-mod_6}
\end{center}
\end{figure}

\end{document}